\newcommand{\C}{\mathbb{C}}
\newcommand{\R}{\mathbb{R}}
\newcommand{\N}{\mathbb{N}}
\def\be{\begin{equation}}
\def\ee{\end{equation}}
\def\bequnan{\begin{eqnarray*}}
\def\eequnan{\end{eqnarray*}}
\def\la{\label}
\def\d{\mathrm{d}}
\def\von{\varepsilon}
\def\cD{{\mathcal D}}
\def\P1{{\mathbf P}}
\def\cM{{\mathcal M}}
\def\cH{{\sf H}}
\def\HE{{\mathcal H} (E) }
\def\HW{{\mathcal H} (W) }
\def\HG{{\mathcal H} (G) }
\def\cU{{\mathcal U}}
\def\tq{\tilde{q}}
\def\th{\tilde{h}}
\def\tf{\tilde{f}}
\def\wt{\widetilde}
\def\({\left(}
\def\Ran{\operatorname{Ran}}
\def\){\right)}
\def\s2{ \wt{S_2} }
\def\llangle{\left\langle}
\def\rrangle{\right\rangle}
\def\len{\left\|}
\def\rin{\right\|}
\def\yOonequ{{\stackrel{\lower.5mm\hbox{$ \scriptstyle\circ $}}y}_1^{\lower3pt\hbox{$ \scriptstyle 2 $}}}
\def\yOtwoqu{{\stackrel{\lower.5mm\hbox{$ \scriptstyle\circ $}}y}_2^{\lower3pt\hbox{$ \scriptstyle 2 $}}}
\def\yOonequti{\stackrel{\lower1mm\hbox{$ \scriptstyle\circ $}}{\widetilde y}_1^{\lower4pt\hbox{$ \scriptstyle 2 $}}}
\def\yOtwoquti{\stackrel{\lower1mm\hbox{$ \scriptstyle\circ $}}{ \widetilde y  }_2^{\lower4pt\hbox{$ \scriptstyle 2 $}} }
\def\yOone{{\stackrel{\lower.5mm\hbox{$ \scriptstyle\circ $}}y}_1}
\def\yOtwo{{\stackrel{\lower.5mm\hbox{$ \scriptstyle\circ $}}y}_2}
\def\yOoneti{\stackrel{\lower1mm\hbox{$ \scriptstyle\circ $}}{\widetilde y}_1}
\def\yOtwoti{\stackrel{\lower1mm\hbox{$ \scriptstyle\circ $}}{ \widetilde y  }_2 }
\begin{document}

\newcounter{prop}
\newtheorem{theorem}{Theorem}
\newtheorem{proposition}[prop]{Proposition}
\newtheorem{corollary}[prop]{Corollary}
\newtheorem{lemma}[prop]{Lemma}
\newcounter{rem}
\newtheorem{remark}[rem]{Remark}
\newtheorem*{remarknonumb}{Remark}
\newcounter{exerc}
\newtheorem{exercise}[exerc]{Exercise}
\def\theremark{\unskip}
\newtheorem{deffinition}{Definition}
\newtheorem{definition}{Definition}

\def\thedefinition{\unskip}


\centerline{CANONICAL SYSTEMS AND DE BRANGES SPACES}

\bigskip
\centerline{R. Romanov\footnote{Department of Mathematical Physics, Faculty of Physics, St Petersburg State University, e-mail: morovom@gmail.com }}

\bigskip
\bigskip

This is an exposition of direct and inverse spectral theory of canonical systems based on de Branges spaces. The main source is chapter 2 in \cite{deBr} which will be referred to throughout as the book. The theory can be considered a far-reaching generalization of the Stiltjes algorithm in the inverse spectral theory of Jacobi matrices. It has two important features,

\begin{itemize}

\item Unlike many other 1D inverse spectral theories (transformators, Krein systems), it is not perturbative  -- there is no underlying problem with well-understood eigenfunctions to be compared with.

\item It deals with the most general form of a second order differential operator -- a canonical system, which includes Schr\"odinger and Dirac operators, Jacobi matrices and strings as partial cases, and thus allows to "interpolate" between them. 

\end{itemize}

The main result in the theory establishes correspondence between classes of entire functions or measures on the real line and canonical systems -- first order matrix differential operators parametrized by properly normalized $ 2 \times 2 $ matrix functions. It belongs mostly to L. de Branges. Let us give its informal description (Theorems \ref{inversolution} and \ref{uniqueness} in the present text). 

The two basic objects are an Hermite -- Biehler (HB) function, $ E ( z ) $, that is, an entire function satisfying $ | E ( z ) |> | E ( \overline z ) | $ for $ z \in \C_+ $, and a Hamiltonian, $ \cH ( x ) $,  -- a  locally summable $ 2 \times 2 $ matrix-valued function on interval $ [ 0 , L ) $, $ L \le \infty $, such that $ \cH ( x ) \ge 0 $ a.e. They respectively represent data in inverse and direct problems to be described below. The link between them is the fact (see Proposition \ref{HB}) that if $ Y ( x , \lambda ) $ is the solution of the Cauchy problem
\be\la{c} \begin{pmatrix} 0 & -1 \cr 1 & 0 \end{pmatrix} \frac{\d Y}{\d x} = \lambda \cH Y , \ee
\[ Y ( 0 ) = C ,\] 
 $ C \ne 0 $ being a constant vector with real components, then $ E_x ( \lambda ) = Y_+ ( x , \lambda ) + i Y_- ( x , \lambda ) $ is an HB function of $ \lambda $ for each $ x \in ( 0 , L ) $. When $ L $ is finite and $ \cH $ is summable on $ ( 0 , L ) $ this is also true for $ x = L $. The  function $ E_L $ is then a natural data for the inverse problem related to the equation (\ref{c}) which is called the canonical system. Let us elaborate on that. For a given Hamiltonian $ \cH $ it is possible to construct a Hilbert space and a selfadjoint operator in it in such a way that (\ref{c}) supplemented by a selfadjoint boundary condition at $ x = L $  would become the eigenproblem for this operator. For instance, the zeroes of $ Y_\pm ( L , \cdot ) $ would form the spectra of operators corresponding to the boundary conditions $ F_\pm ( L ) = 0 $ respectively. Obviously, $ Y_\pm ( L , \cdot ) $ are just the real and imaginary parts of the function $ E_L $, and thus, knowing $ E_L $ we know the spectra of two selfadjoint extensions corresponding to two different boundary conditions at the right end (and the same condition at the left end). On the other hand, from Borg theorems in the classical theory of second order differential operators \cite{Marchenko} we know that two spectra is precisely the amount of information necessary to guarantee uniqueness of solution of the inverse problem of restoring the potential from the spectra, hence $ E_L $ as the choice of the spectral data. Now, the main result says that given an HB function, $ E $, satisfying a regularity condition (this is an easy necessary condition encoding that $ L < \infty $) and some normalizations, there exist a finite $ L $ and an essentially unique Hamiltonian $ \cH $ defined on $ ( 0, L ) $ such that $ E = E_L $. 

The exposition of this powerful and final theorem in the book is hard to understand and appears to be superfluously complicated by total omission of motivations and heuristic arguments. A typical example is Theorem 27 in \cite{deBr}: its meaning is perfectly clear if we consider the corresponding statement - a formula for scalar products of special solutions, see Appendix I, theorem \ref{restore} and an explanation in sect \ref{diprobl}, -  in the \textit{direct} spectral theory. The formulation in the book however just states a lengthy identity without any hints as to where it comes from. We aim to remedy these drawbacks and simplify the arguments whenever possible. 

Another set of results exposed here refers to the case $ L = \infty $. In this case the basic object is the Weyl-Titchmarsh $m$-function of the system. The direct spectral theory (Theorem \ref{sptheoremsing}) says that the generalized Fourier transform with respect to the functions $ Y ( x , \lambda ) $ is an isometry from the Hilbert space of the canonical system into $ L^2 ( \R, \d \mu ) $ where $ \mu $ is the measure in the Herglotz integral representation of the $ m $-function. The inverse problem is to find which $ \mu $'s correspond to canonical systems on the semiaxis in the sense above. The answer (Theorem \ref{invsingular}) is -- any measure $ \mu $ such that $ \int_\R \( 1 + t^2 \)^{ -1 } \d \mu ( t ) $ is finite and a simple necessary condition is satisfied. Its proof is based on nesting circles analysis and feels much easier from the viewpoint of spectral theory than the above result for finite $ L $, for it only uses finitedimensional de Branges spaces, and even that can be eliminated. 

The last result to be mentioned is a formula for the type of a canonical system in the case $ L< \infty $ in terms of the Hamiltonian (Theorem \ref{exptypeM}) obtained independently by de Branges and Krein. 

We assume that the readership is made of complex analysts and specialists in spectral theory of differential operators. Readers are expected to be familiar with the extension theory for symmetric operators and with basic growth theory for entire functions. The only assertion to be used beyond the latter is the Krein theorem on entire functions of bounded type (the formulation is given in the Notation and Basics section). Some arguments might have been shortened if we had used the theory of the Cartwright class but we have opted not to do so. One point in the proof of the lattice property of de Branges spaces requires the notion and elementary properties of subharmonic functions which can be found for instance in \cite[Chapter 17]{Rudin}. A point in the proof of isometric embeddings of  de Branges spaces (theorem  \ref{isometry}) uses basic facts about factorization of analytic functions in the half plane. All of them can be found in cf. \cite{Gofman}.  

The exposition is closed, the proofs given are complete, save for one exception. We use but do not prove the axiomatic characterization of de Branges spaces (theorem \ref{axiomatic} in the current text), mainly because its expositions in the book and in \cite{DymMcKean} do not require improvement.

In sections devoted to solving the inverse problems the exposition sometimes oscillates between direct problems and the inverse ones. We often consider which properties should be enjoyed by objects arising in solutions for an inverse problem if we already knew that the solution exists, and try to postulate or establish them in terms of the data for the inverse problem. 

The structure of the text is as follows. We begin by defining the canonical system, see (\ref{can}), and establishing a convenient normalization of it. The convenience of the normalization we use is that the distinction between a regular problem on a finite interval and a singular problem on the semiaxis in the limit circle case disappears when passing to the systems -- the systems on the semiaxis are exactly those corresponding to operators in the limit point case at infinity. Section 1 explains how to put Schr\"odinger and Dirac equations and the string equation in the form of a canonical system. Section 2 gives a rigorous construction of a selfadjoint operator corresponding to the system in cases of a system on a finite interval and on a semiaxis. Section 3 describes the spectral theorem for this operator in terms of de Branges spaces. Section 4 solves the inverse problems for canonical systems with operators of finite rank. Section 5 is devoted to solution of the inverse problem on a finite interval from the first column of the monodromy matrix. Section 6 contains a derivation of the formula for type of the system in terms of the Hamiltonian. Section 7 establishes the uniqueness of a canonical system corresponding to a given regular HB function and contains a formulation of constructive algorithm for solving the inverse problem in the regular case. As an example we derive the Borg theorem for the Schr\"odinger operator on an interval from this uniqueness result. Sections 8 and 9 study the direct and inverse problem on the semiaxis. In Section 8 we study the Fourier transform with respect to generalized eigenfunctions and define its spectral measure. In Section 9 we solve the inverse problem of reconstructing the system from the spectral measure.

There are three appendices. We put in there the results used in the solution of the inverse problems which are proved entirely on "spectral" side and can thus be considered facts of function theory, as opposed to spectral theory of differential operators. These facts are a theorem on restoring the monodromy matrix from its first column (Appendix I), a parametrization of measures $ \mu $ for which a de Branges space is isometrically embedded in $ L^2 ( \R , \mu ) $ and a theorem saying that subspaces of a regular de Branges space are regular (Appendix II), a proof of the lattice property of de Branges spaces (Appendix III). These are followed by a short section on basic notions and facts of theory of de Branges spaces. In the end we give comments on the literature. 

\textbf{Acknowledgements.} These are notes of lectures delivered by the author in 2011--2013 in St Petersburg. The author had many helpful discussions on de Branges spaces with Anton Baranov and Yuri Belov. Roman Bessonov read the whole text and suggested numerous improvements. I am grateful to all of them.  

\medskip

\textbf{Notation and Basics:} 

$ \langle \cdot , \cdot \rangle $ -- scalar product in a Hilbert space. A  subscript, if present, specifies the space. 

$ H^\infty $ -- the class of bounded analytic functions in  the upper half plane $ \C_+ $; $ \len \cdot \rin_\infty $ stands for the norm in it; $  H^2 $ -- the Hardy class in $ \C_+ $. 

$ L^2 ( \R , \d \mu ) $ -- the space of functions on the real line square summable with respect to a measure $ \mu $. If $ \mu $ is a. c., $ \d \mu = \rho \,	 \d t $ with a locally summable $ \rho $, the $ \d t $ is skipped in the notation for the space.

$ K^X_w $ -- the reproducing kernel at the point $ w $ for a reproducing kernel functional Hilbert space, $ X $, $ f ( w ) = \llangle f , K^X_w \rrangle_X $ for all $ f \in X $.

$ f^* ( z ) \colon = \overline{ f ( \overline z ) } $.

For an entire function, $ g $, $ g_+ \colon = ( g + g^* )/2 $, $ g_- \colon = ( g - g^* )/(2i) $.  

$ \cH ( x ) $ -- a function $ ( 0 , L ) \to {\textrm{Mat}}_2 ( \R ) $, $ 0 < L \le \infty $, such that $ \cH ( x ) \ge 0 $ a. e., and $ \cH \in L^1 ( 0 , L^\prime ) $ for all $ L^\prime < L $.

$ L^2 ( I; \rho ) $ -- the Hilbert space of functions (with values in $ \C^n $, $ n < \infty $) on an interval $ I \subset \R_+ $ defined by the scalar product $ \langle f , g \rangle = \int_I \langle f ( x ) , \rho ( x ) g ( x ) \rangle_{ \C^n } \d x $ for any $ n \times n $-matrix valued function $ \rho ( x ) $, $\rho ( x ) \ge 0 $ a. e., whose elements are $ L^\infty ( I ) $. 
 
$ J= \left( \begin{array}{cc} 0 & -1 \cr 1 & 0 \end{array} \right) $.

For a $ Y \in \C^2 $, $ Y_\pm $ stand for the components of $ Y $, $ Y = \Bigl( \! \begin{array}{c} Y_+ \cr Y_- \end{array} \! \Bigr) $. The indices $ \pm $ denoting the components can be sub- or superscripts.

$ W \{ u , v \} = u^\prime v - v^\prime u  $ -- the Wronskian.

This being lecture notes, indications of spaces in scalar products and differentials in integrals are suppressed unless we feel they are  necessary. (\ref{byparts}) is representative for our attitude.

\begin{definition}
A canonical system is the differential equation of the form
\be\la{can} J \frac{d Y}{d x } = z \cH Y ; \; z \in \C . \ee
\end{definition}

\begin{remarknonumb}
WLog, one can assume that $ \operatorname{tr} \cH ( x ) = 1 $ a. e. 
\end{remarknonumb}

Indeed, let 
\[ \xi ( x ) = \int_0^x \operatorname{tr} \cH ( t ) \, \d t . \] 
The set $ \{ \xi ( x ) \colon \xi ( x ) = \xi ( x^\prime ) \textrm{ for some } x^\prime \ne x \} $ is countable, hence the equality $ \wt{\cH}( \xi (x) ) = \cH ( x ) $ correctly defines a function $ \wt{\cH} $ a. e.  on the interval $ ( 0, L^\prime ) $, $ L^\prime =  \xi ( L ) $, and the set $ \{ s \in ( 0 , L^\prime )\colon \wt{\cH} ( s ) = 0 \} $ has the Lebesgue measure 0. Let $ Y $ be a solution to (\ref{can}), and let $ \wt Y ( \xi ( x ) ) = Y ( x ) $. This definition is correct, and $\frac 1{ \operatorname{tr} \wt{H} ( t ) } \wt{\cH} ( t ) \wt Y ( t ) $ is a locally bounded function. Then 
\bequnan \int_0^s \frac 1{ \operatorname{tr} \wt{H} ( t ) } \wt{\cH} ( t ) \wt Y ( t ) \d t = \int_0^{ x_s }  \wt{\cH} ( \xi ( x ) ) \wt Y ( \xi ( x ) ) \d x =  \int_0^{x_s}  \cH (  x ) Y ( x ) \d x = \\ \frac 1z \int_0^{x_s}  J Y^\prime ( x ) \d x = \frac 1z ( J Y ( x_s ) - J Y ( 0 ) )= \frac 1z (J \wt Y ( s ) - J \wt Y ( 0 )). \eequnan 
Here $ x_s $ is defined by $ \xi ( x_s ) =  s $. Thus $ \wt Y $ is a. c. and differentiating the equality one obtains

\begin{proposition} If $ Y $ is a solution to (\ref{can})  then $ \wt Y $ is a solution to the system \bequnan \la{normcan} J {\wt{Y}}^\prime &  = & z \cH^\circ \wt Y  ,\\
H^\circ ( s ) \colon & = & \frac 1{ \operatorname{tr} \wt{H} ( s ) } \wt{\cH} ( s ) , s \in ( 0 , L^\prime ) \eequnan and vice versa - if $ U $ is a solution to $ J U^\prime = z \cH^\circ U\ $, then $ Y ( x ) = U ( \xi ( x ) ) $ is a solution to (\ref{can}).
\end{proposition}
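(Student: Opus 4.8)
The forward implication is, in effect, already carried out in the display preceding the statement, and I would simply organize it as follows. That chain of equalities yields $J\wt Y(s)=J\wt Y(0)+z\int_0^s\cH^\circ(t)\wt Y(t)\,\d t$ for all $s\in(0,L')$; since $\operatorname{tr}\cH^\circ\equiv1$ the integrand $\cH^\circ\wt Y$ is locally bounded, so the right-hand side is a locally absolutely continuous function of $s$, and differentiating it and applying $J^{-1}=-J$ gives $J\wt Y'=z\cH^\circ\wt Y$ a.e.\ on $(0,L')$. The one point I would spell out more carefully than the display does is that $\wt\cH$ and $\wt Y$ are genuinely defined off a null set: $\xi$ is continuous and nondecreasing, so the set where it is not injective is a countable union of nondegenerate intervals of constancy which $\xi$ collapses to a countable set, and $\xi$ maps $[0,L)$ onto the interval $[0,L')$; with this in hand the substitution $t=\xi(x)$, $\d t=\operatorname{tr}\cH(x)\,\d x$ in the first equality of the display is just the standard change of variables for a monotone absolutely continuous map.

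For the converse I would argue directly. Let $U$ solve $JU'=z\cH^\circ U$ on $(0,L')$ and set $Y=U\circ\xi$. Because $\cH^\circ\ge0$ has trace $1$, its entries are bounded in modulus by $1$ (the off-diagonal one by $1/2$, using $\det\cH^\circ\ge0$), so $U'=-zJ\cH^\circ U$ is locally bounded and $U$ is locally Lipschitz on $(0,L')$; hence $Y=U\circ\xi$, a locally Lipschitz map precomposed with a locally absolutely continuous one, is locally absolutely continuous on $[0,L)$, and the chain rule for absolutely continuous functions gives, for a.e.\ $x$, $Y'(x)=U'(\xi(x))\xi'(x)=-zJ\,\cH^\circ(\xi(x))\,U(\xi(x))\,\operatorname{tr}\cH(x)$, with the left-hand side understood to vanish on the null set where $\xi'(x)=\operatorname{tr}\cH(x)=0$ (legitimate since $U$ is Lipschitz there). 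Then I would identify $\cH^\circ(\xi(x))\operatorname{tr}\cH(x)$ with $\cH(x)$: where $\operatorname{tr}\cH(x)\ne0$ this is immediate from $\wt\cH(\xi(x))=\cH(x)$ (take traces to get $\operatorname{tr}\wt\cH(\xi(x))=\operatorname{tr}\cH(x)$), and where $\operatorname{tr}\cH(x)=0$ both $\cH(x)$ and the left-hand side are $0$ since $\cH(x)\ge0$. Multiplying by $J$ then gives $JY'=z\cH Y$ a.e., i.e.\ $Y$ solves (\ref{can}).

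Alternatively, the converse can be read off the forward statement without redoing any differentiation: both $Y\mapsto\wt Y$ and $U\mapsto U\circ\xi$ preserve the value of the solution at the left endpoint, a solution of a linear system with locally integrable coefficients is determined by that value, and applying the tilde construction to $U\circ\xi$ gives back $U$ (as $\xi$ is surjective) while $\wt Y\circ\xi=Y$ by construction — so the two correspondences are mutually inverse. In either approach the only step that is not a routine verification is the measure-theoretic bookkeeping for the change of variables and the chain rule when $\xi$ is merely absolutely continuous rather than bi-Lipschitz, and this is exactly the place where $\cH\ge0$ and the normalization $\operatorname{tr}\cH^\circ=1$ are used — to make the relevant integrand bounded and $U$ Lipschitz — so it is handled by the standard substitution and chain-rule theorems for absolutely continuous functions.
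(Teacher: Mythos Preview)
Your proposal is correct and follows the paper's approach: the forward direction is exactly the displayed computation preceding the proposition (which the paper then says to differentiate), and you have organized it with appropriate care about the measure-theoretic points. The converse is not argued in the paper at all beyond the words ``and vice versa''; your direct chain-rule argument (using that $\cH^\circ$ has bounded entries so $U$ is locally Lipschitz, hence $U\circ\xi$ is locally a.c.) and your alternative via uniqueness of solutions are both valid ways to complete what the paper leaves implicit.
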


The parameter $ z $ in (\ref{can}) is referred to as the \textit{spectral} parameter. The following Harnack-type estimate will be used. Let $ \operatorname{tr} \cH ( x ) = 1 $ a. e.  and let $ M ( x , z ) $ be a matrix solution to (\ref{can}). Then 
\be\la{estM} \left\| M ( x , z ) \right\| \le e^ { x |z| } \len M ( 0 , z ) \rin , \ee the norm being the one of operators in $ \C^2 $. The solution with $ M ( 0, z ) = I $ is called \textit{fundamental}.  When $ L < \infty $ the fundamental solution at $ x = L $ is called the \textit{monodromy matrix}. The determinant of a matrix solution to (\ref{can}) does not depend on $ x $, in particular, $ \det M ( x , z ) \equiv 1 $ for the fundamental solution.

The fundamental solution is real entire in $ z$ for all finite $ x \le L $ and the corresponding estimates hold locally uniformly in $ x$, that is, the fundamental solution is entire as a function with values in the Banach space $ C[ 0 , x ] $.    

If $ 0 < a < b $ then the fundamental solutions satisfy $ M ( b , z ) = N ( b , z ) M ( a , z ) $ where $ N ( x , z ) $ is a solution of (\ref{can}) on $ ( a, b ) $  with $ N ( a , z  ) = I $ (the chain rule).

The following fact is often used throughout,

\medskip

\textbf{Krein's theorem.} \textit{If restrictions of an entire function to halfplanes $ \C_\pm $ are of bounded type, then this function has finite exponential type.}

\section{Examples}\la{examples}

1. Dirac equation.

Let $ Q ( x ) $ be a locally summable $ 2\times 2 $ - matrix function on $ ( 0, L ) $. Consider the equation 
\[  J X^\prime + QX = \lambda X, \;\; \lambda \in \C . \] 
Let the matrix function $ X^\circ $ be the solution of this equation for $ \lambda = 0 $ satisfying $ \det X^\circ = 1 $. The substitution $ X =  X^\circ Y $ gives \[ J X^\circ Y^\prime = \lambda X^\circ Y  \] or, on multiplying by $ \( X^\circ \)^T $,  \[ J Y^\prime = \lambda \( X^\circ\)^T X^\circ Y  \] 
on account of the fact that $ A^T J A = J $ for any matrix $ A$, $ \det A = 1 $. We have obtained a canonical system with the Hamiltonian $ \cH =  \( X^\circ\)^T X^\circ $. Notice that this Hamiltonian has rank 2 at each $ x $.

2. Schr\"odinger equation.

Let $ q $ be a locally summable function on $  ( 0 , L ) $. Consider the equation \[ - y^{\prime\prime} + qy = \lambda y . \] Let $ \yOone $, $ \yOtwo $ be solutions of this equation  for $ \lambda = 0 $ satisfying $ W \{ \yOone , \yOtwo \} = -1 $. Let $ y = y_1^{} \lower1pt\hbox{$\yOone $} + y_2^{} 
\lower1pt\hbox{$\yOtwo $} $. On substituting this into the equation and letting $ y_1^\prime \lower1pt\hbox{$\yOone $} + y_2^\prime \lower1pt\hbox{$\yOtwo $} = 0 $, one gets 
\[ J \( \! \begin{array}{c}  y_1^\prime \cr y_2^\prime \end{array} \! \) = \lambda  \left( \! \begin{array}{cc} \yOonequ & \yOone \yOtwo \cr  \yOone \yOtwo & \yOtwoqu \end{array} \! \right) \( \! \begin{array}{c}  y_1 \cr y_2 \end{array} \! \) , \] which is a canonical system with Hamiltonian $ \cH ( x ) =  \left( \! \begin{array}{cc} \yOonequ & \yOone \yOtwo \cr  \yOone \yOtwo & \yOtwoqu \end{array} \! \right) $. Notice that this Hamiltonian has rank 1 at each $ x $.

3. String equation.

Let $ \rho ( x ) $, $ \rho ( x ) > 0 $, be a locally summable scalar function. Consider the equation \[ - y^{\prime\prime} = \lambda \rho y . \]  Define $ Y = \left( \begin{array}{c}  \sqrt \lambda y \cr  y^\prime \end{array} \right) $. Then \[ J Y^\prime = \sqrt \lambda \left( \begin{array}{cc} \rho & 0 \cr  0 & 1 \end{array} \right) Y. \]

\section{Operator}

The differential expression corresponding to a canonical system, \[ \cH^{ -1 } ( x ) J \frac \d{\d x}, \] is formally symmetric in the space $ L^2 ( ( 0, L) ; \cH ) $. How to define a selfadjoint operator, $ D $, corresponding to it? $ \cH ( x ) $ may be non-invertible everywhere - see the Schr\"odinger operator above, hence the question - what is the space where $ D $ acts? To answer it, suppose that there exists an $ I \subset ( 0, L ) $ and an $ e \in {\R}^2 $ such that $ \cH ( x ) = \langle \cdot , e \rangle e $ for a. e. $ x \in I $. Consider the equation $ J f^\prime = \cH g $ on the interval $ I $.  On projecting both sides on $ e^\perp \colon = Je $ and taking into account that $ J e^\perp = - e $ we get that $ \langle f^\prime , e \rangle = 0 $. Thus $ \langle f (x) , e \rangle $ is a constant on $ I $.  This means that no function of the form $ \xi ( x ) e $ with $ \xi ( x ) \in L^2 ( I ) $ vanishing outside $ I $ and such that $ \int_I \xi = 0 $ can be approximated in the weighted $ L^2 $-norm by elements from the domain of $ D $. Hence we have to remove such elements from $ L^2 ( ( 0, L) ; \cH ) $ to obtain the Hilbert space for $ D $.

\begin{definition}
An open interval (possibly, semiaxis) $ I \subset \R_+ $ is called $ \cH  $-indivisible (or singular) if there exists an $ e \in {\R}^2 $ such that 
\be\la{singint} \cH ( x ) = \langle \cdot , e \rangle e \end{equation} 
for a. e. $ x \in I $ and there is no larger open interval $ I^\prime $ such that (\ref{singint}) holds for a. e. $ x \in I^\prime $.
\end{definition}  

\begin{definition}
\[ H = \{ f \in L^2 ( 0, L ; \cH ) \colon f(x) \simeq \textrm{const}_I \textrm{ for any } \cH-\textrm{indivisible interval } I \} . \]
\end{definition}

$ \simeq $ here is, hopefully, a self-explaining notation. 

Consider first the case $ L < \infty $. Let us define a natural domain of the operator corresponding to a selfadjoint boundary condition, say $ f_- ( 0 ) = f_- ( L ) = 0 $. 

\begin{deffinition}\label{defdom} Let 
\[ \mathcal D = \left\{ f \in H \colon \begin{array}{cc}  (\textrm{i}) & f \textrm{ is absolutely continuous on } (0, L) \cr (\textrm{ii}) & \exists g \in H \colon J f^\prime = \cH g \cr (\textrm{iii}) & f_- ( 0 ) = 0 \cr (\textrm{iv}) &  f_- ( L ) = 0 \end{array} \right\} . \]
\end{deffinition}

\begin{remarknonumb} (i) means that the equivalence class of an element $ f \in \mathcal D $ contains an a. c. representative, (iii), (iv) that this representative satisfies the stated identities. \end{remarknonumb}

In general the set $ \mathcal D $ is not dense in $ H $. Indeed, suppose that $ ( 0, \von ) $ is a singular interval for some $ \von > 0 $, and the Hamiltonian $ \cH ( x ) = \left( \begin{array}{cc}  0 & 0 \cr 0 & 1 \end{array} \right) $ for a. e. $ x \in ( 0, \von ) $. Let $ f \in \mathcal D $. Then $ f_- ( x ) = \langle f ( x ) , ( 0, 1 )^T \rangle $ is constant on $ ( 0, \von ) $ (see the discussion at the beginning of the section), hence zero by (iii). Thus, the element $ h \in H $ given by \[ h = \left\{ \begin{array}{cc} \Bigl( \! \begin{array}{c}  0 \cr 1 \end{array} \!\Bigr) , & x < \von, \cr 0, & x > \von \end{array} \right. \] is orthogonal to $ \mathcal D $. According to the following theorem, this is the only obstacle to density of $ \mathcal D $ in $ \cH $. 

Let $ e = \Bigl( \! \begin{array}{c}  0 \cr 1 \end{array} \! \Bigr) $. The element of $ H $ defined by equivalence class of the function $ u ( x) \equiv J e $ is  denoted by $ e^\perp $. 

\begin{theorem}\label{domain}
Let 
\[  \varphi_1 =  \left\{ \begin{array}{cc} e , & x < \von \cr 0, & x > \von \end{array} \right. , \;  \varphi_2 =  \left\{ \begin{array}{cc} e , & L - \von^\prime < x < L \cr 0, & x < L - \von^\prime \end{array} \right. . \]
Then $ {\mathcal D}^\perp $ is spanned by those of $ \varphi_1 , \varphi_2 $ which belong to $ H $ for some $ \von, \von^\prime > 0 $. \end{theorem}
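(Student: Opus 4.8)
The plan is to prove the two inclusions separately; the inclusion $\mathcal D^\perp\subseteq\mathrm{span}\{\varphi_1,\varphi_2\}$ is the substantial one.

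\emph{The easy inclusion.} Suppose $(0,\von)$ is an $\cH$-indivisible interval with $\cH=\langle\cdot,e\rangle e$ on it (this is the situation in which $\varphi_1$ is a genuine new element of $H$; that it is the \emph{only} situation in which $\varphi_1\in H\cap\mathcal D^\perp$ will drop out of the hard direction). Then $\varphi_1\in H$, and for $f\in\mathcal D$ one computes $\langle f,\varphi_1\rangle_H=\int_0^\von\langle\cH(x)f(x),e\rangle\,dx=\int_0^\von f_-(x)\,dx$; since $f_-$ is constant on this indivisible interval — project $Jf'=\cH g$ onto $Je$ exactly as in the discussion opening this section — and equals $f_-(0)=0$ by (iii), we get $\varphi_1\perp\mathcal D$. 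The same works for $\varphi_2$ via (iv). Hence $\mathrm{span}\{\varphi_1,\varphi_2\}\subseteq\mathcal D^\perp$.

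\emph{Reducing the hard inclusion.} First I would record an explicit description of $\mathcal D$. For $g\in H$ put $f_g(x)=-\int_0^x J\cH(t)g(t)\,dt$; then $f_g$ is absolutely continuous, $Jf_g'=\cH g$, $f_{g,-}(0)=0$, and $f_g\in H$ automatically, since on any maximal indivisible interval $I$ the derivative $f_g'$ is parallel to $Je_I$, so $\langle f_g,e_I\rangle$ is constant there. Moreover $f_{g,-}(L)=0$ iff $\langle g,e_1\rangle_H=0$. As any $f\in\mathcal D$ differs from $f_g$ (for the $g$ supplied by (ii)) by the constant $f(0)=f_+(0)e_1$, this gives $\mathcal D=\{\,ce_1+f_g:\ c\in\mathbb C,\ g\in H,\ \langle g,e_1\rangle_H=0\,\}$. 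Now take $h\in\mathcal D^\perp$ and set $P(x)=\int_x^L\cH(t)h(t)\,dt$, which is absolutely continuous with $P'=-\cH h$ and $P(L)=0$. Orthogonality to $ce_1$ gives $\langle h,e_1\rangle_H=0$, i.e. $P_+(0)=0$. Orthogonality to $f_g$, after one integration by parts in which both endpoint terms vanish ($f_g(0)=0=P(L)$), reads $\langle h,f_g\rangle_H=\langle JP,g\rangle_H$ (up to conjugation); so $JP$ is $H$-orthogonal to $H\cap e_1^\perp$, and since $e_1\in H$ this is the same as saying that for some $\lambda\in\mathbb C$ the function $G:=JP-\lambda e_1$ lies in $H^\perp$ (the orthocomplement in $L^2(\cH)$).

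\emph{Using $H^\perp$.} The relevant description is: $w\in H^\perp$ iff $\cH w=0$ a.e. off the union $\Sigma$ of the maximal indivisible intervals and $\int_I\langle w,e_I\rangle\,dx=0$ for every such $I$. Apply it to $G$, which is a.c. on $[0,L]$ with $JG'=\cH h$ and $G(L)=-\lambda e_1$. On any interval disjoint from $\Sigma$ we have $\cH G=0$ a.e., so $G\in\ker\cH$ a.e.; since also $G'=-J\cH h\in J\,\Ran\cH$ a.e., at a.e. point $G$ is either $0$ (where $\cH$ has rank $2$) or parallel to $G'$ (rank $1$). On each component of the open set $\{G\neq0\}$ the direction of $G$ is then constant — the ``cross product'' of $G$ with a fixed value of $G$ solves a scalar homogeneous linear ODE and vanishes — and that direction must agree a.e. with the (now constant) direction of $\cH$, making the component an indivisible interval, a contradiction. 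Hence $G\equiv0$, so $\cH h=JG'=0$, off $\Sigma$. On an indivisible interval $I$, $\cH h=\langle h,e_I\rangle e_I$ is constant (as $h\in H$), so $P$ and $G$ are affine on $I$ with slope parallel to $Je_I$; thus $\langle G,e_I\rangle$ is constant on $I$, and the $H^\perp$-condition forces it to vanish, i.e. $G|_I\parallel Je_I$. Chaining these facts through the continuity of $G$ at the endpoints of indivisible intervals, and using that $G(L)$ and $G(0)$ are multiples of $e_1$ (the latter since $P_+(0)=0$ forces $P(0)\parallel e$), one gets that $\cH h$ can be nonzero only on indivisible intervals abutting $0$ or $L$, and — because $Je_I\parallel e_1$ forces $e_I\parallel e$ — only when such an interval has direction $e$; on it $\cH h$ is a constant multiple of $\cH\varphi_i$. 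Therefore $h=\alpha\varphi_1+\beta\varphi_2$ in $H$, with $\alpha$, resp. $\beta$, zero precisely when the endpoint interval is absent or of the wrong direction, i.e. precisely when $\varphi_1$, resp. $\varphi_2$, is not available in $H$.

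\emph{Main obstacle.} The delicate point is the chaining step of the last paragraph: controlling $G$, hence $\cH h$, near an endpoint at which infinitely many indivisible intervals accumulate, and making the rank-$1$/rank-$2$ dichotomy fully rigorous for a Hamiltonian of non-constant rank (the argument above is written as if $\Sigma$ had only finitely many components near each endpoint). Everything else — the formula for $\mathcal D$, the integration by parts, and the descriptions of $H$ and $H^\perp$ — is routine.
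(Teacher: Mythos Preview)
Your approach is the paper's, and the obstacle you flag is not real: your own work already gives $\cH G=0$ a.e.\ on \emph{all} of $[0,L]$, not just off $\Sigma$. On each indivisible $I$ you showed $\langle G,e_I\rangle$ is constant with zero mean, hence identically zero, so $\cH G=\langle G,e_I\rangle e_I=0$ there too. (Quicker still: $G=JP-\lambda e_1$ lies in $H$ by the same one-line check you did for $f_g$, so $G\in H\cap H^\perp$ forces $\cH G=0$ a.e.\ directly.) The paper makes exactly this move: it notes $\Phi:=JP\in H$, whence orthogonality to $H\cap e_1^{\perp}$ forces $\Phi$ to equal a multiple of $e_1$ \emph{as an element of $H$}, i.e.\ $\Omega:=\Phi-c\,e_1\in\ker\cH$ a.e.

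Once $G\in\ker\cH$ a.e., drop the on/off-$\Sigma$ split and run your direction-constancy argument uniformly on the open set $\{G\ne0\}$. There $\cH$ has rank exactly one a.e.\ (since $0\ne G\in\ker\cH$ and $\operatorname{tr}\cH=1$), so writing $G=|G|\,J\xi$ with $\cH=\langle\cdot,\xi\rangle\xi$ and $|\xi|=1$, both factors a.c.\ because $G$ is a.c.\ and nonvanishing, the equation $G'=-J\cH h=-\langle h,\xi\rangle J\xi$ together with $\xi'\perp\xi$ forces $\xi'=0$. Hence each component lies in a single indivisible interval; $h\in H$ makes $|G|$ affine there, and $|G|$ vanishes at any endpoint interior to $(0,L)$, so interior components are impossible. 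No chaining, no accumulation, no rank dichotomy --- your ODE argument \emph{is} this step, you merely ran it on intervals off $\Sigma$ instead of on all of $\{G\ne0\}$. For the identification $e_I=\pm e$ at the surviving endpoint intervals the paper uses $\Omega(L)=-cJe$ (from $\Phi(L)=0$) and then reruns the construction with $\int_0^x$ in place of $\int_x^L$ for the other endpoint; your observation $G(0),G(L)\parallel e_1$ is the same idea.
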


\begin{proof} As we have seen above, $ \varphi_1 \perp {\mathcal D} $ whenever $ \varphi_1 \in H $, and the same is true of $ \varphi_2 $. Let $ \varphi \in {\mathcal D}^\perp $, and let $ g \in H $ be such that $ \llangle g , e^\perp \rrangle_H = 0 $. Define $ f ( x )  = J \int_0^x \cH g $. Then $ f \in \mathcal D $ and thus
\[ 0 = \llangle \varphi , f \rrangle_H = \int_0^L \llangle \cH \varphi , J \int_0^x \cH g \rrangle \d x = - \int_0^L \llangle J \int_t^L\cH \varphi , \cH g  \rrangle \d t  . \]
Notice that $ \Phi ( t ) = J \int_t^L\cH \varphi $ belongs to $ H $, hence the last equality says that $ \llangle \Phi , g \rrangle_H = 0 $ for all $ g \in H $ orthogonal to $ e^\perp $. It follows that $ \Phi ( t ) $ belongs to the equivalence class of a multiple of $ e^\perp $, $ \Phi ( t ) = c Je + \Omega ( t ) $, $ \Omega ( t ) \in \ker \cH ( t ) $ for a. e. $ t $, $ c \in \C $. The $ \Omega ( t ) $ is an a. c. function since $ \Phi ( t ) $ is. Let $ \cM = \{ t\in ( 0, L ) \colon \Omega ( t ) \ne 0 \} $. There exists a function $ \xi \colon \cM \to \R^2 $, $ \| \xi ( x ) \| = 1 $, such that $ \cH ( x) = \langle \cdot , \xi ( x ) \rangle  \xi( x ) $ for a.e. $ x \in \cM $. Moreover, the function $ \xi ( x ) $ can be chosen so that for $ x \in \cM $ we have, $ \Omega ( x ) =  \omega ( x )  J \xi ( x ) $, $ \omega ( x ) = \| \Omega ( x ) \| $. The functions $ \omega ( x ) $, $ \xi ( x ) $ are a. c. on $ \cM $, since $ \Omega ( x ) $ does not vanish there. On differentiating, we get for a. e. $ x \in \cM $,
\[ \omega^\prime ( x ) J \xi ( x ) + \omega ( x ) J \xi^\prime ( x ) = - \langle \varphi ( x ) , \xi ( x ) \rangle J \xi ( x ) . \] Canceling $ J $ out and taking into account that $ \xi^\prime ( x ) $ is orthogonal to $ \xi ( x ) $ we find $ \omega  ( x ) \xi^\prime ( x ) = 0 $ from whence $ \xi ( x ) $ is constant on intervals of the set $ \cM $, that is, each interval of $ \cM $ is a sub-interval of a singular one. Since $ \varphi \in H $ this implies that $ \omega^\prime $ is constant on the intervals of $ \cM $. Thus, for each interval, $ I $, of the set $ \cM $ there exist an $ e_I \in \R^2 $ such that $ \Omega ( x ) = (\textrm{linear function of } x) J e_I $, $ x \in I $. Let $ I \subset ( 0 , L ) $ be an interval of the set $ \cM $ such that neither $ 0 $ nor $ L $ are its ends. Since $ \Omega ( x ) =0 $ at the ends of $ I $, we infer that $ \Omega ( x ) \equiv 0 $ on $ I $, a contradiction, hence $ \cM $ consists of at most two intervals, $ I_0 = ( 0 , \von ) $ and $ I_L = ( L - \von^\prime , L ) $. It follows that \[ \cH ( x ) \varphi ( x ) =  \left\{ \begin{array}{cc} {\textrm{const}}_0 e_{ I_0 } , & x \in I_0 , \cr    {\textrm{const}}_L e_{ I_L } , & x \in I_L , \cr 0, & \textrm{otherwise.} \end{array} \right. \] It remains to notice that $ \Phi ( L ) = 0 $, hence $ \Omega ( L )  = -Je $, that is, $ e_{I_L} =  \pm e $. A similar consideration starting with $ f ( x ) = J \int_x^L \cH g $ shows that $ e_{I_0} = \pm e $.
\end{proof}

\begin{corollary}
$ \overline{\mathcal D} = H $ if the following two conditions are satisfied,  

(L) there is no $ \von > 0 $ such that $ \cH ( x ) = \langle \cdot , e  \rangle e $, $ e = ( 0, 1 )^T $, for a.e. $ x \in ( 0, \von ) $;

(R) there is no $ \von > 0 $ such that $ \cH ( x ) = \langle \cdot , e  \rangle e $, $ e = ( 0, 1 )^T $, for a.e. $ x \in ( L - \von, L ) $. 
\end{corollary}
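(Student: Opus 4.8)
The plan is to deduce this from the proof of Theorem \ref{domain} together with the discussion preceding it. It suffices to show $\mathcal D^\perp=\{0\}$. By Theorem \ref{domain}, every $\varphi\in\mathcal D^\perp$ is a combination of $\varphi_1=e\,\chi_{(0,\von)}$ and $\varphi_2=e\,\chi_{(L-\von',L)}$ with $e=(0,1)^T$ and suitable $\von,\von'>0$, so what has to be checked is that (L) rules out $\varphi_1$ and (R) rules out $\varphi_2$; for this I would reread the proof of Theorem \ref{domain}.

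That proof attaches to each $\varphi\in\mathcal D^\perp$ the function $\Phi(t):=J\int_t^L\cH\varphi$ and shows $\Phi(t)=cJe+\Omega(t)$ with $\Omega(t)\in\ker\cH(t)$ for a.e.\ $t$, the set $\cM=\{t:\Omega(t)\ne0\}$ being a union of at most two intervals $(0,\von)$ and $(L-\von',L)$ on each of which $\cH(x)=\langle\cdot,e\rangle e$ — the rank-one direction being pinned to exactly $e=(0,1)^T$ through $\Phi(L)=0$ (so $\Omega(L)=-cJe$) and the symmetric computation at the left endpoint. Now an interval $(0,\von)$ on which $\cH\equiv\langle\cdot,e\rangle e$ a.e.\ is exactly what (L) forbids, and $(L-\von',L)$ what (R) forbids; hence, assuming (L) and (R), the set $\cM$ is empty, i.e.\ $\Omega\equiv0$. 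Then $J\int_t^L\cH\varphi\equiv cJe$, so $\int_t^L\cH\varphi$ does not depend on $t$, whence $\cH\varphi=0$ a.e.\ and $\varphi=0$ in $H$. This gives $\mathcal D^\perp=\{0\}$ and $\overline{\mathcal D}=H$.

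The step I expect to need the most care is the one glossed over above: identifying the rank-one direction of $\cH$ on the two boundary intervals with the specific vector $e=(0,1)^T$ and not some other direction. This is where the particular boundary condition of Definition \ref{defdom}, $f_-(0)=f_-(L)=0$, enters, and it is what makes (L) and (R) — statements about this very $e$ — the correct hypotheses. Granting that, the corollary just restates, cleanly, the observation made before Theorem \ref{domain} that a singular interval of the indicated kind at $0$ (and its mirror image at $L$) is the only obstruction to the density of $\mathcal D$ in $H$.
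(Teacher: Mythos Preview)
Your proof is correct and matches the paper's intended reasoning: the corollary is stated without a separate proof because it is meant to be read off directly from Theorem~\ref{domain}. You have simply spelled out the mechanism---under (L) and (R) the boundary intervals $I_0$ and $I_L$ produced in that proof must be empty (since on them $\cH=\langle\cdot,e\rangle e$ with $e=(0,1)^T$), whence $\Omega\equiv0$, $\cH\varphi=0$ a.e., and $\varphi=0$ in $H$.
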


In what follows (L) and (R) are referred to as compatibility conditions at left and right endpoint, respectively. 

\begin{theorem} \label{operator} Let (L) and (R) be satisfied. Then the mapping \[ D \colon  f \mapsto g \] on the domain $ \mathcal D $ is a correctly defined selfadjoint operator in $ H $, the notation being that of definition \ref{defdom}. \end{theorem}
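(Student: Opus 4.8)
The plan is to establish, in order: (a) the vector $g$ is uniquely determined by $f$; (b) $\mathcal D$ is dense in $H$; (c) $D$ is symmetric; (d) $\mathcal D(D^{*})\subseteq\mathcal D$ — which together with (c) forces $D^{*}=D$. Point (b) is precisely the Corollary preceding the theorem, and it, together with (a), is where the compatibility conditions (L) and (R) get used. Throughout we may assume $\operatorname{tr}\cH\equiv1$, so that $\cH\in L^{1}(0,L)$. For (a): once an absolutely continuous representative of $f$ is fixed, $\cH g=Jf'$ is determined a.e., hence so is $g$ as an element of $L^{2}(0,L;\cH)\supseteq H$; it remains to see that the a.c.\ representative satisfying (i)--(iv) is unique. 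If $f_{1},f_{2}$ are two such, put $\phi=f_{1}-f_{2}$: then $\phi$ is a.c., $\cH\phi=0$ a.e., $\phi_{-}(0)=\phi_{-}(L)=0$, and $J\phi'=\cH h$ with $h\in H$. Repeating the argument in the proof of Theorem~\ref{domain} almost verbatim — on $\{\phi\neq0\}$ write $\phi=\omega\,J\xi$ with $\cH=\langle\cdot,\xi\rangle\xi$, differentiate, and cancel $J$ — one obtains that $\xi$ is locally constant and $\omega$ affine on $\{\phi\neq0\}$, that this set meets $(0,L)$ in at most two intervals abutting $0$ and $L$, and that (L), (R) force $\omega$ to vanish at the corresponding endpoint; continuity of $\phi$ then gives $\phi\equiv0$.

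For (c), from $Jf'=\cH(Df)$, $Jg'=\cH(Dg)$ and $J^{T}J=I$ one checks the Lagrange identity $\frac{d}{dx}\langle f,Jg\rangle=\langle f,\cH(Dg)\rangle-\langle Df,\cH g\rangle$, whose integral over $(0,L)$ reads
\[
\langle f,Dg\rangle_{H}-\langle Df,g\rangle_{H}=\langle f(L),Jg(L)\rangle-\langle f(0),Jg(0)\rangle .
\]
For $f,g\in\mathcal D$, (iii) and (iv) give $f(a)\in\C(1,0)^{T}$ and $Jg(a)\in\C(0,1)^{T}$ at $a=0$ and $a=L$, so both boundary brackets vanish and $D$ is symmetric.

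The substance is (d). Let $u\in\mathcal D(D^{*})$, $v=D^{*}u\in H$, and set $U(x)=-J\int_{0}^{x}\cH v$, so that $U$ is a.c., $U(0)=0$ and $JU'=\cH v$. By Cauchy--Schwarz and $\cH\in L^{1}$, $U$ is bounded, hence $U\in L^{2}(0,L;\cH)$; and $JU'=\cH v$ makes $\langle U,\xi\rangle$ constant on each indivisible interval (project on $J\xi$, just as at the start of the section), so the class of $U$ already belongs to $H$. The Lagrange computation uses only the differential equations, so it applies with $g=U$ and $v$ in the role of $Dg$; combined with $\langle Df,u\rangle_{H}=\langle f,v\rangle_{H}$ and $U(0)=0$ it yields
\[
\langle Df,\,u-U\rangle_{H}=\langle f(L),JU(L)\rangle\qquad\textrm{for all }f\in\mathcal D .
\]
One then identifies $\operatorname{Ran}D=\{w\in H:\langle w,(1,0)^{T}\rangle_{H}=0\}$: the inclusion $\subseteq$ is (iv) rewritten, since $Jf'=\cH(Df)$ gives $f_{-}(L)=-\big(\int_{0}^{L}\cH(Df)\big)_{+}$, and for $\supseteq$, given $w\in H$ with $\big(\int_{0}^{L}\cH w\big)_{+}=0$ the function $f(x)=\big(-(\int_{0}^{L}\cH w)_{-},\,0\big)^{T}-J\int_{0}^{x}\cH w$ lies in $\mathcal D$, has $Df=w$ and $f(L)=0$. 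Choosing such an $f$ kills the right-hand side above, so $u-U$ is $H$-orthogonal to $\operatorname{Ran}D$; since $u-U\in H$ this forces $u-U=c\,(1,0)^{T}$ for a scalar $c$. Hence $u$ has the a.c.\ representative $U+c\,(1,0)^{T}$, which satisfies $Ju'=JU'=\cH v$, giving (i) and (ii) with $Du=v$, while $u_{-}(0)=(U(0))_{-}=0$ gives (iii). Finally, inserting $u-U=c\,(1,0)^{T}$ back into the displayed identity and using that $\langle Df,(1,0)^{T}\rangle_{H}=0$ for every $f\in\mathcal D$ (again (iv)) gives $\langle f(L),JU(L)\rangle=0$ for all $f\in\mathcal D$; taking $f\equiv(1,0)^{T}\in\mathcal D$, for which $\langle f(L),JU(L)\rangle=-(U(L))_{-}$, we get $(U(L))_{-}=0$, i.e.\ $u_{-}(L)=0$, which is (iv). Thus $u\in\mathcal D$, and $D^{*}=D$.

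The main obstacle is step (d): the point is that for $u\in\mathcal D(D^{*})$ the boundary conditions (iii) and (iv) are not extra hypotheses but automatic — (iii) from $U(0)=0$, (iv) from the identity $\langle Df,(1,0)^{T}\rangle_{H}=0$ on $\mathcal D$ — so everything reduces to identifying $\operatorname{Ran}D$ correctly and to noticing that $JU'=\cH v$ already places $U$ in $H$; the representative uniqueness in (a) requires the same endpoint case analysis as in Theorem~\ref{domain}.
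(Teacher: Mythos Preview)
Your proof is correct and follows essentially the same approach as the paper. The only reorganization is in step (d): the paper first observes that $e^\perp=(1,0)^T\in\ker D$, so $v=D^*u$ is automatically orthogonal to $e^\perp$, which gives $U_-(L)=0$ immediately and places $U$ in $\mathcal D$ before the integration-by-parts argument; you instead characterize $\operatorname{Ran}D$ explicitly, use the special $f$ with $f(L)=0$ to get $u-U\perp\operatorname{Ran}D$, and recover $U_-(L)=0$ at the end by testing against $f\equiv(1,0)^T$. These are the same ideas in a different order.
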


\begin{proof} \textit{Correctness.} Let $ f_1 $, $ f_2 $ be two a. c. representatives of the same element $ f \in \mathcal D $ such that $ J f^\prime_1 = \cH g_1 $, $ J f^\prime_2 = \cH g_2 $ for some $ g_1, g_2 \in H $, and $ f^{1,2}_- ( 0 ) = f^{1,2}_- ( L ) = 0 $. We have to show that in this case $ g_1 = g_2 $. Let $ h = f_1 - f_2 $ and $ \cM  = \{ t \in ( 0 , L ) \colon h ( t ) \ne 0 \} $. For a. e. $ x \in \cM $ the rank of $ \cH ( x ) $ equals 1, and $ h ( x ) \in \ker \cH ( x ) $, hence one can define $ e ( x) \in \textrm{Ran} \, \cH ( x ) $ so that  $ h ( x ) = \rho ( x ) J e ( x ) $ where $ \rho ( x ) = \| h ( x ) \| $. The function $ \rho ( x ) $ is a. c. and does not vanish on $ \cM $, hence $ e ( x) $ is also a. c. on $ \cM $, and \[ h^\prime ( x ) = \rho^\prime ( x ) J e ( x ) + \rho ( x ) J e^\prime ( x ) . \] Projecting on $ e ( x ) $ and taking into account that the l. h. s. vanishes because $ J h^\prime \in \textrm{Ran } \! \cH $, we find $ \langle J e^\prime ( x ) , e ( x ) \rangle = 0 $ a. e. on $ \cM $. Since $ \| e ( x ) \| = 1 $ this implies that $ e^\prime ( x ) $ vanishes a.e. on $ \cM $. It follows that each interval of $ \cM $ is a sub-interval of a singular one. Reasoning as in the proof of theorem \ref{domain} we infer that $ \cM $ consists of at most two intervals, $ I_0 = ( 0 , \von ) $ and $ I_L = ( L - \von^\prime , L ) $, and $ h ( x ) = c ( x - \von ) J e_0 $, $ e_0 \in \textrm{Ran } \cH ( x ) $, for $ x \in I_0 $, $ h ( x ) = d ( L - x + \von^\prime ) J e_L $, $ e_L \in \textrm{Ran } \cH ( x ) $, for $ x \in I_L $. On account of the boundary conditions, $ h_- ( 0 ) = 0 $ which means that either $ ( J e_0 )_-  = 0 $, which contradicts (L), or $ c = 0 $. The second interval is treated similarly via condition (R), and we find that $ h = 0 $. 

\textit{Selfadjointness}.
The conditions (L) and (R) ensure that the domain of $ D $ is dense, hence the adjoint operator $ D^* $ exists. Let us find its domain, $ {\mathcal D}^* $. Given an $ f \in \mathcal D $, pick up a representative of it, satisfying $ f_- (0) = 0 $. We then have, 
\begin{eqnarray} \langle D f , f \rangle = \int_0^L \langle \cH D f , f \rangle = \int_0^L \langle J f^\prime , f \rangle = 2i \left. \Im ( f_+ \overline{f_-} ) \right|_0^L - \int_0^L \langle J f , f^\prime \rangle = \nonumber \\ \int_0^L \langle f , J f^\prime \rangle = \langle f , D f \rangle . \la{byparts} \end{eqnarray}
This means that $ D \subset D^* $. Let $ f \in {\mathcal D}^* $, $ g = D^* f $. Define $ \tf $ to be the solution to $ J {\tf}^\prime = \cH g $ of the form $ \tf = -J \int_0^x \cH g $. It is easy to see that $ \tf \in \mathcal D $ since $ e^\perp \in \mathcal D $ and thus $ \langle g , e^\perp \rangle_{ \cH }  = \langle f ,  D e^\perp \rangle_{ \cH } = 0 $ which implies $ \tf_- ( L ) = 0 $. For any $ u \in \mathcal D $ picking up a representative of its equivalence class from the definition of $ \mathcal D $ we have 
\be\la{anotherbyparts} \langle D u , f \rangle = \langle u , g \rangle = \int_0^L \langle u ,J{\tf}^\prime \rangle = \left. \langle u , J\tf \rangle  \right|_0^L + \int_0^L \langle J u^\prime , \tf \rangle = \langle D u , \tf \rangle . \ee 
Choose $ u \in \mathcal D $ in the form $ u ( x )  = - J \int_0^x \cH h $, where $ h \in H $ satisfies $ \llangle h , e^\perp \rrangle_H = 0 $ and is otherwise arbitrary. Then $ h = D u $, and (\ref{anotherbyparts}) shows that $ \tf - f \perp h $.   We obtain $ \tf - f = (\textrm{const}) \, e^\perp $, that is, $ \tf - (\textrm{const}) \, e^\perp $ is a representative of the equivalence class of $ f $. This representative by construction satisfies all the conditions in the definition of $ \mathcal D $. Thus, $ f \in \mathcal D $ and $ D = D^* $. \end{proof}

\begin{remarknonumb} It is possible that the conditions of the theorem are satisfied but there are two a. c. representatives of the same $ f \in \mathcal D $, one of them satisfies (iii), (iv) and the other does not. \end{remarknonumb}

Consider now the case $ L = \infty $. WLog, one can assume that $ ( b , \infty ) $ is not a singular interval for all $ b > 0 $, for if it is, then the corresponding space $ H $ is the same as the space of the canonical system on the finite interval $ ( 0 , b ) $ with the same Hamiltonian. We define the domain $ \mathcal D $ to be the set of  $ f \in H $ for which conditions (\textit{i}) - (\textit{iii}) of definition \ref{defdom} are satisfied.  

\begin{theorem}\label{operatorLinf}
$ {\mathcal D}^\perp $ is spanned by the vector $ \left\{ \begin{array}{cc} e , & x < \von \cr 0, & x > \von \end{array} \right. $, $ e = ( 0, 1 )^T $, whenever this vector belongs to $ H $ for some $ \von > 0 $, and is zero otherwise. If the condition (L) is obeyed then the mapping \[ D \colon  f \mapsto g \] on the domain $ \mathcal D $ is a correctly defined selfadjoint operator in $ H $. \end{theorem}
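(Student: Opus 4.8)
The plan is to transcribe the proofs of Theorems~\ref{domain} and~\ref{operator} with the right endpoint deleted. Three points require care: there is no condition (iv), so no candidate analogous to $\varphi_2$ can enter the orthogonal complement; the antiderivative $\int_t^\infty\mathcal H\varphi$ need not converge, so primitives must now be taken from the left; and the role played in the finite case by condition (R) is taken here by the limit point property of~(\ref{can}) at $\infty$, which is the heart of the matter.

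\emph{Orthogonal complement and density.} We repeat the proof of Theorem~\ref{domain}. Given $\varphi\in\mathcal D^\perp$, test it against $f(x)=-J\int_0^x\mathcal H g$, where $g\in H$ has compact support in $(0,\infty)$ and $\int_0^\infty\mathcal H g=0$; then $f$ has compact support, satisfies (i)--(iii), and lies in $H$ (on an indivisible interval with direction $e_I$ one has $\langle f,e_I\rangle'=\langle J^{-1}\mathcal H g,e_I\rangle=0$, so $f$ is constant there), hence $f\in\mathcal D$. The Fubini manipulation of Theorem~\ref{domain}, performed on a finite interval containing the supports, now produces $\Psi(x)=J\int_0^x\mathcal H\varphi$ and yields $\int_0^\infty\langle\mathcal H\Psi,g\rangle=0$ for all such $g$. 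As in the finite case this forces $\mathcal H\varphi$ to vanish off the indivisible intervals and to be an affine function times the direction on each of them; an interval of $\{\mathcal H\varphi\ne0\}$ interior to $(0,\infty)$ would make this affine quantity vanish at both endpoints, hence identically, while an interval $(b,\infty)$ is excluded because, by the standing reduction, no $(b,\infty)$ is indivisible. Thus $\mathcal H\varphi$ is supported in some $(0,\varepsilon)$, and, arguing as in the finite case, $\varphi$ is a multiple of $\varphi_1$ with $\mathcal H=\langle\cdot,e\rangle e$ on $(0,\varepsilon)$ --- i.e.\ (L) fails. In particular, when (L) holds $\mathcal D^\perp=\{0\}$ and $\mathcal D$ is dense.

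\emph{Correctness.} This is the proof of Theorem~\ref{operator} with the right-end discussion omitted: if $f_1,f_2$ are a.c.\ representatives of $f\in\mathcal D$ with $Jf_i'=\mathcal H g_i$ and $(f_i)_-(0)=0$, then $h=f_1-f_2$ is linear times $Je_I$ on each interval of $\{h\ne0\}$; interior intervals are ruled out as above, $(b,\infty)$ is ruled out because it is not indivisible, so $\{h\ne0\}\subseteq(0,\varepsilon)$ with $h(x)=c(x-\varepsilon)Je_0$ there, and $h_-(0)=0$ forces $c=0$ by (L); hence $g_1=g_2$.

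\emph{Selfadjointness.} Density has just been shown. The integration by parts~(\ref{byparts}) now gives $\langle Df,f\rangle-\langle f,Df\rangle=2i\lim_{b\to\infty}\Im(f_+(b)\overline{f_-(b)})$, the boundary term at $0$ vanishing by (iii) and the limit existing since $\frac{d}{dx}\Im(f_+\overline{f_-})=\Im\langle\mathcal H Df,f\rangle$ is integrable ($f,Df\in L^2(\mathcal H)$). The one genuinely new ingredient is that this limit vanishes on $\mathcal D$, equivalently that the system is in the limit point case at $\infty$; this is where the normalisation enters, for $\operatorname{tr}\mathcal H=1$ forces $\mathcal H\notin L^1(0,\infty)$, so not every constant vector --- and the constants are precisely the solutions of~(\ref{can}) at $z=0$ --- lies in $L^2(\mathcal H)$ near $\infty$, and the limit point/limit circle dichotomy (nesting-circle analysis) then shows that the space of $L^2(\mathcal H)$-solutions of~(\ref{can}) near $\infty$ is one-dimensional for every $z$. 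Consequently the associated minimal operator has deficiency indices $(1,1)$, the only genuine restriction coming from the regular endpoint $0$, and $f_-(0)=0$ is a symmetric boundary condition there, so by the extension theory for symmetric operators $D$ is selfadjoint; in particular the boundary contribution at $\infty$ vanishes on $\mathcal D$, giving symmetry, and $D^*=D$. Alternatively the inclusion $D^*\subseteq D$ can be organised as in Theorem~\ref{operator}: testing $f\in\mathcal D^*$ against compactly supported elements of $\mathcal D$ shows that $Jf$ is a.c.\ with $(Jf)'=\mathcal H(D^*f)$, and the Green identity against some $u\in\mathcal D$ with $u_+(0)\ne0$ (which exists under (L)), together with the vanishing of the contribution at $\infty$, forces $f_-(0)=0$. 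The limit point dichotomy is the only nontrivial step; everything else is a transcription of the $L<\infty$ proofs.
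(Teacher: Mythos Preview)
Two issues, one a genuine gap and one a genuine difference of method.

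\textbf{Orthogonal complement.} Your test functions $f=-J\int_0^x\mathcal H g$ with $\int\mathcal H g=0$ satisfy $f(0)=0$, which is \emph{two} boundary conditions where (iii) imposes only one. Consequently you only conclude $\Psi=c+\Omega$ with $c$ an \emph{arbitrary} constant vector, not $c\,e^\perp$. After the reduction to $\mathcal H\varphi=\beta e_0\mathbf 1_{(0,\varepsilon)}$ you still need $e_0=\pm e$, and the appeal ``as in the finite case'' does not deliver it: in Theorem~\ref{domain} that step is obtained by switching to the \emph{other} primitive $f=J\int_x^L\mathcal H g$ with only $g\perp e^\perp$, which is exactly the paper's choice in the $L=\infty$ proof. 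With the paper's test function $f(x)=J\int_x^N\mathcal H g$, $g$ supported on $(0,N)$, $\langle g,e^\perp\rangle_H=0$, one gets $\Phi=c\,e^\perp+\Omega$ directly, and then $\Phi(0)=0$ forces $\Omega(0)=-c\,e^\perp$, hence $e_{I_0}=\pm e$. Your class of $f$'s is strictly smaller and does not see this.

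\textbf{Selfadjointness.} Your route via the limit-point alternative is correct in principle but different from the paper and leans on machinery not developed here: the Weyl dichotomy for canonical systems, the implication ``limit point $\Rightarrow$ $[f,g](\infty)=0$ on the maximal domain'', and the deficiency-index count. None of these is proved in the text (the nesting-circle analysis appears only in Section~\ref{directsin}, \emph{after} selfadjointness is needed). The paper instead gives a direct, elementary estimate: writing $f=f_1e_1+f_2e_2$ in the eigenbasis of $\mathcal H(x)$, the normalization $\operatorname{tr}\mathcal H=1$ gives $f_1\in L^2(0,\infty)$ (since the larger eigenvalue is $\ge 1/2$), while $Jf'=\mathcal Hg$ with $g\in H$ gives $f'\in L^2$ and hence $\|f(s)\|^2=O(s)$; Cauchy--Schwarz on $\int_y^x\mathrm{Im}(f_1\overline{f_2})$ and a two-parameter limit $y\to\infty$, then $x\to\infty$, forces $a=\lim\langle Jf,f\rangle=0$. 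Your alternative paragraph (testing $f\in\mathcal D^*$ against compactly supported $u$) is essentially the paper's $D^*\subseteq D$ argument, but note that the paper's version needs no ``vanishing at $\infty$'' there --- the test functions are compactly supported --- whereas symmetry ($D\subseteq D^*$) \emph{does} need it, and that is where the direct estimate is used.
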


Throughout the proof, a number $ N > 0 $ is referred to as admissible if it is not an interior point of a singular interval. For an admissible $ N $ the functions supported on $ ( 0, N ) $ form a subspace in $ H $ denoted by $ H_N $. Recall also the notation $ e^\perp = J e $.

\begin{proof} \textit{Domain}. Let $ \varphi \in {\mathcal D}^\perp $, $ N \in ( 0 , \infty ) $ be an arbitrary point which is not interior to a singular interval, and let $ g \in H $ be supported on $ ( 0 , N ) $ and such that $ \llangle g , e^\perp \rrangle_H = 0 $. Define $ f ( x )  = 0 $ for $ x  > N $, and $ f(x) = J \int_x^N \cH g $ otherwise. Then $ f \in \mathcal D $. We have
\[ 0 = \llangle \varphi , f \rrangle_{ \cH } = \int_0^\infty \llangle \cH \varphi , J \int_x^N \cH g \rrangle \d x = - \int_0^N \llangle J \int_0^t \cH \varphi , \cH g  \rrangle \d t  . \]
Notice that $ \Phi ( t ) = J \int_0^t \cH \varphi $ belongs to $ H_A $ for any admissible $ A$, and hence the last equality says that $ \llangle \Phi , g \rrangle_{ H_N } = 0 $. It follows that $ \Phi ( t ) $ belongs to the equivalence class of a constant vector, a multiple of $ e^\perp $, in $ H_N $, $ \Phi ( t ) = c e^\perp + \Omega ( t ) $, $ \Omega ( t ) \in \ker \cH ( t ) $ for a. e. $ t \in ( 0 , N ) $, $  c\in \mathbb{C} $. It is obvious that the constant $ c $ does not depend on $ N $. Reproducing verbatim the argument from the proof of theorem \ref{domain} with $ N $ in the place of $ L $ we find that the set $ \cM = \{ t\in ( 0, N ) \colon \Omega ( t ) \ne 0 \} $ consists of at most two intervals, $ I_0 = ( 0 , \von ) $ and $ I_N = ( N - \von^\prime , N ) $, $ \von , \von^\prime \ge 0 $. Since the set of admissible $ N $ accumulates at infinity, we conclude that no interval $ I_N $ is in $ \mathcal M $, hence \[ \cH ( x ) \varphi ( x ) =  \textrm{const} \left\{ \begin{array}{cc}  e_0, & x \in I_0 , \cr 0, & \textrm{otherwise.} \end{array} \right. \] for some $ e_0 \in \R^2 $. From $ \Phi ( 0 ) = 0 $ we infer that $ \Omega ( 0 ) = - c e^\perp $, that is, $ e_0 = \pm e $.

The correctness of the definition of $ D $ under condition (L) is proved in the same way as in the case $ L < \infty $. The argument is shortened by absence of the right end of the interval which means that no extra condition is required.

\textit{Symmetricity}.
The condition (L) ensures that the domain of $ D $ is dense. Let us establish that $ D $ is symmetric. Given an $ f \in \mathcal D $, pick up a representative of it, satisfying $ f_- (0) = 0 $. Then, for any $ L \in ( 0, \infty ) $ we have \[ \int_0^L \langle J f^\prime , f \rangle = 2i \left. \Im ( f_+ \overline{f_-} ) \right|_0^L - \int_0^L \langle J f , f^\prime \rangle =  \langle J f ( L )  , f ( L ) \rangle  + \int_0^L \langle f , J f^\prime \rangle . \] As $ L \to \infty $ the l. h. s. tends to  $ \langle D f , f \rangle $, the second term in the rhs - to $ \langle f , D f \rangle $. This means that $ D $ is symmetric if $ \langle J f (L ), f ( L )  \rangle \to 0 $ as $ L \to \infty $. Indeed, let \[ a = \lim_{L \to \infty } \langle J f (L ), f ( L )  \rangle . \] In the basis of eigenvectors of $ \cH ( L ) $, we have \[ 
\langle J f (L ), f ( L )  \rangle = 2i \, \Im \( f_1 (L ) \overline{f_2 ( L )}  \) , \] $ f_1 ( x ) $ ($ f_2  ( x ) $) being the projections of $ f ( x ) $ on normalized eigenvectors corresponding to the maximal (minimal) eigenvalue of $ \cH ( x ) $, respectively. Here we took into account that $ U^* J U = J $ for any real orthogonal matrix $ U $. For any $ x $, $ y $ large enough, $ x > y $, one writes,
\[ \left| \frac a2 ( x - y ) \right| \le \left| \int_y^x \Im \( f_1 \overline{f_2 } \) \right| \le \( \int_y^\infty \left| f_1 \right|^2 \)^{1/2 } \( \int_y^x \left| f_2 \right|^2 \)^{1/2 } \le \dots \]
Notice that $ f \in H $ means that $ f_1 \in L^2 ( 0, \infty ) $ - the normalization of the Hamiltonian implies $ \int_0^\infty | f_1 |^2 \le 2 \left\| f \right\|^2_H $, and $ f \in \mathcal D $ means that $ f^\prime \in L^2 ( \R ; {\C}^2 ) $, hence $ \left\| f ( s )  \right\|^2 = O ( s ) $. One can proceed estimating,
\[  \dots \le o ( y ) \( x^2 - y^2 \)^{ 1/2 } . \] Taking $ y $ large enough and considering the limit $ x \to \infty $, we get a contradiction, unless $ a = 0 $. The symmetricity of $ D $ is proved. 

\textit{Selfdjointness}. Let $ {\mathcal D}^* $ be the domain of $ D^* $, $ f \in {\mathcal D}^* $, $ g = D^* f $. Define $ \tf $ to be the solution to $ J {\tf}^\prime = \cH g $ of the form $ \tf = -J \int_0^x \cH g $. For any compactly supported $ u \in \mathcal D $ picking up a representative of its equivalence class from the definition of $ \mathcal D $ we have for any $ L $ to the right of the support of $ u $, \[ \langle D u , f \rangle = \langle u , g \rangle = \int_0^L \langle u ,J{\tf}^\prime \rangle = \left. \langle u , J\tf \rangle  \right|_0^L + \int_0^L \langle J u^\prime , \tf \rangle = \llangle D u , \tf \rrangle_{ H_L } . \]  Choose $ u $ in the form \[ u ( x ) = \left\{ \begin{array}{cc} - J \int_x^L \cH h , & x < L, \cr 0, & \textrm{otherwise} \end{array} \right. \] where $ L $ is not an interior point of a singular interval, and $ h $ is an arbitrary element of $ H $ supported on $ ( 0, L ) $ and such that $ \llangle h , e^\perp \rrangle_{ H_L } = 0 $. Then, $ D u = h $ and the calculation above says $ \tf - f $ is orthogonal to $ h $ in $ H_L $. It follows that the restriction of $ \tf - f $ on $ ( 0 , L ) $ is an element of $ H_L $ which belongs to the equivalence class of a constant scalar multiple of $ e^\perp $. It is obvious that this constant, say $ b $, does not depend on $ L $. Since $ L $ is arbitrary, we find that the class of equivalence of $ f $ in $ H $ has an a. c. representative, $ \hat f = \tf - b e^\perp $. This representative satisfies $ J {\hat f}^\prime = \cH g $, and $ {\hat f}_- ( 0 ) = 0 $ by the choice of $ h $. These mean that $ f \in \mathcal D $, that is, $ D = D^* $. 
\end{proof}

\begin{remark}\la{essentialself} In the last portion of the proof we have actually shown that the operator $ D $ is essentially selfadjoint on the linear set $ \wt D $ of compactly supported vectors $ f \in \cD $ by establishing that $ \( \left. D \right|_{ \wt D } \)^* = D $. 
\end{remark}

\medskip
\textbf{Examples.} 4. Jacobi matrices. Let $ b_j $, $ j \ge 0 $, $ b_0 = 0 $, be a monotone increasing sequence, $ \Delta_1 = ( 0 , b_1 ) $, $ \Delta_2 = ( b_1 , b_2 ) $ etc., $ e_j \in {\mathbb{R}}^2 $, $ j \ge 1 $, a sequence of vectors of unit norm, $ e_j \ne \pm e_{j-1} $. Define 
\be\la{HamJacobi} \cH ( x ) = \langle \cdot , e_j \rangle e_j , \; \; x \in \Delta_ j . \ee
Compatibility condition (R) is trivially satisfied for this Hamiltonian. Let us calculate the operator $ D $ assuming that the condition (L) is also satisfied. The space $ H $ is naturally identified with $ l^2 ( \N ; \nu ) $, $ \nu = \{ l_j \} $, $ l_j = |\Delta_j | $. For $ g = D f $, $ f \in \mathcal D $, let $ \{ g_j \} $ be the sequence in $ l^2 ( \N ; \nu ) $ identified with $ g $, $ e_j^\perp = J e_j $, \[ f ( x ) = f_j e_j + \xi_j ( x ) e_j^\perp , \; 
\; x \in \Delta_j . \]
Our goal is to express $ g_j $'s via $ f_j $'s. The equation $ J f^\prime = \cH g $ is written as $ \xi_j^\prime ( x ) = - g_j $, $ x \in \Delta_j $. This gives \[ \xi_j = s_j + g_j ( b_j - x ) , \; \; x \in \Delta_j . \] The continuity of $ f $ at $ x = b_{ j-1} $ means that for $ j \ge 2 $
\be\label{continuity} f_{ j-1 } e_{ j-1 } + s_{ j-1 } e_{ j-1 }^\perp = f_j e_j + ( s_j + g_j l_j ) e_j^\perp . \ee
On taking scalar product with $ e_j $, we obtain
\[ f_{ j-1 } \langle e_{ j-1 } , e_j \rangle + s_{ j-1 } \langle e_{ j-1 }^\perp , e_j \rangle = f_j , \] from whence
\[ s_j = \frac 1{\langle e_j^\perp , e_{ j+1 } \rangle } \left[ f_{ j+1 } - f_j \langle e_j , e_{j+1} \rangle \right],  \;\;  j \ge 1 . \] 
Plugging this back to (\ref{continuity}) and taking scalar product with $ e_{ j-1 } $ gives
\[ f_{ j-1 } = f_j \langle e_j , e_{ j-1 } \rangle +  \left[ \frac 1{\langle e_j^\perp , e_{ j+1 } \rangle } ( f_{ j+1 } - f_j \langle e_j , e_{j+1} \rangle ) + g_j l_j \right] \langle e_j^\perp , e_{ j-1 } \rangle \] or 
\[ l_j g_j = \frac 1{\langle e_j^\perp , e_{ j-1 } \rangle } f_{ j-1 } + \( \frac{ \langle e_j , e_{ j+1 } \rangle }{ \langle e_j^\perp , e_{ j+1 } \rangle } - \frac{ \langle e_j , e_{ j-1 } \rangle }{ \langle e_j^\perp , e_{ j-1 } \rangle } \) f_j - \frac 1{\langle e_j^\perp , e_{ j+1 } \rangle } f_{ j+1 } . \]
Let $ u_j = g_j \sqrt{ l_j } $, $ v_j = f_j \sqrt{ l_j } $. Then 
\[ u_j = \frac 1{\langle e_j^\perp , e_{ j-1 } \rangle \sqrt{ l_{ j-1 } l_j }} v_{ j-1 } + \frac 1{ l_j } \( \frac{ \langle e_j , e_{ j+1 } \rangle }{ \langle e_j^\perp , e_{ j+1 } \rangle } - \frac{ \langle e_j , e_{ j-1 } \rangle }{ \langle e_j^\perp , e_{ j-1 } \rangle } \) v_j - \frac 1{\langle e_j^\perp , e_{ j+1 } \rangle \sqrt{ l_{ j+1 } l_j }} v_{ j+1 } . \]
Notice that 
\[ \langle e_j^\perp , e_{ j-1 } \rangle = \langle J e_j , e_{ j-1 } \rangle = - \langle e_j, e_{ j-1 }^\perp \rangle = - \langle e_{ j-1 }^\perp , e_j \rangle  . \]
It follows that if we define 
\begin{eqnarray} \la{rho} \rho_j & = & - \frac 1{\langle e_j^\perp , e_{ j+1 } \rangle \sqrt{ l_{ j+1 } l_j }}, \; j \ge 1 , \\ \la{q} q_j & = & \frac 1{ l_j } \( \frac{ \langle e_j , e_{ j+1 } \rangle }{ \langle e_j^\perp , e_{ j+1 } \rangle } - \frac{ \langle e_j , e_{ j-1 } \rangle }{ \langle e_j^\perp , e_{ j-1 } \rangle } \) , \; j \ge 2 , \end{eqnarray} 
then 
\[ u_j = \rho_{ j-1 } v_{ j-1 } + q_j v_j + \rho_j v_{ j+1 } , \;  j \ge 2 . \]
For $ j = 1 $ the boundary condition $ f_- ( 0 ) = 0 $ can be interpreted as a condition of the form (\ref{continuity}) with $ e_0 = \( \begin{array}{c} 0 \cr 1 \end{array} \) $, $ f_0 = 0 $.  This gives $ u_1 = q_1 v_1 + \rho_1 v_2 $ with \be\la{q1} q_1 = \frac 1{ l_1 } \( \frac{ \langle e_1 , e_2 \rangle }{ \langle e_1^\perp , e_2 \rangle } - \frac{ e_1^- }{ \ e_1^+ } \) . \ee Notice that $ e_1^+ \ne 0 $ - this  is the condition (L). Let $ U $ stand for the natural isomorphism between $ l^2 ( \mathbb{N}; \nu ) $ and $ l^2 ( \N ) $,  $\( U h \)_j = \sqrt{ l_j } h_j $. The calculations just done show that $ U D U^* $ is a selfadjoint operator corresponding to the following Jacobi matrix 
\[ \( \begin{array}{cccccc} q_1 & \rho_1 & 0 & \dots & & \cr 
\rho_1 & q_2 & \rho_2 & 0 & \dots & \cr
0 & \rho_2 & q_3 & \rho_3 & 0 & \dots \cr  & 0 & \ddots & \ddots & \ddots & \ddots \end{array} \). \]
For those familiar with the Weyl terminology, we notice that this matrix is in the limit-circle case if and only if $ L < \infty $.

\begin{proposition} To any Hamiltonian of the form (\ref{HamJacobi}) with $ e_1^+ \ne 0 $ there corresponds a semi-infinite Jacobi matrix defined by (\ref{rho}), (\ref{q}) and (\ref{q1}). For any symmetric Jacobi matrix with $ \rho_j > 0 $ there exists a Hamiltonian of the form (\ref{HamJacobi}) such that (\ref{rho}), (\ref{q}) and (\ref{q1}) hold and $ e_1^+ \ne 0 $. Given an $ e \in \R^2 $, $ e \ne ( 0, 1 )^T $, $ \| e \| = 1 $, and a $ \Delta > 0 $, this Hamiltonian can be chosen so that $ e_1 = e $, $ \Delta_1 = \Delta $, and this choice is unique. \end{proposition}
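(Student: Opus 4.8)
The first assertion needs no new argument: it is precisely the content of the computation carried out just above the statement, where, starting from a Hamiltonian of the form (\ref{HamJacobi}), the space $H$ was identified with $l^2(\N;\nu)$, the equation $Jf'=\cH g$ was written out componentwise, and the coefficients (\ref{rho}), (\ref{q}), (\ref{q1}) of the resulting three-term recursion were read off; the hypothesis $e_1^+\neq 0$ enters only to make (\ref{q1}) meaningful. So I would simply refer back to that computation.

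For the existence and uniqueness statements the plan is to build the data $(e_j)_{j\ge1}$ and $(l_j)_{j\ge1}$, $l_j=|\Delta_j|$, by a recursion that is forced at every step once $e_1$ and $l_1$ are prescribed; recall that the Hamiltonian depends on each $e_j$ only up to sign, so the sign ambiguities below are harmless for the Hamiltonian itself (and the stated hypothesis $e\neq(0,1)^T$ is to be read with this ambiguity, i.e.\ $e\neq\pm(0,1)^T$). Set $e_1=e$, $l_1=\Delta$; then $\|e\|=1$ and $e\neq\pm(0,1)^T$ give $e_1^+\neq0$, i.e.\ (L). The algebraic mechanism is this: writing a unit vector $e_{j+1}$ in the orthonormal basis $\{e_j,e_j^\perp\}$ as $e_{j+1}=\cos\theta\,e_j+\sin\theta\,e_j^\perp$, one has $\langle e_j,e_{j+1}\rangle/\langle e_j^\perp,e_{j+1}\rangle=\cot\theta$, insensitive to the sign of $e_{j+1}$, whereas $\langle e_j^\perp,e_{j+1}\rangle=\sin\theta$ changes sign with $e_{j+1}$. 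Assume $e_1,\dots,e_j$ and $l_1,\dots,l_j>0$ have been fixed, with consecutive $e$'s distinct up to sign. Reading (\ref{q}) for $j\ge2$, and (\ref{q1}) for $j=1$ with the convention $e_0=(0,1)^T$ (which makes (\ref{q1}) the $j=1$ instance of (\ref{q})), as an equation for $\cot\theta$, the right-hand side $l_jq_j+\langle e_j,e_{j-1}\rangle/\langle e_j^\perp,e_{j-1}\rangle$ is a finite real number, so $\theta$ is determined modulo $\pi$: this fixes $e_{j+1}$ up to sign and forces $e_{j+1}\neq\pm e_j$ since $\sin\theta\neq0$. Then (\ref{rho}) reads $\rho_j=-\bigl(\sin\theta\sqrt{l_jl_{j+1}}\bigr)^{-1}$; as $\rho_j>0$ and $l_j>0$, this forces $\sin\theta<0$, pinning down the remaining sign of $e_{j+1}$, and yields the unique value $l_{j+1}=\bigl(\rho_j^2\sin^2\theta\,l_j\bigr)^{-1}>0$. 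Since $l_{j+1}$ depends only on $\sin^2\theta$, the Hamiltonian with blocks $\langle\cdot,e_j\rangle e_j$ on intervals $\Delta_j$ of lengths $l_j$ is uniquely determined; and by construction this Hamiltonian has the form (\ref{HamJacobi}) with $e_1^+\neq0$ and satisfies (\ref{rho}), (\ref{q}), (\ref{q1}), giving existence. (The total length $L=\sum_j l_j$ comes out finite or infinite; the statement imposes no condition on it.)

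I expect no genuine obstacle here; the one point requiring care is the sign bookkeeping --- checking that $\rho_j>0$ together with $l_{j+1}>0$ indeed forces the sign of $e_{j+1}$ at every stage without over-determining the data, so that the recursion runs consistently --- together with the small separate treatment of the initial step via (\ref{q1}), where one must record $e_1^+\neq0$.
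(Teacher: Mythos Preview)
Your proposal is correct and follows essentially the same approach as the paper: the first assertion is referred back to the preceding computation, and the reverse assertion is proved by induction using that $\langle e_j,e_{j+1}\rangle/\langle e_j^\perp,e_{j+1}\rangle$ is the cotangent of the angle between $e_j$ and $e_{j+1}$, so that $(e_{j+1},l_{j+1})$ is forced once $(e_j,l_j)$ is known. Your write-up is simply a more detailed version of the paper's one-sentence sketch, with the sign bookkeeping made explicit.
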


That any Hamiltonian of the form (\ref{HamJacobi}) with $ e_1^+ \ne 0 $ correspond to a Jacobi matrix is proved by the calculations above. The reverse assertion is easily established by induction on taking into account that $ \frac{ \langle e_j , e_{ j+1 } \rangle }{ \langle e_j^\perp , e_{ j+1 } \rangle } $ is the cotangent of the angle between $ e_j $ and $ e_{j+1} $, and hence defines the Hamiltonian on the interval $ \Delta_{ j+1 } $ provided that it is known on $ \Delta_j $.

An assertion analogous to theorem \ref{operator} holds for other boundary conditions at the right end of the interval. We formulate the result leaving the details to the reader.

\begin{theorem} \label{operatorbc}  Let $ e = \( \! \begin{array}{c} \cos \alpha \cr \sin \alpha \end{array} \! \) $, $ 0 \le \alpha \le \pi $, $ L < \infty $.  Let the conditions (L) and 
\[ (R^\prime) \;\;\; \textrm{There is no }  \von > 0 \colon \cH ( x ) = \langle \cdot , e  \rangle e \textrm{ for a.e. } x \in ( L - \von, L ) .\] be satisfied. Let $ {\mathcal D}_\alpha $ stand for the linear set described in definition \ref{defdom} with (iv) replaced by $ f_+ ( L ) \cos \alpha + f_- ( L ) \sin \alpha = 0 $. Then $ {\mathcal D}_\alpha $ is dense in $ H $, and the mapping \[ D_\alpha \colon  f \mapsto g \] on the domain $ {\mathcal D}_\alpha $ is a correctly defined selfadjoint operator in $ H $. For a given Hamiltonian $ \cH $ the compatibility condition ($ R^\prime $) fails for at most one $\alpha \in [ 0 , \pi ) $. \end{theorem}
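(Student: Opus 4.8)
The plan: for $\alpha=\pi/2$ the assertion is Theorem \ref{operator} verbatim (there the boundary condition is $f_-(L)=0$ and $(R')$ reduces to (R)), so assume henceforth $\cos\alpha\neq0$. Then I would check three things --- that $D_\alpha$ is correctly defined, that $\langle D_\alpha f,g\rangle=\langle f,D_\alpha g\rangle$ on $\mathcal D_\alpha$, and that $\Ran D_\alpha=H$ --- after which density of $\mathcal D_\alpha$ and selfadjointness of $D_\alpha$ follow from an abstract lemma, so that no analogue of Theorem \ref{domain} is needed. Correctness goes through as in Theorem \ref{operator}: if $f_1,f_2$ are a.c.\ representatives of the same $f\in\mathcal D_\alpha$ with $Jf_i'=\cH g_i$, $g_i\in H$, then $h:=f_1-f_2$ satisfies $h(x)\in\ker\cH(x)$ a.e., $Jh'=\cH(g_1-g_2)\in\Ran\cH$ a.e., $h_-(0)=0$ and $\langle h(L),e\rangle=0$; the $\ker\cH$-analysis of Theorems \ref{domain}--\ref{operator} (the direction of $\cH$ is constant along each component of $\{h\neq0\}$, and since $g_1-g_2\in H$ so is the relevant scalar, so $h$ is affine in $x$ times a fixed vector there) confines $\{h\neq0\}$ to two intervals $(0,\von)$ and $(L-\von',L)$ at the ends, on which $h$ is a scalar multiple of $Je_0$, resp.\ $Je_L$; on the first, $h_-(0)=0$ with (L) kills the scalar, and on the second $h(L)\in\C\,Je_L$, so $\langle h(L),e\rangle=0$ kills the scalar unless $e_L=\pm e$, in which case $\cH=\langle\cdot,e\rangle e$ near $L$, against $(R')$. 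Hence $h\equiv0$ and $g_1=g_2$ in $H$.

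Next, the identity $\langle D_\alpha f,g\rangle=\langle f,D_\alpha g\rangle$ for $f,g\in\mathcal D_\alpha$ is the integration by parts (\ref{byparts}): its only boundary term $\langle Jf,g\rangle\big|_0^L$ vanishes at $0$ because $f_-(0)=g_-(0)=0$, and at $L$ because $\langle\,\cdot\,,e\rangle=0$ with $e$ real puts $f(L),g(L)$ into $\C\,Je$, on which $\langle J(Je),Je\rangle=-\langle e,Je\rangle=0$. For surjectivity, given $h\in H$ I would set $u(x)=\gamma\,(1,0)^T-J\int_0^x\cH h$, where $(1,0)^T$ and $Je$ below denote the corresponding constant functions and $\gamma:=-\langle h,Je\rangle_\cH/\cos\alpha$. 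Since $L<\infty$ and $\operatorname{tr}\cH=1$, the function $u$ is a.c.\ on $[0,L]$ and lies in $H$; it satisfies $Ju'=\cH h$ a.e.\ and $u_-(0)=0$, while the value of $\gamma$ --- this is where $\cos\alpha\neq0$ enters --- makes $\langle u(L),e\rangle=\gamma\cos\alpha+\langle h,Je\rangle_\cH=0$. Thus $u\in\mathcal D_\alpha$ and $D_\alpha u=h$.

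It then remains to invoke the lemma: \emph{if $A$ satisfies $\langle Af,g\rangle=\langle f,Ag\rangle$ on $\operatorname{dom}A$ and $\Ran A=H$, then $\operatorname{dom}A$ is dense and $A=A^*$.} Indeed, if $\varphi\perp\operatorname{dom}A$, pick $f_0$ with $Af_0=\varphi$; then $\langle Ah,f_0\rangle=\langle h,Af_0\rangle=\langle h,\varphi\rangle=0$ for every $h\in\operatorname{dom}A$, so $f_0\perp\Ran A=H$, hence $f_0=0$ and $\varphi=0$; so $A$ is densely defined and symmetric, and for $f\in\operatorname{dom}A^*$, choosing $f_0$ with $Af_0=A^*f$ gives $\langle Ah,f-f_0\rangle=0$ for all $h\in\operatorname{dom}A$, whence $f=f_0\in\operatorname{dom}A$. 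Applying this to $A=D_\alpha$ proves the theorem when $\cos\alpha\neq0$, and the last sentence is immediate --- if $(R')$ failed for $\alpha,\beta\in[0,\pi)$ then $\langle\cdot,e_\alpha\rangle e_\alpha=\cH=\langle\cdot,e_\beta\rangle e_\beta$ on a common interval abutting $L$, so $e_\alpha=\pm e_\beta$ and $\alpha=\beta$. The step I expect to need the most care is correctness (the local structure of $\ker\cH$ along a solution, imported from Theorem \ref{domain}); it is also worth noting that the present short route is unavailable at $\alpha=\pi/2$, where $D_{\pi/2}$ has nontrivial kernel --- spanned by the constant function $(1,0)^T$ --- and hence is not surjective, which is precisely why that case must be left to Theorem \ref{operator}.
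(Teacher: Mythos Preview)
Your proof is correct. The paper does not give its own argument --- it states the result ``leaving the details to the reader'' as an analogue of Theorems~\ref{domain} and~\ref{operator} --- so the implied route is to repeat those proofs with $e=(0,1)^T$ at the right end replaced by $e=(\cos\alpha,\sin\alpha)^T$: an analogue of Theorem~\ref{domain} would identify $\cD_\alpha^\perp$ explicitly and conditions (L), $(R')$ would kill it, after which the correctness and selfadjointness arguments of Theorem~\ref{operator} carry over with the constant function $e^\perp$ replaced by a suitable element of $\cD_\alpha$.

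Your route is different and tidier for $\cos\alpha\neq 0$. Rather than mimic Theorem~\ref{domain}, you observe that $D_\alpha$ is \emph{onto}: for any $h\in H$ the function $u(x)=\gamma(1,0)^T-J\int_0^x\cH h$ lands in $\cD_\alpha$ once the free scalar $\gamma$ is used to hit the boundary condition at $L$, and this is exactly where $\cos\alpha\neq 0$ is needed. The abstract lemma (a surjective symmetric operator is automatically densely defined and selfadjoint) then replaces both the density analysis of Theorem~\ref{domain} and the $D^*$-computation in Theorem~\ref{operator}. What this buys you is that no analogue of Theorem~\ref{domain} has to be reproved; the price is that the argument genuinely does not cover $\alpha=\pi/2$, since --- as you correctly note --- $D_{\pi/2}$ has the constant $(1,0)^T$ in its kernel and is not surjective. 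In fact, if one tries to push the paper's own selfadjointness argument through for general $\alpha$, the natural substitute for $e^\perp\in\cD$ is precisely the freedom to add a multiple of $(1,0)^T$ to $\tilde f$, which leads one back to your surjectivity step; so your packaging via the abstract lemma makes the underlying mechanism explicit.
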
 

\section{Direct spectral theory}\la{direct}

\subsection{Synopsis} The de Branges approach to the spectral theory is as follows. Rather than diagonalizing each operator $ D_\alpha $, corresponding to a selfadjoint boundary condition at the right end (see exercise 1 above), we study a Fourier transform with respect to solutions, $ \Theta ( x , \lambda ) $, of the canonical system satisfying a fixed Cauchy problem at $ x = 0 $ for \textit{all} complex values of the spectral parameter, that is, we consider the mapping which takes an $ f $ from the Hilbert space of the system, $ H $, to $ \hat f ( w ) \colon = \llangle f , \Theta ( \cdot , \overline w ) \rrangle_H $. Then $ \hat f $ is an entire function and the mapping, at least formally, takes the symmetric operator defined by the boundary condition $ f ( L ) = 0 $, into (a restriction of) operator of multiplication by the independent variable. For each $ \theta $ the composition of this mapping and the subsequent restriction of the entire function to the spectrum of $ D_\alpha $ defines an isomorphism of $ H $ onto $ L^2 ( \R , d \mu_\alpha ) $, diagonalizing $ D_\alpha $, $ \mu_\alpha $ being a discrete measure on $ \R $. The idea is to define a Hilbert space structure on the set of entire functions $ \hat f $ in such a way that $ f \mapsto \hat  f $ is an isomorphism. It is remarkable that this structure can be described intrinsically ("on the spectral variable side"). Indeed, when a real $ z $ belongs to the spectrum of one of $ D_\alpha $ it is clear that $ \hat{ \Theta}_z $ must be proportional to the point evaluator (reproducing kernel) in the space $ \hat{ H } $, the range of the mapping. Thus $ \hat{ H } $ is a reproducing kernel Hilbert space of entire functions, and the image of $ \Theta_z $ is a multiple of the reproducing kernel at $ z $. These observations allow to calculate the norm in the space $ \hat H $ (this is done in theorem \ref{spmaptheo}). It turns out to be a de Branges  space, $ \HE $, corresponding to the function $ E  = \Theta_+^L + i \Theta_-^L $.  

\subsection{} Define $ \Theta ( x , \lambda ) $, $ \Phi ( x , \lambda ) $ to be solutions of the system (\ref{can}) satisfying $ Y ( 0 ) = \( 1 , 0 \)^T $, $ Y ( 0 ) = \( 0 , 1 \)^T $, respectively, $ M ( x , \lambda ) $ to be the fundamental solution of the canonical system, $ M ( x , \lambda ) = [ \Theta ( x , \lambda ) , \Phi ( x , \lambda )  ] $. Depending on context, either argument, or both, of these functions may be skipped in our notation or moved to subscripts.

The following important identity is used throughout. For all $ \lambda , z \in \C $, and all finite $ x \in ( 0 , L ] $
\be\la{identM} M^* ( x , \lambda ) J M ( x , z ) - J = \( z - \overline \lambda \) \int_0^x M^* ( t , \lambda ) \cH ( t ) M ( t , z ) \d t . \ee

The proof is by integrating the derivative of $ M^* J M $ with respect to the equation $ J M^\prime = z \cH M $ and taking into account that $ M ( 0 , \lambda ) = I $. It is convenient for us to write down separately the upper leftmost entry in this identity,
\be\la{identM11} 
\Theta_+ ( x , z ) \Theta_- \( x , \overline \lambda \) - \Theta_- (x , z ) \Theta_+ \( x , \overline \lambda \) = \( z - \overline \lambda \) \int_0^x \Theta^T\( t , \overline \lambda \) \cH ( t ) \Theta ( t , z ) \d t . \ee

\begin{proposition}\la{HB}
Given an $ x \in ( 0, L ) $, the function $ E_x ( \lambda ) = \Theta_+ ( x , \lambda ) + i \Theta_- ( x , \lambda ) $ is Hermite-Biehler, unless 
\be\la{nondegenerate} \cH ( t ) = \( \! \begin{array}{cc} 0 & 0 \cr 0 & 1 \end{array} \! \) \;\; \textrm{for a. e. } \, t \in ( 0,x ) . \ee
\end{proposition}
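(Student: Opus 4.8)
The plan is to read off both the Hermite--Biehler inequality and the exceptional case directly from the identity (\ref{identM11}), by computing $|E_x(z)|^2-|E_x(\bar z)|^2$. Since $\cH$ and the initial vector $(1,0)^T$ are real, the fundamental solution is real entire in the spectral parameter, so $\Theta_\pm(x,\cdot)$ are real entire; hence $E_x^*(\lambda)=\Theta_+(x,\lambda)-i\Theta_-(x,\lambda)$ and $|E_x(\bar z)|=|E_x^*(z)|$ for every $z$. Expanding $|E_x(z)|^2$ and $|E_x(\bar z)|^2$ and using $\overline{\Theta_\pm(x,z)}=\Theta_\pm(x,\bar z)$ one gets
\[
|E_x(z)|^2-|E_x(\bar z)|^2 = -2i\bigl(\Theta_+(x,z)\Theta_-(x,\bar z)-\Theta_-(x,z)\Theta_+(x,\bar z)\bigr),
\]
and the bracket on the right is precisely the left-hand side of (\ref{identM11}) with $\overline\lambda=\bar z$, so
\[
|E_x(z)|^2-|E_x(\bar z)|^2 = -2i(z-\bar z)\int_0^x \Theta^T(t,\bar z)\cH(t)\Theta(t,z)\,\d t = 4\,\Im z\int_0^x \langle \cH(t)\Theta(t,z),\Theta(t,z)\rangle\,\d t,
\]
where $\Theta(t,\bar z)=\overline{\Theta(t,z)}$ was used to rewrite the integrand as $\Theta(t,z)^*\cH(t)\Theta(t,z)\ge 0$, since $\cH(t)\ge 0$ a.e. Denote this nonnegative integral by $I(x,z)$.

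For $z\in\C_+$ this already gives $|E_x(z)|\ge|E_x(\bar z)|$, with equality iff $I(x,z)=0$, so it remains to show that $I(x,z_0)=0$ for a single $z_0\in\C_+$ forces (\ref{nondegenerate}). Indeed, $I(x,z_0)=0$ means $\langle\cH(t)\Theta(t,z_0),\Theta(t,z_0)\rangle=0$, hence $\cH(t)\Theta(t,z_0)=0$, for a.e. $t\in(0,x)$; substituting this into $J\Theta'(\cdot,z_0)=z_0\cH\,\Theta(\cdot,z_0)$ shows $\Theta(\cdot,z_0)$ is constant on $(0,x)$, equal to $\Theta(0,z_0)=(1,0)^T$, whence $\cH(t)(1,0)^T=0$ a.e. there; so the first column --- and, by symmetry of $\cH$, the first row --- of $\cH(t)$ vanishes a.e. on $(0,x)$, which under the normalization $\operatorname{tr}\cH=1$ is precisely (\ref{nondegenerate}). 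Conversely, if (\ref{nondegenerate}) holds the same ODE argument gives $\Theta(\cdot,\lambda)\equiv(1,0)^T$ and $E_x\equiv 1$, which is not Hermite--Biehler. Combining: when (\ref{nondegenerate}) fails, $I(x,z)>0$ for every $z\in\C_+$, so $|E_x(z)|>|E_x(\bar z)|$ and $E_x$ is Hermite--Biehler.

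The computation is essentially forced by (\ref{identM11}) and presents no serious difficulty; the points needing care are the bookkeeping between $E_x^*$ and complex conjugation --- so that $|E_x(\bar z)|$, which appears in the definition of the Hermite--Biehler class, is matched with the sign-definite right-hand side --- and the observation that the vanishing of $I$ at a single point of $\C_+$ already propagates, through the canonical system, to the rigid conclusion (\ref{nondegenerate}); this is what upgrades the inequality to a strict one simultaneously for all $z\in\C_+$ rather than forcing a separate pointwise argument.
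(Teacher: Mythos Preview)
Your proof is correct and follows essentially the same route as the paper: both compute $|E_x(z)|^2-|E_x(\bar z)|^2$ via the identity (\ref{identM11}), reduce it to $4\,\Im z\int_0^x\langle\cH\Theta,\Theta\rangle$, and then argue that vanishing of this integral forces $\cH\Theta\equiv 0$, hence $\Theta\equiv(1,0)^T$ and (\ref{nondegenerate}). Your write-up is a bit more explicit about the conjugation bookkeeping and the role of the trace normalization, but the argument is the same.
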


\begin{proof}
Plugging $ \lambda = z $ in (\ref{identM11}) gives
\be\la{11} \Im ( \Theta_+ {\overline\Theta}_- ) = \Im z \, \int_0^x \llangle \cH \Theta , \Theta \rrangle_{ \C^2 } . \ee 
 On  the other hand, 
\[ \left| E_x ( z ) \right|^2 -\left| E_x ( \overline z ) \right|^2 = 4 \, \Im ( \Theta_+ ( x, z )\overline{\Theta_- ( x, z) }) \] 
for all $ z \in \C $ by straightforward computation. 
The integrand in the r. h. s. of (\ref{11}) is nonnegative, hence $ \Im \(  \Theta_+ ( z ) \overline{ \Theta_- ( z ) } \) \ge 0 $ for all $ z \in \C_+ $, and the equality is only achieved if $ \Theta ( t , z ) \in \ker \cH ( t ) $ for a. e. $ t \in  ( 0 , x ) $, in which case $ \Theta^\prime = 0 $ a. e. on $ ( 0, x ) $, hence $ \Theta ( t ) \equiv \( \begin{array}{c} 1 \cr 0 \end{array} \) $ and $ \cH ( t ) $ satisfies (\ref{nondegenerate}). The assertion follows. \end{proof}

Let $ L < \infty $. The function $ \Theta_z $ then belongs to the space $ H $ for all $ z \in \C $. From now on, we assume that the Hamiltonian does not satisfy (\ref{nondegenerate}) for $ x = L $, hence the de Branges space $ \HE $, $ E = E_L $, is defined. The function $ E_L $ will be then called the \textit{de Branges function} of the system.

\begin{theorem}\la{spmaptheo}
The reproducing kernel, $ K_\lambda ( z ) $, of the space $ \HE $ is given by 
\be\la{repker} K_\lambda ( z ) = \frac 1\pi \llangle \Theta_z , \Theta_\lambda \rrangle_H . \ee 
Let the compatibility condition (L) be satisfied. Then the mapping 
\be\la{spmap} \cU \colon f \mapsto \pi^{ -1/2 } \llangle f , \Theta_{ \overline w } \rrangle_H \ee
is an isomorphism of $ H $ onto $ \HE $.
\end{theorem}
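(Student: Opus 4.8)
The plan is to split the statement into three parts: (i) the kernel formula (\ref{repker}); (ii) the observation that (\ref{repker}) already forces $\cU$ to be a partial isometry of $H$ onto $\HE$ whose kernel is the orthogonal complement of $\{\Theta_\lambda\}$; and (iii) the completeness of $\{\Theta_\lambda\colon\lambda\in\C\}$ in $H$, which is where hypothesis (L) enters and is the only part that is not formal. For (i): since $\Theta(t,\cdot)$ is real entire, $\overline{\Theta(t,\lambda)}=\Theta(t,\bar\lambda)$, so $\llangle\Theta_z,\Theta_\lambda\rrangle_H=\int_0^L\Theta^T(t,\bar\lambda)\,\cH(t)\,\Theta(t,z)\,\d t$. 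By (\ref{identM11}) with $x=L$ this equals $(z-\bar\lambda)^{-1}\bigl(\Theta_+(L,z)\Theta_-(L,\bar\lambda)-\Theta_-(L,z)\Theta_+(L,\bar\lambda)\bigr)$, the value at $z=\bar\lambda$ being read off by continuity (and equal to $\|\Theta_z\|_H^2$ via (\ref{11}) when $z\in\R$). Writing $E=E_L$, so that $E_\pm(\bar\lambda)=\overline{E_\pm(\lambda)}$, this is exactly $\pi$ times the standard formula for the reproducing kernel of $\HE$, namely $K_\lambda(z)=\pi^{-1}(z-\bar\lambda)^{-1}\bigl(E_+(z)\overline{E_-(\lambda)}-E_-(z)\overline{E_+(\lambda)}\bigr)$; here we use the standing assumption that $\cH$ does not satisfy (\ref{nondegenerate}) at $x=L$, which makes $E_L$ Hermite--Biehler and $\HE$ well defined. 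This proves (\ref{repker}).

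For (ii): first, $\cU f$ is entire, because, rewriting as above, $\cU f(w)=\pi^{-1/2}\int_0^L\Theta^T(t,w)\,\cH(t)\,f(t)\,\d t$ with $\cH f\in L^1(0,L)$ and $\Theta(t,w)$ entire in $w$ and locally bounded in $t$ by (\ref{estM}). Next, (\ref{repker}) together with the reproducing property of $K_\lambda$ gives, after a one-line manipulation using only that $E_\pm$ are real entire, that $\cU\Theta_\lambda=\sqrt\pi\,K_{\bar\lambda}\in\HE$ and $\llangle\cU\Theta_\lambda,\cU\Theta_\mu\rrangle_{\HE}=\llangle\Theta_\lambda,\Theta_\mu\rrangle_H$. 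Hence $\cU$ is isometric on $H_0:=\overline{\operatorname{span}}\{\Theta_\lambda\colon\lambda\in\C\}$, and its image contains every reproducing kernel $K_\mu$ and is therefore dense in $\HE$; thus $\cU|_{H_0}$ is unitary onto $\HE$. On the other hand $\cU$ annihilates $H_0^\perp$, since $f\perp\Theta_{\bar w}$ for all $w$ means $\cU f\equiv0$. Consequently $\cU$ is an isomorphism of $H$ onto $\HE$ if and only if $H_0=H$.

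For (iii): it remains to deduce $H_0=H$ from (L). By Theorem \ref{operatorbc} the right-endpoint compatibility condition fails for at most one boundary parameter, so we may fix an $\alpha$ for which it holds; together with (L) this makes $D_\alpha$ a selfadjoint operator in $H$ (Theorems \ref{operator}, \ref{operatorbc}). Since $L<\infty$ and $\cH$ is summable, the resolvent of $D_\alpha$ is compact (a standard fact for canonical systems on a finite interval: it is an integral operator with bounded kernel), so $D_\alpha$ has discrete spectrum and a complete orthonormal basis of eigenfunctions. Any eigenfunction solves (\ref{can}) and satisfies $f_-(0)=0$, hence is a scalar multiple of $\Theta_\mu$ for its eigenvalue $\mu$; therefore these $\Theta_\mu$'s span $H$, so $H_0=H$ and $\cU$ is an isomorphism. (Hypothesis (L) is also necessary: if it fails, the vector $h$ displayed before Theorem \ref{domain} is orthogonal to every $\Theta_\lambda$, because on the indivisible interval $(0,\von)$ one has $\Theta_-(\cdot,\bar\lambda)\equiv0$ and hence $\cH\Theta_{\bar\lambda}=0$ there.)

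The main obstacle is Part (iii). Parts (i)--(ii) are the reproducing-kernel computation via (\ref{identM11}) followed by routine Hilbert-space bookkeeping, but the completeness of the solution family $\{\Theta_\lambda\}$ is a genuine input: it amounts to the non-triviality and discreteness of the spectrum of a selfadjoint realization $D_\alpha$. A variant, closer to de Branges' own treatment, is to run the reproducing-kernel argument for the truncated systems on $(0,x)$, obtaining compatible isometries $\cU_x\colon H_x\to\mathcal H(E_x)$ and passing to the limit $x\to L$; but that merely relocates the difficulty to the density of $\bigcup_x H_x$ in $H$.
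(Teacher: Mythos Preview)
Your proof is correct and follows essentially the same route as the paper: the kernel formula comes from (\ref{identM11}) at $x=L$, the isometry on $\operatorname{span}\{\Theta_\lambda\}$ from the reproducing-kernel calculation $\cU\Theta_\lambda=\sqrt\pi\,K_{\overline\lambda}$, and completeness of $\{\Theta_\lambda\}$ from the discrete spectrum of a selfadjoint $D_\alpha$ (the paper's Exercise~\ref{discspect}). Your added remark that (L) is also necessary is a nice touch not made explicit in the paper.
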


\begin{proof} On plugging $ E = \Theta_+ + i \Theta_- $ into the formula for the reproducing kernel of an abstract de Branges space, 
\[ K_\lambda ( z ) = \frac 1{2 \pi i } \frac{ E ( z ) \overline{ E ( \lambda ) } - E^* ( z ) E ( \overline \lambda ) }{ \overline \lambda - z } ,  \] 
we write 
\be\la{reprokernel} K_\lambda ( z ) = \frac 1\pi \frac{ \Theta_- ( z ) \Theta_+( \overline \lambda ) - \Theta_+ ( z ) \Theta_- ( \overline \lambda ) }{ \overline \lambda - z } . \ee 
The first assertion of the theorem is then (\ref{identM11}) with $ x = L $ divided by $ \pi ( z - \overline \lambda ) $. Using the notation $ \cU $ one can write (\ref{repker}) as follows, $ (\cU \Theta_\lambda ) ( w ) = \sqrt \pi \overline {K_\lambda ( \overline w )} $. Thus, $ \cU \Theta_z \in \HE $ for any complex $ z $, and
\[ \llangle \cU \Theta_\lambda , \cU \Theta_z \rrangle_{\HE} = \pi \llangle \overline {K_\lambda }  , \overline {K_z }  \rrangle_{\HE} =  \pi \llangle K_z , K_\lambda \rrangle_{\HE} = \pi K_z ( \lambda ) = \llangle \Theta_\lambda , \Theta_z \rrangle_H . \] 
We see that $ \cU $ is an isometry on the linear span of $ \Theta_\lambda $, $ \lambda \in \C $. Let us show that this span coincides with $ H $ if (L) is obeyed. Indeed, fix an $ \alpha \in [ 0 , \pi ) $ such that the condition ($R^\prime $) is satisifed. Then $ D_\alpha $ is a selfadjoint operator in $ H $ by exercise \ref{operatorbc}. The set of eigenvectors of $  D_\alpha $ coincides with $ \{ \Theta ( \cdot , \lambda_k ) \} $, where $ \lambda_k $ are defined by \be\la{specDtheta} \Theta_+ ( L  , \lambda_k ) \cos \alpha + \Theta_- ( L , \lambda_k ) \sin \alpha = 0 ,
\ee
 and forms an orthonormal basis in $ H $ because the spectrum of $ D_\alpha $ is discrete (see exercise \ref{discspect} below). It remains to notice that the range of $ \cU $ is the whole of $ \HE $ since it contains any reproducing kernel. \end{proof}

The mapping $ \cU $ defined in this theorem is often called \textit{the Fourier transform} associated with the system $ ( \cH , L ) $. 

\begin{proposition}\la{Dalpha} Let the compatibility conditions ($ L $ ) and ($ R^\prime $) be satisfied, and $ \lambda_k $ be defined by (\ref{specDtheta}). Then the mapping $ \cU_\alpha \colon f \mapsto \{ ( \cU f ) ( \lambda_k ) \} $ is an isomorphism of $ H $ onto the space $ L^2 ( {\mathbb X} , d\mu_\alpha ) $, $ \mathbb X = \{ \lambda_k \} $, $ \mu_\alpha \( \{ \lambda_k \} \)  = \left\| \Theta \( \cdot , \lambda_k \) \right\|^{ -2 } $, and the operator $ \cU_\alpha D_\alpha \cU^*_\alpha $ is the multiplication by the independent variable.  
\end{proposition}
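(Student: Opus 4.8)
The plan is to read the statement off Theorem~\ref{spmaptheo} together with the spectral theorem for the selfadjoint operator $D_\alpha$ of Theorem~\ref{operatorbc}. Since the $\lambda_k$ are real, $(\cU f)(\lambda_k)=\pi^{-1/2}\langle f,\Theta(\cdot,\lambda_k)\rangle_H$, so $\cU_\alpha$ is, up to the fixed constant $\pi^{-1/2}$, the map taking $f\in H$ to the sequence of its Fourier coefficients against the family $\{\Theta(\cdot,\lambda_k)\}_k$. First I would record the structure of $D_\alpha$: the vector $\Theta(\cdot,\lambda_k)$ lies in $\cD_\alpha$ --- it is absolutely continuous, solves $J\Theta^\prime(\cdot,\lambda_k)=\lambda_k\cH\,\Theta(\cdot,\lambda_k)$ with $\lambda_k\Theta(\cdot,\lambda_k)\in H$ (because $L<\infty$), has $\Theta_-(0,\lambda_k)=0$, and satisfies the boundary condition at $L$ precisely by (\ref{specDtheta}) --- hence it is an eigenvector of $D_\alpha$ with eigenvalue $\lambda_k$; conversely, any eigenfunction of $D_\alpha$ for an eigenvalue $\mu$ is, by uniqueness for the Cauchy problem of the canonical system (the compatibility conditions excluding the degenerate situations, as in the indivisible-interval analysis in the proofs of Theorems~\ref{domain} and~\ref{operator}), a scalar multiple of $\Theta(\cdot,\mu)$, so $\mu\in\{\lambda_k\}$ and the eigenvalues are simple. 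As the spectrum of $D_\alpha$ is discrete (exercise~\ref{discspect}), $\{\Theta(\cdot,\lambda_k)\}_k$ is therefore a complete orthogonal system in $H$ --- this was already used in the proof of Theorem~\ref{spmaptheo}.

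Next comes the only computation. Parseval's identity for this orthogonal basis gives, for $f\in H$,
\[ \|f\|_H^2=\sum_k\frac{|\langle f,\Theta(\cdot,\lambda_k)\rangle_H|^2}{\|\Theta(\cdot,\lambda_k)\|_H^2}=\pi\sum_k|(\cU f)(\lambda_k)|^2\,\|\Theta(\cdot,\lambda_k)\|_H^{-2}, \]
which says exactly that $\cU_\alpha$ is (a constant multiple of) an isometry of $H$ into $L^2(\mathbb X,\mu_\alpha)$, the atoms $\mu_\alpha(\{\lambda_k\})=\|\Theta(\cdot,\lambda_k)\|_H^{-2}$ being chosen so that this Plancherel formula holds. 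Surjectivity is immediate: $\cU_\alpha\Theta(\cdot,\lambda_j)$ is supported at the single point $\lambda_j$, so the images of $\{\Theta(\cdot,\lambda_k)\}_k$ are the nonzero point masses of $\mathbb X$, which span $L^2(\mathbb X,\mu_\alpha)$ since $\mu_\alpha$ is purely atomic. Hence $\cU_\alpha$ is a Hilbert space isomorphism of $H$ onto $L^2(\mathbb X,\mu_\alpha)$ (a genuine isometry after the normalization of $\cU$, respectively $\mu_\alpha$, that removes the harmless factor $\pi$).

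For the diagonalization I would take $f\in\cD_\alpha$ and $g=D_\alpha f$, and use $\Theta(\cdot,\lambda_k)\in\cD_\alpha$, $D_\alpha=D_\alpha^*$ and $\lambda_k\in\R$:
\[ \langle g,\Theta(\cdot,\lambda_k)\rangle_H=\langle D_\alpha f,\Theta(\cdot,\lambda_k)\rangle_H=\langle f,D_\alpha\Theta(\cdot,\lambda_k)\rangle_H=\lambda_k\langle f,\Theta(\cdot,\lambda_k)\rangle_H . \]
Thus $(\cU_\alpha D_\alpha f)(\lambda_k)=\lambda_k(\cU_\alpha f)(\lambda_k)$, which is exactly the assertion that $\cU_\alpha$ intertwines $D_\alpha$ with multiplication by the independent variable, i.e. $\cU_\alpha D_\alpha\cU_\alpha^*$ is that multiplication operator on $\cU_\alpha(\cD_\alpha)$. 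Since $\cU_\alpha$ carries $\cD_\alpha=\mathrm{Dom}\,D_\alpha$ exactly onto the maximal domain of the multiplication operator --- both consist of the sequences $\{c_k\}\in L^2(\mathbb X,\mu_\alpha)$ with $\{\lambda_kc_k\}$ again in $L^2(\mathbb X,\mu_\alpha)$ --- and both operators are selfadjoint, the two coincide.

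I expect the only points needing care, rather than a genuine obstacle, to be: (a) the identification of $\operatorname{spec}D_\alpha$ with $\{\lambda_k\}$ and the simplicity of the eigenvalues, which is the same indivisible-interval bookkeeping used throughout Section~2 and is essentially built into Theorem~\ref{spmaptheo}; and (b) in the last step, checking that $\cU_\alpha(\cD_\alpha)$ is \emph{all} of the domain of the multiplication operator and not merely contained in it, so that the inclusion can be upgraded to an equality. Everything else is a transcription of Parseval's identity.
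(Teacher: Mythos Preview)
Your proof is correct and is exactly what the paper has in mind: the paper's entire proof is the single sentence ``This is just the spectral theorem for $D_\alpha$,'' and you have spelled out precisely that---the eigenbasis $\{\Theta(\cdot,\lambda_k)\}$, Parseval, and the intertwining with multiplication. Your observation about the stray factor $\pi$ is also accurate; it is a normalization issue in the statement rather than in your argument.
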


This is just the spectral theorem for $ D_\alpha $.

\begin{exercise} \la{discspect} The operator $ D_\alpha $ of exercise \ref{operatorbc} has discrete spectrum. \end{exercise}

\medskip

\textit{Hint.} {\small For a $ \lambda \notin \R $ the resolvent of $  D_\alpha $ is an integral operator with the kernel \[ c_\lambda \left\{ \begin{array}{cc} \Theta ( x )  \Psi^T ( x^\prime ) , & x < x^\prime , \cr \Psi ( x ) \Theta^T ( x^\prime ) , & x > x^\prime , \end{array} \right. \] $ \Psi $ being the solution of (\ref{can}) satisfying the boundary condition $ \Psi ( L ) = \( - \sin \alpha , \cos \alpha \)^T $, $ c_\lambda $ a complex constant. This integral operator is Hilbert-Schmidt, hence $ D_\alpha $ has discrete spectrum.} 

\medskip

Which entire functions $ E $ are de Branges functions of canonical systems, that is, when $ E ( z ) = E_L ( z ) $ for some canonical system $ ( \cH , L ) $ with $ L < \infty $? It is easy to see a necessary condition. Let $ ( \cH , L ) $ be a canonical system satisfying the condition of theorem \ref{spmaptheo}. Consider the element $ f \in H $ defined by $ f ( x ) \equiv \( \! \begin{array}{c} 0 \cr 1 \end{array} \! \) $. A straightforward calculation shows (a more general formula (\ref{diprob}) is to be derived later in Sect. \ref{diprobl}, so we skip the details here)  
\[ ( \cU f ) ( z ) = \frac 1{\sqrt \pi } \frac{\Theta_+ ( L, z ) - 1}z . \]
The function $ \Theta_+ ( L , \cdot ) / E $ is bounded (in fact, contractive) in $ \C_+ $, hence $ \cU f \in \HE $ implies that $ \( z + i \)^{ -1 } E^{ -1 } ( z ) \in H^2 $. HB functions satisfying this latter property are called regular (see the formal definition in Sect. \ref{facts}). We thus proved   

\begin{proposition} If the assumption of theorem \ref{spmaptheo} holds then the function $ E $ is regular.
\end{proposition}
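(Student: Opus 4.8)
The plan is to exhibit a single explicit element of the Hilbert space $H$ of the system whose image under the Fourier transform $\cU$, being forced to lie in $\HE$, makes $E=E_L$ regular. I would take $f\in H$ to be the equivalence class of the constant function $f(x)\equiv(0,1)^T$; since $L<\infty$ and $\cH\in L^1(0,L)$ (which is already needed for $E_L$ to be defined), this is a genuine element of $H$ --- a constant trivially obeys the indivisible-interval constraint --- and by Theorem~\ref{spmaptheo}, under (L), $\cU f\in\HE$.

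First I would compute $\cU f$. Using $\cH(x)\Theta(x,w)=w^{-1}J\Theta'(x,w)$ from the canonical system and the fact that $\Theta$ is real entire in the spectral parameter, the $H$-pairing telescopes to a boundary term:
\[ \llangle f,\Theta_{\overline w}\rrangle_H=\int_0^L\big\langle(0,1)^T,\cH(x)\Theta(x,w)\big\rangle_{\C^2}\,\d x=\frac1w\int_0^L\Theta_+'(x,w)\,\d x=\frac{\Theta_+(L,w)-1}{w}, \]
since $\Theta_+(0,w)=1$; hence $(\cU f)(z)=\pi^{-1/2}\big(\Theta_+(L,z)-1\big)/z$, in agreement with the formula recorded just before the proposition (a general version appears later as~(\ref{diprob})).

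Next I would invoke two standard facts. Every $g\in\HE$ satisfies $g/E\in H^2$; and $\Theta_+(L,\cdot)=E_+=(E+E^*)/2$, so $\Theta_+(L,\cdot)/E=\frac12(1+E^*/E)$ is a contraction in $H^\infty$, because the Hermite--Biehler inequality $|E^*|<|E|$ in $\C_+$ yields both this bound and the absence of zeros of $E$ in $\C_+$. Applying $g/E\in H^2$ to $g=\cU f$ gives $\big(\Theta_+(L,z)-1\big)/(zE(z))\in H^2$; multiplying by $z/(z+i)\in H^\infty$ yields $\big(\Theta_+(L,z)-1\big)/\big((z+i)E(z)\big)\in H^2$. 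Separately, $\Theta_+(L,z)/\big((z+i)E(z)\big)$ is the product of the $H^\infty$-function $\Theta_+(L,\cdot)/E$ with the $H^2$-function $(z+i)^{-1}$, hence lies in $H^2$. Subtracting,
\[ \frac{1}{(z+i)E(z)}=\frac{\Theta_+(L,z)}{(z+i)E(z)}-\frac{\Theta_+(L,z)-1}{(z+i)E(z)}\in H^2, \]
which is precisely the regularity of $E$.

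The only point demanding care --- and the reason this is not a one-line argument --- is the pole of $(zE)^{-1}$ at the \emph{boundary} point $z=0$: since $(zE)^{-1}\notin H^2$, the regularizing factor $(z+i)^{-1}$ from the definition of regularity must be brought in before the term $1/z$ is separated off, not after, and the two multiplications by bounded analytic functions above are exactly the bookkeeping that makes this legitimate. The remaining ingredients --- $f\in H$, the telescoping evaluation of $\cU f$, and the division-by-$E$ embedding $\HE\hookrightarrow H^2$ --- are routine or already in place.
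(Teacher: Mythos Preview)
Your proof is correct and follows essentially the same approach as the paper: the same test element $f(x)\equiv(0,1)^T$, the same telescoping computation of $\cU f$, and the same use of the contraction $\Theta_+/E\in H^\infty$. The paper simply asserts the final implication in one line, while you spell out the bookkeeping with the factor $z/(z+i)$ and the subtraction; this is a welcome clarification but not a different argument.
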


The main result of the inverse theory in the regular case (to be obtained in Sect. \ref{Invreg}) is that this necessary condition together with a trivial normalization is sufficient. 

The following two implications from (\ref{identM}) are going to be used later on. The first one is immediate.

\begin{corollary} For all $ x \in ( 0 , L ) $, $ z \in \C_+ $
\be\la{Jcontractive} \frac 1i ( M^* ( x , z ) J M ( x , z ) - J ) \ge 0 . \ee
\end{corollary}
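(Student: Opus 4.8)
The plan is to specialize the identity (\ref{identM}) to $\lambda = z$ and read off the sign of the right-hand side. Putting $\lambda = z$ in (\ref{identM}) gives
\[ M^* ( x , z ) J M ( x , z ) - J = \( z - \overline z \) \int_0^x M^* ( t , z ) \cH ( t ) M ( t , z ) \, \d t = 2 i \, \Im z \int_0^x M^* ( t , z ) \cH ( t ) M ( t , z ) \, \d t , \]
so that
\[ \frac 1i \( M^* ( x , z ) J M ( x , z ) - J \) = 2 \, \Im z \int_0^x M^* ( t , z ) \cH ( t ) M ( t , z ) \, \d t . \]

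The remaining point is that the right-hand side is a nonnegative Hermitian matrix. For a.e.\ $t$ one has $\cH ( t ) \ge 0$ by the standing assumption on the Hamiltonian, hence the congruence $M^* ( t , z ) \cH ( t ) M ( t , z ) \ge 0$ for a.e.\ $t$; integrating in $t$ preserves this. Finally, for $z \in \C_+$ the scalar $2 \, \Im z$ is strictly positive, and the product of a positive scalar with a nonnegative Hermitian matrix is again nonnegative. This is exactly (\ref{Jcontractive}).

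There is no genuine obstacle here: the corollary is immediate from (\ref{identM}), the only facts used being that a congruence of a positive semidefinite matrix is positive semidefinite and that the integral of a measurable family of nonnegative matrices is nonnegative.
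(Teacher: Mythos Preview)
Your proof is correct and is exactly the argument the paper has in mind: the text simply says the corollary is ``immediate'' from (\ref{identM}), and specializing $\lambda = z$ there together with $\cH(t)\ge 0$ and $\Im z > 0$ is precisely that immediate step.
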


The other one  is obtained by setting $ \lambda = 0 $ in (\ref{identM}) and differentiating it  in $ z $ at $ z = 0 $, 
\be \la{derM} J \dot M  ( x , 0 ) = \int_0^x \cH ( s ) \d s . \ee 
The r. h. s. is nonnegative and has trace equal to $ x $, hence

\begin{lemma}\la{trace}
$ \dot\Theta_- ( x , 0 ) < 0 $, $ \dot\Phi_+ ( x , 0 ) > 0 $, and
\[ \operatorname{tr} J \dot M  ( x , 0 ) \equiv \dot\Phi_+ ( x , 0 ) - \dot\Theta_- ( x , 0 ) = x . \]
\end{lemma}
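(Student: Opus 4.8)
The plan is to unpack the already-established corollary (\ref{derM}), namely $J\dot M(x,0)=\int_0^x\cH(s)\,\d s$, into matrix entries. Writing $M=[\Theta,\Phi]$ one has $M(x,0)=\left(\begin{smallmatrix}\Theta_+ & \Phi_+\\ \Theta_- & \Phi_-\end{smallmatrix}\right)$ at $z=0$, so $J\dot M(x,0)=\left(\begin{smallmatrix}-\dot\Theta_-(x,0) & -\dot\Phi_-(x,0)\\ \dot\Theta_+(x,0) & \dot\Phi_+(x,0)\end{smallmatrix}\right)$. Comparing with the right-hand side of (\ref{derM}) entrywise gives $-\dot\Theta_-(x,0)=\int_0^x\cH_{11}$, $\dot\Phi_+(x,0)=\int_0^x\cH_{22}$, and $\dot\Theta_+(x,0)=-\dot\Phi_-(x,0)=\int_0^x\cH_{12}$; the last identity is automatic from the symmetry of the right-hand side and serves as a consistency check.

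Next I would read off the sign and trace statements. Since $\cH(s)\ge 0$ a.e., the diagonal entries $\cH_{11}(s),\cH_{22}(s)$ are nonnegative a.e., whence $-\dot\Theta_-(x,0)\ge 0$ and $\dot\Phi_+(x,0)\ge 0$. Adding the two diagonal identities yields $\dot\Phi_+(x,0)-\dot\Theta_-(x,0)=\int_0^x\left(\cH_{11}+\cH_{22}\right)=\int_0^x\operatorname{tr}\cH(s)\,\d s=x$ by the standing normalization $\operatorname{tr}\cH\equiv 1$; this is exactly the displayed identity, and it also equals $\operatorname{tr}J\dot M(x,0)$ directly from the entrywise formula.

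For the strict inequalities I would argue by contradiction. If $-\dot\Theta_-(x,0)=0$, then $\int_0^x\cH_{11}=0$, so $\cH_{11}=0$ a.e. on $(0,x)$; positivity of $\cH$ forces $\cH_{12}=\cH_{21}=0$ a.e. there as well, i.e. $\cH(t)=\left(\begin{smallmatrix}0&0\\0&1\end{smallmatrix}\right)$ a.e. on $(0,x)$ — equivalently $(0,x)$ lies in an $\cH$-indivisible interval of direction $e=(0,1)^T$ at the left endpoint. This is precisely what the compatibility condition (L) excludes (and is also the degenerate case (\ref{nondegenerate}) barred from the start), a contradiction. The argument for $\dot\Phi_+(x,0)>0$ is the mirror image: equality would force $\cH\equiv\left(\begin{smallmatrix}1&0\\0&0\end{smallmatrix}\right)$ a.e. on an initial segment, an indivisible interval in the direction $(1,0)^T$, ruled out by the standing hypotheses on $\cH$ in this section.

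The one point that needs care — the ``hence'' in the text glosses over it — is the strictness: the purely qualitative fact ``$\cH\ge 0$ with trace $x$'' only gives $\ge$, and one must invoke the non-degeneracy of $\cH$ near $0$ to upgrade to $>$. For $\dot\Theta_-<0$ this is immediate from (L); for $\dot\Phi_+>0$ it requires the analogous exclusion of an initial indivisible interval in the complementary coordinate direction to be in force. Everything else is a routine transcription of (\ref{derM}).
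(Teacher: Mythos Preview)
Your proof is correct and is exactly the paper's own argument: the paper simply records that the right-hand side of (\ref{derM}) is nonnegative with trace $x$ and says ``hence'', without writing out the entries. You are right that the strictness is glossed over there --- for $\dot\Theta_-<0$ it follows from condition (L) as you argue, while for $\dot\Phi_+>0$ no hypothesis explicitly excluding an initial indivisible interval in the $(1,0)^T$ direction is in force at this point, so the strict sign is not fully justified by the text (though in every later use either the non-strict inequality suffices or the needed non-degeneracy holds).
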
 

\section{Inverse problem for polynomial HB functions}\la{invpol}

\subsection{Synopsis} In this section we solve inverse problems for finite dimensional de Branges spaces, or, in other words, for finite Jacobi matrices, by what can be described as a variant of the Stiltjes algorithm. The solution in this case is given by a canonical system defined on finitely many singular intervals. The main point in the argument is the factorization of the monodromy matrix for the system into the product of monodromy matrices for singular intervals (lemma \ref{factorsi}).

\subsection{}\la{invpol1} Let $ E $ be an HB polynomial of degree $ n $ such that $ E ( 0 ) = 1$, $ E ( t ) \ne 0 $ for $ t \in \R $. Then $ \HE $ is the space of polynomials of degree not greater than $ n - 1 $. Consider $ X \subset \HE $ - the subspace of polynomials of degree not greater than $ n - 2 $. $ X $ is an axiomatic de Branges space in the sense that it satisfies the conditions of theorem \ref{axiomatic}, hence by that theorem $ X = {\mathcal H} ( \hat{E}_1 ) $ for the function
\be\la{axE} \hat{E}_1 ( z ) = c ( z -  a ) K^X_{\overline a} ( z ) . \ee 
Here $ a \in \C_- $ is arbitrary, $ c $ is a constant depending on $a $ only, and $ K^X_{ \overline a } $ is the reproducing kernel of the space $ X $ at the point $ \overline a $. We claim that $ \hat{E}_1 $ is a combination of $ E $ and $ E^* $ with coefficients linear in $ z $ (see (\ref{E1prep}) below). Indeed, let $ P_X $ be the orthogonal projection on $ X $ in $ \HE $, \[ P_X = I - \len e \rin^{ -2 } \langle \cdot , e \rangle e , \] where $ e = E + e^{ i \alpha } E^* \perp X $ (see a remark after theorem \ref{axiomatic} if this is not obvious). On calculating,
\[ K^X_w  = P_X K_w  = K_w - \frac { \overline { e ( w ) }}{ \len e \rin^2 } e . \]
Plugging it in the definition of $ \hat{E}^1 $, 
\begin{eqnarray}\la{E1prep} & & \hat{E}_1 ( z ) =  \\ & & \mbox{const} \left[ \frac 1{2\pi i } \( E^* ( z ) E ( a) - E ( z ) E^* ( a ) \) - ( z - a ) \frac { e^* ( a ) }{ \len e \rin^2 } ( E ( z ) + e^{ i \alpha } E^* ( z ) ) \right]. \nonumber\end{eqnarray} 
Notice that $ \hat{E}_1 $ defined by (\ref{axE}) does not vanish on the real axis, for if it did, say, at a $ w \in \R $, then any element of $ {\mathcal H} ( \hat{E}_1 ) $ would vanish at $ w $, which is not the case,  hence one can define
\[  E_1 =  \frac {\hat{E}_1}{  \hat{E}_1 (0) } . \] On substituting, we get
\be\la{E1} E_1 ( z ) = ( cz + d ) E ( z ) + ( c_1 z + d_1 ) E^* ( z ) , \; \; c , d , c_1 , d_1 \in \C . \ee 

\subsection{singular interval - the direct problem}

Given a finite interval $ I = ( 0 , a )$ and a vector $ e \in \R^2 $ of unit norm, the monodromy matrix of the canonical system $ ( \cH, a ) $ with the Hamiltonian $ \cH ( x ) = \langle \cdot , e \rangle e $, $ x \in I $, is easily verified to be $ M ( \lambda ) = I - \lambda a \langle \cdot , e \rangle J e $, that is, 
\be M ( \lambda ) = I + \lambda \( \begin{array}{cc} a e_- e_+ & a e_-^2 \cr  - a e_+^2 & - a e_- e_+ \end{array} \) \la{monodrsing}  . \ee

The matrix in the second term in the r. h. s., $ R $, obeys $ R^2 = 0 $, $ R_{ 12} \ge 0 $, $ R_{ 21 } \le 0 $. Conversely, for any nonzero matrix $ R $ satisfying these three properties there exists an $ a > 0 $ and $ e \in \R^2 $, $ \len e \rin = 1 $, such that $ I + \lambda R $ is a monodromy matrix for the corresponding canonical system - it is enough to let $ a = R_{12} - R_{ 21} $, $ e_-  = \sqrt{ R_{12} / a } $, $ e_+  = ( \operatorname{sign} R_{11} ) \sqrt{ - R_{21} / a } $.

\subsection{back}\la{invpol2}
With HB polynomials $ E $ and $ E_1 $ defined above let 
\be\la{ThetaE} \Theta = \frac 12 \( \begin{array}{c} E + E^* \cr \frac 1i ( E - E^* ) \end{array} \), \ee \[ \Theta_1 = \frac 12 \( \begin{array}{c} E_1 + E^*_1 \cr \frac 1i ( E_1 - E^*_1 ) \end{array} \) , \] 
so $ E = \Theta_+ + i \Theta_- $, $ E_1 = \Theta_+^1 + i \Theta_-^1 $.
Then (\ref{E1}) says that 
\be\la{TT1} \Theta_1 ( \lambda ) = ( \Lambda_0 + \lambda \Lambda_1 ) \Theta ( \lambda ) \ee for some $ 2\times 2 $ constant matrices $ \Lambda_0 $, $ \Lambda_1 $ with real entries. We are going to show that $ \Theta = M \Psi $ where $ M $ is the monodromy matrix corresponding to a singular interval and $ \Psi $ corresponds to an HB function in the same way as $ \Theta $ corresponds to $ E $.
 
\begin{lemma}\la{factorsi} Let $ n \ge 2 $. Then

$1^\circ $. $ \Lambda_0 $ is invertible.

$2^\circ $. $ S \colon = \Lambda_0^{ -1 } \Lambda_1 = \( \begin{array}{cc} s & s_1 \cr s_2 & -s \end{array} \) $, where $ s^2 + s_1 s_2 =0 $, $ s_2 \ge 0 $, $ s_1 \le 0 $. 

$3^\circ $.  Let $ \Psi =  \Lambda_0^{ -1 } \Theta_1 $. Then 
\be\la{factout} \Theta ( \lambda ) = ( I - \lambda S ) \Psi ( \lambda ),\ee 
and $ \Psi_+ + i \Psi_-  $ is an Hermite - Biehler function. 
\end{lemma}

\begin{proof}
$ 1^\circ $. $ \Theta_1 $ and $ \Theta $ are polynomials of degrees $ n-1 $ and $ n $, resp. Let $ \theta_n , \theta_{ n-1 } \in \R^2 $ be the coefficients of the vector polynomial $ \Theta $ at $ \lambda^n $, $ \lambda^{ n-1 } $, resp., hence $ \theta_n \ne 0 $. Then (see (\ref{TT1}))
\begin{eqnarray} \Lambda_1 \theta_n  & = & 0 , \la{L0} \\ \Lambda_1 \theta_{ n-1 } + \Lambda_0 \theta_n & = & 0 . \la{L1} \end{eqnarray}
First we check that the two terms in the second identity do not vanish separately. Indeed, suppose $ \Lambda_1 \theta_{ n-1 } = \Lambda_0 \theta_n = 0 $, which implies that $ \theta_n $ belongs to kernels of both $ \Lambda_0 $ and $ \Lambda_1 $. This means that
\[ \Lambda_0 + \lambda \Lambda_1 = \langle \cdot , g \rangle \cdot \left[ \textrm{linear vector function of } \lambda \right] , \]
where $ g \in \R^2 $ is a non-zero vector orthogonal to $ \theta_n $. Thus, 
\[ \Theta_1 (\lambda ) = \( \Lambda_0 + \lambda \Lambda_1 \) \Theta ( \lambda ) = p( \lambda ) \cdot \left[ \textrm{linear function of } \lambda \right] , \] where $ p ( \lambda ) $ is a scalar polynomial, $ p ( \lambda ) = \langle \Theta ( \lambda ) , g \rangle $. The vector function $ \Theta_1 $ does not vanish since $ E_1 $ has no real zeroes, hence $ p ( \lambda ) $ is a non-zero constant, that is, all coefficients of the polynomial $ \Theta $ are orthogonal to $ g $, except at most the constant term. It follows that $ \Theta ( \lambda ) = q ( \lambda ) \theta_n + \Theta ( 0 ) $ for a scalar polynomial $ q ( \lambda ) $. Since $ \Theta ( 0 ) = \( 1 , 0 \)^T $, we infer that 
\[ \frac{\Theta_+}{\Theta_-} = \textrm{a real constant } + \frac 1{\theta^-_n q } . \] 
The l. h. s. is a Herglotz function, which implies that $ q $ is a polynomial of degree at most $ 1 $, and the same is true of $ \Theta $, a contradiction ($ n \ge 2 $). Thus $ - \Lambda_1 \theta_{ n-1 } = \Lambda_0 \theta_n \ne 0 $, so $ \dim \Ran \Lambda_1 = 1 $ and $ \Ran \Lambda_1 \subset \Ran \Lambda_0 $. If $ \Ran \Lambda_1 = \Ran \Lambda_0 $ then $ \Theta_1 = p e $ where $ p $ is a scalar real polynomial and $ e \in \R^2 $ is a constant vector. The latter contradicts the HB property of $ E_1 $. We obtain that $ \Ran \Lambda_1 $ is contained in $ \Ran \Lambda_0 $ as a proper subspace. $ 1^\circ $ is proved.  

$ 2^\circ $. (\ref{L0}) and  (\ref{L1}) say that $ S \theta_n = 0 $, $ S \theta_{ n-1 } = - \theta_n $ hence $ \det S = \operatorname{tr} S = 0 $, $ \( I + \lambda S \)^{ -1 } = I - \lambda S $, and the identity (\ref{factout}) is immediate from solving  (\ref{TT1}) with respect to $ \Theta $. To establish $ 2^\circ $, it remains to show that $ s_2 \ge 0 $.  

For all $ \lambda \in \C_+ $ by the HB property of the function $ E $ we have 
\[ 
0 < \frac 1i \llangle J \Theta , \Theta \rrangle =  \frac 1i \llangle ( J - \lambda JS ) \Psi , ( I - \lambda S ) \Psi \rrangle = \frac 1i \llangle J \Psi , \Psi \rrangle + \frac {\overline \lambda - \lambda }i \llangle JS \Psi , \Psi \rrangle = \dots \]
Here we took into account that $ S^* J S = 0 $ for $ S $ is rank 1 and has real entries, and that $ JS $ is selfadjoint. The first term in the r. h. s. is $ O ( \lambda^{ 2 ( n - 1 ) } ) $ at large $ | \lambda | $, thus one can continue letting $ \Psi_{max} $ to be the coefficient at $ \lambda^{ n-1 } $ in the polynomial $ \Psi $, 
\[ \dots = - 2 \Im \lambda \left| \lambda \right|^{ 2 ( n - 1 ) } \llangle JS \Psi_{max} , \Psi_{max} \rrangle  + o \( \lambda^{ 2 n - 1 } \) , \; \lambda = i \tau , \tau \to + \infty . \] 
We infer that $ \llangle JS \Psi_{max} , \Psi_{max} \rrangle \le 0 $. Notice that $ JS $ is a selfadjoint matrix with zero determinant, hence either $ JS \le 0 $, or $ JS \ge 0 $, hence its quadratic form vanishes on $ \Psi_{max} $ iff $ J S \Psi_{max} = 0 $, the latter not being the case since $ S \Psi_{ max} = - \theta_n $. Thus, $ \llangle JS \Psi_{max} , \Psi_{max} \rrangle < 0 $, which means $ JS \le 0 $, that is, $ s_2 \ge 0 $, $ s_1 \le 0 $.  

$ 3^\circ $. As (\ref{factout}) has already been established, it remains to check that $ \Psi_+ + i \Psi_- $ is HB. Notice that either $ - \Psi_+ / \Psi_- $ or $ \Psi_+ / \Psi_- $ is a non-constant Herglotz function, depending on the sign of $ \det \Lambda_0 $, since $ \Theta_1^+ / \Theta_1^- $ is. $ 3^\circ $ will be proven if we show that $ \Psi_+ /\Psi_- $ is Herglotz. Consider the Jordan form of $ S $, 
\[ S = W \( \begin{array}  {cc} 0 & 1 \cr 0 & 0 \end{array} \) W^{ -1 } , \] 
and choose $ W $ to have real entries (this is possible since $ S $ has ones). Then, 
(\ref{TT1}) multiplied on the left by $ W^{ -1 } \Lambda_0^{ - 1} $ is
\[ W^{ -1 } \Psi ( \lambda ) = \( \begin{array}  {cc} 1 & \lambda \cr 0 & 1 \end{array} \) W^{ -1 } \Theta ( \lambda ) ,\] 
which implies that
\[ \frac{\( W^{ -1 } \Psi  \)_+}{\( W^{ -1 } \Psi  \)_- } =  \frac{\( W^{ -1 } \Theta \)_+}{\( W^{ -1 } \Theta \)_- } + \lambda . \] 
Now, if $ \det W > 0 $ then the r .h. s. is Herglotz, hence so is $ \Psi_+ /\Psi_- $. If $ \det W < 0 $, arguing by contradiction, assume that $ - \Psi_+ / \Psi_- $ is Herglotz. Then the l. h. s. is Herglotz, the first term in the r. h. s. is "minus - Herglotz". Gathering the ratio terms in the l. h. s. we obtain that a sum of two Herglotz functions equals to $ \lambda $.  This means that either function is linear which is only possible if $ n = 1 $, a contradiction, hence $ \Psi_+ / \Psi_- $ is Herglotz in any case.     
\end{proof}

By construction the function $ E_\Psi = \Psi_+ + i \Psi_- $ is an HB polynomial of degree $ n - 1 $ having no real zeroes, and $ E_\Psi ( 0 ) = 1 $ since $ \Psi ( 0 ) = \Theta ( 0 ) = \( \begin{array}{c} 1 \cr 0 \end{array} \)$, hence the lemma can be applied to $ E_\Psi $ in the place of $ E $ if $ n > 2 $. Applying it $ n - 1 $ times we find that 
\[ \Theta ( \lambda ) = ( I + \lambda R_1 ) \dots ( I + \lambda R_{ n-1 } ) \Theta_{ n-1 } ( \lambda ) , \] 
where each of the matrices $ R_j $ has the form $ \( \begin{array}{cc} \rho & \nu \cr \tau & - \rho \end{array} \) $, $ \rho^2 + \nu \tau = 0 $, $ \tau \le 0 $, $ \nu \ge 0 $, and $ \Theta_{ n-1 } $ is a linear function such that $ \Theta_{ n -1 }  ( 0 ) = \( 1, 0 \)^T $, and $ \Theta_{n-1}^+ + i \Theta_{ n-1}^- $ is an HB function. It follows that $ \Theta_{ n-1 } = \Bigl( \! \begin{array}{c} 1 \cr 0 \end{array} \! \Bigr) + \lambda \Bigl( \! \begin{array}{c} a \cr b \end{array} \! \Bigr) $, with $ b < 0 $, that is, 
\[ \Theta_{ n-1 } = \( I + \lambda \( \begin{array}{cc} a & -a^2/b \cr b & -a \end{array} \) \) \Bigl( \! \begin{array}{c} 1 \cr 0 \end{array} \! \Bigr) . \] 
We thus have established the following

\begin{theorem}\la{poll}
Let $ E $ be a polynomial HB function having no real zeroes and such that $ E ( 0 ) = 1 $. Then ($ n = \deg E $) 
\[ \Theta ( \lambda ) = M_1 ( \lambda ) \dots M_n ( \lambda ) \Bigl( \! \begin{array}{c} 1 \cr 0 \end{array} \! \Bigr) , \] where $ M_j = I + \lambda R_j $, $ \det R_j = \operatorname{tr} R_j =0 $, $ R_{12} \ge 0 $, $ R_{21} \le 0 $, and $ \Theta $ is defined by (\ref{ThetaE}).
\end{theorem}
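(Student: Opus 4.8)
The plan is to argue by induction on $n = \deg E$, peeling off one factor $M_1 = I + \lambda R_1$ at a time by means of Lemma~\ref{factorsi} and lowering the degree by one at each step.

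\emph{Base case $n = 1$.} An HB polynomial with $E(0) = 1$ has the form $E(\lambda) = 1 + (\alpha + i\beta)\lambda$ with $\alpha,\beta \in \R$, so by (\ref{ThetaE}) $\Theta(\lambda) = (1,0)^T + \lambda(\alpha,\beta)^T$. The elementary identity $|E(\lambda)|^2 - |E(\bar\lambda)|^2 = 4\,\Im\bigl(\Theta_+(\lambda)\overline{\Theta_-(\lambda)}\bigr)$ (cf.\ the proof of Proposition~\ref{HB}) equals $-4\beta\,\Im\lambda$ in this case, so the HB property forces $\beta < 0$. Then $\Theta(\lambda) = (I + \lambda R_1)(1,0)^T$ with
\[ R_1 = \left( \begin{array}{cc} \alpha & -\alpha^2/\beta \\ \beta & -\alpha \end{array} \right) , \]
and one reads off $\operatorname{tr} R_1 = \det R_1 = 0$, $(R_1)_{12} = -\alpha^2/\beta \ge 0$, $(R_1)_{21} = \beta \le 0$, which is the assertion for $n = 1$.

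\emph{Inductive step.} Assume the statement for degree $n-1$ and let $\deg E = n \ge 2$. First I would carry out the construction of $E_1$ in \S\ref{invpol1} to produce the relation (\ref{E1}), hence (\ref{TT1}). Lemma~\ref{factorsi} then supplies the matrix $S$ of its part $2^\circ$ (so $\operatorname{tr} S = \det S = 0$, $s_2 \ge 0$, $s_1 \le 0$) together with the factorization (\ref{factout}), $\Theta(\lambda) = (I - \lambda S)\Psi(\lambda)$, where $\Psi = \Lambda_0^{-1}\Theta_1$ and $E_\Psi := \Psi_+ + i\Psi_-$ is HB. Setting $M_1 := I - \lambda S = I + \lambda R_1$ with $R_1 := -S$, the matrix $R_1$ satisfies $\operatorname{tr} R_1 = \det R_1 = 0$, $(R_1)_{12} = -s_1 \ge 0$, $(R_1)_{21} = -s_2 \le 0$. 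It then remains to verify that $E_\Psi$ meets the hypotheses of the inductive statement: it is a polynomial of degree exactly $n-1$ because $\Lambda_0$ is invertible (part $1^\circ$ of Lemma~\ref{factorsi}) and $\deg\Theta_1 = n-1$; it satisfies $E_\Psi(0) = 1$ because evaluating (\ref{factout}) at $\lambda = 0$ gives $\Psi(0) = \Theta(0) = (1,0)^T$; and it has no real zeroes, since a real zero $t$ would force $\Psi_+(t) = \Psi_-(t) = 0$, i.e.\ $\Psi(t) = 0$, impossible because $\Theta_1$ has no zeroes and $\Lambda_0$ is invertible. Applying the inductive hypothesis to $E_\Psi$ yields $\Psi(\lambda) = M_2(\lambda)\cdots M_n(\lambda)(1,0)^T$ with each $M_j$ of the stated form, and left-multiplication by $M_1$ closes the induction (when $n = 2$, $E_\Psi$ has degree $1$ and one applies the base case directly).

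I do not anticipate a genuine obstacle here: the substantive work — the existence of $S$ and, above all, the sign $s_2 \ge 0$, obtained through the Herglotz-function and leading-coefficient estimate — is already contained in Lemma~\ref{factorsi}, so the present theorem is essentially the bookkeeping that assembles the iterated factorization. The only spots that need a little care are the base case $n = 1$ (reading off the sign of $\beta$ from the HB condition) and checking that the three properties used at each stage, namely $E(0) = 1$, absence of real zeroes, and the drop in degree, all propagate from $E$ to $E_\Psi$, so that the induction genuinely closes.
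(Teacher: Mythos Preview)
Your proof is correct and follows essentially the same approach as the paper: the paper iterates Lemma~\ref{factorsi} $n-1$ times to peel off the factors $M_1,\dots,M_{n-1}$ and then treats the remaining linear $\Theta_{n-1}$ directly, which is exactly your inductive step plus your base case, just organized as a descent rather than a formal induction. The verifications you flag (that $E_\Psi(0)=1$, that $E_\Psi$ has no real zeroes, and that $\deg E_\Psi = n-1$) are precisely the ones the paper records after the lemma in order to re-apply it.
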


Comparing this theorem with a remark after (\ref{monodrsing}) we obtain the following

\begin{corollary}\label{solfindim}
For any $ E $ satisfying the assumption of the theorem there exists a canonical system $ ( \cH , L ) $, $ L < \infty $, such that $ ( 0, L ) $ is a union of finitely many $ \cH $-indivisible intervals, and $ E_L  = E $.
\end{corollary}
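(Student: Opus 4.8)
The plan is to convert the factorization supplied by Theorem \ref{poll} into a canonical system, reading each factor $M_j = I + \lambda R_j$ as the monodromy matrix of a single indivisible interval and then concatenating these intervals.

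First I would apply Theorem \ref{poll} to the given $E$ to write $\Theta(\lambda) = M_1(\lambda)\cdots M_n(\lambda)(1,0)^T$, where $\Theta$ is the vector polynomial (\ref{ThetaE}) attached to $E$, $M_j(\lambda) = I + \lambda R_j$, and $\det R_j = \operatorname{tr} R_j = 0$, $(R_j)_{12}\ge 0$, $(R_j)_{21}\le 0$. Each $R_j$ is nonzero: for the factors coming from the recursion this is because $\Ran\Lambda_1$ is one-dimensional in Lemma \ref{factorsi}, while the final linear factor is visibly nonzero. Then, by the converse statement in the remark after (\ref{monodrsing}), for each $j$ there are a length $a_j := (R_j)_{12}-(R_j)_{21} > 0$ and a unit vector $e_j\in\R^2$ — both given explicitly there — such that $M_j(\lambda)$ is the monodromy matrix of the canonical system on $(0,a_j)$ with Hamiltonian $x\mapsto\langle\cdot,e_j\rangle e_j$, which has unit trace.

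Next I would set $L := a_1+\dots+a_n$ and cut $(0,L)$ into $n$ consecutive subintervals, ordered so that the chain rule for fundamental solutions reproduces the factors in the order $M_1,\dots,M_n$ — that is, from the right: the leftmost subinterval has length $a_n$ and carries $e_n$, the next has length $a_{n-1}$ and carries $e_{n-1}$, and so on — and on each subinterval put the appropriate translate of $\langle\cdot,e_j\rangle e_j$. This $\cH$ is bounded, nonnegative, and of unit trace a.e., so $(\cH,L)$ is a bona fide normalized canonical system on a finite interval. By the chain rule (Notation and Basics) its monodromy matrix is the ordered product of the monodromy matrices of the pieces, which by the chosen order is $M_1(\lambda)\cdots M_n(\lambda)$; applying it to $\Theta(0)=(1,0)^T$ gives $\Theta(L,\lambda)=\Theta(\lambda)$, hence $E_L(\lambda)=\Theta_+(L,\lambda)+i\Theta_-(L,\lambda)=\Theta_+(\lambda)+i\Theta_-(\lambda)=E(\lambda)$. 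Finally, $\cH$ has the form (\ref{singint}) on each subinterval, so after merging any two adjacent pieces whose direction vectors coincide, $(0,L)$ becomes, up to the finitely many cutting points, a union of finitely many maximal $\cH$-indivisible intervals, as required.

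I expect no genuine obstacle: Theorem \ref{poll} does all the work, and what is left is the bookkeeping of matching the left-to-right order of the glued intervals with the order in which the chain rule emits the factors. (One could also note in passing that $(\cH,L)$ does not satisfy the degenerate condition (\ref{nondegenerate}) at $x=L$, but this is automatic, since $E=E_L$ is by hypothesis a genuine HB function.)
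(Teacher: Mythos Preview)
Your proposal is correct and is exactly the approach the paper takes: the corollary is stated immediately after Theorem \ref{poll} with the one-line justification ``Comparing this theorem with a remark after (\ref{monodrsing}) we obtain the following,'' and you have simply spelled out that comparison --- the conversion of each factor $I+\lambda R_j$ into a singular interval via the converse remark, the concatenation, and the chain-rule ordering --- in full. Your extra care in checking that each $R_j$ is nonzero and in handling the maximality required by the word ``indivisible'' is welcome but not strictly needed, as the paper treats the corollary as immediate.
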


Notice that the compatibility condition (L) is satisfied for the constructed system, for $ b \ne 0 $. 

\subsection{inverse problem for a measure supported on finitely many points}\la{fsupp}\footnote{The material of this subsection is not used until section \ref{invsing}.}
From the viewpoint of mathematical physics the inverse problem just studied is not the most natural - the function $ E $  is never given initially. The natural one is - given a measure supported by finitely many points and an $ \alpha $, find a canonical system such that the measure is the spectral measure of operator $ D_\alpha $ in the sense given by proposition \ref{Dalpha}. The following theorem essentially solves this latter problem. The final result is formulated as remark at the end of this section.

\begin{theorem}\la{finmes} Let $ \mu $ be a measure supported on finitely many points and such that $ \mu (\{ 0 \} ) \ne 0 $. Then there exists a canonical system $ ( \cH , L ) $ such that for all $ z \in\C_+ $
\[ \Im \( \frac{ \Phi_- ( L , \lambda ) }{ \Theta_- ( L , \lambda ) } \) = \Im \lambda \int \frac{ d \mu ( t ) }{ t^2 + \( \Re \lambda \)^2 } + c \Im \lambda , \] $ c $ being a real nonnegative constant.  \end{theorem}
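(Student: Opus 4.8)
The plan is to build, directly out of $\mu$, an Hermite–Biehler polynomial $E$ with $E(0)=1$ to which Corollary \ref{solfindim} applies, and then to extract the asserted identity from the single relation $\det M(L,\cdot)\equiv 1$ together with the normalisation forced by $\mu(\{0\})\neq 0$.

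Write $\mu=\sum_{k=0}^{N}m_k\delta_{t_k}$ with $t_0=0$, the $t_k\in\R$ distinct and $m_k=\mu(\{t_k\})>0$; the hypothesis $\mu(\{0\})\neq 0$ is exactly $m_0>0$. First I would form the rational Herglotz function $m(\lambda)=-\sum_{k=0}^N m_k/(\lambda-t_k)=P(\lambda)/Q(\lambda)$ in lowest terms, so $Q(\lambda)=\prod_{k=0}^N(\lambda-t_k)$ has degree $N+1$ with $Q(0)=0$, and $\deg P=N$; the point is that $P(0)=-m_0\prod_{j\neq 0}(-t_j)\neq 0$ \emph{because} $m_0\neq 0$. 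Put $A:=P(0)^{-1}$, $\Theta_-(\lambda):=AQ(\lambda)$, $\Phi_-(\lambda):=AP(\lambda)$, so $\Theta_-(0)=0$, $\Phi_-(0)=1$, $\Phi_-/\Theta_-=m$, and let $\Theta_+$ be the polynomial of degree $\le N$ with $\Theta_+(t_k)=\Phi_-(t_k)^{-1}$ (the values being finite and nonzero since $\Phi_-(t_k)=A(-m_k)Q'(t_k)\neq 0$). A one-line Lagrange computation using $t_0=0$ gives $\Theta_+(0)=\Phi_-(0)^{-1}=1$. Set $E:=\Theta_++i\Theta_-$. Then $E(0)=1$, $E$ has no real zero (the zeros of $\Theta_-$ are precisely the $t_k$, where $\Theta_+\neq 0$), and $E$ is HB: indeed $\Theta_+/\Theta_-$ is rational with only simple poles, all at the $t_k$, with residue $\Theta_+(t_k)/\Theta_-'(t_k)=-(A^2m_kQ'(t_k)^2)^{-1}<0$, and it vanishes at $\infty$ since $\deg\Theta_+<\deg\Theta_-$; hence $\Theta_+/\Theta_-=\sum_k r_k/(\lambda-t_k)$ with all $r_k<0$, so it maps $\C_+$ into $\C_+$, and $|E(z)|^2-|E(\bar z)|^2=4\,\Im\!\left(\Theta_+(z)\overline{\Theta_-(z)}\right)=4|\Theta_-(z)|^2\,\Im(\Theta_+/\Theta_-)(z)>0$ for $z\in\C_+$.

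By Corollary \ref{solfindim} there is a canonical system $(\cH,L)$, $L<\infty$, with $(0,L)$ a finite union of $\cH$-indivisible intervals and $E_L=E$; in particular the system's solution $\Theta(\cdot,\lambda)$ with $\Theta(0)=(1,0)^T$ has $\Theta_\pm(L,\cdot)=\Theta_\pm$. Now $\det M(L,\cdot)\equiv 1$ reads $\Theta_+(L,\lambda)\Phi_-(L,\lambda)-\Theta_-(L,\lambda)\Phi_+(L,\lambda)\equiv 1$, so at the zeros $t_k$ of $\Theta_-(L,\cdot)$ one gets $\Phi_-(L,t_k)=\Theta_+(t_k)^{-1}=AP(t_k)$. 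Since (by Theorem \ref{poll}) $M(L,\cdot)$ is a product of $N+1$ linear factors, $\deg\Phi_-(L,\cdot)\le N+1$, so the real polynomial $\Phi_-(L,\cdot)-AP$ of degree $\le N+1$ vanishing at the $N+1$ points $t_k$ equals $c_0Q$ for some real $c_0$; hence $\Phi_-(L,\lambda)/\Theta_-(L,\lambda)=m(\lambda)+c_0/A$. Taking imaginary parts annihilates the real constant and produces the imaginary part of the Herglotz representation of $m$, i.e. the right–hand side of the asserted identity with $c=0$. For a prescribed $c>0$ one prepends to $(\cH,L)$ an $\cH$-indivisible interval $(0,c)$ with direction $(0,1)^T$: by the chain rule this leaves $\Theta(L,\cdot)$, hence $\Theta_-(L,\cdot)$, unchanged and replaces $\Phi_-(L,\lambda)$ by $c\lambda\,\Theta_-(L,\lambda)+\Phi_-(L,\lambda)$, adding $c\,\Im\lambda$ to the expression (this enlarged system no longer satisfies (L), which is irrelevant for the present statement).

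The only genuinely substantive point is the Hermite–Biehler verification for $E$ — equivalently, that the prescribed $\mu$ is \emph{consistent} with being a spectral measure — but the construction above reduces it to the elementary fact that $\Theta_+/\Theta_-$ has negative residues and decays at infinity; everything else is bookkeeping with $\det M\equiv 1$ and with the normalisations forced by $t_0=0$ and $m_0\neq 0$. (Alternatively, one could realise $L^2(\R,d\mu)$ as a de Branges space $\HE$ via Theorem \ref{axiomatic}, choose the unimodular rotation of $E$ making $E_-$ vanish on $\operatorname{supp}\mu$, apply Corollary \ref{solfindim}, and match $\mu$ with the spectral measure of $D_{\pi/2}$ through Theorem \ref{spmaptheo} and Proposition \ref{Dalpha}; the residue computation via (\ref{identM}) then again gives the formula. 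I would, however, favour the explicit route since it is self-contained.)
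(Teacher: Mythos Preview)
Your proof is correct and follows essentially the same route as the paper's: construct an HB polynomial $E$ from $\mu$ with $E(0)=1$, apply Corollary \ref{solfindim}, and then use $\det M(L,\cdot)\equiv 1$ to match the residues of $\Phi_-(L,\cdot)/\Theta_-(L,\cdot)$ with the masses $m_k$. The only differences are cosmetic --- you define $\Theta_+$ by Lagrange interpolation and pin down $\Phi_-(L,\cdot)$ via a polynomial degree bound, whereas the paper writes the Herglotz representation of $\Phi_-/\Theta_-$ and reads off residues --- and your final paragraph constructing systems for a prescribed $c>0$ is correct but unnecessary, since the statement only asks for existence with \emph{some} $c\ge 0$ (your base construction already yields $c=0$).
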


\textit{Heurisitics}. Assuming that the theorem holds, that is, 
\[ \frac{ \Phi_- ( L , z ) }{ \Theta_- ( L , z ) }= \sum_{ t_j \in \operatorname{supp} \mu } \frac { \mu ( \{ t_j \} ) }{ t_j - z }  +  c z  + d ,  \; z \in \C_+ ,\]
\[ \mu  ( \{ t_j \} )= - \frac{\Phi_- ( t_j )}{ \dot{\Theta}_- ( t_j ) } , \]   let us try to calculate the system in terms of the measure. Since $ \Phi_- ( L , \cdot ) $ and $ \Theta_- ( L , \cdot ) $ do not have common zeroes, the set of zeroes of $ \Theta_- $ coincides with $ \operatorname{supp} \mu $. This means that $ \Theta_- = \textrm{const} \prod_{ t_j \in \operatorname{supp} \mu } ( z - t_j ) $. Since $ 0 \in \operatorname{supp} \mu $, the constant in front of the product is $ \dot{\Theta}_- ( 0) $, and $ \mu ( 
\{ 0 \} ) = - 1 / \dot{\Theta}_- ( 0) $ on account of $ \Phi_- ( 0) = 1 $. Thus $ \Theta_- $ is defined completely, 
\be\la{tminus} \Theta_- ( z) = - \frac 1{\mu_0 } \prod_{ t_j \in \operatorname{supp} \mu } ( z - t_j ) . \ee
Then \[ \frac{\Theta_+}{\Theta_-} = \sum_{ t_j \in \operatorname{supp} \mu } \frac{ \Theta_+ ( t_j ) }{ \dot{\Theta}_- ( t_j ) } \frac 1{ z - t_j } + c_1 z + d_1 , \; z \in \C_+ , \] for some $ c_1 \ge 0 $, $ d_1 \in \R $. The monodromy matrix $ [ \Theta , \Phi ] $ has unit determinant, hence $ \Theta_+ ( t_j ) \Phi_- ( t_j ) = 1 $. This allows to express $ \Theta_+ ( t_j ) $ via $ \mu ( \{ t_j \} ) $ and already known $ \Theta_- $ to find that $ \Theta_+ $ is defined up to the constants $ c_1 $ and $ d_1 $,
\be\la{tplus} \Theta_+ ( z ) =\(  - \sum_{ t_j \in \operatorname{supp} \mu } \frac 1{  \mu ( \{ t_j \} ) \dot{\Theta}_-^2 ( t_j ) } \frac 1{ z - t_j } + c_1 z + d_1 \) \Theta_- ( z )  . \ee
With $ \Theta_\pm $ defined from the measure one can use the solution of the inverse problem for the function $ E = \Theta_+ + i \Theta_- $ obtained in the previous subsection to recover the system.

\begin{proof} Let us \textit{define} $ \Theta_\mp  $ by (\ref{tminus}) and (\ref{tplus}), respectively, letting $ c_1 = 0 $ and choosing an arbitrary $ d_1 \in \R $. Since $ \mu ( \{ t_j \} ) > 0 $, the function $ \Theta_+ / \Theta_- $ is Herglotz in $ \C_+ $ and non-constant, hence the polynomial $ E \colon = \Theta_+ + i \Theta_- $ is an HB function. By construction, $ E( 0 ) = 1 $. By corollary \ref{solfindim} there exists a canonical system $ ( \cH , L ) $, $ L < \infty $, such that $  \Theta ( L , z ) \equiv \( \! \begin{array}{c} \Theta_+ ( z) \cr \Theta_- ( z ) \end{array} \!\) $. Let $ \Phi ( x , \lambda ) $ be the solution of this system with $ \Phi ( 0 , \lambda ) = \( \! \begin{array}{c} 0 \cr 1 \end{array} \! \) $. Then the function  $ \Phi_- ( L , z )  / \Theta_- ( L , z ) $ is Herglotz (see exercise \ref{columnrow} and the text following it). Consider its Herglotz representation,
\[ \frac{ \Phi_- ( L , z )  }{ \Theta_- ( L , z ) } = \sum \underbrace{ - \frac{
 \Phi_- ( L , t_j ) }{ \dot{\Theta}_- ( L,  t_j ) } } \frac 1{ t_j - z } + {\textrm{a linear function}} . \]  
Since the monodromy matrix of a canonical system has unit determinant, the underbraced expression equals to $ - \( \Theta_+ ( L , t_j ) \dot{\Theta}_- ( L,  t_j ) \)^{ -1 }  $, which is $ \mu ( \{ t_j \} ) $ by construction. The theorem follows.
\end{proof}

Notice that the system constructed in the proof of the theorem satisfies compatibility conditions (L) and (R). Indeed, it satisfies (L) because the system constructed in theorem \ref{poll} and corollary \ref{solfindim} does. The condition (R) is satisfied, for if it were not, then the monodromy matrix, $ M ( z ) $, of the constructed system would be a product of the monodromy matrix for the singular interval  whose right end is $ L $ and $ N ( z ) $, the monodromy matrix of the system $ ( \cH , L^\prime ) $ obtained by cutting off the said singular interval. The former is $ \begin{pmatrix} 1 & \nu z \cr 0 & 1 \end{pmatrix} $ (condition (R) being violated means the monodromy matrix for this interval is (\ref{monodrsing}) with $ e_+ =  0 $, $ e_- = 1 $), hence  
\[ M ( z ) =   \begin{pmatrix} 1 & \nu z \cr 0 & 1 \end{pmatrix}  N ( z ) \] 
with a $ \nu > 0 $. It would follow that $ \Theta_+ ( L , z ) /  \Theta_- ( L , z ) = \textit{ a Herglotz function} +  \nu z $, which is a contradiction because $ \Theta_+ ( L , z ) /  \Theta_- ( L , z ) = o ( |z| ) $ as $ |z |\to \infty $ by construction (we let $ c_1 = 0 $ in the proof of theorem \ref{finmes}). Also, $  - \Theta_+ ( L , t_j ) \dot{\Theta}_- ( L,  t_j ) = \len \Theta ( \cdot , t_j ) \rin^2 $ by (\ref{repker}) with $ z = \lambda = t_j $, so $ \mu ( \{ t_j \} ) = \len \Theta ( \cdot , t_j ) \rin^{ -2 } $, and it follows that 

\begin{remarknonumb} For any measure $ \mu $ satisfying the assumptions of theorem \ref{finmes} there exists a canonical system $ ( \cH , L ) $ satisfying the compatibility conditions (L) and (R) and such that the selfadjoint operator $ D_{ \pi/2 }$ is defined correctly, and $ \mu $ coincides with its spectral measure in the sense that the measure $ \mu_{ \pi/2 } $ from proposition \ref{Dalpha} coincides with $ \mu $.
\end{remarknonumb}

\section{Inverse problem - regular case}\la{Invreg}

Let $ E $ be a regular HB function having no real zeroes, $ E ( 0 ) = 1 $, and let $ \Theta $ be defined by (\ref{ThetaE}). The inverse problem is to construct a canonical system $ ( \cH , L ) $, $ L < \infty $, such that $ \Theta = \Theta_L $.

The structure of the argument is as follows. On the first step, we approximate the required canonical system by a system made of finitely many singular intervals. To do so, we use a polynomial approximation to $ \Theta $ constructed by cutting off the Weierstra\ss\  product for $ \Theta_- $. The result of this step is given by proposition \ref{altern}, where $ \Theta $ is represented as the action of the monodromy matrix of a canonical system on an HB vector, $ G_h $. After this is done, the main problem is that we do not know if the lengths, $ L_N $, of the systems constructed are bounded above. To circumvent this difficulty we notice that (\ref{mh}) is precisely the condition that the de Branges space constructed from the function $ G_h^+ + i G^-_h $ is a subspace in $ \HE $ (Theorem \ref{isometry}). By the abstract theory of de Branges spaces, a de Branges subspace of a regular de Branges space is regular itself (Theorem \ref{isomembed}). Thus we will have established that $ G_h^+ + i G^-_h $ is a regular HB function. On the next step, we reconstruct a matrix satisfying properties of a monodromy matrix from its first column (Appendix I). Applied to $ G_h $ and $ \Theta $, this assertion allows to conclude that $ L_N $ are indeed bounded above, and thus the limit of the polynomial approximation solves the inverse problem.    

Let $ t_j $, $ j \ge 0 $, $ t_0 = 0 $, be the set of zeroes of $ \Theta_- $ ordered by $ | t_j | \le | t_{ j+1 } | $. The function $ \Theta_- $ is of finite exponential type by the Krein theorem, hence for each $ N > 0 $ we have \[ \Theta_- ( z ) = \Theta_-^N ( z ) e^{ \alpha_N z } R_N  ( z ) ,  \]
\[ \Theta_-^N ( z ) = \dot\Theta_- ( 0 ) z \prod^{N-1} \( 1 - \frac z{t_j} \) , \alpha_N \in \R , R_N ( z ) = \prod_{ j \ge N } \( 1 - \frac z{t_j} \) e^{ \frac z{t_j} } - \] the canonical product for $ \Theta_- $.

Let \[ \frac {\Theta_+ ( z ) }{ \Theta_- ( z ) } = \sum_j \frac{\mu_j}{ z - t_j } + a + b z \] be the Herglotz representation of the function $ \Theta_+ / \Theta_- $. Define \[ \Theta_+^N ( z ) =  \( \sum_{j = 0 }^N \frac{\mu_j}{ z - t_j } + a + b z \) \Theta_-^N ( z ) . \] Then $ \Theta_N \colon = \( \! \begin{array}{c} \Theta_N^+ \cr \Theta_N^- \end{array} \! \) $ is a polynomial, $ E_N = \Theta_N^+ + i \Theta_N^- $ is an HB polynomial having no real zeroes, and $ E_N ( 0 ) = 1 $ since $ E ( 0 ) = 1 $ and hence $ \mu_0 = 1/\Theta^\prime_-  ( 0 ) $. Corollary \ref{solfindim} applies to $ E_N $. Let $ ( \cH_N , L_N ) $ be the corresponding canonical system, $ M_N ( x , z ) $ its fundamental solution, $ M_N ( z ) $ its monodromy matrix ($ M_N ( z ) = M_N ( L_N , z ) $), $ M_N ( x , y ; z ) $ the monodromy matrix for the interval $ ( x , y ) \subset ( 0 , L_N ) $, 
$ M_N^h ( z ) $ the monodromy matrix for the interval $ ( L_N - h , L_N ) $, $ 0 < h < L_N $. Then for any $ h \in ( 0 , L_N ) $ 
\be\la{mult} M_N ( z ) = M_N^h ( z ) M_N ( L_N - h ; z ) . \ee
In the following argument we choose consecutively subsequences of $ N $. The indices labeling the subsequences are suppressed in the notation. Notice that given an $ \ell \ge 0 $, the systems $ ( \cH_N , L_N ) $ can be chosen so that $ L_N \ge \ell $. To this end, if $ L_N < \ell $ for the system constructed from $ E_N $ in corollary \ref{solfindim}, we augment the system at the left end by a singular interval, $ I $, of length $ \ell - L_N $ with $ \cH ( x ) = \( \! \begin{array}{cc}  0 & 0 \cr 0 & 1 \end{array} \! \) $ for $ x \in I $. An appropriate $ \ell $ will be chosen towards the end of the argument.

Fix an arbitrary $ \ell > 0 $, and let as assume that the augmentation is done wherever necessary, so that $ L_N \ge \ell $ for all $ N $. Let $ F_N ( x ) = \int_{ L_N - h }^{ L_N - h + x } \cH_N ( s ) \d s $. Given an $ h \in [ 0 , \ell ] $ the functions $ F_N $ belong to $ C ( 0 , h ) $, are uniformly bounded ($ \len F_N ( x ) \rin \le x $) and equicontinuous ($ \| F_N ( x + \delta ) - F_N ( x ) \| \le \delta $). Hence there exists a subsequence $ \{ F_N \} $ and a function $ F \in C( 0, h ) $ such that $ F_N \Longrightarrow F $ in $ C ( 0 , h ) $. The limit function is a. c. (even Lipschitz) hence for a. e.  $ x \in ( 0, h ) $ there exists $ F^\prime ( x ) $, $ F^\prime ( x ) \ge 0 $ ($ F $ is monotone non-decreasing since each $ F_N $ is), and $ \operatorname{tr} F^\prime ( x ) = 1 $. The function $ F^\prime $ is going to be the Hamiltonian of a canonical system solving the inverse problem.  

Notice that $ \len M_N^h ( z ) \rin \le \exp ( h |z| ) $ whenever $ 0 < h < L_N $ by (\ref{estM}). By the Montel theorem it follows that for each $ h \in [ 0 , \ell ] $ there exists a subsequence $ N_k $ and an entire function $ M_h $ such that $ M_{ N_k }^h \Rightarrow M_h $ as $ N_k \to \infty $ uniformly on compacts in $ \C $. Our first goal is to show that $ M_h $ is a monodromy matrix of a canonical system on the interval $ ( 0 , h ) $ with the Hamiltonian $ F^\prime $. On writing the integral equation ($ L_N - h < x < L_N $), 
\be\la{inte} J M_N ( L_N - h , x + \delta ) - J M_N ( L_N - h , x )  = z \int_{ L_N - h + x}^{L_N - h + x + \delta} \cH_N ( s ) M_N^h ( s ) \d s . \ee 
Then
\[ \len M_N ( L_N - h , x + \delta ) - M_N ( L_N - h , x ) \rin \le |z| \int_x^{x + \delta} \|  M_N^h ( s ) \| \d s \le |z| e^{ h |z| } \delta \] 
rendering the system $ \{ M_N ( L_N - h , \cdot \, + L_N - h , z ) \} $ equicontinuous and hence compact in $ C ( 0 , h ) $. Thus there exists a subsequence of $ N $ and an $ \widetilde M \in C ( 0 , h ) $ such that $ M_N ( L_N - h , \cdot + L_N - h , z ) \Longrightarrow \widetilde M $ in $ C ( 0 , h ) $. Consider (\ref{inte}) written in the form ($ x \in ( 0 , h ) $) 
\[ J M_N ( L_N - h , L_N - h + x ) - J = z \int_0^x \cH_N ( s + L_N - h ) M_N^h ( L_N - h , L_N - h + s ) \d s .  \] Letting $ N \to \infty $ we find
\[ \begin{array}{ccc} J \widetilde M ( x ) - J = z & \underbrace{ \int_0^x \cH_N ( s + L_N - h ) \widetilde M ( s ) \d s } & + r_N ( x ) , \cr & (I) & \end{array} \] where $ r_N ( x ) \to 0 $ uniformly in $ x \in ( 0 , h ) $. Integrating by parts gives 
\[ (I) = F_N ( x ) \widetilde M ( x ) - \int_0^x F_N ( s ) {\widetilde M}^\prime ( s )  \d s . \] 
In this formula one can pass to the limit $ N \to \infty $ and integrate by parts back. We end up with
\[ J \widetilde M ( x ) - J = z \int_0^x F^\prime ( s  ) \widetilde M ( s ) \d s , \] 
that is, $ \widetilde M ( x ) $ is a fundamental solution of the canonical system $ ( F^\prime , h ) $, and $ \widetilde M ( h , z ) = M_h ( z ) $ by construction. Notice that $ M_h $ is an entire function of finite exponential type not greater than $ h $.

Consider now the first column in (\ref{mult}). On multiplying by $ e^{ \alpha_N z } $, 
\[ \begin{array}{ccccc} e^{ \alpha_N z } \Theta_N ( z ) &\!  = \! & M_N^h ( z ) & \!\!\!\!\! G_N^h ( z ) , & \! G_N^h ( z ) \colon = e^{ \alpha_N z } M_N ( L_N - h , z )  \Bigl( \begin{array}{c} 1 \cr 0 \end{array} \Bigr) .\cr \noalign{\vskip-3pt} \Downarrow & & \Downarrow & & \cr \Theta ( z ) & & M_h ( z ) & & \end{array} \]  
The convergence in the l. h. s. is uniform on compacts in $ \C \setminus \R $, in the r. h. s. on compacts in $ \C $. Since $ \det M_N^h = 1 $, the inverses of $ M_N^h $ converge uniformly on compacts in $ \C $ as well, hence $ G_N^h $ converge uniformly on compacts in $ \C \setminus \R $ to the real entire function $ G_h \colon = M_h^{ -1 } \Theta $. This function is of finite exponential type since so are $ M_h $ and $ \Theta $.

\begin{lemma}
The function $ G_h^+ + i G^-_h $ is either HB, or $ \equiv 1 $. \end{lemma}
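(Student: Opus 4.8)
The plan is to view each $G_N^h$ as, up to the real factor $e^{\alpha_N z}$, a genuine $\Theta$-solution of a canonical system, and to pass its Hermite--Biehler-type properties through the limit $N\to\infty$. Indeed, $M_N(L_N-h,z)\,(1,0)^T$ is the first column of the monodromy matrix of the restricted system $(\cH_N,L_N-h)$, that is, the value at the right endpoint of its $\Theta$-solution; call it $\Theta^{(N,h)}$, so $G_N^h=e^{\alpha_N z}\Theta^{(N,h)}$. By (\ref{11}) applied to $(\cH_N,L_N-h)$ the integral on the right is nonnegative, hence $\Im\bigl(\Theta^{(N,h)}_+(z)\overline{\Theta^{(N,h)}_-(z)}\bigr)\ge 0$ on $\C_+$, and multiplying by $|e^{\alpha_N z}|^2>0$ gives $\Im\bigl((G_N^h)_+(z)\overline{(G_N^h)_-(z)}\bigr)\ge 0$ on $\C_+$. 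By Proposition \ref{HB} applied to $(\cH_N,L_N-h)$, either $\Theta^{(N,h)}_++i\Theta^{(N,h)}_-$ is Hermite--Biehler and hence zero-free in $\C_+$, or $\cH_N$ satisfies (\ref{nondegenerate}) on $(0,L_N-h)$ and $\Theta^{(N,h)}\equiv(1,0)^T$; in either case $E_{G_N^h}:=(G_N^h)_++i(G_N^h)_-=e^{\alpha_N z}\bigl(\Theta^{(N,h)}_++i\Theta^{(N,h)}_-\bigr)$ has no zeros in $\C_+$, for every $N$.

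Passing to the limit --- recall $G_N^h\to G_h$ locally uniformly on $\C\setminus\R$, in particular on $\C_+$ --- I get $\Im\bigl(G_h^+(z)\overline{G_h^-(z)}\bigr)\ge 0$ on $\C_+$, which by the identity $|E_{G_h}(z)|^2-|E_{G_h}(\bar z)|^2=4\,\Im\bigl(G_h^+(z)\overline{G_h^-(z)}\bigr)$ (valid because $G_h=M_h^{-1}\Theta$ has real Taylor coefficients) reads $|E_{G_h}(z)|\ge|E_{G_h}(\bar z)|$ on $\C_+$; and by Hurwitz's theorem $E_{G_h}$ has no zeros in $\C_+$ (it is not $\equiv 0$, since $G_h(0)=M_h(0)^{-1}\Theta(0)=(1,0)^T$, so $E_{G_h}(0)=1$). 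Hence $g:=E_{G_h}^*/E_{G_h}$ extends to a function in $H^\infty$ with $\len g\rin_\infty\le 1$. If $|g|<1$ throughout $\C_+$, then $|E_{G_h}(\bar z)|<|E_{G_h}(z)|$ everywhere on $\C_+$ (no zeros there to spoil strictness), so $E_{G_h}$ is Hermite--Biehler. Otherwise $|g|$ attains the value $1$ at an interior point of $\C_+$, the maximum principle forces $g$ to be a unimodular constant, and evaluating at $0$ gives $g\equiv 1$, i.e.\ $E_{G_h}^*=E_{G_h}$, i.e.\ $G_h^-\equiv 0$.

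The case $G_h^-\equiv 0$ is, I expect, the main obstacle. Then $\Theta=M_hG_h=G_h^+\cdot M_h(\cdot)(1,0)^T$, so $E=G_h^+\,E^\sharp$, where $E^\sharp$ is the de Branges function of the finite canonical system $(F',h)$. By Proposition \ref{HB}, $E^\sharp$ is Hermite--Biehler (the exceptional case would force $\Theta_-\equiv 0$, impossible for a non-constant HB function $E$). Since $E$ and $E^\sharp$ are zero-free in $\C_+$ and $E$ has no real zeros, $G_h^+=E/E^\sharp$ is a zero-free real entire function of finite exponential type, hence $G_h^+(z)=e^{az}$ for some $a\in\R$ (using $G_h^+(0)=1$). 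Thus $E^\sharp=e^{-az}E$; but $E$ is regular by hypothesis and $E^\sharp$ is regular as the de Branges function of a canonical system on a finite interval, and regular HB functions lie in the Cartwright class (finite exponential type by Krein's theorem, logarithm integrable against $(1+x^2)^{-1}$ on $\R$), so their indicators vanish at $\theta=0$. Comparing indicators at $\theta=0$ in $E^\sharp=e^{-az}E$ gives $0=h_{E^\sharp}(0)=-a+h_E(0)=-a$, whence $a=0$, $G_h^+\equiv 1$, and $E_{G_h}\equiv 1$. This last, growth-theoretic step --- where the regularity hypothesis on $E$ is finally used decisively --- is the delicate point.
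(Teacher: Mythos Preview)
Your proof is correct and follows essentially the same route as the paper. The only organizational difference is in how you obtain the dichotomy: the paper works with the Herglotz ratios $\Theta_N^+(L_N-h,\cdot)/\Theta_N^-(L_N-h,\cdot)$ and uses that a limit of Herglotz functions converging on an open set converges everywhere, forcing either $G_h^-\equiv 0$ or $G_h^-\ne 0$ off $\R$; you instead pass the inequality $|E_{G_N^h}^*|\le|E_{G_N^h}|$ and zero-freeness through the limit via Hurwitz, then apply the maximum principle to $E_{G_h}^*/E_{G_h}$. These are two faces of the same coin (the HB/Herglotz equivalence recorded in the paper's Section~\ref{facts}), and your version is arguably a touch more elementary. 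The treatment of the degenerate case $G_h^-\equiv 0$ is the same in both: one factors $E=G_h^+E^\sharp$ with $E^\sharp$ the de~Branges function of the finite system $(F',h)$, recognizes $G_h^+$ as a nowhere vanishing real entire function of finite type --- hence $e^{az}$ --- and kills $a$ using the log-integrability on $\R$ that regularity of $E$ (and of $E^\sharp$) provides; whether you phrase this via Cartwright indicators or via $\int t^{-2}\log|\cdot|\,\d t<\infty$ as the paper does is immaterial.
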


\begin{proof} We have, \[ \frac{\Theta_N^+ ( L_N - h , z )}{\Theta_N^- ( L_N - h , z )} \to \frac{G_h^+ ( z ) }{ G_h^- ( z ) } \] at any point $ z \notin \R $ such that $ G_h^- ( z ) \ne 0 $. The functions in the l. h. s. are Herglotz in $ \C_+ $ for each $ N $, hence if they converge for all $ z $ from an open subset in $ \C_+ $, then they converge in the whole of $ \C_+ $ uniformly on compacts. It follows that either $ G_h^- \equiv 0 $, or $ G_h^- ( z ) \ne 0 $ for $ z \notin \R $, because $ G^+_h $ and $ G^-_h $ have no common zeroes as $\Theta ( z ) $ does not vanish by assumptions about $ E $. We are going to show that these two possibilities correspond exactly to the alternatives in the lemma. Suppose first that
 $ G_h^- ( z ) \ne 0 $ for $ z \notin \R $. Then $ G_h^+ / G_h^- $ is a Herglotz function in $ \C_+ $ as a limit of Herglotz functions. This function is not constant since $ G_h ( 0 ) = \( 1 , 0 \)^T $, hence $ G_h^+ + i G^-_h $ is HB.

Now let $ G_h^- \equiv 0 $. We have, $ \Theta = G_h^+ \widetilde \Theta $ where $ \widetilde \Theta $ stands for the first column of the matrix $ M_h $. Obviously, the function $ G_h^+ $ does not vanish, hence $ G_h^+ ( z ) = A e^{ b z } $ for some real $ A , b $. It follows that $ b = 0 $. Indeed, $ \int^\infty t^{ -2 } \log \| \Theta ( t ) \| \d t $ is finite by the regularity of $ E $ (see Appendix), and $ \int^\infty t^{ -2} \log \| \widetilde \Theta ( t ) \| \d t $ is finite because $ \widetilde \Theta $ is the first column of a monodromy matrix of a canonical system on a finite interval, hence by proposition \ref{HB} $ \widetilde \Theta_+ +i \widetilde \Theta_- $ is either a regular HB function, or equal to $ 1 $ identically. These two integrals can be finite simultaneously only if $ b = 0 $. Finally, $ A = 1 $ since $ G_h^+ ( 0 ) = \Theta_+ ( 0 ) = 1 $, hence  $ G_h^+ \equiv 1 $ and the lemma is proved.
\end{proof}

On account of $ \ell > 0 $ being arbitrary, we have thus established the following

\begin{proposition}\la{altern} In the assumptions of this section for any nonnegative $ h $ there exists a canonical system $ ( \cH_h , h ) $ such that either 
\be\la{mh} \Theta ( z ) = M_h ( z ) G_h ( z ) ,\ee 
where $ M_h $ is the monodromy matrix for this system, and $ E_h \colon = G_h^+ + i G^-_h $ is an HB function, or $ \Theta ( z ) $ coincides with the first column of $ M_h $. \end{proposition}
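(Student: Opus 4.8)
The plan is to assemble the material developed in this section. I would first dispose of $ h = 0 $ at once: for the empty system $ M_0 \equiv I $, so $ G_0 = \Theta $, and both (\ref{mh}) and the HB property of $ E_0 = \Theta_+ + i \Theta_- = E $ hold trivially. For $ h > 0 $ I would fix $ \ell = h $ (any $ \ell \ge h $ works — it is exactly the freedom in $ \ell $ that lets $ h $ be arbitrary) and run the compactness machinery above along a single subsequence of $ N $ chosen so that $ F_N \Longrightarrow F $ in $ C ( 0 , h ) $, $ M_N^h \Rightarrow M_h $ uniformly on compacts, and $ M_N ( L_N - h , \cdot \, + L_N - h , z ) \Longrightarrow \widetilde M $ in $ C ( 0 , h ) $ all hold simultaneously. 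As shown above, $ \widetilde M $ is then the fundamental solution of the canonical system $ ( \cH_h , h ) $ with $ \cH_h \colon = F^\prime $, and $ M_h = \widetilde M ( h , \cdot ) $ is its monodromy matrix. The only bookkeeping care needed here is that these successive extractions be nested into one common subsequence, so that $ \cH_h $, $ M_h $ and $ G_h $ below are all defined along the same indices; this is routine.

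Next I would obtain (\ref{mh}) by passing to the limit in the first column of (\ref{mult}). Multiplying that column by $ e^{ \alpha_N z } $ turns the identity into $ e^{ \alpha_N z } \Theta_N ( z ) = M_N^h ( z )\, G_N^h ( z ) $ with $ G_N^h ( z ) \colon = e^{ \alpha_N z } M_N ( L_N - h , z ) ( 1 , 0 )^T $. The left-hand side converges to $ \Theta $ on compacts of $ \C \setminus \R $: for the lower component because $ e^{ \alpha_N z } \Theta_-^N = \Theta_- / R_N \to \Theta_- $, the $ R_N $ being the tail of the canonical product and tending to $ 1 $; for the upper component because the truncated Herglotz sum for $ \Theta_+ / \Theta_- $ converges to it locally uniformly off $ \R $. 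Since $ \det M_N^h \equiv 1 $ the inverses $ ( M_N^h )^{ -1 } $ also converge locally uniformly, to $ M_h^{ -1 } $, so $ G_N^h \to G_h \colon = M_h^{ -1 } \Theta $ off $ \R $, and the limit of the displayed identity is $ \Theta = M_h G_h $. Here $ G_h $ is real entire of finite exponential type and $ G_h ( 0 ) = M_h ( 0 )^{ -1 } \Theta ( 0 ) = ( 1 , 0 )^T $.

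Finally I would invoke the preceding lemma: $ E_h \colon = G_h^+ + i G_h^- $ is either HB, which is the first alternative of the proposition, or $ \equiv 1 $; in the latter case $ G_h^- \equiv 0 $ and $ G_h^+ \equiv 1 $, so $ G_h \equiv ( 1 , 0 )^T $ and (\ref{mh}) reads $ \Theta ( z ) = M_h ( z ) ( 1 , 0 )^T $, i.e. $ \Theta $ is the first column of $ M_h $ — the second alternative. Since $ h \ge 0 $ was arbitrary, this proves the proposition. I do not expect any genuine obstacle: all the hard estimates (equicontinuity of $ \{ F_N \} $ and $ \{ M_N^h \} $, the identification of $ \widetilde M $ as a fundamental solution, and — resting on the regularity of $ E $ — the dichotomy lemma) are already in place; the proof of the proposition is a matter of packaging, the one point demanding attention being the $ e^{ \alpha_N z } $-normalization that makes the left-hand limit equal $ \Theta $.
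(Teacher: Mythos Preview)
Your proposal is correct and follows essentially the same approach as the paper: the proposition is a packaging of the compactness argument, the limit passage in the first column of (\ref{mult}) with the $e^{\alpha_N z}$-normalization, and the dichotomy lemma, exactly as you lay out. The only stylistic difference is that the paper fixes an arbitrary $\ell > 0$, carries out the construction for $h \in [0,\ell]$, and then invokes the arbitrariness of $\ell$, whereas you set $\ell = h$ directly --- these are equivalent.
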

  
Thus, either $ \Theta ( z ) $ is the first column of $ M_h $  for some $ h > 0 $, and then the inverse problem is solved, or $ E_h $ is an HB function for all $ h > 0 $. We are going to show eventually that the latter is impossible. Let $ h $ be such that $ E_h $ is an HB function. Our immediate aim is to establish that  $ E_h $ is regular in order to apply theorem \ref{restore} to $ G_h $. To do so, we prove that $ {\mathcal H} ( E_h ) $ is isometrically embedded into $ \HE $ (possibly, up to a one-dimensional subspace) and then use a theorem saying that a de Branges subspace of a regular de Branges space is regular. 

\begin{theorem}\la{isometry} Let $ W $ be an HB function, $ \mu $ a measure on $ \R $. Denote $ S = W^* / W $. Then $ \HW $ is isometrically embedded into $ L^2 \( \R , \left| W \right|^{ -2 } \d \mu \) $ if and only if there exists an $ A \in H^\infty $, $ \len A \rin_\infty \le 1 $, such that  
\[ \Re \frac{ 1 + S ( z ) A ( z ) }{ 1 - S ( z ) A ( z ) } = \frac{ \Im z }\pi \int \frac {\d \mu ( t )}{\( t - \Re z \)^2 + \( \Im z \)^2 } , \; z \in \C_+ . \]

Let $ A \in H^\infty $, $ \len A \rin_\infty \le 1 $, then $ \Re \frac{ 1 + S A}{ 1 - S A } $ is a positive harmonic function in $ \C_+ $. Let 
\be\la{infinitymass} \Re \frac{ 1 + S ( z ) A ( z ) }{ 1 - S ( z ) A ( z ) } = \frac{ \Im z }\pi \int \frac {\d \mu ( t )}{\( t - \Re z \)^2 + \( \Im z \)^2 } + c_0 \Im z , \; z \in \C_+ ,  \ee 
be its Herglotz representation. If $ c_0 > 0 $ then there exists an $ \alpha \in \R $ such that $ W + e^{ i \alpha } W^* \in \HW $, and $ {\mathcal X} \colon = \HW \ominus {\mathcal L } \{ W + e^{ i \alpha } W^* \} $ is isometrically contained in $ L^2 \( \R , \left| W \right|^{ -2 } \d \mu \) $. \end{theorem}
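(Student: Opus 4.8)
The plan is to transfer the problem to the model subspace $K_S:=H^2\ominus SH^2$ of $H^2=H^2(\C_+)$. The HB hypothesis is precisely that $S=W^*/W$ is inner in $\C_+$; the map $f\mapsto f/W$ is a unitary of $\HW$ onto $K_S$, and under it the inclusion $\HW\subset L^2(\R,|W|^{-2}\,\d\mu)$ becomes the restriction map $K_S\to L^2(\R,\d\mu)$, while $W+e^{i\alpha}W^*$ corresponds to $1+e^{i\alpha}S$. Writing $P\mu(z)=\tfrac{\Im z}{\pi}\int\tfrac{\d\mu(t)}{|t-z|^2}$ and using that $K_S$ is a reproducing kernel space with dense span of kernels $k_w(z)=\tfrac{i}{2\pi}\tfrac{1-\overline{S(w)}S(z)}{z-\bar w}$, $\|k_w\|^2=k_w(w)$, $\int_\R k_w\overline{k_{w'}}\,\d t=k_w(w')$ (since $K_S\subset H^2\cong L^2(\R)$ isometrically), I would first reduce the equivalence to the kernel identity $\int_\R k_w(t)\overline{k_{w'}(t)}\,\d\mu(t)=k_w(w')$ for all $w,w'\in\C_+$. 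A preliminary remark is that any such $\mu$ is Poisson-finite, $\int(1+t^2)^{-1}\d\mu<\infty$, since two kernels $k_{w_1},k_{w_2}$ with no common zero dominate $(1+t^2)^{-1}$.

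For the ``if'' direction, given $A$ I would set $\phi:=\tfrac{1+SA}{1-SA}$; as $|SA|<1$ in $\C_+$ this is analytic with positive real part $\tfrac{1-|SA|^2}{|1-SA|^2}=P\mu$, hence lies in $N^+(\C_+)$ and equals, up to an additive constant, the Cauchy transform of $\mu$. Subtracting the corresponding identity for Lebesgue measure (Poisson integral $\equiv1$, Cauchy transform $\equiv1$) and putting $\nu:=\mu-\d t$ shows that the Cauchy transform of $\nu$ equals, up to a constant, $\phi-1=S\cdot\tfrac{2A}{1-SA}$, and the factor $\tfrac{2A}{1-SA}$ is automatically in $N^+$ (it is analytic, and $\pi i S$ times it is the $N^+$-function $\pi i(\phi-1)$). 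So the Cauchy transform of $\nu$ is, modulo a constant, $S$ times an $N^+$-function; I would then invoke the description of the real measures $\nu$ with $\int g\bar h\,\d\nu=0$ for all $g,h\in K_S$ as precisely those with this divisibility, and conclude $\int g\bar h\,\d\nu=0$ on $K_S$, i.e. the kernel identity.

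For the ``only if'' direction one reverses this: from the embedding $\mu$ is Poisson-finite, and with $\phi$ the Cauchy transform of $\mu$ ($\Re\phi=P\mu\ge0$) and $b:=\tfrac{\phi-1}{\phi+1}$ — a Schur function, since $|\phi+1|^2-|\phi-1|^2=4\Re\phi\ge0$ — the kernel identity forces $\mu-\d t$ to annihilate $\{g\bar h\}$, so $\phi-1$ is (mod a constant) $S$ times an $N^+$-function; hence $A:=b/S$ is analytic and, as $A\cdot S=b\in H^\infty$, lies in $N^+$, and since $|A|=|b|\le1$ on $\R$ the Smirnov maximum principle gives $A\in H^\infty$ with $\|A\|_\infty\le1$. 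The one genuinely analytic point, used in both directions, is the factorisation-theoretic description of these annihilating measures — established from the inner-outer factorisation of Smirnov-class functions in $\C_+$ — and that is where I expect the main difficulty to lie; everything else is reproducing-kernel bookkeeping.

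The middle assertion is immediate: $|SA|<1$ in $\C_+$ makes $\tfrac{1+SA}{1-SA}$ analytic there with positive real part $\tfrac{1-|SA|^2}{|1-SA|^2}$, and a positive harmonic function on $\C_+$ has a Herglotz representation $P\mu+c_0\Im z$ with $c_0\ge0$. For the final assertion, suppose $c_0>0$. A Julia--Carath\'eodory argument for $\Re\tfrac{1+SA}{1-SA}$ — using $c_0=\lim_{y\to\infty}y^{-1}\Re\tfrac{1+S(iy)A(iy)}{1-S(iy)A(iy)}>0$ together with $|A|\le1$, $|S|<1$ — shows $S(iy)A(iy)\to1$ and $1-|S(iy)|=O(1/y)$, hence $S$ has a unimodular nontangential limit $\sigma_\infty$ at $\infty$ with $S-\sigma_\infty\in H^2(\C_+)$. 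Taking $e^{i\alpha}:=-\overline{\sigma_\infty}$ gives $1+e^{i\alpha}S=-\overline{\sigma_\infty}(S-\sigma_\infty)\in H^2$, and since $(W+e^{i\alpha}W^*)^*/W=e^{-i\alpha}(1+e^{i\alpha}S)$, this shows $W+e^{i\alpha}W^*\in\HW$. Finally I would read the term $c_0\Im z$ as a point mass $c_0$ ``at $\infty$'': running the ``if'' mechanism with $\mu+c_0\delta_\infty$ embeds $K_S$ isometrically into $L^2(\mu)\oplus\C$, the extra coordinate of $g\in K_S$ being (up to normalisation) $\langle g,1+e^{i\alpha}S\rangle$; restricting to $\mathcal X=\HW\ominus\mathcal L\{W+e^{i\alpha}W^*\}$, i.e. to the $g$ orthogonal to $1+e^{i\alpha}S$, annihilates that coordinate and leaves an isometric embedding of $\mathcal X$ into $L^2(\R,|W|^{-2}\d\mu)$. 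Equivalently one checks the kernel identity for the kernels of $\mathcal X$ directly, the $c_0$-correction from the linear term matching exactly the rank-one subtraction projecting $k_w$ onto $\mathcal X$.
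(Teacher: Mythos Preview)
Your strategy is sound and genuinely different from the paper's. The paper works by hand on an explicit total set: for ``$\Leftarrow$'' it checks equality of inner products on $\{K_w:W(\bar w)=0\}\cup\{f_b:0<b<a\}$, $f_b(z)=\tfrac{1-e^{ibz}}{z}W(z)$, the first family handling the Blaschke factor of $S$ and the second the exponential $e^{iaz}$, each identity reduced to a residue computation against the Herglotz representation of $g=\tfrac{1+SA}{1-SA}$; for ``$\Rightarrow$'' it evaluates $\|K_w\|$ at zeros of $W^*$ to get $P\mu=1$ there (so the Schur function $\Xi$ of $P\mu$ vanishes at the zeros of $B$), and then shows $\Xi=O(e^{-a'\Im z})$ by a separate asymptotic estimate of $\|K_{iy}\|$, whence $\Xi/S\in H^\infty$. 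Your single annihilating-measure criterion treats both factors of $S$ uniformly, which is tidier, but carries the same analytic weight --- proving it from inner--outer factorisation is comparable work to the paper's case split. Two points to watch. In ``$\Rightarrow$'' you obtain $\phi-1\in S\cdot N^+$ only modulo an additive constant, and you must choose that constant to be zero before $b/S$ becomes analytic; this is possible (the off-diagonal identity $\langle K_w,K_{w'}\rangle$ forces $\Im\phi$ to agree at all zeros of $S$, so one subtraction suffices), but should be said. For $c_0>0$ the paper takes a shorter path than your Julia--Carath\'eodory route: it observes $S$ must be a Blaschke product, computes $\langle K_w,K_{w'}\rangle_{L^2(\mu)}$ directly to find the $\HW$-product plus the rank-one term $\tfrac{c_0}{4\pi}\overline{W(w)}W(w')$, and then identifies $\mathcal X^\perp$ as a combination of $W$ and $W^*$ --- thereby avoiding the step from ``angular derivative at $\infty$'' to ``$1+e^{i\alpha}S\in H^2(\C_+)$'', which is an Ahern--Clark statement rather than Julia--Carath\'eodory alone.
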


We would like to apply it with $ \d \mu = \left| \frac{ E_h }E \right|^2 \d t $, $ W = E_h $. To do so we need to find a contractive analytic function, $ A $, in $ \C_+ $ such that 
\be\la{SA} \Re \frac{ 1 + S ( t ) A ( t ) }{ 1 - S ( t ) A ( t ) } =  \left| \frac{ E_h ( t ) }{ E ( t ) } \right|^2, \; t \in \R . \ee  

\textbf{Heuristics.} How to find the $ A $ in the situation of proposition \ref{altern}? The natural requirement is that $ A $ must depend on $ M_h $ only (and not $ G_h $). Let $ G_h $ be a linear function, then $ S ( z ) = - \frac{ z - i \tau }{ z + i \tau } $, $ G_h ( z ) = \Bigl( \! \begin{array}{c} 1 \cr -iz/\tau \end{array} \! \Bigr) $, $ \tau > 0 $. Considering the limits $ \tau \to + \infty $ and $ \tau \to 0+ $ for each fixed $ z \in \R $ we find 
\[ \Re \frac{ 1 + A ( z ) }{ 1 - A ( z ) } = \left\| M_h ( z ) \Bigl( \! \begin{array}{c} 0 \cr 1 \end{array} \! \Bigr) \right\|^{-2},\;  \Re \frac{ 1 - A ( z )}{ 1 + A ( z ) } = \left\| M_h ( z ) \Bigl( \! \begin{array}{c} 1 \cr 0 \end{array} \! \Bigr) \right\|^{-2} . \] 
On dividing these and denoting $ E_1 = \( M_h \)_{ 11 } + i \( M_h \)_{ 21 } $, $ E_2 = \( M_h \)_{ 12 } + i \( M_h \)_{ 22 } $, we find \[ \left| \frac{ 1 + A }{ 1 - A } \right| = \left| \frac{ E_2 }{ E_1 }\right| .\] This suggests to define $ A$ to be the Caley transform of $ E_2 / E_1 $.

\begin{lemma}  \[ A ( z )\colon = \frac{ E_2 ( z ) - i E_1 ( z ) }{ E_2 ( z ) + i E_1 ( z ) } \] is a contractive analytic function in $ \C_+ $, $ \| A \|_\infty \le 1 $, and (\ref{SA}) holds.
\end{lemma}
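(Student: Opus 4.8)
The plan is to treat the two assertions — analyticity and contractivity of $A$ in $\C_+$, and the boundary identity (\ref{SA}) — after first rewriting $A$ symmetrically. Clearing the factors of $i$, $A = \dfrac{E_2-iE_1}{E_2+iE_1} = -\dfrac{E_1+iE_2}{E_1-iE_2}$; put $P\colon= E_1-iE_2$ and $\wt P\colon= E_1+iE_2$, so $A=-\wt P/P$, a ratio of entire functions of finite exponential type (the entries of $M_h$ are such by (\ref{estM})) and hence of bounded type in $\C_+$. A one-line computation gives $|P(z)|^2-|\wt P(z)|^2 = -4\,\Im\!\bigl(E_1(z)\overline{E_2(z)}\bigr)$, so that both ``$P$ has no zero in $\C_+$'' and ``$|A|\le1$ in $\C_+$'' will follow once $\Im\!\bigl(E_1(z)\overline{E_2(z)}\bigr)<0$ is established for $z\in\C_+$.

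For that inequality, let $\wt\Theta = M_h\binom{1}{0}$ and $\wt\Phi = M_h\binom{0}{1}$ be the two canonical solutions of $(\cH_h,h)$, so $E_1 = \wt\Theta_+ + i\wt\Theta_-$, $E_2 = \wt\Phi_+ + i\wt\Phi_-$, and $\binom{E_1}{E_2} = M_h(z)^T\binom{1}{i}$; thus $E_1(z)/E_2(z)$ is the image of the point $1/i=-i$ under the fractional-linear map $\tau\mapsto M_h(z)^T\!\cdot\tau$. By (\ref{Jcontractive}), the corollary of (\ref{identM}), the monodromy matrix $M_h(z)$ is $J$-contractive in $\C_+$; transposing this, and using $M_h^TJM_h\equiv J$ together with $\det M_h\equiv1$, shows that for $z\in\C_+$ the map $\tau\mapsto M_h(z)^T\!\cdot\tau$ carries $\overline{\C_-}$ into itself. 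Since $-i\in\overline{\C_-}$ we get $\Im\!\bigl(E_1(z)/E_2(z)\bigr)\le0$, hence $\Im\!\bigl(E_1(z)\overline{E_2(z)}\bigr) = |E_2(z)|^2\,\Im\!\bigl(E_1(z)/E_2(z)\bigr)\le0$ on $\C_+$; equivalently $E_2/E_1$ is Herglotz, and the inequality is in fact strict unless $E_2/E_1$ reduces to a real constant, which would make the two columns of $M_h$ proportional, contradicting $\det M_h\equiv1$. (In the degenerate cases where $E_1$ or $E_2$ is a constant and $M_h$ is triangular this is immediate from the explicit $M_h$.) Thus $P\ne0$ and $|A|<1$ in $\C_+$, so $A\in H^\infty$ with $\|A\|_\infty\le1$.

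It remains to verify (\ref{SA}), which is a direct calculation. Fix $t\in\R$; then $M_h(t)$ is real with $\det M_h(t)=1$, so $\Im\!\bigl(E_1(t)\overline{E_2(t)}\bigr) = (M_h)_{21}(M_h)_{12}-(M_h)_{11}(M_h)_{22} = -\det M_h(t) = -1$, whence $|P(t)|^2-|\wt P(t)|^2 = 4$. With $S=E_h^*/E_h$ (so $S(t)=\overline{E_h(t)}/E_h(t)$) and $A=-\wt P/P$,
\[ \frac{1+S(t)A(t)}{1-S(t)A(t)} \;=\; \frac{E_h(t)P(t)-\overline{E_h(t)}\,\wt P(t)}{E_h(t)P(t)+\overline{E_h(t)}\,\wt P(t)} . \]
Multiplying numerator and denominator by the conjugate of the denominator, the real part of the numerator equals $|E_h(t)|^2\bigl(|P(t)|^2-|\wt P(t)|^2\bigr)=4|E_h(t)|^2$, while the denominator is $E_h P+\overline{E_h}\,\wt P = 2\bigl(G_h^+E_1+G_h^-E_2\bigr)$, whose real and imaginary parts are the two components of $M_hG_h=\Theta$, hence it equals $2(\Theta_++i\Theta_-)=2E$. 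Therefore
\[ \Re\,\frac{1+S(t)A(t)}{1-S(t)A(t)} \;=\; \frac{4|E_h(t)|^2}{|2E(t)|^2} \;=\; \left|\frac{E_h(t)}{E(t)}\right|^2 , \qquad t\in\R , \]
which is (\ref{SA}) (note $E(t)\ne0$ by the standing hypothesis on $E$). The one genuinely delicate step is the passage from (\ref{Jcontractive}) to the half-plane property of $M_h(z)^T$ in the second paragraph — the ``transpose'' of the familiar fact about transfer matrices; it is elementary but must be set up with some care (for instance via $M_h^*J\overline{M_h}\equiv J$, which is also immediate from (\ref{identM})), and I expect that transpose bookkeeping to be the main obstacle.
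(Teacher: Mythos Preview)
Your proof is correct and follows essentially the same line as the paper: both establish that $E_2/E_1$ is Herglotz in $\C_+$ (equivalently $\Im(E_2\overline{E_1})>0$) from the $J$-contractivity (\ref{Jcontractive}) of $M_h$, and both treat (\ref{SA}) as a direct computation using $\det M_h=1$. The only difference is packaging --- the paper writes the Herglotz step as a single chain of quadratic-form identities ending in $\Im(E_2\overline{E_1})\ge 1$, whereas you route it through the fractional-linear action of $M_h^T$ on $\overline{\C_-}$; the ``transpose'' passage you flag as delicate is exactly the content of the paper's middle equality, and your explicit derivation of (\ref{SA}) via the identification $E_hP+\overline{E_h}\,\wt P = 2E$ spells out what the paper leaves as ``a straightforward calculation.''
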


\begin{proof} For $ z \in \C_+ $ \bequnan \Im \( E_2 \overline{E_1} \) = - \frac1{2i} \llangle J \( \! \begin{array}{c} E_1 \cr E_2 \end{array} \! \) , \( \! \begin{array}{c} E_1 \cr E_2 \end{array} \! \) \rrangle = - \frac1{2i} \llangle J M_h^T \Bigl( \! \begin{array}{c} 1 \cr i \end{array} \! \Bigr) , M_h^T \Bigl( \! \begin{array}{c} 1 \cr i \end{array} \! \Bigr) \rrangle = \\ \frac1{2i} \llangle M_h^* J M_h \( \! \begin{array}{c} 1 \cr -i \end{array} \! \) , \( \! \begin{array}{c} 1 \cr -i \end{array} \! \) \rrangle \ge \llangle J \Bigr( \! \begin{array}{c} 1 \cr -i \end{array} \! \Bigr) , \Bigl( \! \begin{array}{c} 1 \cr i \end{array} \! \Bigr) \rrangle = 1 . \eequnan 
In the last step we took into account that $ i^{ -1 } \( M_h^* ( z ) J M_h ( z ) - J \) \ge 0 $ by (\ref{Jcontractive}). Thus $ E_2/E_1 $ is a Herglotz function in $ \C_+ $, and $ A $ is a bounded analytic function in $ \C_+ $, $ \len A \rin_\infty \le 1 $.

The proof of  (\ref{SA}) is a straightforward calculation which takes into account that $ \det M_h = 1 $. \end{proof}

\begin{exercise} Establish (\ref{SA}) by a "coordinate-free" matrix computation. \end{exercise}

\begin{corollary} There exists a $ c_0 \ge 0 $ such that for all $ z \in \C_+ $
\[ \Re \frac{ 1 + S ( z ) A ( z ) }{ 1 - S ( z ) A ( z ) } = \frac{ \Im z }\pi \int \left| \frac{ E_h ( t ) }{ E ( t ) } \right|^2 \frac {\d t }{\( t - \Re z \)^2 + \( \Im z \)^2 } + c_0 \Im z . \]
\end{corollary}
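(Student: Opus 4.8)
The plan is to feed the two facts just obtained — that $A$ is analytic and contractive in $\C_+$ and that the boundary identity (\ref{SA}) holds — into the Herglotz representation theory, the one additional input being that $F(z):=\frac{1+S(z)A(z)}{1-S(z)A(z)}$ is actually analytic across $\R$. Concretely: since $A\in H^\infty$, $\len A\rin_\infty\le1$, the function $\Re F$ is positive and harmonic in $\C_+$, so it has a Herglotz representation of the form (\ref{infinitymass}), with a positive measure $\mu$ satisfying $\int(1+t^2)^{-1}\d\mu(t)<\infty$ and a constant $c_0\ge0$ — this is exactly the representation recorded in the statement of Theorem~\ref{isometry}, taken with $W=E_h$. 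Thus the whole content of the corollary is that $\mu$ is absolutely continuous with $\d\mu=|E_h/E|^2\,\d t$; once absolute continuity is in hand, Fatou's theorem together with (\ref{SA}) pins the density down to $|E_h(t)/E(t)|^2$.

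The crux — and the main obstacle, since a priori a positive harmonic function with a prescribed a.e.\ boundary density can still carry an extra singular measure — is to show that $F$ extends analytically to a neighbourhood of $\overline{\C_+}$, in particular across $\R$. I would do this via an explicit formula. Clearing denominators in $1\pm SA$ over $E_h(E_2+iE_1)$ and using $E_h=G_h^++iG_h^-$, $E_h^*=G_h^+-iG_h^-$, together with the relation $E=E_1G_h^++E_2G_h^-$ that results from writing $\Theta=M_hG_h$ of Proposition~\ref{altern} in coordinates and forming $\Theta_++i\Theta_-$, one obtains
\[
F(z)=-\,i\,\frac{E_2(z)\,G_h^+(z)-E_1(z)\,G_h^-(z)}{E(z)} .
\]
The numerator is entire, and $E$ is a regular HB function with no real zeros by the standing assumption of this section, so $F$ is analytic near $\R$ (and near $\overline{\C_+}$) with $\Re F>0$ in $\C_+$; the only point to watch is precisely that $E$ does not vanish on $\R$, which is exactly regularity plus absence of real zeros. (Alternatively one can argue that $\psi:=SA$ is analytic near $\overline{\C_+}$ with $|\psi|<1$ on $\R$ — here $|S|=1$ on $\R$ because the real zeros of $E_h$ are matched to the same order by those of $E_h^*$, and $|A|<1$ on $\R$ because $\Im(E_2\overline{E_1})=1$ there by $\det M_h(t)=1$ — whence $\psi\ne1$ throughout $\C_+\cup\R$ by the maximum principle.)

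Finally I would invoke Stieltjes inversion for the Herglotz function $N:=iF$, now analytic on all of $\R$: on every interval $[\alpha,\beta]\subset\R$ the boundary integral $\pi^{-1}\int_\alpha^\beta\Im N(t+i\varepsilon)\,\d t$ converges, as $\varepsilon\downarrow0$, both to $\sigma((\alpha,\beta))+\tfrac12\sigma(\{\alpha\})+\tfrac12\sigma(\{\beta\})$ (general theory) and, by the uniform convergence $\Im N(\cdot+i\varepsilon)\to\Im N$ on $[\alpha,\beta]$ coming from analyticity, to $\pi^{-1}\int_\alpha^\beta\Im N(t)\,\d t$; here $\sigma$ is the measure in the Nevanlinna representation of $N$. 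Hence $\sigma$ has no atoms and is absolutely continuous on each such interval with density $\pi^{-1}\Im N=\pi^{-1}\Re F$; letting $[\alpha,\beta]\uparrow\R$ yields $\d\sigma=\pi^{-1}|E_h/E|^2\,\d t$ by (\ref{SA}), while the linear coefficient of $N$ equals $c_0\ge0$. Substituting this back into $\Re F(z)=\Im N(z)$ is the asserted identity, and the finiteness $\int(1+t^2)^{-1}|E_h/E|^2\,\d t<\infty$ is part of the representation.
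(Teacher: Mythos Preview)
Your proof is correct. The paper states this Corollary without proof, treating it as immediate from the preceding Lemma (which establishes (\ref{SA})) together with the general Herglotz representation recorded in Theorem~\ref{isometry}. You are right to flag that identifying the representing measure with $|E_h/E|^2\,\d t$ is not automatic from the a.e.\ boundary identity (\ref{SA}) alone, since a singular part could in principle survive; your explicit formula $F(z)=-i\,(E_2\,G_h^+-E_1\,G_h^-)/E$, combined with the standing assumption that $E$ has no real zeros, yields analyticity of $F$ across $\R$ and makes the Stieltjes inversion clean. This is the natural way to make rigorous what the paper leaves implicit, and the computation checking the formula for $F$ (via $E=E_1G_h^++E_2G_h^-$ from $\Theta=M_hG_h$) is correct.
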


By the criterion of theorem \ref{isometry} we infer that either $ c_0 = 0 $ and then $ {\mathcal H} ( E_h ) $ is a subspace of $ L^2 \( \R ,  \left| E \right|^{ -2 }  \) $, or $ c_0 \ne 0 $, and then there exists an $ \alpha \ne 0 $ such that $ E_h + e^{ i \alpha } E_h^* \in {\mathcal H} ( E_h ) $ and $ {\mathcal X} \colon = {\mathcal H} ( E_h ) \ominus \mathcal L\{ E_h + e^{ i \alpha } E_h^* \} $ is a subspace of $ L^2 \( \R ,  \left| E \right|^{ -2 } \) $.  

The embedding on the the real line just established has to be supplemented with a global estimate in the plane to infer that $ {\mathcal H} ( E_h ) $ (or $ \mathcal X $) is a subset (hence, a subspace) in $ {\mathcal H} ( E ) $. The required estimate is (ii) in the following lemma the proof of which is to be found  in section \ref{facts}.

\begin{lemma}\la{deBrchar} Let $ W $ be an HB function. An entire function $ f \in {\mathcal H} ( W ) $ iff the following two conditions are satisfied,
\[ \begin{array}{cc} (i) & f \in L^2 \( \R , \left| W \right|^{ -2 } \) , \cr
(ii) &  | f ( z ) | \le C_f \sqrt{ \frac{\left|  W ( z ) \right|^2 - \left|  W ( \overline z ) \right|^2}{ \Im z }} . \end{array} \] \end{lemma}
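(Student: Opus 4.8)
The plan is to prove both directions, treating the substantial implication --- that (i) and (ii) force $f\in\HW$ --- by a Cauchy-type reproducing identity rather than by the usual bounded-type/Smirnov-class route.

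For the easy direction, let $f\in\HW$. Then (i) is nothing but the defining norm identity $\|f\|_{\HW}^2=\int_{\mathbb R}|f(t)|^2|W(t)|^{-2}\,\d t$. For (ii), write $f(z)=\langle f,K_z\rangle_{\HW}$ with $K_z$ the reproducing kernel, so $|f(z)|\le\|f\|\,\sqrt{K_z(z)}$; putting $\lambda=z$ in the kernel formula and using $W^*(z)W(\bar z)=|W(\bar z)|^2$ gives $K_z(z)=\frac1{4\pi}\,\frac{|W(z)|^2-|W(\bar z)|^2}{\Im z}$, which is exactly (ii) with $C_f=\|f\|/(2\sqrt\pi)$ for $z\in\C_+$; for $z\in\C_-$ apply this to $f^*$ at $\bar z$, and the real case is a limiting case.

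For the converse, suppose $f$ is entire and satisfies (i) and (ii). Two preliminary remarks. First, (i) forces $f$ to vanish at each real zero of $W$ to at least the order of that zero (else $|f/W|^2$ would be non-integrable near it); hence $f/W$ is analytic in an open neighbourhood of $\overline{\C_+}$, $f/W^*$ is analytic in an open neighbourhood of $\overline{\C_-}$, and by (i) the traces of $f/W$ and $f/W^*$ on $\mathbb R$ lie in $L^2(\mathbb R)$. Second, since $|W(\bar z)|\le|W(z)|$ in $\C_+$ by the Hermite--Biehler property, (ii) yields the crude bounds $|(f/W)(z)|\le C_f(\Im z)^{-1/2}$ on $\C_+$ and, symmetrically, $|(f/W^*)(z)|\le C_f(-\Im z)^{-1/2}$ on $\C_-$. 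Now fix $w\in\C_+$ and integrate $\zeta\mapsto(f/W)(\zeta)/(\zeta-w)$ over the boundary of the half-disc $\{|\zeta|\le R,\ \Im\zeta\ge0\}$: the semicircular arc contributes $O(R^{-1/2})$ because $\int_0^\pi(\sin\theta)^{-1/2}\,\d\theta<\infty$, so letting $R\to\infty$ gives $\frac1{2\pi i}\int_{\mathbb R}\frac{(f/W)(t)}{t-w}\,\d t=(f/W)(w)$, while closing instead in $\C_-$ shows the analogous integral of $f/W^*$ vanishes for $w\in\C_+$. Substituting the identity $\overline{K_w(t)}\,|W(t)|^{-2}=\frac1{2\pi i(t-w)}\bigl(\tfrac{W(w)}{W(t)}-\tfrac{W^*(w)}{W^*(t)}\bigr)$, valid for $t\in\mathbb R$, and splitting the resulting integral, one arrives at
\[ \int_{\mathbb R} f(t)\,\overline{K_w(t)}\,|W(t)|^{-2}\,\d t \;=\; f(w),\qquad w\in\C, \]
the case $w\in\C_-$ being symmetric and $w\in\mathbb R$ following by analytic continuation (both sides are entire in $w$).

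To conclude, recall that $\HW$ embeds isometrically in $L^2(\mathbb R,|W|^{-2})$ and, being complete, does so as a closed subspace, and that it contains every $K_w$. Let $f_0$ be the orthogonal projection of $f$ onto $\HW$ inside $L^2(\mathbb R,|W|^{-2})$. Then the identity just proved together with the reproducing property in $\HW$ gives $f(w)=\langle f,K_w\rangle=\langle f_0,K_w\rangle=f_0(w)$ for every $w$, and since $f$ and $f_0$ are entire functions agreeing at every point, $f=f_0\in\HW$. I expect the only points needing care to be the verification that (i) genuinely absorbs the real zeros of $W$ so that the quotients are analytic up to the real line, and the orientation/residue bookkeeping in the four contour integrals (two quotients times two possible locations of $w$); the growth bound (ii) is precisely strong enough to make the circular arcs vanish, and no bounded-type input is required.
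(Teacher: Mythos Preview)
Your proof is correct. The core idea --- use condition (ii) to bound $|(f/W)(z)|\le C_f(\Im z)^{-1/2}$ in $\C_+$ and then run a contour integral over a large half-disc --- is the same as the paper's, but the executions differ. The paper first mollifies, setting $h_\tau(z)=\frac{i\tau}{z+i\tau}\,\frac{f(z)}{W(z)}$ so that $|h_\tau(z)|\le C|z|^{-1}(\Im z)^{-1/2}$, proves $h_\tau\in H^2$ by contour integration, and then passes to the limit $\tau\to\infty$ in $L^2(\R)$ to conclude $f/W\in H^2$. Your observation that the arc contribution already vanishes without mollification, because $\int_0^\pi(\sin\theta)^{-1/2}\,\d\theta<\infty$, is a genuine simplification: it eliminates the approximation step entirely.

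On the other hand, your finishing move via the reproducing-kernel identity and orthogonal projection, while correct, is more than you need. Once you have established that $\frac1{2\pi i}\int_\R\frac{(f/W)(t)}{t-w}\,\d t=(f/W)(w)$ for $w\in\C_+$ with $f/W\in L^2(\R)$, you have shown directly that $f/W$ coincides with its own Cauchy projection, which is precisely the statement $f/W\in H^2$. The companion contour integrals for $f/W^*$ give $f^*/W\in H^2$ in the same way, and you are done by the definition of $\HW$ --- no need to invoke $K_w$ or project inside $L^2(\R,|W|^{-2})$. So the paper's route is: mollify, contour-integrate, approximate; yours could be: contour-integrate, stop. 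The detour through the kernel is harmless but adds length.
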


Let $ f \in  {\mathcal H} ( E_h ) $, then it satisfies (ii) with $ W = E_h $. The expression in the r. h. s. of (ii) with $ W = E_h $ is estimated above by the same expression with $ W = E $ by (\ref{mh}) in view of the following lemma.   

\begin{lemma} Let $ A = \Bigl( \! \begin{array}{c} A_+ \cr A_- \end{array} \! \Bigr) \in \C^2 $ be such that $ \Im \( A_+ \overline A_- \) \ge 0 $, and $ N $ be such that $ \frac1i ( N^* J N - J ) \ge 0 $, $ B = N A $. Then \[ \left| B_+ + i B_- \right|^2 - \left| B_+ - i B_- \right|^2 \ge \left| A_+ + i A_- \right|^2 - \left|^2 A_+ - i A_- \right|^2 . \]    
\end{lemma}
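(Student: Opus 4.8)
The plan is to reduce the claimed inequality directly to the matrix positivity hypothesis on $ N $, by means of the elementary identity already exploited in the proof of Proposition~\ref{HB}. First I would record that for any vector $ V \in \C^2 $ with components $ V_\pm $,
\[ \left| V_+ + i V_- \right|^2 - \left| V_+ - i V_- \right|^2 = 4 \, \Im \left( V_+ \overline{ V_- } \right) = -2i \, \langle J V , V \rangle , \]
the first equality being the straightforward computation used in Proposition~\ref{HB} and the second following from $ \langle J V , V \rangle = V_+ \overline{V_-} - \overline{V_+} V_- $. I would also note that $ \tfrac1i ( N^* J N - J ) $ is genuinely Hermitian: since $ J^* = -J $, both $ N^* J N $ and $ J $ are anti-Hermitian, so the hypothesis $ \tfrac1i ( N^* J N - J ) \ge 0 $ is meaningful.

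Next I would apply this identity to $ V = B = N A $ and to $ V = A $. Using $ \langle J N A , N A \rangle = \langle N^* J N A , A \rangle $ (definition of the adjoint), this yields
\[ \left| B_+ + i B_- \right|^2 - \left| B_+ - i B_- \right|^2 = -2i \, \langle N^* J N A , A \rangle , \qquad \left| A_+ + i A_- \right|^2 - \left| A_+ - i A_- \right|^2 = -2i \, \langle J A , A \rangle . \]
Subtracting, and using $ -2i = 2/i $, the difference of the left-hand sides equals $ 2 \, \langle \tfrac1i ( N^* J N - J ) A , A \rangle $, which is $ \ge 0 $ by the hypothesis on $ N $. That is exactly the asserted inequality.

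Since this argument invokes only $ \tfrac1i ( N^* J N - J ) \ge 0 $, the hypothesis $ \Im ( A_+ \overline{A_-} ) \ge 0 $ is not in fact needed for the inequality itself; it only records that the right-hand side is nonnegative, which is the situation in which the lemma is used --- with $ N = M_h ( z ) $, $ A = G_h ( z ) $, $ B = \Theta ( z ) $ for $ z \in \C_+ $, the positivity of $ \tfrac1i ( M_h^* J M_h - J ) $ being (\ref{Jcontractive}) and the inequality $ \Im \bigl( (G_h)_+ \overline{(G_h)_-} \bigr) \ge 0 $ being the Hermite--Biehler property of $ E_h $. I expect no real obstacle here; the only point demanding care is the bookkeeping of the numerical factors ($ 4 $, $ -2i $, $ 2/i $) across the two instances of the identity and the check that $ N^* J N - J $ is anti-Hermitian, so that the final quadratic form is real.
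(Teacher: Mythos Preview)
Your proof is correct and is essentially the same as the paper's: both reduce the inequality to $\tfrac1i\langle (N^*JN-J)A,A\rangle\ge 0$ via the identity $|V_+ + iV_-|^2 - |V_+ - iV_-|^2 = 4\,\Im(V_+\overline{V_-}) = \tfrac{2}{i}\langle JV,V\rangle$, the paper merely writing this as the one-line chain $\Im(B_+\overline{B_-}) = \tfrac1{2i}\langle JB,B\rangle = \tfrac1{2i}\langle N^*JNA,A\rangle \ge \tfrac1{2i}\langle JA,A\rangle = \Im(A_+\overline{A_-})$. Your observation that the hypothesis $\Im(A_+\overline{A_-})\ge 0$ is not used in the inequality itself is also correct.
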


\begin{proof} \[ \Im \( B_+ \overline B_- \) = \frac1{2i} \langle J B , B \rangle = \frac 1{2i} \langle N^* J N A , A \rangle \ge \frac 1{2i} \langle JA , A \rangle = \Im \( A_+ \overline A_- \) . \] \end{proof}

Thus, if $ c_0 = 0 $, then $ {\mathcal H} ( E_h ) $ is a subspace of $ \HE $, if $ c_0 \ne 0 $ then $ \mathcal X $ is a subspace of $ \HE $. Notice that in the latter case $ \mathcal X $ is a de Branges space (see remark after the proof of theorem \ref{isometry}).

\begin{theorem}\la{isomembed}
Let $ \HW $ be a regular de Branges space, $ \HG $ a de Branges space, $ W $ and $ G $ have no real zeroes. Assume that $ \HG \subset \HW $, and $ \len f \rin_{ \HG } = \len f \rin_{ \HW } $ for all $ f \in \HG $. Then $ \HG $ is regular. \end{theorem}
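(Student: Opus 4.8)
The plan rests on the intrinsic description of regularity recalled in Section~\ref{facts}: an HB function $F$ with no real zeros is regular exactly when $(z+i)^{-1}F(z)^{-1}\in H^{2}(\C_{+})$, and since $1/F$ lies in the Smirnov class $N^{+}(\C_{+})$ while $N^{+}\cap L^{2}(\R)=H^{2}$, this is the same as $\int_{\R}\frac{\d t}{(1+t^{2})\,|F(t)|^{2}}<\infty$. Thus the hypothesis ``$\HW$ regular'' is this integrability for $W$, and I must derive it for $G$. (That $1/G\in N^{+}(\C_{+})$, needed so that the reduction applies to $G$, is itself forced by the inclusion: $\HG\subseteq\HW$ makes $W/G$ of bounded type in $\C_{+}$, and $1/G=(1/W)(W/G)$ with $1/W\in N^{+}$.)

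Next I would put the data on the real line. Theorem~\ref{isometry}, applied with $\mu$ the Lebesgue measure and $A\equiv 0$ — for which $\Re\frac{1+SA}{1-SA}\equiv 1$ is precisely the Poisson integral of $\d t$ — shows that $\HW$ and $\HG$ are isometrically embedded in $L^{2}(\R,|W|^{-2}\d t)$ and $L^{2}(\R,|G|^{-2}\d t)$ respectively. Combined with $\HG\subseteq\HW$ isometrically this yields, for all $f\in\HG$,
\[ \int_{\R}\frac{|f(t)|^{2}}{|G(t)|^{2}}\,\d t=\|f\|_{\HG}^{2}=\|f\|_{\HW}^{2}=\int_{\R}\frac{|f(t)|^{2}}{|W(t)|^{2}}\,\d t ; \]
equivalently $\HG$ is isometrically embedded in $L^{2}\bigl(\R,|G|^{-2}\d\mu\bigr)$ with $\d\mu=\frac{|G(t)|^{2}}{|W(t)|^{2}}\,\d t$, and a second use of Theorem~\ref{isometry} (for $G$, with this $\mu$) gives $A\in H^{\infty}$, $\|A\|_{\infty}\le 1$, with $\Re\frac{1+SA}{1-SA}$ the Poisson integral of $\mu$; evaluation at $z=i$ then yields $\int_{\R}\frac{|G(t)|^{2}}{(1+t^{2})|W(t)|^{2}}\,\d t<\infty$. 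I would also keep in mind the structural fact that, the inclusion being isometric, $\HG$ is \emph{orthogonally} complemented in $\HW$ with $P_{\HG}K^{W}_{w}=K^{G}_{w}$ for every $w$; hence $\HW\ominus\HG$ is a reproducing–kernel space with kernel $K^{W}-K^{G}\ge 0$, so that $|G(z)|^{2}-|G(\bar z)|^{2}\le|W(z)|^{2}-|W(\bar z)|^{2}$ on $\C_{+}$.

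What remains is to turn the integral bounds for $W$ into $\int_{\R}\frac{\d t}{(1+t^{2})|G(t)|^{2}}<\infty$, and this is the only substantive step: it cannot be obtained by a soft estimate (a Cauchy--Schwarz bound goes the wrong way, and the version of the theorem with merely \emph{continuous} inclusion is false), so the isometry must be used in an essential way. I would organise the argument around the difference–quotient operators $R_{w}\colon f\mapsto\frac{f(z)-f(w)}{z-w}$, regularity of a de Branges space being equivalent to invariance under all $R_{w}$, $w\in\C$; membership in $\HG$ versus $\HW$ is controlled by the criterion of Lemma~\ref{deBrchar}, whose growth bound (ii) is the delicate part. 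For $f\in\HG\subseteq\HW$ one has $R_{w}f\in\HW$ because $\HW$ is regular, and the point is that $R_{w}f\in\HG$; by the orthogonal complementation above this amounts to showing that $\HW\ominus\HG$ is invariant under the adjoints $R_{w}^{*}$ taken in $\HW$ (so that $\HG$, its orthogonal complement, is $R_{w}$–invariant), after which the resolvent identity $R_{\alpha}-R_{\beta}=(\alpha-\beta)R_{\alpha}R_{\beta}$ propagates the conclusion from one non–real $w$ to all of $\C$. Proving that $\HW\ominus\HG$ is $R_{w}^{*}$–invariant — equivalently, translating the model–space form of $\HG\subseteq\HW$ (obtained by dividing by $G$ and $W$, which have no real zeros, so that $\HG,\HW$ become $K_{\Theta_{G}}=H^{2}\ominus\Theta_{G}H^{2}$ and $K_{\Theta_{W}}=H^{2}\ominus\Theta_{W}H^{2}$ with $\Theta_{G}=G^{*}/G$, $\Theta_{W}=W^{*}/W$ inner, and $\HG\subseteq\HW$ the statement that $G/W$ is an isometric multiplier $K_{\Theta_{G}}\to K_{\Theta_{W}}$) into the required control of $1/|G|$ by $1/|W|$ on $\R$ — is where the isometry enters decisively and is the main obstacle of the proof.
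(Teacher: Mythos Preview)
Your outline has the right target --- regularity of $\HG$ is equivalent to invariance under the difference quotients $R_w$, and since $\HW$ is regular and $\HG \subset \HW$ isometrically the question reduces to showing $R_w f \in \HG$ for $f \in \HG$ --- but you stop precisely at the hard step. You write that proving $R_w^*$-invariance of $\HW \ominus \HG$ ``is the main obstacle of the proof'' and leave it at that; this is not a proof but an identification of what remains to be proved. The model-space reformulation you sketch (passing to $K_{\Theta_G} \subset K_{\Theta_W}$ via the isometric multiplier $G/W$) does not dissolve the obstacle: it is just a restatement, and nothing in it yields the integrability of $|G|^{-2}(1+t^2)^{-1}$ on $\R$ or the backward-shift invariance you need. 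Your own remark that Cauchy--Schwarz ``goes the wrong way'' already shows that the preliminary integral bounds obtained via Theorem~\ref{isometry} are not enough; the proposal contains no mechanism beyond them.

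The paper's argument to close this gap is concrete and rather different from an abstract invariance statement. For $g \in \HG$ and $f = (g-g(0))/z$, one takes an arbitrary $h \in \HW \ominus \HG$ and forms
\[
\Psi_1(z)=\Bigl\langle \frac{G(z)f - f(z)G}{\cdot - z},\, h\Bigr\rangle_{\HW},\qquad
\Psi_2(z)=\Bigl\langle \frac{G^*(z)f - f(z)G^*}{\cdot - z},\, h\Bigr\rangle_{\HW};
\]
the quotient $\Psi = \Psi_1/G = \Psi_2/G^*$ is entire, of bounded type in each half-plane (hence of finite exponential type by Krein's theorem), and a Phragm\'en--Lindel\"of argument forces $\Psi \equiv 0$. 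This gives $\frac{G(z)f - f(z)G}{\cdot - z} \in \HG$ for every $z$. A second and more delicate step then shows that $P^\perp f$ (the component of $f$ in $\HW \ominus \HG$) must be of the form $\alpha G + \beta G^*$, and a bootstrapping growth estimate along the positive imaginary axis --- combining $f/G = O(\tau^{-1})$, the pointwise bound of Lemma~\ref{deBrchar} for $Pf$, and the structure of $P^\perp f$ --- yields $(P^\perp f)(i\tau)/G(i\tau) = O(\tau^{-1})$, from which $P^\perp f = 0$ and $f \in \HG$ follow. None of this machinery is present in your proposal.
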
  

Now, $ E $ does not have real zeroes by assumption, and $ E_h $ by (\ref{mh}). We conclude from the theorem that $ E_h $ is regular. This is trivial in the case $ c_0 = 0 $, and follows immediately from the regularity of $ \mathcal X $  since the latter is a subspace in $ {\mathcal H} ( E_h ) $ if $ c \ne 0 $.

Let us apply theorem \ref{restore} to functions $ E $ and $ E_h $ of proposition \ref{altern}. Denote the corresponding matrices by $ M $ and $ N_h $, respectively. The matrices $ M $ and $ \wt M \colon = M_h N_h $ are entire functions having the same first column, $ \Theta $. As they both have determinant one, their respective second columns, $ \Phi $ and $ \wt \Phi $, satisfy $ \wt \Phi = \Phi + c \Theta $ where $ c $ is a scalar function of $ \lambda $. Since $ \Theta_+ $ and $ \Theta_- $ have no common zeroes, $ c $ is an entire function. We will show that $ c $ is zero.

First, we show that $ c $ is linear. Indeed, 
\[ c = \frac{ {\wt \Phi}_-}{\Theta_- } - \frac{\Phi_-}{\Theta_-} . \]  
According to (iii) of theorem \ref{restore} $ \Phi_-/\Theta_- $ is a Herglotz function. Now, $ {\wt \Phi }_-/\Theta_- $ is a Herglotz function as well because $ A \colon = \wt M ( z ) $ has the property that $ \frac 1i \( A^* J A - J \) \ge 0 $ for all $ z \in \C_+ $, for it is a product of two matrices, $ M_h $ and $ N_h $, each satisfying this property. Indeed, $ M_h $ is a monodromy matrix of a canonical system, hence (\ref{Jcontractive}) holds for it, and for $ N_h $ the property is (iii) of theorem \ref{restore}. That the product of two matrices, $ B $ and $ C $, satisfying the property, also satisfies it, is immediate from the following line
\[  \frac 1i \( \(BC\)^* J BC - J \) = \frac 1i C^* ( B^* J B - J ) C + \frac 1i ( C^* J C - J ) . \]
It remains to notice that by exercise \ref{columnrow} the property implies $ \Im \( A_{22} / A_{ 21 } \) \ge 0 $. Thus, $ c $ is entire and is a difference of two Herglotz functions. The representation theorem for Herglotz functions implies that $ c ( \lambda ) = a + b \lambda $ for some real $ a , b $. 

Notice that $ a = 0 $ on account of  $ N_h ( 0 ) = I $, and we have thus proved that in the assumptions of proposition \ref{altern} 
\be\la{lingrow} M ( z ) \( \begin{array} {cc} 1 & b z \cr 0 & 1 \end{array} \! \) = M_h ( z ) N_h ( z ) . \ee

Consider the ratio of $ 12 $- and $ 11 $-entries in this equality, with the following notation for columns of $ M_ h $, $ M_h = [ \Theta_h , \Phi_h ] $,  
\bequnan \frac { \Phi_+ }{ \Theta_+ } + b z = \frac{ \Theta_h^+ N_{ h, 12 } + \Phi_h^+ N_{ h , 22 }}{\Theta_h^+ G_h^+ + \Phi_h^+ G_h^- } = \frac{ N_{ h, 12 }}{ G_h^+ }  \( \frac{ \frac{ N_{ h, 22 }}{ N_{ h, 12 }} - \frac{G_h^-}{ G_h^+ }}{ \frac{\Theta_h^+}{ \Phi_h^+ } + \frac{ G_h^-}{ G_h^+ }} + 1 \) = \\ \begin{array}{ccc} = \frac 1{ \( G_h^+ \)^2 } & \displaystyle{\underbrace{ \frac1{ \frac{\Theta_h^+}{ \Phi_h^+ } + \frac{ G_h^-}{ G_h^+ }} } } & + \frac{ N_{ h, 12 }}{ G_h^+ } . \cr & (I) & \end{array} \eequnan

Consider the limit $ z = i t $, $ t \to + \infty $. By (iii) of theorem \ref{restore} the l. h. s. is $ b z ( 1 + o ( 1 ) ) $ and $ N_{ h, 12 }/ G_h^+ = o ( t ) $. Then, $| G_h^+ ( i t ) |/t $ is bounded away from $ 0 $. This is obvious from the the canonical product for $ G_h^+ $, for $ \left| G_h^+ ( i t ) \right|^2 = C \prod_{ G_h^+ ( \tau ) = 0 } \( 1 + t^2 /\tau^2 \) $ (notice that $ G_h^+ $ does have at least one zero since it is a real part of an HB function - see proposition \ref{altern}). It follows that if $ b \ne 0 $, then the denominator in (I) should be $ o ( t^{ -2 } ) $. The imaginary parts of both terms in the denominator are of the same sign (negative) by exercise \ref{columnrow}, hence
\[ \Im \frac{ G_h^- ( it ) }{ G_h^+ (it ) } = o \( \frac 1{t^2} \) ,\]
On the other hand, $ - G^h_- / G^h_+ $ is a Herglotz function, therefore the asymptotics obtained holds iff this function is zero, a contradiction since $ E_h ( 0 ) = 1$. It follows that $ b = 0 $, and $ M = M_h N_h $. Recalling that $ \operatorname{tr} J \dot M_h ( 0 ) = h $ by lemma \ref{trace} since $ M_h $ is a monodromy matrix of a canonical system on an interval of length $h $, we find
\[ \operatorname{tr} J \dot M ( 0 ) = h + \operatorname{tr}  J {\dot{N}}_h ( 0 ) . \]
The second term in the r. h. s. is positive by corollary \ref{trpos}, therefore $ h < \operatorname{tr} J \dot M ( 0 ) $. This means that $ E_h  \equiv 1 $ for all $ h \ge \operatorname{tr} J \dot M ( 0 )  $, and the second alternative mentioned after proposition \ref{altern} is ruled out. We formulate the result,

\begin{theorem}\la{inversolution}
Let $ E $ be a regular HB function having no real zeroes, $ E ( 0 ) = 1 $, and let $ \Theta = \frac 12 \( \begin{array}{c} E + E^* \cr \frac 1i ( E - E^* ) \end{array} \) $. Then there exists a canonical system $ ( \cH , L ) $, $ L < \infty $, such that $ \Theta = \Theta ( L , \cdot ) $.
\end{theorem}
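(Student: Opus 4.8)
The plan is to read off Theorem~\ref{inversolution} from Proposition~\ref{altern} together with the length bound obtained above. By Proposition~\ref{altern}, for every $h \ge 0$ there is a canonical system $(\cH_h, h)$ with monodromy matrix $M_h$ for which \emph{either} $\Theta(z) = M_h(z) G_h(z)$ with $E_h := G_h^+ + i G_h^-$ an HB function, \emph{or} $\Theta$ coincides with the first column of $M_h$. In the second case we are done at once: setting $L = h$ and $\cH = \cH_h$, the first column of the monodromy matrix is exactly the solution $\Theta(\cdot,\cdot)$ of $(\cH_h,h)$ evaluated at $x = h$, so $\Theta = \Theta(L,\cdot)$. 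Thus everything reduces to showing that the first alternative cannot hold for all $h$; the first $h$ for which it fails supplies the system.

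So assume $E_h$ is HB. First I would show $E_h$ is regular. The explicit contractive function $A$ produced above satisfies (\ref{SA}), so by the corollary following that lemma there is a $c_0 \ge 0$ for which the Herglotz representation (\ref{infinitymass}) holds with $W = E_h$, $S = E_h^*/E_h$, and $\d\mu = |E_h/E|^2\,\d t$. Theorem~\ref{isometry} then yields that either $c_0 = 0$ and ${\mathcal H}(E_h)$ embeds isometrically into $L^2(\R,|E|^{-2})$, or $c_0 \ne 0$ and the codimension-one subspace ${\mathcal X} = {\mathcal H}(E_h) \ominus {\mathcal L}\{E_h + e^{i\alpha}E_h^*\}$ does. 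Using (\ref{mh}), the monotonicity lemma for matrices $N$ with $\frac 1i(N^*JN - J)\ge 0$, and the pointwise characterisation of Lemma~\ref{deBrchar}, this real-line embedding upgrades to an isometric inclusion of ${\mathcal H}(E_h)$ (resp.\ of ${\mathcal X}$) into $\HE$. Since $\HE$ is regular by hypothesis and $E$, $E_h$ have no real zeroes, Theorem~\ref{isomembed} shows ${\mathcal H}(E_h)$ is regular (in the second case via the regularity of ${\mathcal X}$, which it forces).

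Now apply Theorem~\ref{restore} to $E$ and to $E_h$, obtaining entire determinant-one matrix functions $M$ and $N_h$ with $M(0) = N_h(0) = I$ whose first columns are $\Theta$ and $G_h$. The matrices $M$ and $\wt M := M_h N_h$ have the same first column $\Theta$, so (determinants being one, and $\Theta_\pm$ having no common zero) their second columns satisfy $\wt\Phi = \Phi + c\Theta$ with $c$ entire. Writing $c = {\wt\Phi}_-/\Theta_- - \Phi_-/\Theta_-$ exhibits $c$ as a difference of two Herglotz functions: $\Phi_-/\Theta_-$ by (iii) of Theorem~\ref{restore}, and ${\wt\Phi}_-/\Theta_-$ because $\wt M$ is a product of two matrices each satisfying $\frac 1i(A^*JA - J)\ge 0$. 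The representation theorem for Herglotz functions then gives $c(\lambda) = a + b\lambda$, and $a = 0$ since $N_h(0) = I$. Comparing the $(1,2)$- and $(1,1)$-entries of $M(z)\begin{pmatrix} 1 & b z \cr 0 & 1 \end{pmatrix} = M_h(z) N_h(z)$ along $z = it$, $t \to +\infty$, and using that $|G_h^+(it)|/t$ stays bounded away from $0$ (from the canonical product for $G_h^+$, which has a zero, being the real part of an HB function) while $-G_h^-/G_h^+$ is Herglotz, one excludes $b \ne 0$. Hence $M = M_h N_h$, so by Lemma~\ref{trace}, $\operatorname{tr} J \dot{M}(0) = h + \operatorname{tr} J {\dot{N}}_h(0) > h$, the last term being positive by Corollary~\ref{trpos}. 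Therefore $h < \operatorname{tr} J \dot{M}(0)$ whenever $E_h$ is HB; taking any $h \ge \operatorname{tr} J \dot{M}(0)$ forces the second alternative of Proposition~\ref{altern}, and then $(\cH_h, h)$ with $L = h$ solves the inverse problem.

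The genuinely hard part — already carried out in the preceding sections — is the length bound $h < \operatorname{tr} J \dot{M}(0)$, which is precisely where regularity of $E$ enters: through the isometric-embedding criterion (Theorem~\ref{isometry}), the principle that a subspace of a regular de Branges space is regular (Theorem~\ref{isomembed}), and the reconstruction of a monodromy matrix from its first column (Theorem~\ref{restore}). Granting all of these, the proof of Theorem~\ref{inversolution} is just the assembly described above.
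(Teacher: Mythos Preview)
Your proposal is correct and follows essentially the same route as the paper's own proof: the dichotomy from Proposition~\ref{altern}, the regularity of $E_h$ via Theorems~\ref{isometry} and~\ref{isomembed} together with Lemma~\ref{deBrchar}, the matching of $M$ with $M_h N_h$ through Theorem~\ref{restore}, the elimination of the linear term $b z$ by the imaginary-axis asymptotics, and finally the trace bound $h < \operatorname{tr} J \dot M(0)$ via Lemma~\ref{trace} and Corollary~\ref{trpos}. The assembly and the order of the steps coincide with the paper's argument throughout.
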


Let us comment on this theorem. Since $ E_h \equiv 1 $ for $ h = \operatorname{tr} J \dot M ( 0 ) $, the canonical system solving the inverse problem is $ ( F^\prime , L ) $ with $ \ell =   \operatorname{tr} J \dot M ( 0 ) $, $ L = \ell $. The length, $ L  $, of this system is given by an explicit formula in terms of the function $ E $ (see (\ref{phi-}), (\ref{phi+}))
\be\la{lengthL} L =  \operatorname{tr} J \dot M ( 0 ) = \dot{\Phi}_+ ( 0 ) - \dot{\Theta}_- ( 0 ) = \frac 1\pi \len \frac{ \Theta_+ - 1 }\lambda \rin_{\HE}^2 -   \dot{\Theta}_- ( 0 )  \ee
with $ \Theta_\pm $ defined by (\ref{ThetaE}). This system satisfies the compatibility condition (L). Indeed, if it didn't, the system $ ( \tilde \cH , \tilde L ) $, $ \tilde H ( x ) = F^\prime ( x + L - \tilde L ) $, obtained by truncation of the singular interval at the left end, would satisfy the compatibility condition and have the same de Branges function $ E $. Comparing (\ref{LthroughE}) and the above formula for $ L $ we see that $ \tilde L = L $.

\section{A formula for type} \la{finreg}

Let $ E $ be a regular HB function. Then it is of finite exponential type, and let $ p = \limsup_{ |z| \to \infty } \ln | E ( z ) |/ |z| $ be its type. Notice that it is enough to consider $ z \to \infty $ along the positive imaginary axis.
\be\la{exptypeE} p = \limsup_{ y \to + \infty } \ln | E ( iy ) |/ y . \ee

Indeed, by the regularity of $ E $, for $ z \in \C_+ $ we have $ E ( z ) = e^{ ihz } f ( z ) $, for an $ h \le 0 $ and an outer function $ f $. It follows that for $ 0 < \theta < \pi $
\[ \ln | E ( R e^{ i\theta } ) | = - h R \sin \theta + o ( R ) , \; R \to +\infty . \]
By the HB property of $ E $, \[ \ln | E ( R e^{ i\theta } ) | \le h R \sin \theta + o ( R ) , \; R \to +\infty , \] 
for $ -\pi < \theta < 0 $. Applying the Fragmen-Lindel\"of theorem in sectors $ - \pi/4 < \theta < \pi/2 $, $ -3\pi/4 < \theta < -\pi/4 $, $ \pi/2 < \theta < 5\pi/4 $ we find that for any $ \von > 0 $ there exists a $ C_\von $ such that\footnote{This reasoning is sometimes called a hall-of-mirrors argument.} $ | E ( z ) | \le C_\von e^{( \von - h ) | z | } $, that is, the exponential type of $ E $ is not greater than $ -h $, and hence equals to it which is (\ref{exptypeE}). 

Let $ M ( \lambda ) = [ \Theta ( \lambda ) , \Phi ( \lambda ) ] $ be a monodromy matrix corresponding to $ E $ in the sense of theorem \ref{restore}.

\begin{lemma}\la{exptypeelements} All matrix elements of $ M $ are entire functions of the same exponential type equal to $ p $ given by (\ref{exptypeE}).
\end{lemma}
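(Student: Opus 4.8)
The plan is to show two inequalities: that each matrix element of $M$ has exponential type at most $p$, and that at least one of them (hence, after a symmetry/pairing argument, all of them) has type exactly $p$. For the upper bound, recall that $E = \Theta_+ + i\Theta_-$ and $E^* = \Theta_+ - i\Theta_-$, so $\Theta_+ = (E+E^*)/2$ and $\Theta_- = (E-E^*)/(2i)$; since $E$ and $E^*$ both have exponential type $p$ (the type of $E^*$ equals that of $E$ because $E^*(z) = \overline{E(\bar z)}$), the functions $\Theta_+$ and $\Theta_-$ have type at most $p$. For the second column one uses theorem \ref{restore}: $\Phi_-/\Theta_-$ is a Herglotz function, so in particular $\Phi_-$ is a real entire function whose growth along the imaginary axis is controlled — along $z = iy$ one has $|\Phi_-(iy)/\Theta_-(iy)| = O(y)$, so $\Phi_-$ has type at most $p$ as well; and since $\det M \equiv 1$, $\Phi_+ = (1 + \Phi_-\Theta_-\cdot(\text{something}))/\Theta_+$ — more cleanly, $\Phi_+\Theta_- - \Phi_-\Theta_+ = 1$ forces $\Phi_+$ to have type at most $p$ too (a quotient of type-$p$ entire functions that is itself entire has type at most $p$, by the Krein theorem applied to the restrictions to the half-planes, or directly since $\Phi_+ = (1+\Phi_-\Theta_+)/\Theta_-$ — wait, that is not quite a polynomial identity; better: from $M^*JM = J$ type considerations, or just note $|\Phi_+(iy)| \le |\Phi_-(iy)\Theta_+(iy)/\Theta_-(iy)| + 1/|\Theta_-(iy)|$ and both terms are $\le e^{(p+\varepsilon)y}$).

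For the lower bound, observe that $\|M(iy)\|$ must grow at least like $e^{py}$: indeed $\|M(iy)\| \ge |\Theta_+(iy)|$ or $|\Theta_-(iy)|$, and since $\max(|\Theta_+(iy)|,|\Theta_-(iy)|) \ge \tfrac12|E(iy)|$ (because $E = \Theta_+ + i\Theta_-$), we get $\limsup_{y\to\infty} \ln\|M(iy)\|/y \ge p$ by (\ref{exptypeE}). Combined with the upper bounds, every entry has type $\le p$, and at least one of $\Theta_+,\Theta_-$ has type exactly $p$. To promote this to "all entries have type $p$": the two entries of the first column $\Theta_\pm$ cannot both have type $< p$ (just shown), so at least one does; and a Herglotz function $\Phi_-/\Theta_-$ of a nonconstant argument — combined with $\det M = 1$ and the Krein theorem — forces the remaining entries to have the same type. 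Here I would use the standard fact that if $g$ is entire of type $\tau$ and $f/g$ is Herglotz (hence of bounded type in each half-plane, with mean type $0$ on each, by the Herglotz representation and regularity), then $f$ has the same type $\tau$; similarly for the $12$ and $22$ entries.

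The place to be careful — the main obstacle — is the claim that a quotient of two finite-type entire functions which happens to be entire again has type no larger than the numerator, and dually that Herglotz-type ratios do not decrease the exponential type. The clean route is: (1) all four entries are real entire of finite exponential type (this follows from the estimate (\ref{estM}) applied to the fundamental solution, giving type $\le L$ a priori, and finiteness is genuine); (2) their types are $\le p$ by the arguments above using $E,E^*,$ and the Herglotz property from theorem \ref{restore}; (3) their types are $\ge p$ because $\|M(iy)\| \gtrsim |E(iy)|$ forces at least one entry to have type $p$, and then the Herglotz identities $\Phi_-/\Theta_-$, $\Phi_+/\Theta_+$ (each Herglotz or "minus-Herglotz" by exercise \ref{columnrow} and theorem \ref{restore}) transfer this type across the matrix — concretely, if $\Theta_-$ had type $p$ then so does $\Phi_-$ since their ratio is a nonconstant function of bounded type with zero mean type in $\C_\pm$; and $\det M = 1$ with $\Theta_-,\Phi_-$ of type $p$ forces $\Theta_+$ or $\Phi_+$ to have type $p$, after which the same ratio argument finishes. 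I would write step (3) carefully since it is the only part that is not a one-line observation; the rest is routine once one invokes the Krein theorem and the structure theorems already proven.
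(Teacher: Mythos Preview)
Your plan is essentially the paper's argument, and the pieces are all there, but step (3) of your ``clean route'' contains one genuine misstep and one omission you should fix.

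The misstep is the claim that ``$\det M = 1$ with $\Theta_-, \Phi_-$ of type $p$ forces $\Theta_+$ or $\Phi_+$ to have type $p$.'' The identity $\Theta_+\Phi_- - \Theta_-\Phi_+ = 1$ does not by itself prevent $\Theta_+$ and $\Phi_+$ from both having type strictly less than $p$; the two products could have large type individually yet cancel. The determinant is not the right transfer mechanism here.

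The omission is that you list the Herglotz ratios $\Phi_-/\Theta_-$ and $\Phi_+/\Theta_+$ but never invoke $\Theta_+/\Theta_-$, which is the one that does the work for the first column. Since $\Theta_+/\Theta_-$ is Herglotz (this is just the HB property of $E$), one has $C y^{-1} \le |\Theta_+(iy)/\Theta_-(iy)| \le Cy$ for large $y$, so
\[
\limsup_{y\to+\infty} y^{-1}\ln|\Theta_+(iy)| = \limsup_{y\to+\infty} y^{-1}\ln|\Theta_-(iy)|,
\]
and since $E = \Theta_+ + i\Theta_-$ this common value is $p$ by (\ref{exptypeE}). That gives \emph{both} $\Theta_\pm$ type exactly $p$ in one stroke, and you never need the determinant. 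This is precisely what the paper does. Then for $\Phi_-$ the paper uses $\Phi_-/\Theta_-$ Herglotz to get type $\le p$ (via Phragm\'en--Lindel\"of along rays, after finite type is established) and $-\Theta_-/\Phi_-$ Herglotz to get type $\ge p$; $\Phi_+$ is handled analogously.

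A smaller remark: your appeal to (\ref{estM}) for the a priori finiteness of type presumes that the $M$ of this lemma is the monodromy matrix of a canonical system. That is true (by the identification established at the end of Section~\ref{Invreg}), but the paper avoids this dependence by invoking Krein's theorem directly: $\Phi_-/\Theta_-$ is Herglotz in $\C_+$ and minus-Herglotz in $\C_-$, hence $\Phi_-$ is of bounded type in both half-planes and thus of finite exponential type. This keeps the lemma self-contained relative to Theorem~\ref{restore}.
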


\begin{proof} Since $ \Theta_\pm $ are linear combinations of $ E $ and $ E^* $, both have exponential type $ \le p $. Then, 
\[ \limsup_{ y \to + \infty } y^{ -1 } \ln | \Theta_+ ( iy ) | = \limsup_{ y \to + \infty } y^{ -1 } \ln | \Theta_- ( iy ) | \] 
because $ \Theta_+ / \Theta_- $ is a Herglotz function in $ \C_+ $ and thus for large $ y $ we have  $ C y^{ -1 } \le \left| \Theta_+ ( iy ) / \Theta_- ( iy ) \right| \le C y $. Since $ E = \Theta_+ + i \Theta_- $ it follows that these upper limits coincide with $ p $ and hence the types of $ \Theta_\pm $ are equal to $ p $. 

Proceeding to $ \Phi $ notice first that $ \Phi_\pm $ are of finite exponential type by the Krein theorem since $ \pm \Phi_- / \Theta_- $ is a Herglotz function in $ \C_\pm $ by theorem \ref{restore} and exercise \ref{columnrow}, and therefore $ \Phi_- $ is of bounded type in $ \C_\pm $ because $ \Theta_- $ is. By the same token, we conclude that for any $ \von > 0 $ $ | \Phi_-  ( z ) / \Theta_- ( z ) | = O ( | z | ) $ for $ z \to \infty $ along any ray $ z = R e^{ i \theta } $, $ \theta \ne 0 , \pi $, hence $ | \Phi_-  ( z ) | = O \( e^{ ( \von - h ) |z| } \)  $. Applying the Fragmen-Lindel\"of theorem repeatedly as was done above gives that the exponential type of $ \Phi_- $ is not greater than $ h $. That the type of $ \Phi_- $ cannot be less than $ p $ follows again from the Herglotz property, this time of $ - \Theta_- / \Phi_- $, for if it were a similar application of the Fragmen-Lindel\"of theorem would imply that the exponential type of $ \Theta_- $ is also less than $ p $. Thus, $ \Phi_- $ has the type $ p $, and so does $ \Phi_+ $ by the way of an analogous argument.
\end{proof}

\begin{theorem}\la{exptypeM} 
Let $ ( \cH , L ) $ be a canonical system corresponding to the function $ E $ in the sense of theorem \ref{inversolution}. Then the exponential type of $ E $, $ p $, satisfies
\[ p = \int_0^L \sqrt{ \det \cH ( x ) } \, \d x . \]
\end{theorem}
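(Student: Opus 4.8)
The plan is to compute the exponential type $p$ of $E$ directly from the monodromy matrix $M(\lambda)$ built from $\cH$, using the growth estimate \eqref{estM} for an upper bound and a lower bound via the Herglotz structure established in Lemma~\ref{exptypeelements}. First I would recall that by Lemma~\ref{exptypeelements} all entries of $M$ have exponential type exactly $p = \limsup_{y\to\infty} y^{-1}\ln|E(iy)|$, so it suffices to show that this common type of $M$ equals $\int_0^L\sqrt{\det\cH(x)}\,\d x$. Since the type of $M$ is a single number, I would study $\limsup_{y\to+\infty} y^{-1}\ln\|M(L,iy)\|$.

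\textbf{Upper bound.} The idea is to subdivide $[0,L]$ and use the chain rule $M(b,z)=N(b,z)M(a,z)$ together with \eqref{estM}, which gives $\|M(L,iy)\|\le \prod_j \|N_j(iy)\|$ where $N_j$ is the transfer matrix over a subinterval $I_j$; on each piece $\|N_j(iy)\|\le e^{|I_j| y}$, recovering only the trivial bound $L$. To do better, on each small subinterval one should diagonalize: writing $\cH(x)=R(x)\operatorname{diag}(\lambda_1(x),\lambda_2(x))R(x)^T$ with $R$ orthogonal and $\lambda_1\lambda_2=\det\cH$, and using $\operatorname{tr}\cH=1$ so $\lambda_1+\lambda_2=1$, the transfer matrix over $I_j$ is, to leading order in the length, governed by $J^{-1}\cH$, whose eigenvalues are $\pm i\sqrt{\det\cH}$ (since $\det(J^{-1}\cH)=\det\cH$ and $\operatorname{tr}(J^{-1}\cH)=0$). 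Hence over a short interval where $\cH$ is essentially constant the growth of $\|N_j(iy)\|$ is $\exp\bigl(y\int_{I_j}\sqrt{\det\cH}\bigr)(1+o(1))$; on indivisible intervals $\det\cH=0$ and $N_j$ is linear in $z$, contributing nothing to the type. Summing (being careful that the product of the individual asymptotics is legitimate — this requires the error terms to be controlled uniformly, e.g. via Gr\"onwall applied to $M^*JM$ type quantities, which is where the work is), one gets $p\le\int_0^L\sqrt{\det\cH}$.

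\textbf{Lower bound.} Here I would exploit that $E$ is an HB function, so $\Theta_+/\Theta_-$ is Herglotz; from \eqref{11} (or Proposition~\ref{HB}) one has $\Im(\Theta_+\overline{\Theta_-})=\Im z\int_0^L\langle\cH\Theta,\Theta\rangle$, and more relevantly the identity \eqref{identM} with $\lambda=z$, $M^*(L,z)JM(L,z)-J=(z-\bar z)\int_0^L M^*\cH M$, controls $\|M(L,iy)\|$ from below by $y\int_0^L\|\cH^{1/2}M(t,iy)\|^2\,\d t$. Iterating this lower bound along $x$ — estimating $\|M(t,iy)\|$ from below by its value with the optimal initial direction and feeding the diagonalization above in reverse — yields growth at least $\exp\bigl(y\int_0^L\sqrt{\det\cH}\bigr)$; equivalently one can argue ray-by-ray as in Lemma~\ref{exptypeelements}, using that the type is attained along the imaginary axis and that a Herglotz function cannot decay the relevant factor. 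Combining the two bounds with Lemma~\ref{exptypeelements} gives the theorem.

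\textbf{Main obstacle.} The delicate point is the uniform control of the $o(1)$ errors when passing from "short interval, $\cH$ nearly constant, transfer $\approx\exp(zJ^{-1}\cH|I_j|)$" to the full product over a partition whose mesh goes to zero: one must ensure the accumulated multiplicative errors do not swamp the exponential $\exp\bigl(y\sum_j\sqrt{\det\cH_j}\,|I_j|\bigr)$. This is a product-integral / Trotter-type estimate; I expect to handle it by a Gr\"onwall argument applied to the difference between $M$ and the piecewise-constant-Hamiltonian approximant, using $\operatorname{tr}\cH=1$ to bound $\|\cH\|\le 1$ uniformly, plus the subadditivity $\sqrt{\det\cH}\le\tfrac12\operatorname{tr}\cH$ to see the integral is finite, and finally the a.e.\ convergence of $\sum_j\sqrt{\det\cH_j}\,|I_j|\to\int_0^L\sqrt{\det\cH}$ along a refining sequence of partitions (Jensen/concavity makes this monotone, hence convergent).
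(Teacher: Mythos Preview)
Your overall architecture---split into upper and lower bounds, with the lower bound driven by the identity \eqref{identM} and a Gr\"onwall-type argument---matches the paper. For the lower bound you are close: the missing ingredient is the pointwise matrix inequality $\cH(t)\ge i\sqrt{\det\cH(t)}\,J$ (Lemma~\ref{2by2}(i)), which turns $\frac{1}{i}(M^*JM-J)=2\Im z\int_0^x M^*\cH M$ directly into $\xi'(x)\ge 2\Im z\,\sqrt{\det\cH(x)}\,\xi(x)$ for $\xi=(i^{-1}M^*JM)_{11}$, and one integrates. Your ``iterate the lower bound along $x$'' is this, once made precise.

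The upper bound is where your plan diverges from the paper and where there is a genuine gap. Your Trotter/piecewise-constant scheme runs into exactly the obstacle you flag, and your proposed fix does not close it. Concretely: on a short interval with constant $\cH_j$, one has $\exp(-zJ\cH_j l_j)=\cosh(z l_j\sqrt{\det\cH_j})\,I-\dfrac{\sinh(z l_j\sqrt{\det\cH_j})}{\sqrt{\det\cH_j}}\,zJ\cH_j$ (Cayley--Hamilton, using $\operatorname{tr}(J\cH_j)=0$), so $\|\exp(-iyJ\cH_j l_j)\|\le C(\det\cH_j)^{-1/2}e^{y l_j\sqrt{\det\cH_j}}$. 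The prefactor $(\det\cH_j)^{-1/2}$ is the condition number of the diagonalising change of basis and it blows up wherever $\det\cH$ is small. In the product over a refining partition these prefactors accumulate uncontrollably; replacing them by the trivial bound $e^{yl_j}$ on ``bad'' pieces costs the full length of $\{\det\cH\le\delta\}$ in the type, and this set need not be small (think of a Hamiltonian that is rank one on a set of positive measure interlaced with rank-two pieces). A Gr\"onwall comparison between $M$ and the piecewise-constant approximant only gives errors of size $e^{L|z|}$ times $\|\cH-\cH^{(n)}\|_{L^1}$, which again does not see $\sqrt{\det\cH}$. Finally, even if you proved $\mathrm{type}(M^{(n)})\le\sum_j l_j\sqrt{\det\cH^{(n)}_j}$ for each $n$, passing to the limit requires a lower-semicontinuity of exponential type under locally-uniform convergence that you have not justified.

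The paper bypasses all of this with a single idea: for any invertible $\Omega$ the Gr\"onwall estimate reads $\|\Omega M(x,y,\lambda)\|\le\|\Omega\|\exp\bigl(|\lambda|\int_x^y\|\Omega J\cH\Omega^{-1}\|\bigr)$, whence $|p(y)-p(x)|\le\int_x^y\|\Omega J\cH\Omega^{-1}\|$. Since $\operatorname{tr}(J\cH)=0$, Lemma~\ref{2by2}(ii) gives $\inf_\Omega\|\Omega J\cH(y)\Omega^{-1}\|=\sqrt{\det\cH(y)}$, so $p$ is Lipschitz with $|p'|\le\sqrt{\det\cH}$ a.e. This replaces your product-of-local-diagonalisations by a \emph{single global} conjugation optimised pointwise after differentiation, which is why no error terms accumulate.
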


The proof of this theorem is based on two elementary observations brought about by the following lemma, of which (\textit{i}) eventually leads to $ p \ge (\textrm{the integral in RHS}) $, and (\textit{ii}) to the reverse inequality.

\begin{lemma}\la{2by2} Let $ A $ be a $ 2 \times 2 $-matrix.

(i) If $ A $ has real entries and $ A \ge 0 $, then $ A \ge \pm i \sqrt{ \det A } J $.

(ii) If $ \operatorname{tr} A = 0 $, then \[ \sqrt{ | \det A | } = \inf_{ \Omega \colon \det \Omega \ne 0 } \len \Omega A \Omega^{ -1 } \rin . \]
\end{lemma}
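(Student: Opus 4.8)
\textbf{Part (i).} The plan is to reduce to an eigenvalue computation. Write $A=\begin{pmatrix} a & b \\ c & d \end{pmatrix}$ with $a,b,c,d\in\R$; positivity forces $A$ symmetric, so $b=c$, and $a,d\ge 0$, $\det A = ad-b^2\ge 0$. I want to show $A \mp i\sqrt{\det A}\,J \ge 0$, i.e. that the Hermitian matrix $A - i\sqrt{\det A}\,J = \begin{pmatrix} a & b+i\sqrt{\det A} \\ b - i\sqrt{\det A} & d\end{pmatrix}$ (and its conjugate) is positive semidefinite. Its trace is $a+d\ge 0$ and its determinant is $ad - \big(b^2 + \det A\big) = \det A - \det A = 0$. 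A $2\times 2$ Hermitian matrix with nonnegative trace and zero determinant has eigenvalues $0$ and $a+d\ge 0$, hence is positive semidefinite; the same computation with $+i\sqrt{\det A}\,J$ gives the other inequality. This part is routine and I expect no obstacle.

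\textbf{Part (ii).} Here the hypothesis is $\operatorname{tr}A=0$, so the eigenvalues of $A$ are $\pm\sqrt{-\det A}$, and $|\det A| = \big|\sqrt{-\det A}\big|^2$ equals the product of the absolute values of the eigenvalues; one should also note $\det(\Omega A \Omega^{-1}) = \det A$ is conjugation-invariant. The inequality $\|\Omega A\Omega^{-1}\| \ge \sqrt{|\det A|}$ for every invertible $\Omega$ is the easy direction: for any matrix $B$, $\|B\|^2 = \|B^*B\|\ge |\det(B^*B)|^{1/2}\cdot\|B\|$ is not quite it — cleaner is $\|B\|\ge$ largest singular value $\ge \sqrt{s_1 s_2} = \sqrt{|\det B|}$ since $s_1\ge s_2$, applied to $B = \Omega A\Omega^{-1}$. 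For the reverse, i.e. that the infimum is attained (or approached), I would split into cases according to the Jordan type of $A$. If $\det A\ne 0$, then $A$ is diagonalizable (distinct eigenvalues $\pm\sqrt{-\det A}$); conjugating by the eigenvector matrix puts $A$ in the form $\sqrt{-\det A}\begin{pmatrix}1 & 0\\ 0 & -1\end{pmatrix}$ up to a further real scaling, whose norm is exactly $|\sqrt{-\det A}| = \sqrt{|\det A|}$ — wait, one must be careful: the eigenvectors may not be orthogonal, so after conjugation the matrix need not be normal. The fix: conjugating the diagonal matrix $\operatorname{diag}(\mu,-\mu)$ further by $\operatorname{diag}(t,1)$ leaves it diagonal, so in fact the diagonal form $\operatorname{diag}(\mu,-\mu)$ is already achievable by a single conjugation and has norm $|\mu|=\sqrt{|\det A|}$, so the infimum is attained. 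If $\det A = 0$ (and $\operatorname{tr}A=0$), then $A$ is nilpotent, $A^2=0$; if $A=0$ the claim is trivial, and if $A\ne 0$ then $A$ is conjugate to $\begin{pmatrix}0&1\\0&0\end{pmatrix}$, and conjugating this by $\operatorname{diag}(t,1)$ yields $\begin{pmatrix}0&t\\0&0\end{pmatrix}$, whose norm $|t|\to 0$, so the infimum is $0 = \sqrt{|\det A|}$.

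\textbf{Main obstacle.} The only genuine subtlety is in part (ii), case $\det A\ne 0$: one must not assume the eigenvectors of $A$ are orthogonal, and the correct observation is that for a $2\times 2$ traceless matrix with nonzero determinant, conjugation can be chosen to produce the \emph{diagonal} matrix $\operatorname{diag}(\sqrt{-\det A}, -\sqrt{-\det A})$ outright (diagonalize, then the resulting matrix is already diagonal regardless of orthogonality), giving operator norm exactly $\sqrt{|\det A|}$; combined with the lower bound from singular values this shows the infimum is attained and equals $\sqrt{|\det A|}$. I would write this carefully and keep the nilpotent case short.
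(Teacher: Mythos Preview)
Your proof is correct and follows essentially the same route as the paper's. For (i) both arguments verify that the Hermitian matrix $A \mp i\sqrt{\det A}\,J$ has nonnegative diagonal and nonnegative determinant (you compute the determinant as $0$, the paper states $2\det A$; your value is the correct one for symmetric $A$, but either way the conclusion is the same), and for (ii) both use the eigenvalue structure $\pm\sqrt{-\det A}$ to handle the case $\det A\ne 0$ by diagonalization and treat the nilpotent case by conjugating $\begin{pmatrix}0&c\\0&0\end{pmatrix}$ with a diagonal $\Omega$ to drive the norm to zero.
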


\begin{proof}
(\textit{i}) The diagonal of $ A \pm i \sqrt{ \det A } J $ is nonnegative (it is the same as the one of $ A $), and by direct computation $ \det ( A \pm i \sqrt{ \det A } J ) = 2 \det A $, which is nonnegative. 

(\textit{ii}) The spectrum of $ A $ is $ \pm \sqrt{ - \det A } $ which implies (\textit{ii}) immediately in the case $ \det A \ne 0 $. If $ \det A = 0 $ then $ A $ wlog can be assumed to have the form $ \( \! \begin{array}{cc} 0 & c \cr 0 & 0 \end{array} \! \) $. On taking $ \Omega = \mathrm{daig} ( a, a^{ -1 } ) $ and sending $ a $ to $ 0 $ we get (\textit{ii}). 
\end{proof}

\medskip

\noindent \textit{proof of theorem \ref{exptypeM}.}
I. $ p \ge \int_0^L \sqrt{ \det \cH } $. WLog one can assume that the system $ ( \cH , L ) $ satisfies compatibility condition (L). Let $ M ( x , z ) $ be the monodromy matrix of the system $ ( \cH , L ) $. Then (\ref{identM}) is satisfied with $ \lambda = \overline z $, and by lemma \ref{2by2} (\textit{i}) applied to $ A = \cH ( x ) $ we have,
\bequnan \frac 1i \( M^* ( x , \lambda ) J M ( x , z ) - J \) = 2 \Im z \int_0^x M^* ( t , z ) \cH ( x ) M ( t , z ) \d t  \ge \\ 2\Im z \int_0^x \sqrt{ \det \cH ( t ) } \( \frac 1i M^* ( t , z ) J M ( t , z ) \) \d t . \eequnan
This operator inequality implies the corresponding numerical inequality of upper leftmost entries of the matrices (whenever it is deemed convenient, $z$ is suppressed in the arguments of functions),
\[  \xi ( x ) \ge 2\Im z \int_0^x \xi \sqrt{ \det \cH } , \]
\[ \xi \colon = \frac 1i \( M^* J M \)_{ 11 } . \]
Denoting the integral in the RHS as $ h ( x ) $, we rewrite it as
\[ h^\prime ( x ) \ge 2\Im z \sqrt{ \det \cH ( x ) } h ( x ) . \] 
 As $ h ( x ) \ge 0 $ and monotone non-decreasing, one solves this inequality dividing it by $ h ( x ) $ and integrating, to obtain
\[ h ( x ) \ge e^{ 2 \Im z \int_y^x \sqrt{ \det \cH } } h ( y ) , \; 0 \le y \le x \le L . \]
It follows that 
\[ \xi ( x ) \ge 2 ( \Im z )  e^{ 2 \Im z \int_y^x \sqrt{ \det \cH } } h ( y )  . \]
Taking $ x = L $ and recalling that $ \xi ( L , z ) = \frac{ \left| E ( z ) \right|^2 -  \left| E ( \overline z ) \right|^2 }2 $ by the definition, we find for $ \Im z > 1 $ 
\[ \left| E ( z ) \right|^2 \ge 4 h ( y , z ) e^{ 2 \Im z \int_y^L \sqrt{ \det \cH } } . \] 
In view of (\ref{exptypeE}) this means that $ p \ge  \int_y^L \sqrt{ \det \cH } $ whenever $ h ( y , i \von ) $ is separated from $ 0 $ as $ \von \to + \infty $. It remains to check when the latter takes place. For $ z = i \von $, $ \von < 0 $, and all $ N > 0 $ we have
\[ \xi ( t , z ) = 2 \Im \( \frac{ \Theta_+ ( t , z ) }{ \Theta_- ( t ,z ) } \) \left| \Theta_- ( t , z ) \right|^2 \ge C_t \mu_t ( -N , N ) \frac\von{ \von^2 + N^2 } \von^2 . \]
Here $ C_t \ne 0 $, $ \mu_t $ is the measure in the integral representation of the Herglotz function $ \Theta_+ ( t , \cdot ) / \Theta_- ( t , \cdot ) $, and we took into account that $ | \Theta_- | $ grows at least linearly along the imaginary axis, which is obvious from the canonical product for $ \Theta_- $. Adjusting $ N $ properly, we find that $ \xi ( x , i \von ) \to + \infty $ as $ \von \to + \infty $, hence $ h ( y , i \von )  \to + \infty $ unless $ \det \cH ( t ) = 0 $ for a. e. $ t \in ( 0 , y ) $ by the Fatou theorem. The inequality $ p \ge \int_0^L \sqrt{ \det \cH } $ follows.

II. $ p \le \int_0^L \sqrt{ \det \cH }  $. Let $ p ( a ) $ be the exponential type of the function $ E_a ( z ) \colon = \Theta_+ ( a , z ) + i \Theta_- ( a , z ) $. The idea of the argument is to prove that $  p ( a ) $ is an absolutely continuous function, and $ | p^\prime | \le \sqrt{ \det \cH } $ a. e.

Let $ \Omega $ be an invertible $ 2 \times 2 $-matrix, $ M ( x , y, \lambda ) $ the monodromy matrix for the system $ ( \cH , L ) $ from $ x $ to $ y $, $ 0 \le x \le y \le L $. The integral equation for the monodromy matrix can be written in the form 
\[  \Omega M ( x, y,  \lambda ) = \Omega - \lambda \int_x^y \( \Omega J \cH ( t ) \Omega^{ -1 } \) \Omega M ( x, t , \lambda )  \d t . \]
By the Gronwall lemma
\[ \len \Omega M ( x , y , \lambda ) \rin \le \| \Omega \| e^{ | \lambda | \int_x^y \len  \Omega J \cH \Omega^{ -1 } \rin } . \]
On account of the multiplicative property of the monodromy matrix, 
\[ M ( y , \lambda ) = M ( x , y , \lambda ) M ( x , \lambda ) , \] 
it follows that the function $ p $ satisfies (recall that by lemma \ref{exptypeelements} the exponential type of all entries of the matrix $ M ( a , \lambda ) $ equals to $ p ( a ) $)
\[ | p ( y ) - p (  x ) | \le \int_x^y \len  \Omega J \cH \Omega^{ -1 } \rin . \]
Here we have taken into account that $ \det M ( x , y , \lambda ) = 1 $, hence the exponential type of entries of the matrix $ M^{ -1 } ( x, y , \lambda ) $ is also estimated above by $ \int_x^y \len  \Omega J \cH \Omega^{ -1 } \rin $. 

The integrand in the estimate obtained is bounded, hence $ p $ is an a. c. (even Lipschitz) function, and
\[ | p^\prime ( y ) | \le \len  \Omega J \cH ( y ) \Omega^{ -1 } \rin . \] 
Applying lemma \ref{2by2} (\textit{ii}) with $ A = J \cH ( y ) $ we find that $ | p^\prime ( y ) | \le \sqrt{ \det \cH ( y ) } $ for a. e. $ y \in ( 0 , L ) $, which implies the required inequality. \hfill $ \Box $ 

\section{Uniqueness}

All the results on uniqueness in inverse problem for canonical systems are based on the following 

\begin{theorem}\la{lattice}
Let $ \d \mu $ be a ($\sigma$-finite) measure on $ \R $, $ X , Y $ be subspaces in $ L^2 ( \R , \d \mu ) $, which are de Branges spaces, that is, there exist HB functions $ E_X $, $ E_Y $ such that $ X = \cH ( E_X ) $, $ Y = \cH ( E_Y ) $ as 
Hilbert spaces. Suppose that $ E_X $ and $ E_Y $ have no real zeroes, $ E_X / E_Y $ is a function of bounded type in $ \C_+ $. Then either $ X \subset Y $, or $ Y \subset X $.
\end{theorem}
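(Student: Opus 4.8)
The plan is to reduce the dichotomy to a statement purely about the Hermite--Biehler functions $E_X$, $E_Y$, and then to deduce that statement from the hypothesis that $X$ and $Y$ sit isometrically in one and the same $L^2(\R,\d\mu)$.

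First, set-theoretic inclusion suffices: if, say, $X\subseteq Y$ as sets of entire functions, then $\len f\rin_X=\len f\rin_{L^2(\mu)}=\len f\rin_Y$ for every $f\in X$ because both embeddings are isometric, so the inclusion is automatically an isometric inclusion of de Branges spaces in the sense of the statement. Next, I would use the standard description $\HW=\{f\text{ entire}\colon f/W,\ f^*/W\in H^2\}$, with $\len f\rin_{\HW}^2=\int_\R|f/W|^2$ (which is equivalent to Lemma~\ref{deBrchar} together with the factorization of analytic functions of bounded type in $\C_+$). From it one reads off at once that $E_X/E_Y\in H^\infty$ implies $X\subseteq Y$ as sets: both $f/E_Y=(f/E_X)(E_X/E_Y)$ and $f^*/E_Y=(f^*/E_X)(E_X/E_Y)$ are then products of an $H^2$ function by a bounded analytic one, hence lie in $H^2$. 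So the whole theorem reduces to the claim that either $E_X/E_Y\in H^\infty$ or $E_Y/E_X\in H^\infty$.

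To prove this last dichotomy, put $S=E_X/E_Y$. Since $E_X$ and $E_Y$ are Hermite--Biehler and have no real zeros, both $S$ and $1/S$ are analytic and zero-free on the closed half plane $\overline{\C_+}$, and both are of bounded type in $\C_+$; hence $\log|S|$ is harmonic in $\C_+$ and has a representation
\[
\log|S(x+iy)|=\tau\,y+\frac1\pi\int_\R\frac{y\,\d\nu(t)}{(x-t)^2+y^2},
\]
with $\tau=\limsup_{y\to\infty}y^{-1}\log|S(iy)|$ its mean type and $\nu$ a real measure satisfying $\int(1+t^2)^{-1}\,\d|\nu|<\infty$. One checks that $S\in H^\infty$ is equivalent to $\tau\le 0$ together with the positive part of $\nu$ being absolutely continuous with bounded density, and symmetrically for $1/S$; and that the bounded-type hypothesis by itself does \emph{not} force one of these two alternatives (take, e.g., $\nu$ carrying point masses of opposite signs). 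What rules out the bad cases is the common isometric embedding. The crucial lemma I would prove is that this embedding couples the two boundary weights: applying the parametrization of admissible measures in Theorem~\ref{isometry} once with $W=E_X$ and once with $W=E_Y$, and using that the weights $|E_X|^2\,\d\mu$ and $|E_Y|^2\,\d\mu$ differ on $\R$ exactly by the factor $|S|^2$, one forces the singular part of $\nu$ to have a fixed sign with $\tau$ of the matching sign, so that precisely one of $S$, $1/S$ is bounded on $\C_+$. The passage from this boundary information to a bound on all of $\C_+$ is a Fragmen-Lindel\"of (``hall-of-mirrors'') argument applied to the subharmonic function $\log|S|$ (its elementary properties being recalled in \cite[Ch.~17]{Rudin}).

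The main obstacle is exactly that crucial lemma: translating the single geometric hypothesis (isometric containment in one common $L^2(\mu)$) into one-sided control of $\log|E_X/E_Y|$ on the real line, in the direction dictated by its mean type. The reduction to the $H^\infty$ dichotomy and the Fragmen-Lindel\"of/subharmonicity step are routine; the bounded-type hypothesis enters precisely to supply the harmonic majorant that the latter step requires, and it is the ingredient whose removal makes the statement fail.
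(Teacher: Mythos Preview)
Your proposal has a genuine gap at exactly the point you flag as ``the main obstacle.'' The reduction in your first paragraph is fine: $E_X/E_Y\in H^\infty$ does imply $X\subseteq Y$ as sets, and under the common isometric embedding set-inclusion upgrades to isometric inclusion. But the entire content of the theorem then lives in your ``crucial lemma,'' and what you offer for it is not a proof. Applying Theorem~\ref{isometry} with $W=E_X$ and then with $W=E_Y$ hands you two contractive functions $A_X,A_Y\in H^\infty$ attached to the inner functions $S_X=E_X^*/E_X$ and $S_Y=E_Y^*/E_Y$; the boundary data they encode are the Poisson integrals of $|E_X|^2\,\d\mu$ and $|E_Y|^2\,\d\mu$. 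Knowing that these two densities differ on $\R$ by the factor $|S|^2$ does not, by itself, force the signed boundary measure $\nu$ of $\log|S|$ to have singular part of a fixed sign matched to the mean type. The sentence ``one forces the singular part of $\nu$ to have a fixed sign with $\tau$ of the matching sign'' is a restatement of what has to be proved, not an argument; there is no mechanism here that couples $A_X$ to $A_Y$, and without one the two Herglotz representations sit side by side with no interaction. In short, you have correctly isolated a sufficient target, but the route you sketch from the hypotheses to that target is missing its only nontrivial step.

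The paper's proof is entirely different and does not pass through the $H^\infty$ dichotomy for $E_X/E_Y$ at all. It argues by contradiction: assuming neither inclusion holds, pick $f_Y\perp Y$ with $f_Y\not\perp X$ and $f_X\perp X$ with $f_X\not\perp Y$, and from these together with reproducing kernels build two auxiliary entire functions $\xi_X,\xi_Y$ (essentially Cauchy transforms against $f_X,f_Y$ of divided differences $\frac{g(z)h-h(z)g}{z-\cdot}$). One checks they are of bounded type in both half-planes, hence of finite exponential type by Krein's theorem, and --- this is where the bounded-type hypothesis on $E_X/E_Y$ actually enters --- of \emph{minimal} type, satisfying $\min\{|\xi_X(z)|,|\xi_Y(z)|\}\le C/|\Im z|$. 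The core of the argument is then Proposition~\ref{maximumpr}, proved via the Carleman-type growth Lemma~\ref{Carleman}: two entire functions of minimal type cannot both be nonzero if their pointwise minimum decays like $1/|\Im z|$. So the subharmonic-function input is not a routine Fragmen--Lindel\"of bound on $\log|E_X/E_Y|$ as you suggest; it is a two-function angular-density estimate applied to entire functions that directly encode the assumed failure of both inclusions.
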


An important partial case of this theorem is when $ X$ and $ Y $ are subspaces of a de Branges space, $ \cH ( E ) $, that is, when $ \d \mu (t) = \left| E ( t ) \right|^{-2 } \d t $. In this case $ E_X / E $ and $ E_Y / E $ are functions of bounded type in $ \C_+ $ hence their ratio is of bounded type too and 

\begin{corollary}
If two subspaces of a de Branges space are de Branges spaces themselves then one of them contains the other, provided that the corresponding HB functions have no real zeroes.
\end{corollary}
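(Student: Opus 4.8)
The plan is to read the corollary directly off Theorem~\ref{lattice}, specialized to $\d\mu(t)=\left|E(t)\right|^{-2}\,\d t$. Denote the two given subspaces by $X={\mathcal H}(E_X)$ and $Y={\mathcal H}(E_Y)$. If one of them is $\{0\}$ it is trivially contained in the other, so I may assume both are nonzero. Since $\HE$ is isometrically contained in $L^2\(\R,\left|E\right|^{-2}\,\d t\)$, every closed subspace of $\HE$ is a closed subspace of $L^2(\R,\d\mu)$ carrying the inherited ($L^2(\R,\d\mu)$-) norm; hence $X$ and $Y$ are subspaces of $L^2(\R,\d\mu)$ that are de Branges spaces in the sense required, and $E_X$, $E_Y$ have no real zeroes by hypothesis. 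Thus the only hypothesis of Theorem~\ref{lattice} that still needs checking is that $E_X/E_Y$ is of bounded type in $\C_+$.

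To obtain this I would first show that $E_X/E$ and $E_Y/E$ are of bounded type in $\C_+$. Pick any $f\in{\mathcal H}(E_X)\setminus\{0\}$. By the standard description of de Branges spaces, $f\in{\mathcal H}(E_X)$ forces $f/E_X\in H^2(\C_+)$, while $f\in{\mathcal H}(E_X)\subset\HE$ forces $f/E\in H^2(\C_+)$; since neither of these functions vanishes identically,
\[ \frac{E_X}{E}=\frac{f/E}{\,f/E_X\,} \]
exhibits $E_X/E$ as a quotient of two functions of bounded type, hence $E_X/E$ is of bounded type in $\C_+$. The same argument with $Y$ in place of $X$ shows that $E_Y/E$ is of bounded type. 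Consequently $E_X/E_Y=(E_X/E)\,(E_Y/E)^{-1}$ is of bounded type in $\C_+$, all hypotheses of Theorem~\ref{lattice} are met, and the theorem yields $X\subset Y$ or $Y\subset X$.

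I do not expect a genuine obstacle here: once the reduction to $\d\mu=\left|E\right|^{-2}\,\d t$ is in place the argument is pure bookkeeping around two facts the theory already supplies, namely that $\HE$ is isometrically contained in $L^2\(\R,\left|E\right|^{-2}\,\d t\)$ (via $g\mapsto g/E$, with image in $H^2(\C_+)$) and that the class of functions of bounded type in $\C_+$ is closed under quotients with non-vanishing denominator. The only point where one must be a little careful is the invocation of $H^2$-membership of $f/E_X$ and $f/E$; if one wishes to avoid it, the middle paragraph can be run with $f$ a reproducing kernel of ${\mathcal H}(E_X)$, using the explicit kernel formula together with the fact that $E_X^*/E_X\in H^\infty(\C_+)$ for the HB function $E_X$.
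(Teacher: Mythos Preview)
Your argument is correct and follows exactly the route the paper takes: specialize Theorem~\ref{lattice} to $\d\mu(t)=|E(t)|^{-2}\,\d t$, observe that $E_X/E$ and $E_Y/E$ are of bounded type in $\C_+$, and conclude that their quotient $E_X/E_Y$ is of bounded type. The paper merely asserts the bounded-type property of $E_X/E$ and $E_Y/E$ without justification, whereas you supply the standard one-line argument via a nonzero element $f$; otherwise the proofs are identical.
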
 

The proof of theorem \ref{lattice} is given in Appendix III.  

\begin{theorem}\la{uniqueness}
Let $ ( \cH , L ) $, $ ( \cH_1 , L_1 ) $, $ L $, $ L_1 < \infty $, be two canonical systems satisfying the compatibility condition (L), $ \Theta $ and $ \Theta_1 $ be solutions of the respective canonical systems (\ref{can}) satisfying $ Y ( 0 ) = \( 1 , 0 \)^T $. If $ \Theta ( L , \lambda ) = \Theta_1 ( L_1 , \lambda ) $ for all $ \lambda $, then $ L = L_1 $, $ \cH = \cH_1 $.
\end{theorem}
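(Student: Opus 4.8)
The plan is to show that the de Branges function, the length, and in fact the whole chain of ``intermediate'' de Branges functions of a canonical system satisfying (L) are invariants of the single datum $\Theta(L,\cdot)$, and then to read off the Hamiltonian from that chain. First, from $\Theta(L,\cdot)=\Theta_1(L_1,\cdot)$ it follows that $E\colon=\Theta_+(L,\cdot)+i\Theta_-(L,\cdot)$ is the de Branges function of the second system as well. Since the equation at $z=0$ reads $J\Theta'=0$ we have $\Theta(x,0)\equiv(1,0)^{T}$, so $E(0)=1$; since $\det M(L,\cdot)\equiv1$ the vector $\Theta(L,t)$ never vanishes, so $E$ has no real zeros; and $E$ is regular by the proposition following Theorem~\ref{spmaptheo}, whose hypotheses hold here ($L<\infty$, condition (L), non-degeneracy — (L) forbids (\ref{nondegenerate})). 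Thus both systems share one fixed regular, real-zero-free, normalized $E$, and $\HE$ is one fixed Hilbert space.

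Next I would show $L=L_1$. For any system satisfying (L) the length is an explicit functional of $E$: by Lemma~\ref{trace}, $L=\operatorname{tr}J\dot M(L,0)=\dot\Phi_+(L,0)-\dot\Theta_-(L,0)$ with $\dot\Theta_-(L,0)=\Im\dot E(0)$; by (\ref{derM}), $\dot\Phi_+(L,0)=\bigl(\int_0^L\cH\bigr)_{22}=\llangle(0,1)^{T},(0,1)^{T}\rrangle_H$, and since $\cU$ is isometric onto $\HE$ (Theorem~\ref{spmaptheo}) with $(\cU(0,1)^{T})(z)=\pi^{-1/2}(\Theta_+(L,z)-1)/z$, this equals $\pi^{-1}\len(E_+-1)/\lambda\rin_{\HE}^{2}$. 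Hence $L=\pi^{-1}\len(E_+-1)/\lambda\rin_{\HE}^{2}-\Im\dot E(0)$, i.e.\ formula (\ref{lengthL}), now valid for an arbitrary (L)-system; applying it to both systems gives $L=L_1$. (One also gets $M(L,\cdot)=M_1(L_1,\cdot)$: the two monodromy matrices have the same first column $\Theta$ and determinant $1$, so their second columns differ by $c\Theta$ with $c$ entire; both matrices $N$ obey $\frac1i(N^{*}JN-J)\ge0$ on $\C_+$, so $\Phi_-/\Theta_-$ and $\Phi_1^-/\Theta_-$ are Herglotz by (\ref{Jcontractive}) and exercise~\ref{columnrow}, forcing $c$ affine; $c(0)=0$ since $\Phi(\cdot,0)\equiv(0,1)^{T}$, so $c(\lambda)=b\lambda$, and comparing traces gives $L_1=L+b$, whence $b=0$.)

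For the Hamiltonian it suffices to prove $\Theta(a,\cdot)=\Theta_1(a,\cdot)$ for all $a\in(0,L]$: then $E_a\colon=\Theta_+(a,\cdot)+i\Theta_-(a,\cdot)$ equals the analogous $E_a^{(1)}$, so $\dot\Theta_\pm(a,0)=\dot\Theta_1^{\pm}(a,0)$; since $\int_0^a\cH=J\dot M(a,0)$ is symmetric of trace $a$ (Lemma~\ref{trace}, (\ref{derM})), its entries $-\dot\Theta_-(a,0)$, $\dot\Theta_+(a,0)$, $a+\dot\Theta_-(a,0)$ are common to both systems, so $\int_0^a\cH=\int_0^a\cH_1$ for all $a$, i.e.\ $\cH=\cH_1$ a.e. To obtain $\Theta(a,\cdot)=\Theta_1(a,\cdot)$: for $a$ not interior to an indivisible interval, Theorem~\ref{spmaptheo} applied to the truncated system $(\cH|_{(0,a)},a)$ (which inherits (L)) identifies ${\mathcal H}(E_a)$ isometrically with that system's Hilbert space; extension by zero realizes the latter inside $H$, and through $\cU$ this exhibits ${\mathcal H}(E_a)$ as a subspace of $\HE$ (the needed compatibility being a short computation with (\ref{identM11})). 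Thus $\{{\mathcal H}(E_a)\}_{a}$ is a totally ordered family of subspaces of $\HE$ exhausting $\HE$ at $a=L$, and likewise $\{{\mathcal H}(E_a^{(1)})\}_{a}$ via $\cU_1$. By the corollary to Theorem~\ref{lattice} any ${\mathcal H}(E_a)$ and ${\mathcal H}(E_{a'}^{(1)})$ are comparable, so the union of the two families is a single chain, with common top $\HE$ where $\Theta(L,\cdot)=\Theta_1(L_1,\cdot)$ is prescribed. One now propagates the identity down the chain: a subspace ${\mathcal H}(E_a)$ fixes $E_a$ only up to $E_a\mapsto E_a+t\,\Im E_a$, $t\in\R$, but the length formula applied to truncations recovers $a$ from $E_a$ and is not invariant under that replacement because $\Im\dot E_a(0)=\dot\Theta_-(a,0)<0$; together with the chain structure this forces $E_a=E_a^{(1)}$, hence $\Theta(a,\cdot)=\Theta_1(a,\cdot)$, for all such $a$, and the (at most countably many) $a$ interior to indivisible intervals are then immediate, since on an indivisible interval $\cH$ is determined by its generating vector, which the jump of the chain prescribes.

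The main obstacle is precisely this last synchronization: showing that a de Branges subspace of $\HE$ produced by the system, \emph{together with} the Cauchy normalization $\Theta(a,0)=(1,0)^{T}$, determines $E_a$ — in effect, uniqueness of the chain of a regular de Branges space. The first two steps and the reduction in the third are bookkeeping with the cited results; the substance lies in combining the total ordering furnished by the corollary to Theorem~\ref{lattice} with the trace/length identity of Lemma~\ref{trace} to eliminate the residual parameter $t$.
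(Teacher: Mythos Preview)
Your Part I ($L=L_1$) and the setup for Part II --- the chains $\{\mathcal{H}(E_a)\}$, the lattice corollary giving a monotone bijection $x$ with $\mathcal{H}(E_{x(a)})=\mathcal{H}(E_a^{(1)})$, and the identification of the residual freedom $\Theta_+(x(a),\cdot)=\Theta_+^1(a,\cdot)+t_a\,\Theta_-^1(a,\cdot)$ --- coincide with the paper's. The gap is exactly where you place it, and the tool you propose does not close it. Applying the length formula to each truncation gives
\[
x(a)-a \;=\; -2t_a\,\dot\Theta_+^1(a,0)\;-\;t_a^2\,\dot\Theta_-^1(a,0),
\]
a single relation between the two unknowns $x(a)$ and $t_a$; the ``chain structure'' adds only that $x$ is a monotone bijection of $[0,L]$ fixing the endpoints, which does not force $x(a)\equiv a$. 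Even if one somehow knew $x(a)=a$ for some particular $a$, the right-hand side is a quadratic in $t_a$ with a second root $-2\dot\Theta_+^1(a,0)/\dot\Theta_-^1(a,0)$, so ``length unchanged'' does not yield $t_a=0$ either. The length formula involves only first $\lambda$-derivatives of $M$ at $0$ and simply cannot separate $x(a)$ from $t_a$.

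The paper supplies the missing relation by passing to the \emph{second} $\lambda$-derivative. From $\Theta_-(x(t),\lambda)=\Theta_-^1(t,\lambda)$ one has $\ddot\Theta_-(x(t),0)=\ddot\Theta_-^1(t,0)$; after a separate lemma establishing that $x(\cdot)$ is absolutely continuous, this is differentiated in $t$. Differentiating $JM'=\lambda\cH M$ twice in $\lambda$ at $\lambda=0$ gives $\ddot\Theta_-'(x,0)=-2\dot\Theta_-^2\,(\dot\Theta_+/\dot\Theta_-)'$, and substituting on both sides collapses the identity to
\[
\frac{d}{dt}\left(\frac{\dot\Theta_+(x(t),0)}{\dot\Theta_-(x(t),0)}-\frac{\dot\Theta_+^1(t,0)}{\dot\Theta_-^1(t,0)}\right)=0.
\]
The quantity in parentheses is precisely your parameter $t$ (the paper's $a_t$), hence constant and equal to its value $0$ at $t=L$. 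Only then does $x(t)\equiv t$ follow, from the trace identity. This second-derivative step is the ingredient your outline lacks.
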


\begin{proof}
I. $ L =L_1 $. According to corollary \ref{trace} we have $ L =  \dot\Phi_+ ( L , 0 ) - \dot\Theta_- ( L , 0 ) $. Then (see (\ref{phi+})), 
\[ \dot\Phi_+ ( L , 0  ) = G_0^+ = \frac 1\pi \len \frac {\Theta^+_L - 1 }t \rin_{ \HE }^2  \] 
with $ E = \Theta^+_L + i \Theta^-_L $. Thus, for a given canonical system, $ ( \cH , S ) $, the length $ S $ is given by an explicit formula in terms of the function $ \Theta ( S , \lambda ) $. It follows that $ L = L_1 $.

II. $ \cH_1 =\cH $. Define  for $ t \le L $, \[ E_t = \Theta_+ ( t, \cdot ) + i \Theta_- ( t , \cdot ),  \; E_t^1 = \Theta_+^1 ( t , \cdot ) + i \Theta_-^1 ( t , \cdot ) , \]  $ {\mathcal H}_t = {\mathcal H} ( E_t ) $, $ {\mathcal H}^1_t = {\mathcal H} ( E^1_t ) $. If $ t $ is not an interior point of a singular interval for $ \cH $ and $ \tau $  is not an interior point of a singular interval for $ \cH_1 $ then $ {\mathcal H}_t $ and $ {\mathcal H}_\tau^1 $ are subspaces of $ \HE $, for they are ranges of subspaces in $ H $ ($ H_1 $) comprised of functions supported on $ [0 , t ]$ ($ [ 0 , \tau ] $, resp.) under the map $ \cU $ ($ \cU_1 $) defined in (\ref
{spmap}). By the same reason the families $ \{ {\mathcal H}_t \} $ and $ \{ {\mathcal H}^1_t \} $ are increasing in $ t $. According to the  corollary to theorem \ref{lattice} either $ {\mathcal H}_t \subset {\mathcal H}_\tau^1 $, or $ {\mathcal H}_\tau^1 \subset {\mathcal H}_t  $. Denote by $ \mathcal M $, $ {\mathcal M}_1 $ the complement  in $ [ 0 , L ] $ of the set of interior points of the singular intervals for $ \cH $, $ \cH_1 $, resp. Given a $ t \in {\mathcal M}_1 $ define $ x ( t ) = \sup \{ x  \in \mathcal M \colon \, {\mathcal H}_x \subset {\mathcal H}_t^1 \}  $. Then $ x ( t ) \in \mathcal M $ since $ \mathcal M $ is closed, and $ {\mathcal H}_{x (t) } = {\mathcal H}^1_t $. Indeed,  $ {\mathcal H}_{x (t) } \subset {\mathcal H}^1_t $ trivially  if the set $  \{ x  \in \mathcal M \colon \, {\mathcal H}_x \subset {\mathcal H}_t^1 \}  $ has a maximal element, and if it does not then $  {\mathcal H}_{x (t) } = \bigvee_{ x < x ( t ) }  {\mathcal H}_x $ (see  theorem \ref{spmaptheo}),  hence $ {\mathcal H}_{x (t) } \subset {\mathcal H}^1_t $ in any case. Conversely, the subspace $ {\mathcal N } = \bigcap_{ x > x ( t ) } {\mathcal H}_x $ contains  $ {\mathcal H}^1_t $, for $ {\mathcal H}^1_t  \subset {\mathcal H}_x $ as $ x > x ( t ) $. If $  {\mathcal H}_{ x ( t ) } = \mathcal N $ we are done. If $  {\mathcal H}_{ x ( t ) } $ is a proper subspace of $ \mathcal N $ then $ x_* \colon = \inf \{ {\mathcal M}\cap ( x( t ), L ] \} \in \mathcal M $, $ \left[ x ( t ) , x_* \right] $ is a singular interval, $ {\mathcal N } = {\mathcal H}_{ x_* } $, hence $  \mathcal N \ominus  {\mathcal H}_{ x ( t ) } $ is one-dimensional. It follows that $ {\mathcal H}^1_t $ coincides with either $ {\mathcal H}_{ x( t ) } $, or $ {\mathcal N} $.  In the former case we are done, and in the latter case $ x ( t ) $ is not the supremum ( $ x_* > x ( t ) $), which is impossible. Thus, $ {\mathcal H}_{x (t) } = {\mathcal H}^1_t $.

The spaces $ {\mathcal H}_{x (t) } $ and $ {\mathcal H}^1_t $, $ t \in \cM_1 $, coincide as de Branges spaces, hence they have the same reproducing kernels. The reproducing kernel at $ z = 0 $ is $ \Theta_- ( z ) / z $ (up to an absolute constant, see (\ref{reprokernel})), hence $ \Theta_- ( x ( t ) , \lambda ) = \Theta_-^1 ( t , \lambda ) $. Considering the reproducing kernels at some nonzero point, we now conclude that $ \Theta_+ ( x ( t ) , \lambda ) = \Theta_+^1 (  t , \lambda ) + a_t \Theta_-^1 (  t , \lambda )  $ for some real constant $ a_t $, that is, for each $ t \in \cM_1 $ there exists a real $ a_t $ such that
\be\la{at} \Theta ( x ( t ) , \lambda ) = \( \begin{array}{cc} 1 & a_t \cr 0  & 1 \end{array} \) \Theta^1 ( t , \lambda ) . \ee
Our goal is to show that $ a_t =0 $, then $ x ( t ) \equiv t $ and $ \cH_1 = \cH $ will follow easily. 

By definition, $ x ( t ) $ is a monotone increasing function on $ \cM_1 $. Let us extend it to the whole of $ [ 0 , L ] $ by linearity ($ \cM_1 $ is closed!). The resulting map is a monotone bijection of $ [ 0 , L ] $ onto itself. The range of $ \cM _1 $ under this bijection coincides with $ \cM $. The identity 
\be\la{the-} \Theta_- ( x ( t ) , \lambda ) = \Theta_-^1 ( t , \lambda ) \ee 
then holds for all $ t \in [ 0 , L ] $, and thus 
\be\la{theta-} \ddot\Theta_- ( x ( t ) , 0 ) = \ddot\Theta_-^1 ( t , 0 ) . \ee

We are going to differentiate this identity in $ x $ with respect to the canonical system. To this end, we need absolute continuity of $ x ( t ) $. 

\begin{lemma}
Let  $ \Omega \subset [ 0 , L ] $. If $ | \Omega | = 0 $ then $ | x ( \Omega ) | = |x^{ -1 } ( \Omega ) | = 0 $.
\end{lemma}

\begin{proof}
Suppose by contradiction that $ |\Omega | = 0 $ but $ | x^{ -1 } ( \Omega ) | > 0 $. WLog, one can assume that $ x^{ -1 } ( \Omega ) $ is closed and contained in $ \cM_1 $. Then $ [ 0 , L ] \setminus x^{ -1 } ( \Omega ) $ is a union of mutually disjoint open intervals, $ \omega_j $. Since the ends of these intervals lie in $ \cM_1 $, the set of Fourier images in the sense of  Theorem \ref{spmaptheo} of functions from $ H_1 $, the Hilbert space of the system $ ( \cH_1 , L ) $,  supported on $ \cup_j \omega_j $ forms a subspace in $ \HE $ coinciding with the set of Fourier images of functions from $ H $, the Hilbert space of the system $ ( \cH , L ) $, supported on $ x ( \cup_j \omega_j ) $. The latter set of elements coincides with the whole of $ H $ since the complement of $ x ( \cup_j \omega_j ) $, -- the set $ \Omega $ -- has zero measure. It follows that the set of functions from $ \cH_1 $ supported on $ \cup_j \omega_j $ coincides with $ H_1 $, which means that $ x^{ -1 } ( \Omega ) = [ 0 , L ] \setminus \cup_j \omega_j $ is a set of zero measure, a contradiction. The other equality ($ | x ( \Omega ) |= 0 $)  follows by switching $ \cH $ and $ \cH_1 $ in the argument.
\end{proof}

Now, both sides in (\ref{theta-}) are a. c. functions of $ t \in [ 0 , L ] $, and
\be\la{Thetaderive} x^\prime ( t )  \ddot\Theta_-^\prime ( x ( t ) , 0 ) = \frac{d \ddot\Theta_-^1 ( t , 0 )}{ d t } \ee
for a. e. $ t \in [0, L ] $.  

\begin{lemma}
Let $ Y ( x , \lambda ) $ be a vector solution to (\ref{can}) satisfying $ Y ( 0 , \lambda ) = \begin{pmatrix} 1 \cr 0 \end{pmatrix} $. Then 
\be\la{21} \ddot{Y}_-^\prime ( x , 0 ) = 2 \left. \( \dot{Y}_-^\prime \dot{Y}_+ - \dot{Y}_+^\prime \dot{Y}_- \)\right|_{ \lambda = 0 } \ee
for a. e. $ x \in [ 0 , L ] $.
\end{lemma}

\begin{proof} Let $ M $ be the fundamental solution of (\ref{can}). Differentiating $ J M^\prime = \lambda \cH M $ twice in $ \lambda $ at $ \lambda = 0 $ we obtain $ J \ddot{M}^\prime = 2 \cH \dot M = 2 J \dot{M}^\prime \dot M $. In the second equality we took (\ref{derM}) into account. Now, if we cancel $ J $ in both sides, (\ref{21}) is the left lower entry in the resulting matrix identity. \end{proof}                                                                                    

Notice that if the compatibility condition (L) is satisfied then $ \dot{\Theta}_- ( x , 0 ) = \int_0^x \cH_{ 11 } $ (see  (\ref{derM})) is positive and monotone nondecreasing for $ x > 0 $ hence the r. h. s. in (\ref{21}) can be rewritten as 
\[ -2 \dot{\Theta}_-^2 \( \frac{ \dot{\Theta}_+ }{ \dot{\Theta}_- } \)^\prime . \]

\medskip
 
\textit{End of proof of theorem \ref{uniqueness}.} On substituting this into both sides of (\ref{Thetaderive}) and canceling out the squares of $ \dot\Theta_- ( x ( t ) ) $ and $ \dot\Theta_-^1 ( t ) $ (see (\ref{the-}); we have just explained why they do not vanish under the assumptions of the theorem) we find
\[  x^\prime ( t ) \left. \( \frac{ \dot{\Theta}_+ }{ \dot{\Theta}_-} \)^\prime \right|_{ x = x (t) } = \( \frac{ \dot{\Theta}_+^1 }{ \dot{\Theta}_-^1 } \)^\prime \]
or 
\[ \frac{d}{dt} \(  \frac{ \dot{\Theta}_+ ( x(t) ) }{ \dot{\Theta}_- ( x(t) ) } - \frac{ \dot{\Theta}_+ ( t ) }{ \dot{\Theta}_- ( t ) } \) = 0. \]
The obtained equality holds a. e. in $ t $ and the function in braces is a. c. The function is therefore constant on $ ( 0 , L ] $. On taking $ t =L $ we find that the constant is $ 0 $. Notice that for $ t \in \cM_1 $ the value of this function is precisely $ a_t $ from (\ref{at}). Thus, $ \Theta ( x(t) , \lambda ) = \Theta^1 ( t , \lambda ) $ for all $ t \in \cM_1 $, and hence for all $ t \in [ 0 , L ] $ since $ \Theta ( x ( t )) $ and $ \Theta^1 $ are linear functions on a singular interval for $ \cH_1 $ coinciding at the ends of the interval (for the ends lie in $ \cM_1 $). On account of explicit formulae (\ref{phi-}) and (\ref{phi+}) for the second special solution $ \Phi $, the monodromy matrices, $ M ( x , \lambda ) $, $ M_1 ( t , \lambda ) $, for the canonical systems $ ( \cH , L ) $, $ ( \cH_1 , L ) $, resp., satisfy
\[ M ( x ( t ) , \lambda ) = M_1 ( t , \lambda ) \]
for all $ t \in [ 0 , L ] $, $ \lambda \in \C $. On differentiating this equality in $ t $ and $ \lambda $ at $ \lambda = 0 $ we obtain on account of (\ref{derM}), 
\[ x^\prime ( t ) \cH ( x ( t ) ) = \cH_1 ( t ) \]
for a. e. $ t \in [  0 , L ] $. Taking trace in this equality we conclude that $ x^\prime ( t ) = 1 $ a. e. in $ t $. Since $ x ( t ) $ is a. c., it follows that $ x( t ) = t $ and thus $ \cH_1 = \cH $. 
\end{proof}

This uniqueness theorem allows to analyze the solution of the inverse problem in the regular case given in section \ref{Invreg}. As noticed after theorem \ref{inversolution}, the canonical system solving the problem is $ ( F^\prime , L ) $ where $ L $ is determined from $ E $ by the formula (\ref{lengthL}), and $ F $ is determined by a convergent subsequence of $ F_N $'s.  As it stands, this solution is not constructive since the convergent subsequence is chosen by a compactness argument. Recall, however, that the system $ ( F^\prime , L ) $ satisfies the compatibility condition (L) \textit{regardless of the choice of a convergent subsequence}.  Theorem \ref{uniqueness} now implies that the limiting functions $ F $ corresponding to different choices of the subsequence may only differ by an additive constant, and hence they coincide, since $ F_N ( 0 ) = 0 $. It follows that the initial sequence, $ \{  F_N \}_{  N \in \N } $,  itself converges in $ C ( 0 , L ) $. Thus the solution is actually constructive and one can formulate

\medskip

\textbf{Algorithm for solving the inverse problem in the regular case.} \textit{Let $ E $ be an HB function satisfying the assumptions of  section \ref{Invreg}. Let $ t_j $, $ j \ge 0 $, $ t_0 = 0 $, be the set of zeroes of $ \Theta_- $ ordered by $ | t_j | \le | t_{ j+1 } | $, 
\[ \Theta_-^N ( z ) = \dot\Theta_- ( 0 ) z \prod^{N-1} \( 1 - \frac z{t_j} \) , \]
\[  \Theta_+^N ( z ) =  \( \sum_{j = 0 }^N \frac{\Theta_+( t_j ) }{ \dot\Theta_- ( t_j ) }\frac 1{ z - t_j } + a + b z \) \Theta_-^N ( z ) ,\]
$ a $ and $ b $ being the constants in the linear term in the Herglotz representation of $ \Theta_+ / \Theta_- $. Then $ E_N = \Theta_N^+ + i \Theta_N^- $ is an HB polynomial satisfying the assumptions of  section \ref{invpol}. Let $ ( \cH_N , L_N ) $ be the corresponding canonical system constructed in sections \ref{invpol1} and \ref{invpol2}. Let 
\[ L = \frac 1{\pi} \len \frac{\Theta_+ - 1 }\lambda \rin_{\HE}^2 -   \dot{ \Theta}_- ( 0 ) . \] 
Define 
\[  \wt\cH_N ( x ) = \left\{ \begin{array}{cl} \cH_N \( x - \max\{ 0 , L - L_N \}  \) , &  x \ge \max\{ 0 , L - L_N \} \cr
 \( \! \begin{array}{cc} 0 & 0 \cr 0 & 1 \end{array} \! \), 
 & 0 \le x \le \max\{ 0 , L - L_N \},
 \end{array} \right. \]
\[  F_N ( x ) = \int_{ l_N - h }^{ l_N - h + x } \wt\cH_N ( s ) \d s , \; l_N = \max \{ L , L_N \}.  \] 
Then $ F_N $ converges in $ C ( 0 , L ) $ to a monotone non-decreasing function, $ F $. The canonical system $ ( F^\prime , L ) $ solves the inverse problem and satisfies the compatibility condition (L).} 

\medskip

\subsection{Borg Theorem}\la{borgtheorem}

Let $ q $ being a real-valued function from $ L^1 ( 0 , 1 ) $, $ h \in \R $ , $  h_1 \in \R \cup \{ \infty \} $. Denote by $ \sigma ( q ; h , h_1 ) $ the set of $ \lambda \in \R $ such that the equation 
\be\la{Schr} - u^{\prime \prime } + q u = \lambda u \ee 
has a non-trivial solution satisfying $ u^\prime ( 0 ) = h u ( 0 ) $, $ u^\prime ( 1 ) = h_1 u ( 1 ) $,  with the usual meaning for $ h_1 = \infty $.

\begin{theorem}\la{abstrBorg}
If $ \sigma ( q ; h, h_1 ) = \sigma ( \tq ; \th , \th_1 ) $, $ \sigma ( q ; h , h_2 ) = \sigma ( \tq ; \th , \th_2 ) $ for some $ q $, $ \tq $, $ h $, $ h_{ 1,2} $, $ \th $, $ \th_{1, 2} $, $ h_1 \ne h_2 $, then $ \tq = q $, $ \th = h $, $ \th_1 = h_1 $, $ \th_2 = h_2 $.
\end{theorem}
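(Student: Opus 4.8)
The plan is to deduce Theorem~\ref{abstrBorg} from the uniqueness Theorem~\ref{uniqueness}. First I would put (\ref{Schr}) into canonical form exactly as in Example~2 of Section~\ref{examples}, but with the zero-energy solutions adapted to the boundary condition at $0$: take $\yOone$ solving $-y''+qy=0$ with $\yOone(0)=1$, $\yOone'(0)=h$, and $\yOtwo$ with $\yOtwo(0)=0$, $\yOtwo'(0)=1$, so that $W\{\yOone,\yOtwo\}\equiv-1$. Then $\cH(x)=\langle\,\cdot\,,v(x)\rangle v(x)$ with $v=(\yOone,\yOtwo)^{T}$, and the solution $\Theta(x,\lambda)$ of (\ref{can}) with $\Theta(0)=(1,0)^{T}$ corresponds, via $y=\Theta_{+}\yOone+\Theta_{-}\yOtwo$, to the solution of (\ref{Schr}) with $y(0)=1$, $y'(0)=h$; since $\yOone$ does not vanish near $0$, condition (L) holds, and (as solutions of $-y''+qy=0$ cannot vanish on an interval) $\cH$ has no indivisible intervals. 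A direct computation at $x=1$ gives that $\lambda\in\sigma(q;h,h_{1})$ iff $\Theta_{+}(1,\lambda)\cos\alpha_{1}+\Theta_{-}(1,\lambda)\sin\alpha_{1}=0$, where $\alpha_{1}\in[0,\pi)$ is the angle with $(\cos\alpha_{1}:\sin\alpha_{1})=(\yOone'(1)-h_{1}\yOone(1):\yOtwo'(1)-h_{1}\yOtwo(1))$ (with the obvious reading for $h_{1}=\infty$), and $h_{1}\mapsto\alpha_{1}$ is injective because the matrix with rows $(\yOone(1),\yOtwo(1))$ and $(\yOone'(1),\yOtwo'(1))$ has determinant $1$. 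Thus each of the two given spectra is the zero set of a real linear combination of $\Theta_{+}(1,\cdot)$ and $\Theta_{-}(1,\cdot)$ — the spectrum of one of the operators $D_{\alpha}$ of Section~\ref{direct} — and $\alpha_{1}\neq\alpha_{2}$.

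The core step is to recover $\Theta(1,\cdot)$ from the pair of spectra. Here one uses that the solutions of (\ref{Schr}) on a finite interval are entire in $\lambda$ of order $\le1/2$ (Gronwall on the integral equation), so $E_{1}:=\Theta_{+}(1,\cdot)+i\Theta_{-}(1,\cdot)$ — a regular HB function by the proposition following Theorem~\ref{spmaptheo}, with no real zeros since $\det M(1,\cdot)\equiv1$ — is of exponential type $0$ and genus $0$ (equivalently $\det\cH\equiv0$, so this also follows from Theorem~\ref{exptypeM}). Consequently the functions $A_{j}:=\Theta_{+}(1,\cdot)\cos\alpha_{j}+\Theta_{-}(1,\cdot)\sin\alpha_{j}$, $j=1,2$, are real entire of genus $0$ with only real simple zeros, hence each equals its canonical product over its zero set times a single real constant, fixed by $\Theta(1,0)=(1,0)^{T}$. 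Writing $A_{j}=c_{j}P_{j}$ with $P_{j}$ the canonical product over the $j$-th spectrum and inverting the $2\times2$ linear system (admissible since $\alpha_{1}\neq\alpha_{2}$) expresses $\Theta_{\pm}(1,\cdot)$ as fixed combinations of $P_{1},P_{2}$ whose coefficients are explicit in $\cos\alpha_{j},\sin\alpha_{j}$; in particular $\Theta_{-}(1,\cdot)=\frac{\cos\alpha_{1}\cos\alpha_{2}}{\sin(\alpha_{2}-\alpha_{1})}(P_{2}-P_{1})$. Carrying this out for both problems (the $P_{j}$ are the same, the spectra being equal) and matching coefficients of $P_{1},P_{2}$ — which are linearly independent because $\sigma_{1}\neq\sigma_{2}$, as $h_{1}\neq h_{2}$ forces $\alpha_{1}\neq\alpha_{2}$ — yields $\tan\alpha_{j}=\tan\tilde\alpha_{j}$, hence $\tilde\alpha_{j}=\alpha_{j}$, and therefore $\tilde\Theta(1,\cdot)=\Theta(1,\cdot)$. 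The case in which one of the $\alpha_{j}$ is $\pi/2$ (a Dirichlet condition at $1$) is handled by the same computation with the factor $\lambda$ carried along, and forces the corresponding $\tilde\alpha_{j}$ to be $\pi/2$ as well.

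With $\tilde\Theta(1,\cdot)=\Theta(1,\cdot)$ established, Theorem~\ref{uniqueness} (both canonical systems are on $(0,1)$ and satisfy (L)) gives $\tilde\cH=\cH$ on $(0,1)$. Since $\cH=\langle\,\cdot\,,v\rangle v$ and $\tilde\cH=\langle\,\cdot\,,\tilde v\rangle\tilde v$ with nowhere-vanishing continuous $v,\tilde v$, equality of the Hamiltonians gives $v=\pm\tilde v$ pointwise, and continuity together with $v(0)=\tilde v(0)=(1,0)^{T}$ forces $v\equiv\tilde v$, i.e.\ $\yOone\equiv\yOoneti$ and $\yOtwo\equiv\yOtwoti$ on $[0,1]$. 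These being $C^{1}$, comparing first derivatives at $0$ gives $h=\tilde h$; and from $\yOtwo''=q\yOtwo$ and $\yOtwo''=\yOtwoti''=\tilde q\yOtwo$ one gets $q=\tilde q$ a.e.\ on the full-measure set $\{\yOtwo\neq0\}$, hence $q=\tilde q$. Finally, $q$ and $h$ being common, the map $h_{1}\mapsto\alpha_{1}$ is the same for both problems and injective, so $\tilde\alpha_{1}=\alpha_{1}$ (from the previous paragraph) yields $h_{1}=\tilde h_{1}$, and likewise $h_{2}=\tilde h_{2}$.

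The main obstacle is the middle step: showing the two level sets pin down $\Theta(1,\cdot)$ with no residual ambiguity. The exponential-type-zero (equivalently genus-zero) property is what makes the Hadamard factorization of the $A_{j}$ clean, leaving no stray exponential factor; the remaining work is the elementary but slightly delicate bookkeeping that determines the boundary angles by comparing the two problems against each other, rather than merely fixing $\Theta(1,\cdot)$ up to an affine transformation. Everything else is either a citation (the reduction of Section~\ref{examples}, Theorem~\ref{uniqueness}) or routine Sturm--Liouville manipulation.
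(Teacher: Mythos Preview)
Your overall strategy --- reduce to a canonical system, show $\Theta(1,\cdot)=\tilde\Theta(1,\cdot)$, then invoke Theorem~\ref{uniqueness} --- is the paper's, but the execution has two real gaps.

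The serious one is in your core step. You say ``matching coefficients of $P_1,P_2$ yields $\tan\alpha_j=\tan\tilde\alpha_j$'', but there is no equation to match: you have $\Theta_\pm$ expressed through $P_1,P_2,\alpha_1,\alpha_2$ and $\tilde\Theta_\pm$ through $P_1,P_2,\tilde\alpha_1,\tilde\alpha_2$, with no relation between $\Theta$ and $\tilde\Theta$ available. In fact the two spectra do \emph{not} determine $\Theta(1,\cdot)$ among normalized genus-$0$ HB pairs. With $P_j(0)=1$ your own formulae give
\[
\frac{\Theta_+}{\Theta_-}=-\tan\alpha_1+(\tan\alpha_2-\tan\alpha_1)\,\frac{P_1}{P_2-P_1},
\]
so the Herglotz condition only fixes the \emph{sign} of $\tan\alpha_2-\tan\alpha_1$, leaving a genuine two-parameter family of admissible $(\alpha_1,\alpha_2)$ (equivalently, $\Theta_+\mapsto\Theta_++s\Theta_-$ and $\Theta_-\mapsto k\Theta_-$, $k>0$, preserve the zero sets and HB). It is the Schr\"odinger origin of $E$ that breaks this ambiguity, and you never use it. The paper's device is a specific choice of $\yOone,\yOtwo$ (its lemma in this subsection) forcing $H_2=0$ and $H_1=\pm1$ for \emph{both} problems; then one spectrum is the zero set of $\Theta_+$ itself and the other of $\Theta_+-\Theta_-$, so Hadamard and the normalization at $0$ give $\tilde\Theta_+=\Theta_+$ and $\tilde\Theta_-=\pm\Theta_-$, with the Herglotz property of $\Theta_+/\Theta_-$ fixing the sign.

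The second gap is in your appeal to Theorem~\ref{uniqueness}. That theorem, as proved, assumes $\operatorname{tr}\cH=1$ (its Step~I uses lemma~\ref{trace}, i.e.\ $L=\operatorname{tr}J\dot M(L,0)$, and its final line derives $x'(t)=1$ by taking trace). Your Hamiltonians have $\operatorname{tr}\cH(x)=\yOonequ(x)+\yOtwoqu(x)$. The theorem therefore only yields that the \emph{normalized} systems coincide, which says $v(\xi^{-1}(s))\parallel\tilde v(\tilde\xi^{-1}(s))$ where $\xi(x)=\int_0^x(\yOonequ+\yOtwoqu)$ and similarly $\tilde\xi$; it does not give $\cH=\tilde\cH$ on $(0,1)$ until one shows $\xi=\tilde\xi$. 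The paper devotes half a page to exactly this, differentiating the ratio identity and using $W\{\yOone,\yOtwo\}=-1$ to obtain $\xi'\circ\xi^{-1}=\tilde\xi'\circ\tilde\xi^{-1}$, hence $\xi=\tilde\xi$. Your line ``$vv^T=\tilde v\tilde v^T\Rightarrow v=\pm\tilde v$'' presupposes the two Hamiltonians agree at the same $x$, which is precisely what remains to be proved.
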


Before proving the Borg theorem, let us dwell on details of the correspondence between the Schr\"odinger operator and the canonical systems described in section \ref{examples}. According to what has been said there, $ y $ is a solution to (\ref{Schr}) iff $ y = Y^\circ \cdot Y $ where $ Y $ is a solution to the canonical system (\ref{can}) with the Hamiltonian $ \cH ( x ) = \llangle \cdot , Y^\circ (x) \rrangle Y^\circ (x) $, $ Y^\circ = \begin{pmatrix} \yOone \cr \yOtwo \end{pmatrix} $, $ {\stackrel{\lower.5mm\hbox{$ \scriptstyle\circ $}}y}_{1,2} $ being real solutions to (\ref{Schr}) with $ \lambda = 0 $ satisfying $ W \{ \yOone , \yOtwo \} = -1 $, $ \cdot $ standing for the real (without conjugation of the second member) scalar product. We would like to choose $ {\stackrel{\lower.5mm\hbox{$ \scriptstyle\circ $}}y}_{1,2} $ so that the eigenfunctions of the canonical system corresponding to the Schrodinger boundary conditions above  be $ \Theta $, that is,  the condition corresponding to $ y^\prime ( 0 ) = h y ( 0 ) $ be $ Y_- ( 0 ) = 0 $. We have 
\[ \begin{array}{rcl} y^\prime = & \underbrace{Y^\circ \cdot Y^\prime } & \hskip-6mm + \, {Y^\circ}^\prime \cdot Y = {Y^\circ}^\prime \cdot Y , \cr & \| &  \cr & \lambda J Y^\circ \cdot \cH Y & = \lambda ( Y^\circ \cdot Y )( J Y^\circ \cdot Y^\circ ) = 0 \cr  \end{array} \]
and thus 
\be\la{foyrbc} y^\prime - h y = \( {Y^\circ}^\prime - h Y^\circ \) \cdot Y . \ee
It follows that given a solution $ y $ to (\ref{Schr}) satisfying $ u^\prime ( 0 ) = h u ( 0 ) $ the following are equivalent,

\textit{(i)} $ Y_- ( 0 ) = 0 $,

\textit{(ii)} $ \yOone^{\lower3pt\hbox{$ \scriptstyle \prime $}}( 0 ) - h \yOone ( 0 ) = 0 $.

From now on, the assumption in \textit{(ii)} is supposed to be satisfied. 

Consider now the right end of the interval. Let $ y $ be a solution to (\ref{Schr}), $ h_r \in \R \cup \{ \infty \} $. Then, 
\begin{eqnarray} y^\prime ( 1 ) = h_r y ( 1 ) \mathop{\Leftrightarrow}^{ (\ref{foyrbc}) }  \left. \( {Y^\circ}^\prime - h_r Y^\circ \) \cdot Y \right|_1 = 0 \Leftrightarrow Y ( 1) = \left. (\mbox{const}) J  ( {Y^\circ }^\prime - h_r Y^\circ ) \right|_1  \Leftrightarrow \nonumber \\  
  Y_+ ( 1 ) = H Y_- ( 1 ) ,  H \colon = \displaystyle{\frac{\yOtwo^{\lower3pt\hbox{$ \scriptstyle \prime $}}  ( 1 ) - h_r \yOtwo ( 1 ) }{ - \yOone^{\lower3pt\hbox{$ \scriptstyle \prime $}} ( 1 ) + h_r \yOone ( 1 ) }},  \la{Hhr} \end{eqnarray} 
the notation convention being that $ H = \infty $  means $ Y_- ( 1 ) = 0 $.

The  fractional linear relation determining $ H $ through $ h_r $ is nondegenerate since $ {\stackrel{\lower.5mm\hbox{$ \scriptstyle\circ $}}y}_{1,2} $ are linear independent, hence (\ref{Hhr}) establishes a one-to-one correspondence between $ h_r \in \R \cup \{ \infty \} $ and $  H \in \R \cup \{ \infty \} $, and $ y = Y^\circ \cdot Y $ is a one-to-one correspondence between eigenfunctions corresponding to $ \sigma ( q ; h , h_r ) $ and solutions to (\ref{can}) satisfying $ Y_- ( 0 ) = 0 $, $ Y_+ ( 1 ) = H Y_- ( 1 ) $.

Let the assumption of the Borg theorem be satisfied. Let $ \cH $, $ \wt{ \cH } $ be the Hamiltonians of the canonical systems corresponding to $ q $ and $ \tq $, resp. In what follows the solutions of the Schr\"odinger equation with the potential $ \tq $, of the canonical system with the Hamiltonian $ \wt { \cH } $, and other objects corresponding to $ \tq $ analogous to the ones defined above for $ q $ are denoted by the same symbols supplied with tilde.  We assume that the condition \textit{(ii)} and a similar one for the solutions of the tilded problem are satisfied. Define $ E ( \lambda ) = \Theta_+ ( 1 , 
\lambda ) + i \Theta_- ( 1 , \lambda ) $, $ \tilde E ( \lambda ) = \tilde \Theta_+ ( 1 , \lambda ) + i \tilde \Theta_-  ( 1 , \lambda ) $. Our immediate goal is to show that $ E = \tilde E $. To this end one has to take a transform of these functions (a linear combination of $ E $ and $ E^* $, $ \tilde E $ and $ \tilde E^* $) for which the two coinciding spectra are, respectively,  the zeroes of their real and imaginary parts. By the following lemma, such a transform exists under a  "no zero mode" condition.

\begin{lemma} Given a $ q $, $ h $, $ h_{ 1 ,2 } $ subject to the conditions of the Borg theorem, assume that $ 0 \notin \sigma( q ; h , h_2 ) $. Then one can choose the solutions $ y^\circ_{ 1,2 } $ satisfying $ \yOone^{\lower3pt\hbox{$ \scriptstyle \prime $}}( 0 ) - h \yOone ( 0 ) = 0 $ and $ W \{ \yOone , \yOtwo \} = -1 $ in such a way that $ H_1 $ is either $ 1 $, or $ -1 $, and $ H_2 = 0 $, $ H_{ 1,2 } $ being defined by (\ref{Hhr}) with $ h_r = h_{ 1, 2 } $, respectively. \end{lemma}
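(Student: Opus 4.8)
The plan is to turn the statement into the problem of choosing two real parameters so that two scalar quantities take prescribed values. Introduce the boundary functionals $\ell_0(w):=w'(0)-h\,w(0)$ and $r_j(w):=w'(1)-h_j w(1)$, $j=1,2$, on the two‑dimensional solution space of $-w''+qw=0$, with the convention $r_j(w):=w(1)$ if $h_j=\infty$ (the $h_r\to\infty$ limit in (\ref{Hhr})); with this convention (\ref{Hhr}) reads, uniformly in all cases, $H_j=-r_j(y^\circ_2)/r_j(y^\circ_1)$. The left condition $(y^\circ_1)'(0)=h\,y^\circ_1(0)$ says $y^\circ_1\in\ker\ell_0$, so $y^\circ_1=c\,u$ for a fixed nonzero $u\in\ker\ell_0$ and a free $c\in\R\setminus\{0\}$; after fixing a solution $v$ with $W\{u,v\}=1$, the normalization $W\{y^\circ_1,y^\circ_2\}=-1$ forces $y^\circ_2=\alpha\,u-\tfrac1c v$ with $\alpha\in\R$ free. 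Thus the admissible pairs $y^\circ_{1,2}$ are parametrized by $(c,\alpha)$, and I must pick $(c,\alpha)$ so that $H_2=0$ and $H_1\in\{1,-1\}$.

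First I would record three non‑degeneracies. (a) $r_2(u)\neq0$: this is exactly the hypothesis $0\notin\sigma(q;h,h_2)$, and it is also what makes $H_2=0$ attainable at all, since $r_2(u)=0$ would mean $\ker\ell_0=\ker r_2$, forcing every admissible pair to have vanishing Wronskian. (b) $r_1,r_2$ are linearly independent functionals, equivalently $d:=r_1(u)r_2(v)-r_1(v)r_2(u)\neq0$: if a nonzero $w$ met the right boundary condition for both $h_1$ and $h_2$, comparing the two conditions at $x=1$ and using $h_1\neq h_2$ forces $w(1)=w'(1)=0$, hence $w\equiv0$, so $\ker r_1\neq\ker r_2$. (c) $r_1(u)\neq0$: otherwise $H_1=\infty$ for every admissible choice; this holds for the data the lemma is applied to because, as is customary in Borg‑type arguments, one first subtracts a suitable constant from $q$ so that $0$ avoids all the finitely many spectra occurring in the proof of Theorem~\ref{abstrBorg}.

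Then I would solve the two equations. $H_2=0$ means $r_2(y^\circ_2)=0$, i.e.\ $\alpha\,r_2(u)=\tfrac1c r_2(v)$, which by (a) gives $\alpha=r_2(v)/(c\,r_2(u))$ and hence $y^\circ_2=\tfrac1c w_2$ with $w_2:=\tfrac{r_2(v)}{r_2(u)}u-v$ a nonzero solution satisfying $r_2(w_2)=0$; by (b) one has $r_1(w_2)=d/r_2(u)\neq0$. Consequently
\[ H_1=-\frac{r_1(y^\circ_2)}{r_1(y^\circ_1)}=-\frac{r_1(w_2)}{c^2\,r_1(u)}=\frac{D}{c^2},\qquad D:=-\frac{r_1(w_2)}{r_1(u)}, \]
and $D\neq0$ by (b)–(c), so taking $c:=\sqrt{|D|}$ (and $\alpha$ as above) gives $H_1=\operatorname{sign}D\in\{1,-1\}$, while by construction $y^\circ_{1,2}$ are real, satisfy the left condition, and have Wronskian $-1$.

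The computation itself is short; the part that deserves care is the non‑degeneracy bookkeeping of the second paragraph — in particular recognising that finiteness of $H_1$ needs $r_1(u)\neq0$, not merely $r_2(u)\neq0$, so that the lemma is genuinely being used in the setting where $0$ has been pushed out of all the relevant spectra — together with verifying that the formula $H_j=-r_j(y^\circ_2)/r_j(y^\circ_1)$ degenerates correctly in the cases $h_1=\infty$ or $h_2=\infty$, which is dealt with once and for all by the convention for $r_j$ introduced above.
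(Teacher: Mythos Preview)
Your proof is correct and follows essentially the same route as the paper: take $y^\circ_1=c\,\xi$ with $\xi$ a fixed solution satisfying the left condition, choose $y^\circ_2$ (proportional to $1/c$) so that $H_2=0$, note that then $H_1=(\text{nonzero constant})/c^2$, and adjust $c$. Your treatment is in fact a bit more careful than the paper's on the non-degeneracy side: you correctly flag that finiteness of $H_1$ requires $r_1(u)\neq 0$, i.e.\ $0\notin\sigma(q;h,h_1)$, a hypothesis the lemma as stated omits but which in the application is arranged by the same shift of $q$ that removes $0$ from $\sigma(q;h,h_2)$.
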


Indeed, there are four conditions (Wronskian, \textit{(ii)} for $ \yOone $, $ H_2 = 0 $, $H_1 = \pm 1$) for four coefficients parametrizing all possible choices of $ {\stackrel{\lower.5mm\hbox{$ \scriptstyle\circ $}}y}_{1,2} $, hence the assertion of the lemma looks plausible.  

\begin{proof} Fix arbitrary solutions $ \xi $ and $ \eta $ to (\ref{Schr}) with $ \lambda = 0  $ satisfying $ W (\xi  , \eta ) = - 1 $ and $ \xi^\prime ( 0 ) = h \xi ( 0 ) $. Let $ \yOone = c \xi  $, 
\[ \yOtwo = \frac 1c \( \eta + \frac{ \eta^\prime ( 1 ) - h_2 \eta ( 1 ) }{ - \xi^\prime ( 1 ) + h_2 \xi ( 1 ) }\xi \) . \] 
The condition about zero not being in the spectrum means that the denominator in the coefficient in front of $ \xi $ does not vanish. It is obvious that $ \yOtwo^{\lower3pt\hbox{$ \scriptstyle \prime $}} ( 1) = h_2 \yOtwo ( 1 ) $ ( the linear combination of $ \eta $ and $ \xi $ above is produced to this effect), hence $ H_2 = 0$. Then
\be\la{H1} H_1 = \frac 1{ c^2 } \(  \frac{ \eta^\prime ( 1 ) - h_1 \eta ( 1 ) }{ - \xi^\prime ( 1 ) + h_1 \xi ( 1 ) } -  \frac{ \eta^\prime ( 1 ) - h_2 \eta ( 1 ) }{ - \xi^\prime ( 1 ) + h_2 \xi ( 1 ) } \)  \ee 
It is obvious that, depending on the sign of the expression in parentheses, one can choose $ c $ so that $ H_1 $ is either $ 1$, or $ -1 $.  \end{proof}

From now on, we assume that $ 0 \notin \sigma( q ; h , h_2 ) $, $ 0 \notin \sigma( \tq ; \th , \th_2 ) $, and the solutions $ {\stackrel{\lower.5mm\hbox{$ \scriptstyle\circ $}}y}_{1,2} $, $ {\stackrel{\lower.5mm\hbox{$ \scriptstyle\circ $}}{\tilde y}}_{1,2} $ are chosen so that the assertion of the lemma is satisfied. This does not incur any loss of generality, since one can change potentials by adding the same constant to make these assumptions hold. We also assume for convenience that the enumeration of spectra is chosen so that $ H_1 = 1 $ (notice that  switching $ h_1 $ and $ h_2 $ in (\ref{H1}) changes sign of $ H_1 $). 

Let $ \Theta_\pm = \Theta_\pm ( 1 , \cdot ) $, $ \tilde \Theta_\pm = \tilde \Theta_\pm ( 1 , \cdot ) $. The functions $ \Theta_+ $  and $ \tilde \Theta_+ $ have finite exponential type and their sets of zeroes coincide by the assumption of the Borg theorem (the said sets are $ \sigma ( q ; h , h_2 )$ and $  \sigma ( \tq ; \th , \th_2 ) $, respectively), hence there exist real $ A $ and $ b $ such that $ \Theta_+ ( z ) / \tilde \Theta_+ ( z )  = A e^{ b z } $.  The regularity of $ E $ and $ \tilde E $ implies that $ \( 1 + t^2 \)^{ -1 } \ln | E ( t ) |  \in L^1 ( \R ) $, $ \( 1 + t^2 \)^{ -1 } \ln | \tilde E ( t ) |  \in L^1 ( \R ) $, from whence $ b = 0 $. Also, $ A = 1 $ because $ \Theta_+ ( 0 ) =  \tilde \Theta_+ ( 0 ) = 1 $. Thus, $ \tilde  \Theta_+ = \Theta_+ $. Similarly, $ \sigma ( q ; h , h_1 )$, the set of zeroes of the function $ \Theta_+ - \Theta_- $, coincides with $  \sigma ( \tq ; \th , \th_1 ) $, the set of zeroes of the function $ \tilde \Theta_+ - {\tilde H}_1 \tilde\Theta_- $, hence the ratio of these functions is $ e^{ d z } $ for some real $ d $. Then $ d = 0 $ by the same reason as above, so  $ \Theta_+ - \Theta_- = \tilde \Theta_+ - {\tilde H}_1 \tilde\Theta_- $ and thus either $ \tilde H_1 =1 $, $ \tilde \Theta_- = \Theta_- $, or $ \tilde H_1 =-1 $, $ \tilde \Theta_- = - \Theta_-  $. The latter is impossible, for instance, because both $ \Theta_+ / \Theta_- $ and $ \tilde \Theta_+/ \tilde\Theta_- $ are Herglotz functions. It follows that $ \tilde \Theta_- = \Theta_- $, and the equality $ \tilde E = E $ is established.

One can now apply the uniqueness theorem \ref{uniqueness}. According to it, the lengths of the intervals on which the corresponding normalized canonical systems are defined coincide, that is,
\[ L \equiv \int_0^1 \( \yOonequ + \yOtwoqu \) \d x =   \int_0^1 \( \yOonequti + \yOtwoquti  \) \d x , \]
and the normalized Hamiltonians coincide. Let  
\[ \xi ( s ) = \int_0^s \( \yOonequ + \yOtwoqu \) \d x , \;\; \tilde \xi ( s ) = \int_0^s \( \yOonequti + \yOtwoquti \) \d x . \] 
The values of the normalized Hamiltonians at a point $ t \in ( 0 , L ) $ are orthogonal projections on vectors $ Y^\circ \( \xi^{ -1} (t) \) $ and $ {\widetilde Y}^\circ ( {\tilde \xi}^{ -1 } ( t ) ) $, resp. It follows that these vectors are proportional, that is, 
\be\la{ratio} \frac{\yOone \( \xi^{ -1} (t) \)}{\yOtwo \( \xi^{ -1} (t) \)}  = \frac{\yOoneti \bigl( {\tilde \xi}^{ -1 } ( t )  \bigr)}{\yOtwoti \bigl( {\tilde \xi}^{ -1 } ( t ) \bigr) } \ee
for all $ t \in [ 0 , L ] $. Notice that, as nontrivial solutions of 2nd order linear ODE, $ \yOtwo $ and $ \yOtwoti $ are $ C^1 $ and have at most finitely many zeroes.   
Differentiating the last identity and taking into account that  $ W \{ y^\circ_1 , y^\circ_2 \} = W \{  \yOoneti , \yOtwoti \} = -1 $  we find,
\[  \left. \frac 1{\yOonequ + \yOtwoqu } \, \frac1{\yOtwoqu } \right|_{ x = \xi^{ -1 } ( t ) }  = \left. \frac 1{\yOonequti + \yOtwoquti } \, \frac1{\yOtwoquti } \right|_{ x = \tilde \xi^{ -1} ( t )  }  . \]
Denoting by $ c( t ) $ the equal expressions in (\ref{ratio}), one can rewrite this as follows, 
\[  \frac 1{ c^2 (t) + 1 } \frac1{\stackrel{\lower1mm\hbox{$ \scriptstyle\circ $}}{ \widetilde y  }_2^{\lower4pt\hbox{$ \scriptstyle 4 $}} \( \xi^{ -1 } ( t ) \) }   =  \frac 1{ c^2 (t) + 1 } \frac1{\stackrel{\lower1mm\hbox{$ \scriptstyle\circ $}}{ \widetilde y  }_2^{\lower4pt\hbox{$ \scriptstyle 4 $}} \bigl( \widetilde \xi^{ -1 } ( t ) \bigr) }  . \]
Thus, $ \bigl| \yOtwo \( \xi^{ -1 } (t ) \) \bigr|  = \Bigl| \yOtwoti \bigl( \tilde \xi^{ -1 } ( t ) \bigr) \Bigr| $, and using (\ref{ratio}) again, we find that $ \bigl| \yOone \( \xi^{ -1 } (t ) \) \bigr|  = \Bigl| \yOoneti \bigl( \tilde \xi^{ -1 } ( t ) \bigr) \Bigr| $. Summing the squares of these we obtain, $ \xi^\prime \circ \xi^{ -1 } = \tilde \xi^\prime  \circ \tilde \xi^{ -1 } $ from whence $ \tilde \xi = \xi $ and thus $ \yOonequ = \yOonequti $, $ \yOtwoqu = \yOtwoquti $.  It follows that $ q = \tq $ because 
\[ q  =  \frac12 \frac{\( \yOonequ + \yOtwoqu \)^{\prime\prime} - 2 \( {\yOone^\prime}^2 + {\yOtwo^\prime}^2 \) }{ \yOonequ + \yOtwoqu  } .  \] 

Another way to prove that $ q =\tq $ is to notice that either $ \yOtwo  = \yOtwoti $, or $ \yOtwo  = - \yOtwoti $ because all zeroes of $ \yOtwo $ and $ \yOtwoti $ are simple, hence each of these functions changes sign at each zero, and then to use the equation for these solutions. By the same token $ h_2 = \th_2 $. A similar consideration for $ \yOone $ and $ \yOoneti $ shows that $ h = \th $. The Borg theorem is proved. 

\section{Direct spectral theory - singular case}\la{directsin}

Let $ L = \infty $, and $ ( \cH , \infty ) $ be a canonical system obeying the compatibility condition (L) and such that the ray $ ( b , \infty ) $ is not a singular interval for any $ b > 0 $.  Notice that for any $ z \notin \R $  the equation (\ref{can}) has a solution belonging to $ H $. Indeed, consider a compactly supported $ h \in H $. Since the operator $ D $ constructed in the proof of theorem \ref{operatorLinf} is selfadjoint, for any $ z \notin \R $ the equation \[ \( D - z \) f = h \] has a solution $ f \in \cD $. To the right of the support of $ h $ this solution is an a. c. function satisfying (\ref{can}) with $ Y = f $. Picking up an arbitrary point, $ x_0 $ to the right of the support of $ h $ and solving the Cauchy problem for the equation (\ref{can}) on $ ( 0 , x_0 ) $ with the data $ Y ( x_0 ) = f ( x_0 ) $ we obtain a solution to  (\ref{can}) on the whole semiaxis $ ( 0 , + \infty ) $. This solution is unique up to a scalar factor, and $ \Theta ( \cdot , z ) $ is not the solution, for otherwise $ \Theta ( x , z ) $ would be in the domain of $ D $ and hence an eigenfunction of a selfadjoint operator corresponding to a  non-real eigenvalue. It follows that for all $ z \notin \R $ there exists a unique solution, $ U \in H $, to (\ref{can}) of the form
\be\la{mthetaphi} U ( x , z ) = \Phi ( x , z ) - m ( z ) \Theta ( x , z ) , \; x \in [ 0 , + \infty ) ,  \ee 
where $ m ( z ) $ is a complex number. 
Define 
\[ m_X ( z ) = \frac{ \Phi_- ( X , z )}{ \Theta_- ( X , z ) } , \; X \ge 0 . \] 
Since $ U ( x ) $ is small at infinity in $ x $, and $ \Theta ( x ) $ is not, one can expect that $ m_X $ approaches $ m $ as $ X \to \infty $. A precise assertion is the content of the following

\begin{theorem}\la{mN}
\[  m_X\displaystyle{\mathop{\Longrightarrow}_{ X \to \infty } } m  \] uniformly on compacts in $ \C \setminus \R $.
\end{theorem}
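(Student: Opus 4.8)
The plan is to use the nesting circles picture. For each $X>0$ that is not interior to a singular interval, the image under the M\"obius map $w\mapsto (\Phi_+(X,z)w+\Phi_-(X,z))/(\Theta_+(X,z)w+\Theta_-(X,z))$ (or rather the description of those $m$ for which the corresponding solution $\Phi-m\Theta$ has finite $H$-norm on $(0,X)$) of the real line is a circle $C_X(z)$ in $\C_+$, and the closed disks $D_X(z)$ bounded by these circles are nested and decreasing in $X$: $D_{X'}(z)\subset D_X(z)$ for $X'>X$. This is the classical Weyl construction transported to canonical systems, and it follows from the identity \eqref{identM} applied on $(0,X)$, which shows that $\frac1i(M^*(X,z)JM(X,z)-J)\ge 0$ for $z\in\C_+$ (this is \eqref{Jcontractive}), so that the quadratic form controlling $\int_0^X\langle\cH(\Phi-m\Theta),(\Phi-m\Theta)\rangle$ defines a genuine disk. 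First I would set up this disk $D_X(z)$ precisely, note that $m_X(z)$ (the center being related to, and the point $m=m_X$ arising as the image of the real parameter giving the boundary condition $F_-(X)=0$) lies on $\partial D_X(z)=C_X(z)$, and that $m(z)$ — the coefficient in \eqref{mthetaphi} of the unique $H$-solution — lies in $\bigcap_X D_X(z)$ since $U\in H$ means $\int_0^\infty\langle\cH U,U\rangle<\infty$, hence the truncated integrals are bounded.

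Next I would show the radii $r_X(z)$ of $D_X(z)$ tend to $0$ as $X\to\infty$. The standard formula gives $r_X(z)=\bigl(2\,\Im z\int_0^X\langle\cH\Theta(\cdot,z),\Theta(\cdot,z)\rangle\bigr)^{-1}$ (up to the obvious normalization), so $r_X(z)\to 0$ is equivalent to $\int_0^\infty\langle\cH\Theta,\Theta\rangle=+\infty$, i.e.\ $\Theta(\cdot,z)\notin H$ for $z\notin\R$. But that is exactly the fact established just before the statement: if $\Theta(\cdot,z)\in H$ it would be an eigenfunction of the selfadjoint operator $D$ at a nonreal eigenvalue, which is impossible. (One must also invoke the compatibility condition (L) so that $D$ really is selfadjoint and the $H$-norm controls $\int\langle\cH\cdot,\cdot\rangle$ from below in the relevant component; this is already in hand from Theorem \ref{operatorLinf}.) Since both $m_X(z)$ and $m(z)$ lie in $D_X(z)$ and $\operatorname{diam}D_X(z)=2r_X(z)\to 0$, we get $m_X(z)\to m(z)$ pointwise on $\C\setminus\R$.

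Finally, for the uniformity I would note that $m_X(z)=\Phi_-(X,z)/\Theta_-(X,z)$ is, for each $X$ not interior to a singular interval, a meromorphic (in fact Herglotz, by exercise \ref{columnrow}) function of $z$; on $\C_+$ each $m_X$ maps into the fixed disk $D_{X_0}(z)$ for any fixed small $X_0$, but more simply each $m_X$ with $X\ge X_0$ maps $\C_+$ into $\C_+$ and, on any compact $K\subset\C_+$, into the bounded set $\bigcup_{z\in K}D_{X_0}(z)$, which is bounded away from $\R$ and bounded above. Hence $\{m_X\}_{X\ge X_0}$ is a normal family on $\C_+$ (and likewise on $\C_-$, using $m_X(\bar z)=\overline{m_X(z)}$), so from pointwise convergence we get locally uniform convergence. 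The main obstacle is really just assembling the nesting-circles bookkeeping cleanly — writing the disk $D_X(z)$ correctly, checking the monotonicity from \eqref{identM}, and identifying that both $m_X(z)$ and the limit $m(z)$ sit inside it; once $r_X(z)\to 0$ is pinned to $\Theta(\cdot,z)\notin H$ (which is already proved), the convergence and its uniformity follow formally.
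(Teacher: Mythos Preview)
Your proposal is correct and follows essentially the same nesting-circles strategy as the paper: set up the Weyl disks $D_X(z)$ via the $J$-contractivity of the monodromy matrix (from \eqref{identM}), observe that $m_X(z)$ lies on $\partial D_X(z)$ and $m(z)$ lies in $\bigcap_X D_X(z)$, and show the diameters shrink to zero because $\Theta(\cdot,z)\notin H$.

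Two small points of divergence are worth noting. First, your placement of $m(z)$ in the intersection is the direct one (the defining integral $\int_0^\infty\langle\cH U,U\rangle$ is finite, hence all truncations are), whereas the paper instead tracks two boundary points $-\overline{\Phi_\pm/\Theta_\pm}$ to the common limit $m_*(z)$ and then matches $m(z)=-\overline{m_*(z)}$ by comparing asymptotics of $U$ and $\Theta$; your route is slightly cleaner. Second, for uniformity the paper computes the diameter explicitly as $\bigl(\Im z\int_0^X\Theta^*\cH\Theta\bigr)^{-1}$ and observes this tends to zero locally uniformly in $z$ by continuity of $\Theta$ in $z$, whereas you invoke a normal-families argument from the Herglotz property and the uniform containment $m_X(K)\subset\bigcup_{z\in K}D_{X_0}(z)$. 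Both work; the paper's explicit diameter formula is marginally more self-contained, while your normal-families observation is a standard and perfectly adequate shortcut.
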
 

\begin{proof} Let us consider the fractional linear map 
\[ G_X \colon w \mapsto  \frac{\overline{ \Phi_- ( X , z ) } w - \overline{ \Phi_+ ( X , z ) }}{ - \overline{ \Theta_- ( X , z ) } w + \overline{ \Theta_+ ( X , z ) }  } . \] 
The matrix of this map, 
\[ \begin{pmatrix}  \overline{ \Phi_- ( X , z ) } & - \overline{ \Phi_+ ( X , z ) } \cr - \overline{ \Theta_- ( X , z ) } & \overline{ \Theta_+ ( X , z ) }  \end{pmatrix} , \] 
to be denoted $ G_X $ as well, coincides with the complex conjugate of $ {  M ( X ) }^{ -1 } $, $ M ( X ) $ being the monodromy matrix of the system under consideration for the interval $ ( 0 , X ) $. The matrix $ G_X $ is $J $-contractive by lemma \ref{columnrow} since $ M( X ) $ is. It follows that the image of $ \C_+ $ under the map $ G_X $ is a disc in $ \C_+ $  (see exercise \ref{JcontrC} below) which we denote by $ D_X $. We claim that the discs $ D_X $ are nesting, $ D_X \subset D_{ X^\prime }$ for $ X > X^\prime $. Indeed, the multiplicative property of monodromy matrices implies that $ G_X = G_{ X^\prime }\Omega $ where $\Omega $ is the inverse complex conjugate of the monodromy matrix for the interval $ ( X^\prime , X ) $. Then $ \Omega $ is $ J $-contractive, hence the corresponding fractional linear map brings $ \C_+ $ into itself. It follows that $ D_X  = G_{ X^\prime } ( \textrm{a subset in } \C_+ ) \subset D_{ X^\prime } $.

The point $ - \overline {\Phi_- ( X , z ) / \Theta_- ( X , z ) } $ lies on the boundary of the disc $ D_X $, for it coincides with the image of infinity under $ G_X $. Let us show that the radii of $ D_X $ converge to zero locally uniformly in $ z \in \C \setminus \R $, and $ \cap_{ X > 0 } D_X = \{ - \overline{m ( z )} \} $. The theorem will be proven then. The diameter of $ D_X $ equals to the distance between the image of infinity and that of the point $ w = \Re \( \Theta_+ ( X, z )/ \Theta_- ( X , z ) \) $, for the line $ \Re w = \Re \( \Theta_+ ( X , z ) / \Theta_- ( X , z ) \) $ is orthogonal to the real axis hence $ G_X $ sends it into a line containing a diameter of $ D_X $. Thus, for any $ z \in \C_+ $ 
\bequnan \operatorname{diam} D_X = \left| - \frac{ \Phi_- ( X , z)}{ \Theta_- ( X , z ) } - \frac{ \Phi_- ( X , z ) \Re \frac{ \Theta_+ ( X , z ) }{ \Theta_- ( X , z ) } -  \Phi_+ ( X , z ) }{ - \Theta_- ( X , z ) \Re \frac{ \Theta_+ ( X , z ) }{ \Theta_- ( X , z ) } + \Theta_+ ( X , z ) } \right| = \\ \left| \frac 1{ \Theta_-^2 ( X , z ) \Im  \frac{ \Theta_+ ( X , z ) }{ \Theta_- ( X , z ) } } \right| =  \frac 1{\Im \( \Theta_+ ( X , z ) \overline{\Theta_- ( X , z ) } \) }} \displaystyle{ \mathop{=}^{(\ref{11})} \frac 1{ \Im z \int_0^X \Theta^* ( t , z ) \cH ( t ) \Theta ( t , z ) \d t } . \eequnan 
The denominator in the r. h. s. tends to infinity as $ X \to \infty $, for $ \Theta ( \cdot , z ) \notin H $, and it is easy to see from the continuity of $ \Theta $ in $ z $ that it tends to infinity locally uniformly in $ z $. Thus the diameter of $ D_X $ converges to zero, $ \cap_{ X > 0 } D_X $ consists of a single point, say, $ m_* ( z ) \in \C_+ $, and 
\[ - \overline { \( \frac{\Phi_- ( X , z )}{ \Theta_- ( X , z ) } \) } \longrightarrow m_* ( z ) . \]   
In a similar way, $ - \overline{ \Phi_+ ( X )/ \Theta_+ ( X ) } $ lies on the boundary of $ D_X $ (it is the image of $ w = 0 $ under $ G_X $) and therefore converges to $ m_* ( z ) $ as well. Combining these together, we get
\[ \Phi ( X ) = \Theta ( X ) ( - \overline{m_* ( z ) } + o ( 1 ) ) , \; X \to + \infty . \]
It follows that \[ U  = \Phi - m ( z ) \Theta  = \Theta ( - \overline{m_* ( z ) } - m ( z ) + o ( 1 ) ) . \] Since $ U $ belongs to $ H $ and $ \Theta $ doesn't, $ m ( z ) =  - \overline{m_* ( z ) } $, as required. \end{proof}

\begin{exercise}\la{JcontrC}
Let $ S $ be a $ 2 \times 2 $-matrix, $ \det S = 1 $. Then the following are equivalent,

(i) $ \frac 1i \( S^* J S - J \) \ge 0 $,

(ii) The fractional linear map corresponding to $ S $ maps $ \C_+ $ into itself.
\end{exercise}

\medskip

\textit{Hint.} {\small (ii) $ \Longrightarrow $ (i). Any fractional linear map of $ \C_+ $ into itself is a composition of maps $ z \mapsto z + a $, $ a \in \C_+ $, and automorphisms of $ \C_+ $ (fractional linear maps defined by elements of $ SL ( 2 , \R ) $). (i) is trivially satisifed for the latter (the equality in fact holds) and  is easily verified for the former.

(i) $ \Longrightarrow $ (ii). An easy calculation shows that 
\[ \Im S ( z ) = \frac 1{2i} Q ( z ) \llangle S^* J S \begin{pmatrix} z \cr 1 \end{pmatrix} ,  \begin{pmatrix} z \cr 1 \end{pmatrix} \rrangle \] 
with a $ Q ( z ) \ge 0 $. This implies that the real line is transformed by $ S $ into a cricle or line lying in $ \overline{\C_+} $.} 

\medskip

According to exercise \ref{columnrow}, $ m_X $ are Herglotz functions for each $ X $, hence so is $ m $. The function $ m $ is called the \textit{Weyl-Titchmarsh function} for the canonical system $ ( \cH , \infty ) $. Let $ \mu_X $, $ \mu $ be the measures in the integral representation of the functions $ m_X $, $ m $, respectively, that is,
\[ \frac{ \Phi_- ( X , \lambda ) }{ \Theta_- ( X , \lambda ) } = \frac 1\pi \int \frac{ d \mu_X ( t ) }{ t - \lambda } + \textrm{ a linear function} , \] 
and similar for $ \mu $.  For a compactly supported $ f \in H $ let us define the entire function $ \cU f $ by the formula (\ref{spmap}). The set of such $ f $ is obviously dense in $ H $.

\begin{theorem}\la{sptheoremsing}
The mapping $ \cU $ defines an isomorphism between $ H $ and $ L^2 ( \R  , d \mu ) $, that is, $ \cU f \in  L^2 ( \R  , d \mu ) $, 
 \be\la{UN} \len f \rin_H = \len \cU f \rin_{ L^2 ( \R  , d \mu ) } \ee
for any compactly supported $ f \in H $, and $ \operatorname{Ran} \cU  $ is dense in $ L^2 ( \R  , d \mu ) $. 

Let $ A $ be the operator of multiplication by the independent variable in  $ L^2 ( \R  , d \mu ) $. Then 
\[  D = \( \overline \cU \)^* A \overline \cU . \]
\end{theorem}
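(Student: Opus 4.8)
The plan is to reduce everything to the finite‑interval results via truncation. First I would fix a compactly supported $f\in H$, say supported on some $(0,X_0)$, and observe that for admissible $X>X_0$ it lies in the Hilbert space $H_X$ of the truncated system $(\cH,X)$ with the same norm, while $(\cU f)(w)=\pi^{-1/2}\langle f,\Theta_{\overline w}\rangle_{H_X}$ is precisely the Fourier transform of the truncated system. Theorem \ref{spmaptheo} then gives $\cU f\in\mathcal H(E_X)$ with $E_X=\Theta_+(X,\cdot)+i\Theta_-(X,\cdot)$, and Proposition \ref{Dalpha} (with the boundary condition $f_-(X)=0$) yields an isometric inclusion $\mathcal H(E_X)\subset L^2(\R,d\mu_X)$, $\mu_X$ the spectral measure of $D_{\pi/2}$ for that system. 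Using $\det M(X,\cdot)\equiv 1$ together with the reproducing‑kernel formula (\ref{reprokernel}) I would identify $\mu_X$ with the measure in the Herglotz representation of $m_X=\Phi_-(X,\cdot)/\Theta_-(X,\cdot)$ (the atom at a zero $\lambda_k$ of $\Theta_-(X,\cdot)$ being $\len\Theta(\cdot,\lambda_k)\rin_{H_X}^{-2}$, which reads off both as the residue data of $m_X$ and as the point mass of the spectral measure). Hence $\len f\rin_H^2=\int|\cU f|^2\,d\mu_X$ for every admissible $X>X_0$; and since the de Branges chain $\{\mathcal H(E_X)\}$ is increasing and isometrically embedded, this integral does not depend on $X$.

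Second, I would pass to the limit $X\to\infty$. By Theorem \ref{mN}, $m_X\Rightarrow m$ on compacts of $\C\setminus\R$; by the continuity of the Herglotz (Nevanlinna) correspondence this forces $\mu_X\to\mu$ vaguely on $\R$, and from $\operatorname{Im} m_X(i)\to\operatorname{Im} m(i)<\infty$ the weighted masses $\int(1+t^2)^{-1}d\mu_X$ stay bounded. Lower semicontinuity of $g\mapsto\int g\,d\nu$ under vague convergence then gives $\int|\cU f|^2\,d\mu\le\liminf_X\int|\cU f|^2\,d\mu_X=\len f\rin_H^2$, so $\cU$ maps compactly supported vectors into $L^2(\R,d\mu)$ and extends to a contraction $\overline\cU\colon H\to L^2(\R,d\mu)$.

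Third, to upgrade the contraction to an isometry and to diagonalize $D$ I would exploit the selfadjointness of $D$ (Theorem \ref{operatorLinf}). An integration by parts as in (\ref{byparts}), legitimate for compactly supported $f\in\cD$ because the boundary term at $0$ dies by $f_-(0)=0$ and there is none at infinity, gives $\cU(Df)=w\,\cU f$, i.e. $\cU$ intertwines $D$ with multiplication by the independent variable on the compactly supported vectors of $\cD$. Since the Weyl solution $U(\cdot,z)=\Phi(\cdot,z)-m(z)\Theta(\cdot,z)$ lies in $H$ for $z\notin\R$, its Green's function represents $(D-z)^{-1}$, and for compactly supported $f$ a direct computation should identify $\langle(D-z)^{-1}f,f\rangle_H$ with the Cauchy transform $\int(t-z)^{-1}|\cU f(t)|^2\,d\mu(t)$ — the difference of the two sides being an entire function (the poles of $m$ on $\R$ are cancelled by the atoms of $\mu$), bounded off $\R$ and vanishing at infinity, hence $0$ by Liouville. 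Comparing with the spectral theorem for $D$, which writes the same quantity as $\int(t-z)^{-1}d\sigma_f(t)$ with $\sigma_f=\langle E_D(\cdot)f,f\rangle$ of total mass $\len f\rin_H^2$, uniqueness of the representing measure forces $d\sigma_f=|\cU f|^2\,d\mu$; in particular $\len f\rin_H^2=\int|\cU f|^2\,d\mu$, so $\overline\cU$ is an isometry. For denseness of $\operatorname{Ran}\overline\cU$ I would combine the intertwining ($\overline\cU(D-z)^{-1}f=(t-z)^{-1}\overline\cU f$) with uniqueness of Cauchy transforms: a $\psi\perp\operatorname{Ran}\overline\cU$ satisfies $\int(t-z)^{-1}\overline\cU f\,\overline\psi\,d\mu=0$ for all compactly supported $f$ and all $z\notin\R$, hence $\overline\cU f\cdot\overline\psi=0$ $\mu$‑a.e.; and $\bigcup_f\{\overline\cU f\neq0\}$ is $\mu$‑conull, since otherwise a whole spectral subspace of multiplication by $t$ would be missing from the image, contradicting $d\sigma_f=|\cU f|^2\,d\mu$ and the completeness of the spectral resolution of $D$. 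Then $\overline\cU$ is unitary onto $L^2(\R,d\mu)$ and the intertwining, extended by continuity to $\cD$, reads $D=(\overline\cU)^*A\,\overline\cU$.

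The main obstacle is the passage $X\to\infty$ in steps two and three: the soft argument only delivers a contraction, and recovering the full norm and the surjectivity genuinely requires the selfadjointness of $D$ — equivalently, that no spectral mass is lost in the limit — which I expect is cleanest to encode through the resolvent/Green's‑function identity above; verifying that identity (the Liouville argument and the matching of poles and atoms) is where the real work lies.
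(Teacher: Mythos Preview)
Your first two steps --- truncate, use the finite-interval spectral theorem to get $\|f\|_H^2 = \int |\cU f|^2\,d\mu_X$, then pass to a contraction via vague convergence $\mu_X \to \mu$ --- match the paper exactly.

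At the third step you diverge, and the paper's route is both simpler and more complete than yours. Rather than appealing to the Green's function and a Liouville argument (which you do not carry out, and whose key claim --- that the difference of $\langle (D-z)^{-1}f,f\rangle_H$ and the Cauchy transform of $|\cU f|^2\,d\mu$ extends to an \emph{entire} function of $z$ --- is not justified as stated), the paper upgrades the contraction to an isometry by a tightness argument. For compactly supported $f \in \cD$ set $g = (D+i)f$; then $(\cU g)(\lambda)=(\lambda+i)(\cU f)(\lambda)$, so
\[
\int_{|\lambda|>A}|\cU f|^2\,d\mu_X = \int_{|\lambda|>A}\frac{|\cU g|^2}{1+\lambda^2}\,d\mu_X \le \frac{\|g\|_H^2}{A^2},
\]
uniformly in $X$. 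This forces $\int|\cU f|^2\,d\mu_X \to \int|\cU f|^2\,d\mu$, hence equality of norms on the dense (Remark~\ref{essentialself}) set of compactly supported $f\in\cD$. Your resolvent route is the classical Weyl--Titchmarsh method and can be made rigorous, but it needs an explicit Green's-function computation followed by Stieltjes inversion, not a Liouville shortcut; the paper's two-line tightness bound bypasses all of that.

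Your density argument has a genuine gap. You correctly reduce to showing that the set $\cM = \{t: (\cU f)(t)=0 \text{ for every compactly supported }f\}$ is $\mu$-null, but your justification (``otherwise a whole spectral subspace of $A$ would be missing, contradicting completeness of the spectral resolution of $D$'') is circular: from $d\sigma_f = |\cU f|^2\,d\mu$ you only learn $E_D(\cM)=0$, i.e.\ $\cM$ is null for the spectral measure \emph{of $D$}. That says nothing about $\mu(\cM)$ unless you already know $\mu$ is equivalent to the spectral measure of $D$, which is precisely what is being proved. The paper closes this concretely: since the $\cU f$ are entire and not all zero, $\cM$ is discrete; hence if $\mu(\cM)>0$ there is an atom $z_0\in\cM$ with $\mu(\{z_0\})>0$. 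But for admissible $N$ and $f=\chi_N\Theta_{z_0}$ one has $(\cU f)(z_0)=\pi^{-1/2}\int_0^N\langle\cH\Theta_{z_0},\Theta_{z_0}\rangle\,dx>0$ by condition (L), contradicting $z_0\in\cM$.
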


\begin{proof} Fix  an arbitrary compactly supported $ f \in H $. Then for all $ X $ greater than the right end of the support of $ f $ we have
\bequnan \len \cU f \rin^2_{ L^2 ( \R  , d \mu_X ) } = - \pi \sum_{ \Theta_- ( X, t ) = 0 } \frac{ \Phi_- ( X,  t ) }{ \dot{\Theta}_- ( X ,  t ) } \left| ( \cU f ) ( t ) \right|^2 = \\ - \sum \frac 1{ \Theta_+ ( X, t ) \dot{\Theta}_- ( X, t ) } \left| \llangle f , \Theta_t \rrangle_H  \right|^2 = \sum \frac  {\left| \llangle f ,  \Theta_t \rrangle_H  \right|^2}{ \len  \Theta_t \rin^2 } = \dots \eequnan
Here the first equality is the definition of the l.h.s., the second one is implied by the monodromy matrix $ M ( X , t ) $ having unit determinant, the third one holds on account of the identity $ - \Theta_+ ( X , t ) \dot{\Theta}_- ( X,  t )  =  \len \Theta ( \cdot , t ) \rin^2_{ L^2 ( ( 0 , X) ; \cH ( x ) )}  $ valid whenever $ \Theta_- ( X, t ) = 0 $. The latter identity is immediate from  (\ref{identM}) upon differentiating the upper leftmost entry in $ z $ and letting $ z = \lambda = t $  (we have already used it when solving the inverse problem for a measure supported on finitely many points). If the operator of the canonical system on $ ( 0 , X ) $ with boundary condition $ f_- ( 0 ) = f_- ( X ) =0 $ is selfadjoint (that is, the compatibility condition (R) is satisfied at the point $ x = X $), then the set $ \{ \Theta_t \}_{ \Theta_- ( t ) = 0 } $ is the orthogonal basis of eigenfunctions of this operator (see proposition \ref{Dalpha}), hence the equality can be continued,
\[ \dots =  \len f \rin_H^2 . \]

Notice that the condition (R) is satisfied for a sequence of $ X $ monotone increasing to infinity on account of the assumption that the ray $ x > a $, $ a > 0 $, cannot be a singular interval. From now on let us assume that $ X $ belongs to such a sequence. We have just shown that the restriction of $ \cU $ to the subspace in $ H $ of functions supported on $ ( 0 , X ) $ is an isomorphism onto $ L^2 ( \R, d\mu_X ) $. Our goal is to pass to the limit $ X \to \infty $ in this assertion. First, $ d \mu_X \displaystyle{ \mathop{\longrightarrow}^{*-weak}} d \mu $ by theorem \ref{mN}. This and the isomorphism just established imply that $ \cU f \in L^2 ( \R , d \mu ) $ for any compactly supported $ f \in H $, and
\[ \len \cU f \rin_{  L^2 ( \R, d\mu ) } \le \limsup_{ X \to \infty }   \len \cU f \rin_{ L^2 ( \R  , d \mu_X ) } =  \len f \rin_H . \]
Let us assume additionally that $ f \in \cD $, $ \cD $ being the domain of the operator $ D $, and let $ g = ( D + i ) f $. Then 
\[ \sup_X \int_{ |\lambda| > A } \left| \cU f \right|^2 \d \mu_X \mathop{\longrightarrow}_{ A \to \infty } 0 . \]
Indeed, $ g $ is compactly supported together with $ f $ and $ (\cU g ) ( \lambda ) = ( \lambda + i ) ( \cU f ) ( \lambda ) $ which is easily verified by straightforward computation, hence the integral in the l. h. s.  equals to 
\[ \int_{ |\lambda| > A } \frac{\left| ( \cU g )( \lambda ) \right|^2}{ 1 + \lambda^2 } \d \mu_X ( \lambda ) \le \frac { \len \cU g \rin^2_{ L^2 ( \R , d \mu_X ) } }{ A^2 } . \]
From this it follows that $ \len \cU f \rin_{ L^2 ( \R , d \mu_X ) } \to \len \cU f \rin_{ L^2 ( \R , d \mu ) } $, and (\ref{UN}) is established for all $ f \in \wt \cD $, $ \wt \cD $ being the set of compactly supported vectors in $ \cD $. In the proof of theorem \ref{operatorLinf} we have shown that $ \wt \cD $ is dense in $ H $, hence (\ref{UN}) is proved.

 Integrating by parts we find that $ \cU D f = A \cU f $ for any $ f \in \wt \cD $. By remark \ref{essentialself} the set $ \wt \cD $ is an essential domain of the operator $ D $, hence this identity admits closure and holds true of any $ f \in \cD $ if we replace $ \cU $ with its closure. 

It remains to show that $ \operatorname{Ran} \cU  $ is dense in $ L^2 ( \R  , d \mu ) $. For $ z_0 \notin \R $ one can rewrite this as follows,
\[  \( A - z_0 \)^{-1 } \cU h = \cU \( D - z_0 \)^{-1 } h , \]
\[ h \colon = ( D - z_0 ) f . \] 
Essential selfadjointness of $ D $ on $ \wt \cD $ means that the set $ ( D - z_0 ) \wt\cD $ is dense in $ H $ for $ z_0 \notin \R $. Thus the last displayed equality holds for all $ h $ from a dense set. It follows that $ \wt H = \overline{ \operatorname{Ran} \cU } $ is an invariant subspace of  $ \( A - z_0 \)^{ -1 } $, and hence of the operator $ A $ itself. In turn, this implies that $ \wt H $ is the subspace of functions vanishing outside a subset of $ \R $. Assume that the orthogonal complement of $ \wt H $ is nontrivial, then $ \wt H $ is the subspace of functions vanishing $ \mu $-a.e. on a set, $ \cM $, of positive $ \mu $-measure. The set $ \cM $ is necessarily discrete since there are nonzero entire functions in $ \operatorname{Ran} \cU $, and therefore $ \mu \( \{ z_0 \} \) \ne 0 $ for some $ z_0 \in \cM $. Now, let $ N > 0 $ be a point which is not interior to a singular interval, and let $ f = \chi_N \Theta_{ z_0 } $, $ \chi_N $ being the indicator function of the interval $ ( 0 , N ) $. Then, $ f \in \cH $, and $ ( \cU f ) ( z _0 ) = \int_0^N \langle \cH  \Theta_{ z_0 } ,  \Theta_{ z_0 } \rangle \d x > 0 $ on account of the compatibility condition (L).  Thus, the assumption that $ \cM $ has positive $ \mu $-measure leads to a contradiction, and $ \wt H = L^2 ( \R  , d \mu ) $. 
\end{proof}

Neither the compatibility condition (L), nor the assumption of absence of a singular semiaxis are essential for the definition of the Weyl-Titchmarsh function, they were only imposed to be able to formulate the result in the form of a spectral theorem for a selfadjoint operator. 

\begin{lemma}\la{generalm} Suppose that  %
\be\la{nondegenerateanew}  \cH ( t )\not\equiv \( \! \begin{array}{cc} 0 & 0 \cr 0 & 1 \end{array} \! \) \ee
on $ ( 0 , \infty ) $. Then for any $ \lambda \notin \R $ there exists a unique complex $ m $ such that $ \Phi_\lambda - m \Theta_\lambda \in H $. 
\end{lemma}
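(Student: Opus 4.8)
\emph{Plan.} I would argue by a dichotomy on the structure of $\cH$ near infinity. Suppose first that no semiaxis $(b,\infty)$ is $\cH$-indivisible. If the compatibility condition (L) holds, this is precisely the setting of Theorem \ref{mN} (and of the discussion preceding it, which establishes exactly the existence and uniqueness of $m$), so there is nothing more to do. If (L) fails, then $\cH(x)=\langle\cdot,e_0\rangle e_0$ with $e_0=(0,1)^T$ for a.e.\ $x$ in a maximal interval $(0,\epsilon_0)$, where $0<\epsilon_0<\infty$ — finiteness precisely because $(0,\infty)$ is not $\cH$-indivisible. On $(0,\epsilon_0)$ one computes $\Theta(x,\lambda)=(1,0)^T$ and $\Phi(x,\lambda)=(\lambda x,1)^T$, so the monodromy matrix over $(0,\epsilon_0)$ is $\left(\begin{smallmatrix}1&\lambda\epsilon_0\\0&1\end{smallmatrix}\right)$, and the shifted Hamiltonian $\widehat\cH(x):=\cH(x+\epsilon_0)$ satisfies (L) and still has no indivisible semiaxis. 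Theorem \ref{mN} then supplies a unique $\widehat m(\lambda)$ with $\widehat\Phi_\lambda-\widehat m\,\widehat\Theta_\lambda$ in the Hilbert space $\widehat H$ of $\widehat\cH$, and by the chain rule $\Theta(x,\lambda)=\widehat\Theta(x-\epsilon_0,\lambda)$, $\Phi(x,\lambda)=\lambda\epsilon_0\widehat\Theta(x-\epsilon_0,\lambda)+\widehat\Phi(x-\epsilon_0,\lambda)$ for $x\ge\epsilon_0$. Since on $(0,\epsilon_0)$ the vector $\Phi_\lambda-m\Theta_\lambda$ equals $(\lambda x-m,1)^T$, whose $\cH$-weighted square is identically $1$ and whose $e_0$-component is constant, membership in $H$ imposes no condition there and reduces to $\widehat\Phi_\lambda-(m-\lambda\epsilon_0)\widehat\Theta_\lambda\in\widehat H$, i.e.\ to the single value $m=\lambda\epsilon_0+\widehat m(\lambda)$.

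Next I would handle the case of an $\cH$-indivisible semiaxis $J=(b_0,\infty)$ with $\cH(x)=\langle\cdot,e\rangle e$, $\|e\|=1$. On $J$ every solution $Y$ of (\ref{can}) has $\langle Y(x),e\rangle$ constant, equal to $\langle Y(b_0),e\rangle$, and $\langle\cH(x)Y(x),Y(x)\rangle=|\langle Y(b_0),e\rangle|^2$, so $Y\in L^2((0,\infty);\cH)$ forces $\langle Y(b_0),e\rangle=0$ (as $|J|=\infty$), while over the finite interval $(0,b_0)$ there is no obstruction (a solution is automatically constant on indivisible subintervals and the weighted integral over a finite interval is finite). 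Hence $\Phi_\lambda-m\Theta_\lambda\in H$ iff $m\langle\Theta(b_0,\lambda),e\rangle=\langle\Phi(b_0,\lambda),e\rangle$, and the whole point is that $\langle\Theta(b_0,\lambda),e\rangle\ne0$ for $\lambda\notin\R$. This quantity equals $\langle\Theta(b',\lambda),e\rangle$ for any $b'\ge b_0$ (constancy on $J$), and $b'$ can be chosen with $E_{b'}$ Hermite--Biehler: by Proposition \ref{HB} the only obstruction is $\cH\equiv\langle\cdot,e_0\rangle e_0$ a.e.\ on $(0,b')$, which (\ref{nondegenerateanew}) forbids (taking $b'$ past $b_0$ when $e\ne e_0$; already $b'=b_0$ works when $e=e_0$, since then $\cH$ must differ from $\langle\cdot,e_0\rangle e_0$ somewhere in $(0,b_0)$). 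With $e=(\cos\psi,\sin\psi)$ one has $\langle\Theta(b',\cdot),e\rangle=\tfrac12 e^{-i\psi}\bigl(E_{b'}+e^{2i\psi}E_{b'}^*\bigr)$, and a combination $E+cE^*$ of an HB function with $|c|=1$ has only real zeros (on $\C_+$, $|cE^*|=|E^*|<|E|$; on $\C_-$, $|cE^*|>|E|$; so it cannot vanish off $\R$). Thus $m=\langle\Phi(b_0,\lambda),e\rangle/\langle\Theta(b_0,\lambda),e\rangle$ is the required unique $m$.

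Finally, uniqueness also follows uniformly once one knows $\Theta_\lambda\notin H$ for $\lambda\notin\R$: a solution of (\ref{can}) is automatically constant on $\cH$-indivisible intervals, so $\Theta_\lambda\in H$ would mean $\Theta_\lambda\in L^2((0,\infty);\cH)$, which is false in each case above — because $\langle\Theta(b_0,\lambda),e\rangle\ne0$ in the indivisible-semiaxis case, and because $\Theta_\lambda$ coincides off $(0,\epsilon_0)$ with the shifted $\widehat\Theta_\lambda\notin\widehat H$ otherwise — and then $\Phi_\lambda-m_1\Theta_\lambda,\ \Phi_\lambda-m_2\Theta_\lambda\in H$ would give $(m_1-m_2)\Theta_\lambda\in H$. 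I expect the only genuinely delicate point to be the reduction in the non-indivisible case when (L) fails: one must check that removing the initial indivisible interval preserves both standing hypotheses of Theorem \ref{mN} and correctly carries the affine shift $m\mapsto m-\lambda\epsilon_0$. Everything else is the chain rule together with the Hermite--Biehler input already available from Proposition \ref{HB}.
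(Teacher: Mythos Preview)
Your proof is correct and follows the same two-case structure as the paper: reduce to the already-handled setup (selfadjoint operator exists) when no semiaxis is indivisible, and in the indivisible-semiaxis case reduce membership in $H$ to the single equation $\langle\Theta(b_0,\lambda),e\rangle\,m=\langle\Phi(b_0,\lambda),e\rangle$, then show the coefficient of $m$ does not vanish for $\lambda\notin\R$. Your treatment of the first case is in fact more explicit than the paper's: where the paper just says ``cut off a singular interval $(0,A')$ \dots\ and the argument proceeds verbatim'', you track the chain-rule relation and obtain the concrete formula $m=\lambda\epsilon_0+\widehat m(\lambda)$.

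The one genuine difference is in how you prove $\langle\Theta(b_0,\lambda),e\rangle\ne0$. The paper argues directly: if it vanished, the by-parts identity $\lambda\int_0^{b_0}\langle\cH\Theta,\Theta\rangle=\overline\lambda\int_0^{b_0}\langle\cH\Theta,\Theta\rangle$ forces $\cH\Theta\equiv0$ on $(0,b_0)$, whence $\Theta\equiv(1,0)^T$ there, which pins $e=(0,1)^T$ and makes $\cH$ equal the forbidden matrix on all of $(0,\infty)$. You instead observe that $\langle\Theta(b',\cdot),e\rangle=\tfrac12 e^{-i\psi}(E_{b'}+e^{2i\psi}E_{b'}^*)$ and invoke Proposition~\ref{HB} to get $E_{b'}$ Hermite--Biehler, so this combination has only real zeros. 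Both arguments are short; yours recycles an already-proved proposition, the paper's is self-contained and is essentially the computation behind Proposition~\ref{HB} specialized to the situation at hand.
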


\begin{proof} If $ ( A , + \infty ) $ is not a singular interval for any $ A \ge 0 $ then, by cutting off a singular interval $ ( 0 , A^\prime ) $, $ A^\prime > 0 $, we obtain a canonical system on $ ( A^\prime , \infty ) $ satisfying a compatibility condition at $ x = A^\prime $ analogous to $ (L) $. By theorem \ref{operatorLinf} there exists a selfadjoint operator corresponding to this  canonical system and the boundary condition $ f_- ( A ) = 0 $, and the argument proceeds verbatim as in the beginning of this section.  

Assume now that $ ( A , + \infty ) $ is a singular interval for some $ A \ge 0 $, $ \cH ( x ) = \langle \cdot , e \rangle e $ for an $ e \in \R^2 $, $ \| e \| = 1 $. For any solution, $ Y $, of (\ref{can}) we then have (see (\ref{monodrsing})) for $ x \ge A $,
\[ Y ( x , \lambda ) = Y ( A , \lambda ) - \lambda \langle Y ( A , \lambda ) , e \rangle ( x - A ) J e . \]
It follows that $ Y \in H $ iff $ \langle Y ( A , \lambda ) , e \rangle = 0 $. Since $ \Theta $ and $ \Phi $ form a basis in the space of solutions for the system, a linear combination of them satisfies the latter condition. Then, $ \langle \Theta ( A , \lambda ) , e \rangle \ne 0 $, for otherwise 
\[ z \int_0^A \langle \cH \Theta , \Theta \rangle =\int_0^A \langle J \Theta^\prime , \Theta \rangle  = \left. \langle J \Theta , \Theta \rangle \right|_0^A + \int_0^A \langle \Theta , J \Theta^\prime \rangle = \overline z \int_0^A \langle \Theta , \cH \Theta \rangle , \] 
the boundary term at $ x = A $ vanishing on account of $ e $ being a vector with real components, so $ \langle \Theta , \cH \Theta \rangle = 0 $ and $ \cH \Theta = 0 $ on $ ( 0 , A ) $ which implies by the system (\ref{can}) that $ \Theta ( x ) = \begin{pmatrix} 1 \cr 0 \end{pmatrix} $ for all $ x \in [ 0 , A ] $. In turn, on considering $ x = A $ this gives that $ e = \begin{pmatrix} 0 \cr 1 \end{pmatrix} $. Combining these we conclude that the only possibility for $ \cH $ is the one prohibited by the assumption (\ref{nondegenerateanew}), hence $ \langle \Theta ( A , \lambda ) , e \rangle  \ne 0 $ indeed, and the linear combination with unit coefficent at $ \Phi_\lambda $ exists and is unique.
\end{proof}

According to this lemma, the Weyl-Titchmarsh function is defined for any Hamiltonian satisfying the condition (\ref{nondegenerateanew}). It is easy to see that theorem \ref{mN} holds for this function. This rather trivial remark is required in solution of the inverse problem in the singular case. The point is that the Hamiltonian in that solution is defined via some approximation procedure, which can well lead to Hamiltonians having a singular semiaxis, even if the approximating ones do not. Theorem \ref{sptheoremsing} can also be formulated without additional restrictions, but that would require messy dealing with extensions of non-densely defined operators, which we prefer to avoid.

\section{Inverse problem - singular case}\la{invsing}

In the case of a canonical system on the semiaxis ($ L = \infty $), there is no function $ E ( L , \lambda ) $, for $ \Phi $ and $ \Theta $ generally do not  have boundary values as $ x \to \infty $, and the corresponding inverse problem does not make sense. The inverse problem with respect to spectral measure of the operator $ \cD_\alpha $ described in section \ref{fsupp}, however, does. This problem will be solved, in a sense, in this section. In the case when the measure is supported on finitely many points it was solved in section \ref{fsupp}. The argument here uses that solution via approximation.

Let $ d \mu $ be a measure such that $ \int_\R \( 1 + t^2 \)^{ -1 } \d \mu ( t ) $ is finite and $ \mu ( \{ 0 \} ) = 0 $. The latter condition is technical and will be removed later on. Define $ \mu_N $, $ 1 \le N <\infty $, to be an arbitrary sequence of measures, each being supported on finitely many points, such that 
\[ \d \mu_N \displaystyle{ \mathop{\longrightarrow}^{*-weak}} \d \mu , \] and   
\be\la{esttails}  
\sup_N \int_{ |t| > s } \frac{ \d \mu_N ( t ) }{  t^2 }  \mathop{\longrightarrow}_{ s \to \infty } 0 , \ee \[ \mu_N ( \{ 0 \}) \ne 0 . \]

It easy to see that such a sequence exists. Let $ ( \cH_N , L_N ) $ be the canonical system obtained by applying theorem \ref{finmes} to the measure $ \d \mu_N $. Notice that $ L_N \to \infty $. Indeed, $ \mu_N ( \{ 0 \} )  = - 1/ \dot\Theta^-_N ( 0 ) \to \mu ( \{ 0 \} ) = 0 $, hence $ - \dot\Theta^-_N ( 0 ) \to + \infty $.  Since $ L_N \ge  - \dot\Theta^-_N ( 0 ) $ by the assertion of lemma \ref{trace}, $ L_N \to \infty $.

Define $ G_N ( x ) = \int_0^x \cH_N ( t ) \d t $ for $ 0 < x \le L_N $. For $ x > L_N $ define $ G_N ( x ) $ to be an arbitrary function so that $ G_N $ is continuous on $ [ 0 , + \infty ) $. Then the family $ G_N $ is compact in $ C ( 0 , A ) $ for any $ A > 0 $. Applying a  diagonal process we obtain that there exists a subsequence, $ G_N $, and a function, $ G $, such that $ G_N $ converges to $ G $ uniformly on any compact $ ( 0 , A ) $. Since $ G_N $ are monotone increasing and absolutely continuous, so is $ G $. Define $ \cH ( x ) =   G^\prime ( x ) $, $ x \ge 0 $. The function $ \cH $ is defined for a. e. $ x \ge 0 $, ans satisfies $ \cH ( x ) \ge 0 $, $ \operatorname{tr} \cH ( x) = 1 $ for a. e. $ x $.

Let $ M_N ( x , z ) $, $ M ( x , z ) = [ \Theta ( x , z ) , \Phi ( x , z ) ] $ be the monodromy matrices for the systems $ ( \cH_N , L_N ) $,  $ ( \cH , \infty ) $, resp. First, we would like to show that $ M_N ( x , z ) \longrightarrow M ( x , z ) $ for each $ z \in \C $ and $ x > 0 $. Just as we did in the regular case (see (\ref{inte}) and the argument following it) one can establish using the integral equation for $ M_N $ that for any $ z \in \C $ there exists a subsequence, $ N_k $, of $ N $'s and a continuous function $ \tilde{ M } ( \cdot , z ) $ such that $ M_{ N_k } ( \cdot , z ) \Longrightarrow \tilde M ( \cdot , z )  $ in $ C ( 0 , A ) $ for all $ A > 0 $. The same argument based on integration by parts as in the regular problem shows that $ \tilde M ( \cdot , z ) $ satisfies the same integral equation as $ M ( \cdot , z ) $ from whence they coincide. If follows that the limit does not depend on the choice of the subsequence, and  $ M_N ( x , z ) \longrightarrow  M ( x , z )  $ for all $ z \in \C $. 

Let us first assume that $ \cH \not\equiv \( \! \begin{array}{cc} 0 & 0 \cr 0 & 1 \end{array} \! \) $ on $ \R_+ $. Then $ m $,  the Weyl-Titchmarsh function for the system $ ( \cH , \infty ) $, is defined (see the remark at the end of section \ref{directsin}). Let
\[ m_N ( x , z ) = \frac{ \Phi^-_N ( x , z ) }{ \Theta^-_N ( x , z ) }, \; x \le L_N , \]
\[  m_N ( z ) = m_N ( L_N , z ) .  \]
We have for $ x < L_N $, $ z \in \C_+ $,
\bequnan & &  | m_N ( z ) - m ( z ) | \le \\ & & \!\!\! \begin{array}{ccccc}  \underbrace{ | m_N ( z ) - m_N ( x , z ) |  } & + & \underbrace{\left| m_N ( x , z ) - \frac{ \Phi^- ( x , z ) }{ \Theta^- ( x , z ) }\right|} & + & \underbrace {\left| \frac{ \Phi^- ( x , z ) }{ \Theta^- ( x , z ) } - m ( z ) \right| } \cr (I) & & (II) & & (III) \end{array} . \eequnan
Then
\[ (I) \le \frac 1{ \Im \( \Theta_N^+ ( x,z ) \overline{\Theta_N^-  ( x, z )}  \) }, \; (III) \le \frac 1{ \Im \( \Theta^+ ( x, z ) \overline{\Theta^-  ( x, z) } \) } \]
by the nesting circles analysis in the proof of theorem \ref{mN}. The convergence of $ M_N ( x ) $ to $ M ( x ) $ implies that $ ( II) \to 0 $ as $ N \to \infty $,  and 
\[ \Im \( \Theta_N^+ ( x ) \overline{\Theta_N^-  ( x)}  \) \mathop{\longrightarrow}_{ N \to \infty } \Im \( \Theta^+ ( x ) \overline{\Theta^-  ( x) } \)  \mathop{=}^{(\ref{11})}  \Im z \int_0^x \Theta^*  \cH \Theta . \]
Combinig these estimates, on passing to the limit $ N \to \infty $ we find that for any $ z\in \C_+ $ and  $ x > 0 $
\[ \limsup_{ N \to \infty }  | m_N ( z ) - m ( z ) | \le \frac 2{ \Im z \int_0^x \Theta^*  \cH \Theta} \]
The r. h. s. vanishes as $ x \to \infty $ because $  \Theta ( \cdot , z ) \notin H $ according to lemma \ref{generalm}. It follows that $ m_N (z ) \to m ( z ) $. On the other hand,  the choice of $ \mu_N $ implies that for any $ z \in \C_+ $
\[ \Im z \int \frac{ \d \mu_N ( t )  }{ \( \Re z - t \)^2 + \( \Im z \)^2 }\mathop{\longrightarrow}_{ N \to \infty } \Im z \int \frac{ \d \mu ( t ) }{ \( \Re z - t \)^2 + \( \Im z \)^2 }. \] 
Notice that it is at this point we have used the technical condition (\ref{esttails}). Comparing these we conclude that $ \mu $ coincides with the measure in the Herglotz representation of the function $ m $, that is,
\[ \Im ( m( z )) =  \frac{\Im z}\pi \int \frac{ \d \mu ( t ) }{ \( \Re z - t \)^2 + \( \Im z \)^2 } + c \Im z , \; \Im z > 0  , \]
for a nonnegative constant $ c $. We have just conditionally proved the following

\begin{theorem}\la{invsingular} 
Let $ \mu $ be a measure on $ \R $, $ \int_\R \frac{ \d \mu ( t ) }{ 1 + t^2 } < \infty $ and $ \mu ( \{ 0 \} ) = 0 $. Then there exists a canonical system $ ( \cH , \infty ) $, $ \cH \not\equiv \( \! \begin{array}{cc} 0 & 0 \cr 0 & 1 \end{array} \! \) $, such that 
\[ \Im m ( z ) = \frac{ \Im z }\pi \int_\R \frac{ \d \mu ( t ) }{ \( \Re z - t \)^2 + \( \Im z \)^2 } + c \Im z , \; \Im z > 0 \] 
for a nonnegative constant $ c $. 

If the system satisfies compatibility condition (L) and no semiaxis is a singular interval for $ \cH $ then $ \mu $ coincides with the spectral measure of the corresponding selfadjoint operator in the sense given by theorem \ref{sptheoremsing}.
\end{theorem}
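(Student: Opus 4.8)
The heavy lifting is already done in the text above, under the standing hypothesis $\cH\not\equiv\begin{pmatrix}0&0\cr 0&1\end{pmatrix}$ on $\R_+$: one has the Hamiltonian $\cH$ as a locally uniform limit of the integrated Hamiltonians $\int_0^x\cH_N$ coming from Theorem $\ref{finmes}$, the convergence $M_N(x,z)\to M(x,z)$, and the conclusion that $\mu$ is the measure in the Herglotz representation of the Weyl--Titchmarsh function $m$ of $(\cH,\infty)$. So my plan has two remaining steps: (1) discharge that hypothesis, i.e. show the limiting Hamiltonian is never $\begin{pmatrix}0&0\cr 0&1\end{pmatrix}$ identically; (2) deduce the last assertion from Theorems $\ref{operatorLinf}$ and $\ref{sptheoremsing}$.

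For (1) I would argue by contradiction. If $\cH\equiv\begin{pmatrix}0&0\cr 0&1\end{pmatrix}$ then by $(\ref{monodrsing})$ the limiting fundamental solution is $M(x,z)=\begin{pmatrix}1&zx\cr 0&1\end{pmatrix}$, so $\Theta(x,z)\equiv\binom10$ and $\Phi(x,z)=\binom{zx}{1}$. Fix $z\in\C_+$ and $X>0$; since $L_N\to\infty$ we have $X<L_N$ for large $N$, so I can look at the Weyl discs $D_{N,X}=G_{N,X}(\C_+)$ of the finite systems $(\cH_N,L_N)$ from the proof of Theorem $\ref{mN}$. By $(\ref{11})$, $\operatorname{diam}D_{N,X}=\bigl(\Im(\Theta_N^+(X,z)\overline{\Theta_N^-(X,z)})\bigr)^{-1}\to\infty$ because $\Theta_N(X,z)\to\binom10$; moreover the marked boundary point $-\overline{\Phi_N^+(X,z)/\Theta_N^+(X,z)}=G_{N,X}(0)$ converges to $-\overline{zX}$, whose imaginary part is $X\Im z$. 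Together with $D_{N,X}\subset\C_+$ this forces the discs, for each fixed $X$, to degenerate to the half-plane $\{\Im w>X\Im z\}$, so $\inf\{\Im w:w\in D_{N,X}\}\to X\Im z$. Since $-\overline{m_N(z)}=G_{N,L_N}(\infty)$ lies in $\overline{D_{N,L_N}}\subset\overline{D_{N,X}}$, this gives $\liminf_N\Im m_N(z)\ge X\Im z$; as $X$ is arbitrary, $\Im m_N(z)\to\infty$. On the other hand, the systems $(\cH_N,L_N)$ have polynomial monodromy matrices (Corollary $\ref{solfindim}$), and $\deg\Phi_N^-(L_N,\cdot)\le\deg\Theta_N^-(L_N,\cdot)$ (the latter equals the number of atoms of $\mu_N$, by $(\ref{tminus})$), so the Herglotz function $m_N=\Phi_N^-(L_N,\cdot)/\Theta_N^-(L_N,\cdot)$ carries no mass at infinity: $\Im m_N(z)=\tfrac{\Im z}\pi\int\bigl((\Re z-t)^2+(\Im z)^2\bigr)^{-1}\,d\mu_N(t)$, which by $(\ref{esttails})$ and $\mu_N\to\mu$ tends to the \emph{finite} number $\tfrac{\Im z}\pi\int\bigl((\Re z-t)^2+(\Im z)^2\bigr)^{-1}\,d\mu(t)$. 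This contradiction gives (1), hence the first assertion of the theorem.

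For (2): if $(\cH,\infty)$ obeys $(L)$ and no semiaxis is $\cH$-indivisible, Theorem $\ref{operatorLinf}$ makes $D$ selfadjoint, and Theorem $\ref{sptheoremsing}$ identifies $\overline{\cU}$ as a unitary of $H$ onto $L^2(\R,d\nu)$ carrying $D$ into multiplication by the independent variable, where $\nu$ is the measure in the Herglotz representation of $m$; by step (1), $\nu=\mu$, so $\mu$ is the spectral measure of $D$ in the sense of Theorem $\ref{sptheoremsing}$.

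I expect the main obstacle to be the disc-degeneration claim inside step (1): a Weyl disc whose diameter explodes, even though it stays in $\C_+$ and has a marked boundary point at height $\to X\Im z$, should not be able to creep below the line $\Im w=X\Im z$ through its far-away arc. I would make this precise by following $G_{N,X}^{-1}$ -- a Möbius map converging locally uniformly on $\C$ to $w\mapsto w+\overline zX$ -- on bounded subsets of the plane (a point $w^*\in D_{N,X}$ with bounded real part and $\Im w^*<X\Im z$ would have $G_{N,X}^{-1}(w^*)\notin\C_+$ for $N$ large), and combining this with convexity of $D_{N,X}$ and the fact that its two named boundary points sit at heights $\to X\Im z$ and $\to+\infty$ respectively.
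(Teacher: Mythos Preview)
Your overall plan is correct: the preparatory work in the section has already shown, conditionally on $\cH\not\equiv\begin{pmatrix}0&0\\0&1\end{pmatrix}$, that $\mu$ is the Herglotz measure of $m$; step (2) is a straightforward appeal to Theorems \ref{operatorLinf} and \ref{sptheoremsing}; and your step (1) contradiction strategy is the right one. Your argument that $m_N$ carries no mass at infinity (via the degree bound $\deg\Phi_N^-\le\deg\Theta_N^-$) is correct and makes explicit what the paper leaves implicit.

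The problem is the disc-degeneration claim itself. A disc in $\C_+$ with exploding diameter and one boundary point at height $\to X\Im z$ need \emph{not} have infimum height $\to X\Im z$: its lowest point can sit at any height in $[0,X\Im z]$, provided the center drifts off horizontally. Your proposed patch relies on the second boundary point $G_{N,X}(\infty)=-\overline{\Phi_N^-/\Theta_N^-}$ having height $\to+\infty$, but you only know its \emph{modulus} goes to infinity; nothing in the setup forces $\Im(\Phi_N^-/\Theta_N^-)\to+\infty$ rather than $|\Re(\Phi_N^-/\Theta_N^-)|\to\infty$. So as written this step has a gap. It is easily repaired by reversing the order of your two ingredients: first use the measure side to get $\sup_N|m_N(z)|<\infty$; then the points $-\overline{m_N(z)}$ lie in a fixed compact set, and on that set the locally uniform convergence $G_{N,X}^{-1}\to(w\mapsto w+\overline zX)$ that you already noted forces $\Im m_N(z)\ge X\Im z-\varepsilon$ for large $N$, contradicting the bound since $X$ is arbitrary.

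The paper takes a different, somewhat slicker route: instead of letting the Weyl discs $D_{N,X}$ explode, it passes to the inverted discs $\tilde D_{N,X}=\{1/\lambda:\lambda\in D_{N,X}\}$. Swapping $\Theta$ and $\Phi$ in the diameter formula from Theorem \ref{mN} gives $\operatorname{diam}\tilde D_{N,X}=\bigl(\Im(\Phi_+^N(X,z)\overline{\Phi_-^N(X,z)})\bigr)^{-1}\to\frac{1}{X\Im z}$, while the boundary point $-\overline{\Theta_-^N/\Phi_-^N}\to 0$. The nesting and triangle inequalities then give $\frac{1}{|m_N(z)|}\le\bigl|\Theta_-^N/\Phi_-^N\bigr|+\operatorname{diam}\tilde D_{N,X}\to\frac{1}{X\Im z}$, hence $|m_N(z)|\to\infty$. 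This sidesteps all questions about degenerating half-planes: the inverted discs simply shrink.
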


\begin{proof} The theorem will be proved established if we show that the condition (\ref{nondegenerateanew}) is satisfied for $ \cH $. Suppose by contradiction that $ \cH ( x ) = \( \! \begin{array}{cc} 0 & 0 \cr 0 & 1 \end{array} \! \) $ for a. e. $ x > 0 $. Consider the composition of the mapping $ G_x $  from the proof of theorem \ref{mN} corresponding to the Hamiltonian $ \cH_N $  and the transform $ \lambda \mapsto 1/\lambda $,
\[ w \mapsto  \frac{ - \overline{ \Theta_-^N ( x , z ) } w + \overline{ \Theta_+^N ( x , z ) }  }{\overline{ \Phi_-^N ( x , z ) } w - \overline{ \Phi_+^N ( x , z ) }} . \]
This map transforms $ \C_+ $ into the disc $ \tilde{D}_x^N $, $ \tilde{D}_x^N  = \{ \lambda^{ -1 } , \lambda \in D_x^N \} $ in the notation of the proof of theorem \ref{mN}. The discs $ \tilde{D}_x^N $ are nesting, as $ D_x $'s are, and a calculation from the proof of theorem \ref{mN} can be reproduced verbatim with $ \Theta $ and $ \Phi $ exchanged to give
\[ \mathrm{diam } \tilde{D}_x^N =  \frac 1{\Im \( \Phi_+^N ( x , z ) \overline{\Phi_-^N ( x , z ) } \) } \mathop{\longrightarrow}_{ N \to \infty }  \frac 1{\Im \( \Phi_+ ( x , z ) \overline{\Phi_- ( x , z ) } \) } = \frac 1{ x \Im z } . \]
The last equality here is implied by the explicit formula for the solution $ \Phi $ in the situation under consideration, $ \Phi ( x , z ) = \begin{pmatrix} z x \cr 1 \end{pmatrix} $.
By the nesting circles and the triangle inequality we have for $ x \le N $
\[ \frac1{\left| m_N ( z ) \right|} \le \left| \frac{ \Theta_-^N ( x , z ) }{ \Phi_-^N ( x , z ) } \right| + \left| \frac 1{ x \Im z } \right| . \]
The first term in the r. h. s. vanishes as $ N \to \infty $. Since $ x $ is arbitrary we conclude that 
$  \left| m_N ( z ) \right| \displaystyle{\mathop{\longrightarrow}_{ N \to \infty }} \infty $.
This, in turn, contradicts the assumptions about $ \mu_N $ which imply that $ \sup_N \int \frac{ \d \mu_N ( t ) }{ 1 + t^2 } $ is fnite and hence $ | m_N ( z ) | $ is bounded in $ N $ for each $ z \in \C_+ $. Thus,  the condition \ref{nondegenerateanew} is satisfied for $ \cH $, and the first assertion of the theorem is established. The second assertion is just theorem \ref{sptheoremsing} for the Hamiltonian $ \cH $. \end{proof}

\section{Appendix I. Reconstruction of monodromy matrix from the first column.}\la{appI}

\begin{theorem}\la{restore}
Let $ E $ be a regular HB function having no real zeroes, $ E ( 0 ) = 1 $, and let $ \Theta $ be defined by (\ref{ThetaE}). There exists an entire vector-function $ \Phi ( \lambda ) $ such that the matrix $ N  ( \lambda ) = [ \Theta ( \lambda ) , \Phi ( \lambda ) ] $ satisfies $ N ( 0 ) = I $ and

(i) $ \det N ( \lambda ) = 1 $,

(ii) \[ \frac 1i \( N^* ( \lambda ) J N ( \lambda ) - J \) \ge 0 ,\;   \lambda \in \C_+ . \]

(iii) The functions $\Phi_- / \Theta_- $, $\Phi_+ / \Theta_+ $ are Herglotz in $ \C_+ $ and do not have a linear growing term in its integral representation\footnote{This means that they are $ o ( |z| ) $ when $ z \to \infty $ along the positive imaginary axis.}. \end{theorem}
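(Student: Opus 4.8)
The plan is to build the second column $\Phi$ by hand out of a single Herglotz function attached to the real zeros of $\Theta_-$, and then read the three properties off the construction. Since $E$ is HB, the identity $|E(z)|^2-|E(\overline z)|^2=4\Im(\Theta_+(z)\overline{\Theta_-(z)})$ used in Proposition \ref{HB} shows that $\Theta_-$ (and likewise $\Theta_+$) has no zeros in $\C_+$ and that $m_0:=\Theta_+/\Theta_-$ is a non-constant Herglotz function; hence the zeros $\{t_j\}_{j\ge0}$ of $\Theta_-$ (with $t_0=0$, as $E(0)=1$) and the zeros $\{s_k\}$ of $\Theta_+$ are real and simple, and $\Theta_\pm$ have no common zero (a common one would be a real zero of $E$). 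Plugging $E=\Theta_++i\Theta_-$ into the reproducing-kernel formula (\ref{reprokernel}) gives $\pi K_{t_j}(t_j)=-\dot\Theta_-(t_j)\Theta_+(t_j)$ and $\pi K_{s_k}(s_k)=\dot\Theta_+(s_k)\Theta_-(s_k)$, so the numbers $\gamma_j:=(\pi K_{t_j}(t_j))^{-1}=-(\Theta_+(t_j)\dot\Theta_-(t_j))^{-1}$ and $\delta_k:=(\pi K_{s_k}(s_k))^{-1}=(\Theta_-(s_k)\dot\Theta_+(s_k))^{-1}$ are strictly positive (being reciprocals of squared norms of reproducing kernels, which are nonzero because the zeros of $\Theta_\pm$ are simple). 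Granting for the moment that $\sum_j\gamma_j(1+t_j^2)^{-1}<\infty$ and $\sum_k\delta_k(1+s_k^2)^{-1}<\infty$, put
\[ g(z)=a+\sum_{j\ge0}\gamma_j\Bigl(\tfrac1{t_j-z}-\tfrac{t_j}{1+t_j^2}\Bigr), \]
a Herglotz function with no linear growing term, $a\in\R$ to be fixed below.

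The convergence of these two series is the step at which regularity of $E$ is genuinely used. I would argue that $h(z):=(\Theta_-(z)-\Theta_-(i))/(z-i)$ belongs to $\HE$: since $\Theta_-/E=\tfrac1{2i}(1-E^*/E)$ is bounded in $\C_+$, one checks that $h/E$ and $h^*/E$ are of bounded type and of non-positive mean type in $\C_+$ and are square-summable on $\R$ — the latter because $(1+t^2)^{-1/2}|E(t)|^{-1}\in L^2(\R)$, which is exactly the regularity hypothesis $(z+i)^{-1}E^{-1}\in H^2$. As $h(t_j)=-\Theta_-(i)/(t_j-i)$, one has $|h(t_j)|^2=|\Theta_-(i)|^2/(1+t_j^2)$; and the kernels $\{K_{t_j}\}$ are pairwise orthogonal in $\HE$ (by (\ref{reprokernel}), $K_{t_j}(t_k)=0$ for $j\ne k$ since $\Theta_-(t_j)=\Theta_-(t_k)=0$), so Bessel's inequality gives
\[ \pi|\Theta_-(i)|^2\sum_j\frac{\gamma_j}{1+t_j^2}=\sum_j\frac{|h(t_j)|^2}{\|K_{t_j}\|_{\HE}^2}\le\|h\|_{\HE}^2<\infty . \]
The $\delta$-series is handled identically, with $h$ replaced by $(\Theta_+(z)-\Theta_+(i))/(z-i)\in\HE$ and $\{K_{s_k}\}$ in place of $\{K_{t_j}\}$.

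Now set $\Phi_-:=g\,\Theta_-$. The principal part of $g$ at $t_j$ being $-\gamma_j/(z-t_j)$, the product $g\Theta_-$ continues analytically across each $t_j$ with value $-\gamma_j\dot\Theta_-(t_j)=\Theta_+(t_j)^{-1}$; hence $\Phi_-$ is real entire, $\Phi_-(t_j)=\Theta_+(t_j)^{-1}$, and $\Phi_-(0)=1$. Let $\Phi_+:=(\Theta_+\Phi_--1)/\Theta_-$; its numerator vanishes at every (simple) zero of $\Theta_-$, so $\Phi_+$ is real entire and $\det[\Theta,\Phi]\equiv1$, which is (i). Choosing $a:=-\Phi_+(0)$ replaces $\Phi$ by $\Phi+a\Theta$, giving $\Phi_+(0)=0$, so $N:=[\Theta,\Phi]$ satisfies $N(0)=I$; and $\Phi_-/\Theta_-=g$ is Herglotz without a linear term — half of (iii). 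For the other half, observe $\Phi_+/\Theta_+=(\Theta_+\Phi_--1)/(\Theta_+\Theta_-)=g-(\Theta_+\Theta_-)^{-1}=:\widetilde g$; this meromorphic function has no pole at any $t_j$ (the poles of $g$ and of $(\Theta_+\Theta_-)^{-1}$ there cancel) and a simple pole at each $s_k$ with residue $-\delta_k<0$, so $\widetilde g$ differs from the Herglotz function $\sum_k\delta_k(\tfrac1{s_k-z}-\tfrac{s_k}{1+s_k^2})$ — convergent by the previous paragraph — by an entire function which is real entire, of bounded type in each half-plane, and $o(y)$ along the imaginary axis; by Krein's theorem and a Phragmén–Lindelöf argument this is a constant, whence $\widetilde g=\Phi_+/\Theta_+$ is Herglotz without a linear term, completing (iii). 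Finally, (ii): by exercise \ref{JcontrC} it suffices, for each $\lambda\in\C_+$, to verify that the Möbius transform of $N(\lambda)$ maps $\C_+$ into itself; using $\Phi_+=\widetilde g\Theta_+$, $\Phi_-=g\Theta_-$ and $m_0(g-\widetilde g)=\Theta_-^{-2}$, that transform equals $w\mapsto m_0(\lambda)\dfrac{w+\widetilde g(\lambda)}{w+g(\lambda)}=m_0(\lambda)-\dfrac1{\Theta_-(\lambda)^2(w+g(\lambda))}$, and a short computation reduces the required inclusion to the pointwise inequality $\Im g(\lambda)\,\Im m_0(\lambda)\ge(\Im\Theta_-(\lambda)^{-1})^2$ for $\lambda\in\C_+$, which one then verifies directly from $\det N=1$.

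I expect the convergence step to be the main obstacle: establishing $h\in\HE$ is the only place where regularity of $E$ is really needed, and it rests on the half-plane factorization / bounded-type facts collected in the Notation section rather than on anything formal; by contrast the verification of (ii) is routine but computationally the most laborious part. (An alternative, arguably cleaner, route is to approximate $E$ by polynomial HB functions $E_N$ as in Section \ref{Invreg}, apply the finite-dimensional theory of Section \ref{invpol} to each $E_N$, and pass to the limit; the one delicate point there is to show — again using regularity of $E$ — that the spectral measures of the approximating systems keep no mass near infinity, so that the second columns converge.)
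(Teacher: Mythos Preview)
Your construction of $\Phi$ via the Herglotz function $g$ is a legitimate alternative to the paper's definition of $\Phi_\pm$ through the $\HE$--scalar products (\ref{phi-})--(\ref{phi+}), and your treatment of (i) and (iii) is essentially sound. The flow of (iii) is reversed relative to the paper: you build $g=\Phi_-/\Theta_-$ Herglotz by hand and then deduce the Herglotz property of $\widetilde g=\Phi_+/\Theta_+$, whereas the paper obtains the explicit series (\ref{thplu}) for $\Phi_+/\Theta_+$ first and derives the Herglotz property of $\Phi_-/\Theta_-$ \emph{from} (ii) via Lemma~\ref{columnrow}. Your convergence argument using Bessel's inequality against $\{K_{t_j}\}$ is also a bit more elementary than the paper's appeal to Theorem~\ref{orthog}.

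The genuine gap is in (ii). Your reduction of $J$--contractivity at $\lambda$ to the pointwise inequality $\Im g(\lambda)\,\Im m_0(\lambda)\ge\bigl(\Im\Theta_-(\lambda)^{-1}\bigr)^2$ is correct, but the claim that this ``verifies directly from $\det N=1$'' is false. Given $\det N(\lambda)=1$, that inequality is \emph{equivalent} to $J$--contractivity at $\lambda$, so there is nothing left to verify \emph{from} $\det N=1$ --- one still has to prove the inequality by other means. And it cannot follow from any pointwise algebraic or sign condition: the paper's own Exercise~\ref{counterex} produces a single matrix $G$ with $\det G=1$ and all four ratios $A/C,\,B/D,\,B/A,\,D/C$ in $\C_+$, yet $\tfrac1i(G^*JG-J)\not\ge0$; for that matrix your inequality fails numerically ($\Im g\cdot\Im m_0=69/625<144/625=(\Im\Theta_-^{-1})^2$). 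So what is required is a \emph{global} argument using the full Herglotz structure of $g$, $\widetilde g$, $m_0$ as functions of $\lambda$. The paper supplies this through the identity (\ref{Jcontr}), which expresses $\tfrac1{2i}\langle(N^*JN-J)\mathbf e,\mathbf e\rangle$ as $\pi\Im\lambda$ times a squared $\HE$--norm; the alternative mentioned in the Comments section is Sodin's theorem (three of the four ratios Herglotz $\Rightarrow$ $J$--contractive), which likewise relies on the global analytic structure, not on a single value. Either route is nontrivial --- this is the substantive step of the theorem, not the ``routine'' one.
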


\subsection{Direct problem}\la{diprobl} In the notation of section \ref{direct}, for a given canonical system satisfying the compatibility condition (L) let us find the Fourier transform $ \cU \Phi_z $. We have, 
\bequnan \int_0^L \Phi_z^T \cH \Theta_w = \frac 1w \int_0^L \Phi_z^T J \Theta_w^\prime = \frac 1w \( \left. \Phi_z^T J \Theta_w \right|_0^L +\int_0^L \( J \Phi_z^\prime \)^T \Theta_w \) = \\ \frac 1w \( \Phi_z^T ( L )  J \Theta_w (L) - 1 + z \int_0^L \Phi_z^T \cH \Theta_w \) ,
\eequnan
 hence
 \be\la{diprob} \(  \cU \Phi_z \) ( w ) = \frac 1{\sqrt \pi }  \frac{ \Phi_z^T ( L )  J \Theta (L , w ) - 1 }{ w - z } . \ee
Let us calculate the scalar product of $ \Phi_\lambda $ and $ \Phi_0 \equiv \Bigl( \! \begin{array}{c}0 \\ 1 \end{array} \! \Bigr) $ twice. First, by differentiating it with respect to (\ref{can}) (compare the $22$-element of (\ref{identM})), we find that
\[ \llangle \Phi_\lambda , \Phi_0 \rrangle_H = \frac{\Phi_\lambda^+ ( L ) }\lambda . \]
On the other hand, since $ \cU $ is an isometry, it coincides with 
\[ \begin{array}{ccc} \llangle \cU \Phi_\lambda , \cU \Phi_0 \rrangle_{\HE} = \displaystyle{\frac 1\pi } & \displaystyle{ \llangle \frac{ \Phi_L^T ( \lambda )  J ( \Theta_L - \Theta_L ( \lambda ) ) }{ \cdot - \lambda } \right.} & \left. , \displaystyle{\frac{\Theta_L^+ - 1 }t} \rrangle_{ \HE } . \cr \noalign{\vskip2pt} & \overbrace{\Phi^T ( L , \lambda )  J \Theta ( L , \lambda ) = \det M = 1} & \hskip2cm (\star ) \end{array}\] Thus,
\be\la{sysphi}  \Phi_\lambda^T ( L ) J  G_\lambda = \frac{\Phi_\lambda^+ ( L ) }\lambda  , \ee 
\[ G^\pm_\lambda \colon = \frac 1\pi \llangle \frac{ \Theta_L^\pm - \Theta_L^\pm ( \lambda )}{ \cdot - \lambda }, \frac{\Theta_L^+ - 1 }t \rrangle_{ \HE } . \]

\subsection{back to inverse problem}
Let us consider (\ref{sysphi}) and ($\star $) as a system on $ \Phi^\pm_\lambda $. Notice that the coefficients of this system are well-defined in the framework of the inverse problem, for they only depend on values of $ \Theta $ at $ x = L $, and the space $ \HE $ is regular which ensures that the scalar products in the definition of $ G^\pm $ are defined. Let us \textit{define} $ \Phi^\pm_\lambda $ to be the solution of the system (\ref{sysphi}), ($\star $) with $ \Theta_L \equiv \Theta $. This system is solvable,
\be\la{phi-} \Phi^-_\lambda = 1 + \lambda G^-_\lambda , \ee
\be\la{phi+} \Phi^+_\lambda = \lambda G^+_\lambda .  \ee 
The solution is a obtained by a straightforward calculation taking into account that (see  (\ref{reprokernel}))
\[ \Theta_\lambda^T J G_\lambda = \llangle K_{ \overline \lambda } ,  \frac{\Theta^+ - 1 }t \rrangle_{ \HE } = \frac{ \Theta^+ ( \lambda ) - 1 }\lambda . \] 
By construction, $ N  ( \lambda ) \colon = [ \Theta_\lambda , \Phi_\lambda ] $ has determinant $ 1 $ and is an entire function. It remains to establish (\textrm{ii}) and (\textrm{iii}).

Assume first that $ \Theta_+ \notin \HE $. By theorem \ref{orthog}, the set $ \{ K_\tau \}_{ \Theta_+ ( \tau ) = 0 } $ is then an orthogonal basis in $ \HE $. On calculating the scalar product defining $ G^\pm $ in this basis,
\[ G^+_\lambda = \Theta^+_\lambda \sum_ { \Theta^+_\tau = 0 } \frac {\mu_\tau}\tau \frac 1{ \tau - \lambda }  =  \frac{\Theta^+_\lambda}\lambda \sum_ { \Theta_+ ( \tau ) = 0 } \mu_\tau \left[ \frac 1{ \tau - \lambda } - \frac 1\tau \right]  , \] 
\[ \mu_\tau = \frac 1{\pi \len K_\tau \rin^2 } = \frac 1{{\dot{\Theta}}^+ ( \tau ) \Theta_- ( \tau ) } . \] 
Thus 
\be\la{thplu} \frac{\Phi_\lambda^+}{\Theta_\lambda^+ } = \sum_ { \Theta_+ ( \tau ) = 0 } \mu_\tau \left[ \frac 1{ \tau - \lambda } - \frac 1\tau \right]  , \ee
where $ \mu_\tau $ are positive constants, hence the l. h. s. is a Herglotz function.

Assertion (\textrm{ii}) will follow from the identity 
\be\la{Jcontr} \frac 1{2i} \llangle \( N^*_\lambda J N_\lambda - J \) \mathbf{e} , \mathbf{e} \rrangle_{ \C^2 } = \pi ( \Im \lambda ) \len \xi_\lambda^T J N_\lambda  \mathbf{e}
\rin^2_{ \HE } , \ee 
\[ \xi_\lambda ( t ) = \frac1\pi \frac 1{ t - \lambda } \( \Theta ( t ) - \Theta ( \lambda ) \) ,  \mathbf{e} = \Bigl( \! \begin{array}{c} h \cr 1 \! \end{array} \Bigr) , h \in \C . \]

\begin{exercise}
Identity (\ref{Jcontr}) will only be used as a tool to show that its l. h. s. is nonnegative, the precise form of the expression in the r. h. s. is irrelevant. Is it possible to derive (\textit{ii}) without actually proving the identity?
\end{exercise} 

\subsection{Derivation of identity (\ref{Jcontr}).} The square norm in the r. h. s. is
\bequnan \begin{array}{ccccc} \len \xi_\lambda^T J \( \Phi_\lambda + h \Theta_\lambda \) \rin^2 & = & \len \xi_\lambda^T J \Phi_\lambda -  h K_{\overline\lambda } \rin^2 & = & 
\cr & \overbrace{\xi_\lambda^T J \Theta_\lambda = - K_{\overline \lambda } } & & \overbrace{ \Theta_\lambda^T J \Phi_\lambda = -1 } & \end{array} \\ \len \xi_\lambda^T J \Phi_\lambda \rin^2 - 2 \Re \left[ \overline h \frac{ \Theta_{\overline \lambda}^T J \Phi_\lambda + 1 }{ \pi ( \overline \lambda - \lambda ) } \right] + \left|  h \right|^2 K_\lambda (\lambda ) , 
\eequnan
\bequnan \mathrm{ L. H. S. \, in \, }  (\ref{Jcontr}) =  \frac 1{2i} \llangle \( \begin{array}{cc} \overline{ \Theta_\lambda^T} J \Theta_\lambda & \overline{ \Theta_\lambda^T} J \Phi_\lambda \cr \overline{ \Phi_\lambda^T} J \Theta_\lambda & \overline{ \Phi_\lambda^T} J \Phi_\lambda \end{array} \) \mathbf{e} , \mathbf{e} \rrangle - \Im h  =  \frac 1{2i} \overline{ \Phi_\lambda^T} J \Phi_\lambda + \\ \Im \left[ \overline h \( { \Theta_{ \overline \lambda }^T} J \Phi_\lambda + 1 \) \right] + \frac 1{2i} \Theta_{ \overline \lambda}^T J \Theta_\lambda \left| h \right|^2 \eequnan

Comparing the last two terms in the obtained expressions (quadratic and linear ones in $ h $), and taking into account that $ K_\lambda ( \lambda ) =  { \pi \Im \lambda }^{ -1 } \Im \( \Theta^+_\lambda \overline{ \Theta^-_\lambda} \) $, we see that the respective terms differ exactly by the $ \pi \Im \lambda $ multiple.  Thus, to prove (\ref{Jcontr}) it remains to check that
\be\la{formaux} \pi \Im \lambda \len \xi_\lambda^T J \Phi_\lambda \rin^2 = \frac 1{2i} \overline{ \Phi_\lambda^T} J \Phi_\lambda \ee
To this end, we get rid of $ \Phi_\lambda^- $ in both sides by using $ \det N ( \lambda ) = 1 $, 
\[ \xi_\lambda^T J \Phi_\lambda = \frac 1{ \Theta^+_\lambda } \xi_\lambda^T J \( \Bigl( \! \begin{array}{c} 0 \cr 1 \end{array} \! \Bigr) + \Phi_\lambda^+ \Theta_\lambda \) = - \frac 1{ \Theta^+_\lambda } \( \xi_\lambda^+ + \Phi_\lambda^+ K_{ \overline \lambda } \) , \]
\[ \frac 1{2i} \overline{ \Phi_\lambda^T} J \Phi_\lambda =   \frac 1{\left| \Theta^+_\lambda \right|^2 } \( \Im \( \Phi_\lambda^+ \Theta_\lambda^+ \) + \frac 1{2i} \left| \Phi^+_\lambda \right|^2 \Theta_{ \overline \lambda } J \Theta_\lambda \) . \]
Inserting these in (\ref{formaux}) we see that the required fromula is equivalent to the following,
\be\la{formulaatlast} \pi \Im \lambda \len\xi_\lambda^+ + \Phi_\lambda^+ K_{ \overline \lambda }  \rin^2 = \Im \( \Phi_\lambda^+ \Theta_\lambda^+ \) + \frac 1{2i} \left| \Phi^+_\lambda \right|^2 \Theta_{ \overline \lambda } J \Theta_\lambda . \ee
We have 
\[  \len\xi_\lambda^+ + \Phi_\lambda^+ K_{ \overline \lambda }  \rin^2 =  \len \xi_\lambda^+ \rin^2 + 2 \Re \( \xi_\lambda^+ ( \overline \lambda ) \overline{ \Phi_\lambda^+ } \) +  \left| \Phi^+_\lambda \right|^2 K_{ \overline \lambda } ( \overline \lambda ) . \]
Finally, 
\be\la{xiplusnorm} \len \xi_\lambda^+ \rin^2 = \frac 1{\pi^2 } \sum_{ \Theta^+ ( \tau ) = 0 } \pi \mu_\tau \frac { \left| \Theta^+_\lambda \right|^2 }{  \left| \tau - \lambda \right|^2 } \mathop{=}^{ (\ref{thplu}) } \frac 1{\pi \Im \lambda }  \left| \Theta^+_\lambda \right|^2 \Im \( \frac{\Phi_\lambda^+}{\Theta_\lambda^+ } \) . \ee 
On calculating, we find that (\ref{formulaatlast}) holds, and thus (\ref{Jcontr}) is proved.

It remains to establish (iii). The required assertion for the function $ \Phi^+ /\Theta^+ $ has already been established, see (\ref{thplu}). To deal with  $ \Phi^- /\Theta^- $ we are going to use the following 

\begin{lemma}
\la{columnrow} Let a matrix $ B $, $ \det B = 1 $, satisfy\footnote{Matrices satisfying (\ref{JCo}) are called \textit{$ J $-contractive}} 
\be\la{JCo} \frac 1i \(B^* J B- J \) \ge 0 . \ee 
Then $ \Im \( B_{22} / B_{ 21 } \) \ge 0 $, $ \Im \( B_{12} / B_{ 11 } \) \ge 0 $. 
\end{lemma}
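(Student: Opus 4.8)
The plan is to reduce everything to the action of the fractional linear map associated with $B$ on the upper half plane, exactly as in exercise \ref{JcontrC}. Since $\det B = 1$, condition (\ref{JCo}) is equivalent (by that exercise) to the statement that the map $z \mapsto (B_{11}z + B_{12})/(B_{21}z + B_{22})$ sends $\C_+$ into itself; in particular it sends $\overline{\C_+}$ into $\overline{\C_+}$, so the images of the boundary points $z = \infty$ and $z = 0$ lie in $\overline{\C_+}$. The image of $z = \infty$ is $B_{11}/B_{21}$ and the image of $z = 0$ is $B_{12}/B_{22}$. Thus immediately $\Im(B_{11}/B_{21}) \ge 0$ and $\Im(B_{12}/B_{22}) \ge 0$. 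This is close to what we want but with the roles of numerator and denominator swapped relative to the statement, so the remaining point is to pass from $\Im(B_{11}/B_{21}) \ge 0$ to $\Im(B_{22}/B_{21}) \ge 0$, and similarly for the other pair.

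For that I would compute directly. Write $\Im(B_{22}/B_{21}) = \Im(B_{22}\overline{B_{21}})/|B_{21}|^2$, so it suffices to show $\Im(B_{22}\overline{B_{21}}) \ge 0$. The natural tool is to feed a suitable real vector into the quadratic form (\ref{JCo}). Taking $v = (0,1)^T \in \R^2$ (real, so $v = \overline v$), we get
\[ 0 \le \frac 1i \llangle (B^* J B - J) v , v \rrangle = \frac 1i \( \llangle J B v , B v \rrangle - \llangle Jv , v \rrangle \) = \frac 1i \llangle J B v , B v \rrangle , \]
since $\llangle Jv,v\rrangle = 0$. Now $Bv = (B_{12}, B_{22})^T$, and $\frac 1i \llangle J(B_{12},B_{22})^T , (B_{12},B_{22})^T\rrangle = \frac 1i (-B_{12}\overline{B_{22}} + B_{22}\overline{B_{12}}) = \frac{2}{i}\, i\,\Im(B_{22}\overline{B_{12}}) = 2\Im(B_{22}\overline{B_{12}})$, wait — this yields $\Im(B_{22}\overline{B_{12}}) \ge 0$, i.e. a relation between $B_{12}$ and $B_{22}$, not $B_{21}$ and $B_{22}$. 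So instead I would feed in $v = (1,0)^T$, giving $Bv = (B_{11},B_{21})^T$ and $\frac1i\llangle J Bv, Bv\rrangle = 2\,\Im(B_{21}\overline{B_{11}}) \ge 0$; this again reproduces $\Im(B_{11}/B_{21})\ge 0$ after dividing by $|B_{21}|^2$ and conjugating. To instead get $B_{22}$ into play with $B_{21}$ I would use the identity $B_{11}\overline{B_{22}} - B_{12}\overline{B_{21}}$ in combination with $\det B = B_{11}B_{22} - B_{12}B_{21} = 1$; concretely, from $\det B = 1$ one has $B_{22} = (1 + B_{12}B_{21})/B_{11}$, so
\[ \Im\( \frac{B_{22}}{B_{21}} \) = \Im\( \frac{1 + B_{12}B_{21}}{B_{11}B_{21}} \) = \Im\( \frac{1}{B_{11}B_{21}} \) + \Im\( \frac{B_{12}}{B_{11}} \). \]
We already know $\Im(B_{12}/B_{11}) \ge 0$ (image of $z=0$), and $\Im(1/(B_{11}B_{21})) = -\Im(\overline{B_{11}B_{21}})/|B_{11}B_{21}|^2 = \Im(B_{11}B_{21})/|B_{11}B_{21}|^2$; but $\Im(B_{11}B_{21})$ is not obviously of a fixed sign, so this particular algebraic route may stall.

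The cleaner approach, which I expect to be the actual argument and the one I would commit to, is this: observe that $B$ is $J$-contractive if and only if $\widetilde B := J B^T J^{-1}$ is, because $J$-contractivity of $B$ is symmetric under transposition together with the similarity by $J$ (one checks $\widetilde B^* J \widetilde B - J = -J (B^* J B - J)^T J$ up to sign bookkeeping, using $J^T = -J = J^{-1}$ and $\det B = \det \widetilde B = 1$). Since $\widetilde B = \begin{pmatrix} B_{22} & -B_{21} \\ -B_{12} & B_{11}\end{pmatrix}$, applying the already-proven inequalities $\Im(\widetilde B_{11}/\widetilde B_{21}) \ge 0$ and $\Im(\widetilde B_{12}/\widetilde B_{11}) \ge 0$ to $\widetilde B$ gives exactly $\Im(B_{22}/(-B_{12})) \ge 0$ and... — again I must be careful about which entries land where, so in the write-up I would just verify the transposition-plus-conjugation symmetry of (\ref{JCo}) explicitly and read off the two desired inequalities. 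The one genuine obstacle is purely bookkeeping: making sure the signs and the $1/i$ factors line up so that the boundary-image computation and the quadratic-form computation produce $\Im(B_{22}/B_{21}) \ge 0$ and $\Im(B_{12}/B_{11}) \ge 0$ in precisely the form stated, rather than their conjugates or the swapped versions. Once the symmetry $B \text{ is } J\text{-contractive} \iff J B^T J^{-1} \text{ is}$ is recorded, both claims follow from the single computation with the real vectors $(1,0)^T$ and $(0,1)^T$.
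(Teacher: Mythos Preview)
Your instinct to look for a symmetry of the $J$-contractivity condition is exactly right, but the specific symmetry you propose fails. Since $\det B = 1$, one has $JB^TJ^{-1} = B^{-1}$ (this is just the adjugate formula for $2\times 2$ matrices), and a one-line computation shows
\[
\frac 1i\bigl((B^{-1})^*JB^{-1}-J\bigr)=-(B^*)^{-1}\Bigl[\frac 1i\bigl(B^*JB-J\bigr)\Bigr]B^{-1}\le 0,
\]
so $B^{-1}$ is $J$-\emph{expansive}, not $J$-contractive. Consequently you cannot apply the ``image of $0$ and $\infty$'' argument to $\widetilde B$ and read off the inequalities in the stated direction; your hedge ``up to sign bookkeeping'' hides a genuine sign that goes the wrong way. (Your explicit matrix for $\widetilde B$ is also miscomputed: it should be $\left(\begin{smallmatrix}B_{22}&-B_{12}\\-B_{21}&B_{11}\end{smallmatrix}\right)$, i.e.\ $B^{-1}$.)

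The paper repairs this by inserting a complex conjugation: taking the complex conjugate of (\ref{JCo}) and then conjugating by $\overline B^{\,-1}$ shows that $A:=\overline B^{\,-1}$ \emph{is} $J$-contractive. Now feed the real vectors $(1,0)^T$ and $(0,1)^T$ into $\frac1i(A^*JA-J)\ge 0$; since $J$ has zero diagonal this says the diagonal entries of $A^*JA$ have nonnegative imaginary part, and with $A=\left(\begin{smallmatrix}\overline{B_{22}}&-\overline{B_{12}}\\-\overline{B_{21}}&\overline{B_{11}}\end{smallmatrix}\right)$ these entries compute to $2i\,\Im(B_{22}\overline{B_{21}})$ and $2i\,\Im(\overline{B_{11}}B_{12})$, which is exactly what is claimed. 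So the only missing idea in your plan is that the symmetry must include complex conjugation; with that single change your quadratic-form computation with the standard basis vectors goes through verbatim.
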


\begin{proof} Taking complex conjugate of (\ref{JCo}) and multiplying out $ \overline B^* $ and $ \overline B $ in the result we find that $ A = \( {\overline B} \)^{ -1 } $ is $ J $-contractive as well. It follows that the daigonal elements of $ A^* J A $ must have non-negative imaginary part. These elements are $ 2i \Im \( B_{22} \overline{B_{ 21 }} \) $ and $ 2i\Im \(\overline{  B_{11}} B_{ 12 } \) $, and the assertion follows.
\end{proof}

When applied to $ N_\lambda $ this lemma says that $ \Phi^- /\Theta^- $ is Herglotz. To establish that it is $ o ( \lambda ) $ on the imaginary axis as $ \lambda \to i \infty $, consider $ \det N (\lambda ) = 1 $ written in the form
\[ \frac{\Phi_\lambda^-}{\Theta_\lambda^- } = \frac{\Phi_\lambda^+}{\Theta_\lambda^+ } + \frac 1{\Theta_\lambda^- \Theta_\lambda^+ } . \]
We already know that the first term in the r. h. s. is $ o ( y ) $. The functions $ \Theta_\pm $ are real entire and have all their zeroes real, hence by considering the corresponding canonical factorizations we infer that $ | \Theta_\pm ( iy ) | $ are  growing functions of $ y \in \R_+ $. It follows that the modulus of the second term in the r. h. s. is bounded (in fact, vanishes) as $ \lambda \to i \infty $. Thus the whole sum is $ o ( y ) $, (iii) holds and the proof of theorem \ref{restore} is completed in the case $ \Theta_+ \notin \HE $.

Let now $ \Theta_+ \in \HE $. In this case $ \Theta_+ \perp (\Theta_+ - 1 )/t $ by theorem \ref{orthog}. Hence (\ref{thplu}) holds unchanged. Indeed, $ \Theta_+ \( \Theta_+ - 1 \) / \left| E \right|^2 =  \Re \left[  ( \Theta_+ - 1 )/E \right] $ from whence
\[ \llangle  \frac{\Theta_+ - 1 }t , \Theta_+ \rrangle = \int \Re \left[  \frac{\Theta_+ - 1 }{ t E } \right] =  \lim_{ N \to + \infty } \Re \left[  \int \frac N{ t^2 + N }  \frac{\Theta_+ - 1 }{ t E } \right]  = \dots \]  
We had to insert the limit in $ N$ because $ \frac{\Theta_+ - 1 }{ t E } $ is not $ L^1 $ at infinity, to be able to switch the integration and taking the real part. Proceeding,
\[ \dots =  \lim_{ N \to + \infty } \Re \( \frac\pi{i} \frac{\Theta_+ ( i \sqrt N )  - 1 }{ E ( i \sqrt N ) } \) \mathop{\longrightarrow}_{ N \to + \infty}  0 \]
because $ \Theta_+ / E \in H^2 $ and $ | E ( i \tau ) | = | \Theta_- ( i \tau ) | \, \left| \Theta_+ / \Theta_- + i \right| \ge   | \Theta_- ( i \tau ) | \to \infty $ as $ \tau \to + \infty $. More generally, a similar calculation shows that $ \Theta_+  \perp \xi_\lambda^+ $ for any $ \lambda \in \C $.

The only other point in the argument where we have used the assumption $ \Theta_+ \notin \HE $ is in the formula (\ref{xiplusnorm}) where we have written down the norm of $ \xi^+_\lambda $ using the basis of $ K_\tau $. Since $ \xi_\lambda^+  \perp \Theta_+ $, as we have just mentioned, this formula also holds unchanged, and the proof of  the proof of theorem \ref{restore} is completed in this case as well.

\begin{exercise} Show that $ \Theta_+ \in \HE $ iff for the solution of the corresponding inverse problem the point $ x = L $ is a right endpoint of a singular interval with $ \cH ( x ) = \begin{pmatrix} 1 & 0 \cr 0 & 0 \end{pmatrix} $. \end{exercise}

\begin{exercise}\la{counterex} Show that there exists a matrix $ G = \begin{pmatrix} A & B \cr C & D \end{pmatrix} $ for which $ \Im ( A/ C ) , \Im ( B/D ) $, $ \Im ( B/A) $, $ \Im ( D/C ) $ are positive but $ \frac 1i ( G^* J G - J ) $ is not $ \ge 0 $. \end{exercise}

The part (\textit{ii}) of theorem \ref{restore} has an important consequence.
 
\begin{corollary}\la{trpos} The matrix $ N ( z ) $ constructed in theorem \ref{restore} satisfies $ \operatorname{tr} J \dot{N} ( 0 ) > 0 $. 
\end{corollary}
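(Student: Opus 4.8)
The plan is to squeeze information about $\dot N(0)$ out of part (ii) of Theorem~\ref{restore} by letting $\lambda\to 0$ along the imaginary axis, and then to upgrade the resulting weak inequality to a strict one using that the first column of $N$ comes from a Hermite--Biehler function.

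First I would expand $N$ near the origin. Since the entries of $N$ are real entire and $N(0)=I$, one has $N(\lambda)=I+\lambda\dot N(0)+O(\lambda^2)$ and $N^*(\lambda)=N^T(\overline\lambda)=I+\overline\lambda\,\dot N^T(0)+O(\lambda^2)$, hence
\[
N^*(\lambda)JN(\lambda)-J=\lambda\,J\dot N(0)+\overline\lambda\,\dot N^T(0)J+O(|\lambda|^2)\qquad(\lambda\to 0).
\]
Taking $\lambda=it$ with $t>0$ gives $\tfrac1i\(N^*(it)JN(it)-J\)=t\(J\dot N(0)-\dot N^T(0)J\)+O(t^2)$, which is $\ge 0$ by (ii); dividing by $t$ and letting $t\to 0+$ yields $J\dot N(0)-\dot N^T(0)J\ge 0$. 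Because $J^T=-J$ and $\dot N(0)$ is real, the left-hand side is exactly $J\dot N(0)+(J\dot N(0))^T$, a real symmetric positive semidefinite matrix, so its trace $2\operatorname{tr}J\dot N(0)$ is $\ge 0$, i.e.\ $\operatorname{tr}J\dot N(0)\ge 0$.

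For the strict inequality I would argue by contradiction: if $\operatorname{tr}J\dot N(0)=0$, then the positive semidefinite matrix $J\dot N(0)+(J\dot N(0))^T$ has zero trace and hence vanishes, so $J\dot N(0)$ is skew-symmetric; the only skew-symmetric $2\times2$ matrices being multiples of $J$, we get $J\dot N(0)=cJ$ and thus $\dot N(0)=cI$ for some real $c$. Looking at the first column of $N$ this forces $\dot\Theta_-(0)=0$. But $\Theta_-(0)=\tfrac1{2i}\(E(0)-E^*(0)\)=0$, and since $E$ is Hermite--Biehler the quotient $\Theta_+/\Theta_-$ is a Herglotz function whose pole at the origin — where $\Theta_+(0)=1\ne0$ — must be simple; equivalently $\dot\Theta_-(0)=-\pi K_0(0)=-\pi\len K_0\rin_{\HE}^2<0$ by (\ref{reprokernel}). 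Either way $\dot\Theta_-(0)\ne0$, a contradiction, so $\operatorname{tr}J\dot N(0)>0$. The only point requiring a little care is the passage to the limit in (ii): one must divide by $\Im\lambda$ before letting $\lambda\to0$, which is legitimate precisely because $N^*JN-J$ vanishes to first order at the origin; everything else is elementary linear algebra together with the fact that a Herglotz function has no multiple poles.
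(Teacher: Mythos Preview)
Your argument is correct and follows the same basic strategy as the paper: expand condition (ii) of Theorem~\ref{restore} to first order at $\lambda=0$ along the imaginary axis and read off a positive semidefiniteness condition involving $J\dot N(0)$. There are two minor differences worth noting. First, the paper additionally invokes (i), i.e.\ $\det N\equiv1$, to get $\operatorname{tr}\dot N(0)=0$; this forces $J\dot N(0)$ itself to be symmetric, so that $J\dot N(0)\ge0$ as a matrix rather than merely its symmetric part. That is a slightly stronger conclusion, though your weaker one already yields $\operatorname{tr}J\dot N(0)\ge0$. Second, the paper's written proof stops at $J\dot N(0)\ge0$ without spelling out the passage to the strict inequality; your contradiction argument via $\dot\Theta_-(0)=-\pi K_0(0)<0$ (equivalently, the simple pole of the Herglotz function $\Theta_+/\Theta_-$ at the origin) supplies precisely the missing step, and is what one needs in either approach to rule out $\operatorname{tr}J\dot N(0)=0$.
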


\begin{proof}
 On developing (\textit{ii}) at $ \lambda = 0 $ we find $ i^{ -1 } \( \overline \alpha {\dot N}^* ( 0 ) J + \alpha  J \dot N ( 0 ) \) \ge 0 $ for any $ \alpha \in \C_+ $, $ | \alpha | =1 $. In particular, $ \alpha  = i $ gives $ \Re ( J \dot N ( 0 ) ) \ge 0 $. On differentiating $ \det N ( \lambda ) = 1 $ at $ \lambda = 0 $, we obtain that $ \operatorname{tr} \dot N ( 0 ) = 0 $. This and $ \dot  N ( 0 ) $ having real entires imply that $ J \dot N ( 0 ) $ is selfadjoint. Thus $ J \dot N ( 0 ) \ge 0 $ .
\end{proof}

When discussing the constructive aspect of the solution of the inverse problem we are going to use another corollary of calculations in section \ref{diprobl} and formulae (\ref{phi-}), (\ref{phi+}). Let $ ( \cH , L ) $ be a regular canonical system, satisfying the compatibility condition (L). Then (\ref{phi-}), (\ref{phi+}) hold and plugging them into lemma \ref{trace} with $ x = L $ we find that
\be\la{LthroughE}  L =  \operatorname{tr} J \dot M ( 0 ) = \dot{\Phi}_+ ( 0 ) - \dot{\Theta}_- ( 0 ) = G_0^+ -  \dot{\Theta}_- ( 0 ) = \frac 1\pi \len \frac{ \Theta_+ - 1 }\lambda \rin_{\HE}^2 -  \Im \dot{E} ( 0 )  \ee
where $ E = E ( L , \cdot ) $.

\section{Appendix II}

\noindent\textit{Proof of theorem \ref{isometry}.} Throughout, $ S ( z ) = e^{ ia z} B ( z ) $, $ a \ge 0 $, $ B $ being the Blaschke product over zeroes of $ W $, is the canonical factorization of the inner function $ S = W^* / W $. 

$ \Rightarrow $ Let $ \overline w $ be a root of $ W $, then 
\[ \len K_w \rin^2 = K_w ( w ) = \frac 1{ 4\pi \Im w } \left| W ( w ) \right|^2 . \] 
On the other hand, 
\[ \len K_w \rin_{ L^2 \( \R , \left| W \right|^{ -2 } \d \mu \) }^2 = \frac 1{\( 2 \pi \)^2 } \left| W ( w ) \right|^2 \int \frac { \d \mu ( t ) }{ \left| w - t \right|^2 } , \] 
hence $ \int \frac{ \d \mu ( t ) }{ 1 + t^2 } $ is finite, and 
\[ \int \frac{ \d \mu ( t ) }{ \left| w - t \right|^2 } = \frac \pi{\Im w } . \] 
Thus the Poisson transform of the measure $ \d \mu $ exists and defines a positive harmonic function in $ \C_+ $, hence there exists a $ \Xi \in H^\infty $, $ \len \Xi \rin \le 1 $, such that 
\be\la{xi} \Re \frac{ 1 + \Xi ( z ) }{ 1 - \Xi ( z ) } = \frac{ \Im z }\pi \int \frac {\d \mu ( t )}{\( t - \Re z \)^2 + \( \Im z \)^2 } , \; z \in \C_+ . \ee

Notice that the function $ \Xi $ is defined by (\ref{xi}) up to a real parameter (to be fixed later). The implication will be proved if we show that $ \Xi / S $ is a contractive analytic function in $ \C_+ $ for an appropriate choice of this parameter. 

Observe first that, by the calculation above, if $ S ( z ) = 0 $, that is, $ W ( \overline z ) = 0 $, then the r. h. s. in (\ref{xi}) is $ 1 $, which implies $ \Xi ( z )  = 0 $, and it is easy to check that the multiplicity of zero of $ S $ at a point $ z $ is not greater than that of $ \Xi $. Thus, $ \Xi/ S $ is indeed analytic in $ \C_+ $, and therefore $ B $ is an inner factor of $ \Xi $. It remains to show that $  e^{  ia z} $ is also an inner factor of $ \Xi $, that is, $ \Xi e^{ - iaz} \in H^\infty $. We are going to establish that 

$ 1^\circ $. $ \Xi ( z ) = O \( e^{ - a^\prime \Im z } \) $ as $ \Im z \to + \infty $ for any $ a^\prime < a $.

This assertion will impy $ \Xi e^{ - iaz} \in H^\infty $, and hence the $ \Rightarrow $ implication of the theorem, by an elementary Fragmen-Lindel\"of theorem (or, alternatively, by the arithmetics of inner functions).

If $ a = 0 $ then $ 1^\circ $ is trivial, so let $ a > 0 $. Let us calculate the norm of reproducing kernels, $ K_w $, at $ w = k + i \tau , \tau \to + \infty $, 
\[ \len  K_{ w } \rin^2 = \frac 1{ 4 \pi \tau } \( \left| W ( w ) \right|^2 - \left| W^* ( w ) \right|^2 \) = \frac 1{ 4 \pi \tau } \left| W ( w ) \right|^2 \( 1 + O \( e^{ -2 a \tau } \) \) .  \]
On the other hand, by the isometry of the inclusion the same quantity is
\bequnan \frac 1{ \( 2 \pi \)^2 } \int \frac{ \d\mu ( t ) }{ \left| W (  t ) \right|^2 } \frac{ \left| W ( t ) \overline { W ( w) } - W^* ( t ) W ( \overline w ) \right|^2 }{ \left| w - t \right|^2 } = \frac 1{ \( 2 \pi \)^2 } \left| W ( w ) \right|^2  \int \frac {\d \mu ( t )}{ \left| t - w \right|^2 } \( 1 + O \( e^{ - a \tau } \)  \) . 
\eequnan 
On comparing, 
\[ \frac \tau\pi \int \frac {\d \mu ( t )}{ \left| t - w \right|^2 } = 1 + O \( e^{ - a \tau } \tau \) , \] 
and this estimate is locally uniform in $ k = \Re w $. Now, let $ g ( w ) $ be the analytic function in $ \C_+ $ whose real part is the Poisson transform of $ \d \mu $. We have just proved that $ \Re g ( w ) = 1 + O \( e^{ - a^\prime \Im w } \) $ for any $ a^\prime < a $, and this estimate is locally uniform in $ \Re w $. From the Cauchy-Riemann equations it follows that $ \Im g ( w ) = \mathrm{const} + O \( e^{ - a^\prime \Im w } \) $. Choose this constant to be zero, and define $ \Xi $ by $ g = \frac{ 1 + \Xi }{ 1 - \Xi } $. By construction, this function satisfies (\ref{xi}), and $1^\circ $ holds. 

$ \Leftarrow $ The proof of this implication consists in checking that the scalar products in $ \HW $ and in $ L^2 \( \R , \left| W \right|^{ -2 } \d \mu \) $ coincide on elements of a total set in $ \HW $. The total set to be used is\footnote{To keep notation at minimum we will assume throughout the argument that all the zeroes of $ W $ are simple.} $ {\mathcal N} = \{ K_w \}_{ W ( \overline w ) = 0 } \cup \{ f_b \}_{ 0 < b < a } $, $ f_b ( z ) \colon = \frac{ 1 - e^{ izb} }z W ( z ) $. We postpone the derivation of its totality in $ \HW $ until the end of the proof and first establish the implication taking the totality for granted. It then suffices to check that for all $ K_w $ and $ f_b $ from this set
\bequnan 
1^\circ . & \llangle K_w , K_{ w^\prime } \rrangle_{ \HW } = \llangle K_w , K_{ w^\prime } \rrangle_{ L^2 \( \R , \left| W \right|^{ -2 } \d \mu \) } ,  \cr 
2^\circ . & \llangle f_b , f_{ b^\prime } \rrangle_{ \HW } = \llangle f_b , f_{ b^\prime } \rrangle_{ L^2 \( \R , \left| W \right|^{ -2 } \d \mu \) } , \cr
3^\circ . & \llangle K_w , f_b \rrangle_{ \HW } = \llangle K_w , f_b \rrangle_{ L^2 \( \R , \left| W \right|^{ -2 } \d \mu \) }  . \eequnan

The l. h. s. in $ 1^\circ $ is 
\[ K_w ( w^\prime ) = \frac 1{2 \pi i } \frac{\overline {W ( w )} W ( w^\prime ) }{ \overline w - w^\prime } , \] 
hence $ 1^\circ $ will be established if we show that
\be\la{scprod1} \int \frac { \d \mu ( t ) }{ (\overline w - t ) ( w^\prime - t ) } = - 2 \pi i \frac 1{ \overline w - w^\prime } . \ee 
Indeed, let 
\[ g = \frac{ 1 + A S }{ 1 - A S } , \]
then the assumption of the theorem means that 
\be\la{Hergl} g ( z ) = - \frac i\pi \int \( \frac 1{t-z} - \frac t{ 1+t^2 } \) \d \mu ( t ) + i \gamma , \; \gamma \in \R . \ee 
It follows that 
\be\la{scprod}  \mathrm{L. H. S. \,\, in} \, (\ref{scprod1})  = \frac \pi i \frac{ \overline{ g ( w ) } + g ( w^\prime ) }{ \overline w - w^\prime } . \ee  As $ g ( w ) = g ( w^\prime ) = 1 $ since $ S ( w ) = S ( w^\prime ) = 0 $, we infer (\ref{scprod1}). $ 1^\circ $ is established.

Proceeding, the r. h. s. of $ 2^\circ $ is \bequnan \int \frac{ \( 1 - e^{ ikb} \) \( 1 - e^{ -ik b^\prime } \)  }{ k^2 } \, \d \mu ( k )  = \\ \begin{array}{ccc} \displaystyle{ \lim_{ \von \downarrow 0 }} & \underbrace{\int \frac{ \( 1 - e^{ ikb} \) \( 1 - e^{ -ik b^\prime } \)  }{ k^2 } \Re g ( k + i \von ) } & \d k , \cr & (I) & \end{array} \eequnan 
\bequnan (I) \equiv \frac 12 \int \frac{ \( 1 - e^{ ikb} \) \( 1 - e^{ -ik b^\prime } \)  }{ k^2 } ( g ( k + i \von ) - 1 ) \d k + \\ \frac 12 \overline{ \int \frac{ \( 1 - e^{ -ikb} \) \( 1 - e^{ ik b^\prime } \)  }{ k^2 } ( g ( k + i \von ) - 1 ) \d k } + \\ \int \frac{ \( 1 - e^{ ikb} \) \( 1 - e^{ -ik b^\prime } \)  }{ k^2 } \, d k  . \eequnan 
The first two terms in the r. h. s. of the formula for (I) vanish by contour integration since $  g ( k + i\von ) - 1 = O \( e^{ - a \von } \)$ uniformly in $ k $, and the last term is precisely the l. h. s. in $ 2^\circ $. $ 3^\circ $ is proved similarly to $ 2^\circ $ - the measure in the r. h. s. of $ 3^\circ $ is approximated by $ \Re g ( k + i \von ) \d k $, and the resulting integral is calculated explicitly to be the l. h. s. The first assertion of the theorem is established modulo totality of the set $ \mathcal N $.

\textit{Proof of totality of $ \mathcal N $.} Let $ f \in \HW $ be orthogonal to $ \mathcal N $. Then $ f ( w ) = \llangle f , K_w \rrangle_{\HW } = 0 $ for any $ w $ such that $ W ( \overline w ) = 0 $. It follows that $ B $ is an inner factor of the function $ f / W \in H^2 $. Now $ f \perp f_b $ means that $ f/W $ is orthogonal in $ H^2 $ to $ ( 1 - e^{ izb} )/z $ for all $ b \in ( 0 , a ) $.  The latter function is the inverse Fourier transform of the indicator of the interval $ ( 0 , b ) $ up to a numerical factor. It follows that $ f/W $ is the inverse Fourier transform of an $ L^2 ( \R ) $ function orthogonal to functions supported on $ ( 0 , a ) $. By the Paley-Wiener theorem this means that $ f/W \in e^{ iaz} H^2 $. Combining these, we conclude that $ f/W \in S H^2 $. On the other hand, $ f / W \perp S H^2 $ by the very definition of a de Branges space (it's $ f^*/W \in H^2 $ spelled out!), so $ f = 0 $.      

We now proceed to proving the second assertion of the theorem. Let (\ref{infinitymass}) be satisfied with a $ c_0 > 0 $. Then $ S $ is a Blaschke product, hence the set $ \{ K_w \}_{ W ( \overline w ) = 0 } $ is total in $ \HW $. Let us calculate the scalar product of reproducing kernels, $ K_w $ and $ K_{ w^\prime } $, from this set. The Herglotz representation for the function $ g = \frac{ 1 + A S }{ 1 - A S } $ differs from (\ref{Hergl}) in this case by an extra term of the form $ -i c_0 z $ in the r. h. s., hence (compare (\ref{scprod})) 
\bequnan \int \frac { \d \mu ( t ) }{ (\overline w - t ) ( w^\prime - t ) } = \frac \pi i \frac{ \overline{ g ( w ) + i c_0 w  } + g ( w^\prime ) + i c_0 w^\prime }{ \overline w - w^\prime } = [ g ( w ) = g ( w^\prime ) = 1 ] = \\  - 2 \pi i \frac 1{ \overline w - w^\prime } - c_0 \pi . \eequnan

Multiplyed by $ \overline{ W ( w ) } W ( w^\prime ) $ the last line becomes 
\[ \llangle K_w , K_{ w^\prime } \rrangle_{ \HW } = \llangle K_w , K_{ w^\prime } \rrangle_{ L^2 \( \R , \left| W \right|^{ -2 } \d \mu \) } + c_0 \frac \pi{ \( 2 \pi i\)^2 } \overline{ W ( w ) } W ( w^\prime ) . \]
By bi-linearity it follows that for any finite linear combinations, $ f = \sum f_j K_{ w_j } $, $ g = \sum g_j K_{ w_j } $, of elements of the total set,
\[ \llangle f , g \rrangle_{ L^2 \( \R , \left| W \right|^{ -2 } \d \mu \) } = \llangle f , g \rrangle_{ \HW } + \( - \frac { c_0 }{ 4 \pi } \) \( \sum f_j  \overline {W ( w_j )} \) \overline{ \sum g_j  \overline {W ( w_j )} } \]
In partcular, this means that the closure in $ \HW $ of the linear subset 
\[ {\mathcal X} = \left\{ f = \sum_{ j = 1}^N f_j K_{ w_j } , \; N = N ( f ) < \infty \colon \sum f_j \overline{ W ( w_j ) } = 0 \right\} \]
of such combinations is isometrically embedded into $ L^2 \( \R , \left| W \right|^{ -2 } \d \mu \) $. 

It remains to show that the orthogonal complement of $ \mathcal X $ is $ \HW $ is spanned by a linear combination of $ W $ and $ W^* $. Let $ f \in \HW \ominus \mathcal X $. Pick up an arbitrary $ w \colon W ( \overline w ) = 0 $, and let $ c = f ( w ) / W ( w ) $. Notice that $ c $ dose not depend on the choice of $ w $. Indeed, \[ K_w - \frac{ \overline{ W ( w ) } }{ \overline{ W ( \lambda )} } K_\lambda \in {\mathcal X} \] if $ W ( \overline \lambda ) = W ( \overline w ) = 0 $, hence $ f $ is orthogonal to this element, that is, $ f ( w ) -  \frac{ W ( w ) }{ W ( \lambda ) } f ( \lambda ) = 0 $. 

It follows that \[ \frac{ f - c W }{ \cdot - \lambda } \in \HW \] for any $ \lambda \colon W ( \overline \lambda ) = 0 $. Obviously, this element vanishes at all points $ z $, $ z \ne \lambda $, $ W ( \overline z ) = 0 $, hence is orthogonal to all the reproducing kernels from the total set except for at most $ K_\lambda $, that is, it is a multiple of the corresponding element, $ K_{ \overline \lambda } $, of the biorthogonal system, 
\[ \frac{ f - c W }{ \cdot - \lambda } = c_1 K_{ \overline \lambda } \equiv c_2 \frac{ W^* }{ \cdot - \lambda } \] for some constants $ c_1 , c_2 $ depending on $ \lambda $ only. Thus $ f $ is a linear combination of $ W $ and $ W^* $. Since $ f \in \HW $, this is only possible if $ f $ is a multiple of $ W + e^{ i \alpha } W^* $ for some real $ \alpha $. Finally, $ \mathcal X $ is not dense in $ \HW $, for if it were, then one can use the first assertion of the theorem \textit{in the reverse direction} to infer that $ c_0 = 0 $. \hfill $ \Box $ 

\medskip

\begin{remarknonumb}
The subspace $ \mathcal X $ in the second assertion of the theorem is a de Branges space. \end{remarknonumb}

\begin{proof} By the axiomatic characterization of de Branges spaces it suffices to check that $ f  / ( \cdot - a ) \in \mathcal X $ whenever $ f \in \mathcal X $, $ a \notin \R $ and $ f ( a ) = 0 $. By theorem \ref{orthog}, the set of vectors, $ e_k $, of the form 
\[ e_k = \frac {W + e^{ i \alpha } W^* }{ \cdot - t_k } , \] 
where $ t_k $ ranges over the zeroes of the numerator, forms an orthogonal basis in $ \mathcal X $.
Let $ f = \sum f_k e_k $ be the decomposition of $ f $ w. r. t. this basis. Then 
\bequnan \frac {f ( z)}{ z- a } = \sum f_k \frac {W ( z )  + e^{ i \alpha } W^* ( z)  }{ ( z - t_k )( z - a ) } = \sum f_k \frac 1{t_k - a } \frac {W ( z )  + e^{ i \alpha } W^* ( z)  }{ z - t_k } - \\ \frac 1{ z - a }\sum f_k \frac {W ( z )  + e^{ i \alpha } W^* ( z)  }{t_k - a } . \eequnan 
The sum in the second term in the r. h. s. is $ - f ( a ) $, thus it vanishes, and the first term is a linear combination of $ e_k $'s hence an element of $ \mathcal X $. \end{proof} 

\begin{remark} \la{Polya} Let $ G $ be an HB entire function of exponential type. Then for all $ k \in \R $ the function $  | G ( k + i \von ) | $ of $ \von $ is monotone increasing on $ \R_+ $ and tends to $ \infty $ as $ \von \to +\infty $. \end{remark}

\begin{proof} Let \[ G ( z ) = C e^{ az } \prod \( 1 - \frac z{z_j} \) e^{ \frac z{z_j} } \] be the Weierstra\ss\  product for $ G ( z ) $. Notice that the sum $ \sum \Im \( 1/z_j \) $ converges because the function $ G^* / G $ is inner and the Blaschke condition is hence satisfied. Then \[ G (  z ) = C e^{ b z } \prod \( 1 - \frac z{z_j} \) e^{ z \Re \( \frac 1{z_j} \) } . \] The assertion will be proved if we show that $ \Im b \le 0 $. We have \[ \frac{G^* ( z )}{ G ( z ) } = e^{ ( \overline b - b ) z } \prod \frac{z_j}{\overline{z_j}}\, \frac{ z -  \overline{z_j} }{ z -  z_j } . \] Comparing the r. h. s. with the canonical factorization of the inner function in the l. h. s., we infer that $ \Im b \le 0 $ indeed. \end{proof}

Solving inverse problems requires studying hereditary properties of regularity with respect to inclusion. A trivial remark is that if a de Branges space $ \HG $ is regular and is isometrically contained in a de Branges space $ \HE $, then $ \HE $ is also regular. According to theorem \ref{isomembed} the converse to this assertion holds true if the functions $ G $ and $ E $ have no real zeroes.     

\medskip

\noindent\textit{Proof of theorem \ref{isomembed}.} Let $ g \in \HG $, 
\[ f ( z ) \colon = \frac{g ( z) - g  ( 0 ) }z . \] 
It suffices to show that $ f \in \HG $. Given an $ h \in \HW \ominus \HG $ let 
\bequnan \Psi_1 ( z ) & = &\llangle \frac{ G ( z ) f - f( z) G}{ \cdot - z } , h \rrangle_{ \HW } , \\ \Psi_2 ( z ) & = & \llangle \frac{ G^* ( z ) f - f( z) G^* }{  \cdot -z } , h \rrangle_{ \HW } . \eequnan Then, $ \Psi_{ 1,2 } $ are obviously entire, and $ \Psi_1 G^* - \Psi_2 G = 0 $. On account of $ G $ and $ G^* $ having no common zeroes by the assumption, this means that 
\be\la{psi+-} \left\{ \begin{array}{ccc} \Psi_1 & = & \Psi G \hfill \cr
\Psi_2 & = &\Psi G^*  \end{array} \right. \ee
for some entire function $ \Psi $. Let us show that $ \Psi \equiv 0 $. 

We have, 
\be\la{Psistructure} \begin{array}{ccrc} \Psi ( z ) = & \underbrace{\llangle \frac f{ \cdot - z } , h \rrangle_{L^2 \( \R , \left| W \right|^{ -2 } \) } } & \!\! - \, \displaystyle{\frac{ f( z ) }{G (z ) }} & \underbrace{ \! \llangle \frac G{ \cdot - z } , h \rrangle_{L^2 \( \R , \left| W \right|^{ -2 } \) }} , \cr \noalign{\vskip4pt} & \textrm{Cauchy transform of } & & (II) \cr & \textrm{an } L^1-\textrm{function} & & \end{array} \ee
\bequnan (II) = ( z - z_0 ) \llangle \frac G{ ( \cdot - z )  ( \cdot - z_0 ) } , h \rrangle_{L^2 \( \R , \left| W \right|^{ -2 } \) } + \llangle \frac G{ \cdot - z_0 } , h \rrangle_{L^2 \( \R , \left| W \right|^{ -2 } \) } \equiv \\ ( z - z_0 ) \( \textrm{Cauchy transform of an } L^1-\textrm{function}\) + \textrm{const} .\eequnan This and a similar representation obtained from the second line in (\ref{psi+-}) mean that $ \Psi $ is a function of bounded type in both halfplanes, hence an entire function of exponential type by the Krein theorem. Also, this means that $ \( 1 + t^2 \)^{ -1 } \ln |\Psi | \in L^1 ( \R ) $, and 
\[ \ln | \Psi ( z ) | \le \frac {\Im z }\pi \int \frac{ \ln_+ | \Psi ( t ) | \mathrm{d} t }{ \( \Re z - t \)^2 + \( \Im z \)^2 } , \]
 which implies that $ \ln \Psi ( z ) = o ( |z| ) $ as $ z \to \infty $ along any ray $ \mathrm{arg} \, z  = \theta $, $ 0 < \theta < \pi $. Looking once more at (\ref{Psistructure}) we see that for $ \Im z > 1 $, $ \Re z = 0 $
\[ \begin{array}{ccc} | \Psi ( z ) | \le \displaystyle{ C \left[ \frac 1{ \Im z } + \left| \frac {f ( z ) }{ G ( z ) } \right|  \( 1 + \frac {| z - z_0 |}{ \Im z } \) \right]} & \le & \displaystyle{ C \( \frac1{\Im z } + \frac 1{ G ( z ) } \) }. \cr & \overbrace{  g / G \in H^2 } & \end{array} \]
According to remark \ref{Polya} $ | G (  i \von ) | \to + \infty $ as $ \von \to + \infty $. Thus, $ \Psi ( i \von ) \to 0 $ as $ \von \to + \infty $. A similar argument  starting from $ \Psi = \Psi_2 / G^* $ gives that $ \Psi ( i \von ) \to 0 $ as $ \von \to - \infty $. Bringing everything together, 
\[ \begin{array}{c} \ln | \Psi ( z ) | = o ( |z| ) \textrm{ on any ray } e^{ i \theta } [ 0 , +\infty ) , \theta \ne 0 , \pi \cr \noalign{\vskip4pt}  \Psi ( z ) \textrm{ is of exponential type}, \cr \noalign{\vskip4pt}  \Psi ( i \tau ) \displaystyle{ \mathop{\longrightarrow}_{\tau \to \pm \infty }} 0  . \end{array} \]  
Now a hall-of-mirrors argument (see also the beginning of section \ref{finreg}) shows that $ \Psi \equiv 0 $.

Thus, 
\[ \frac{ G ( z ) f - f( z) G}{ \cdot - z } \in \HG . \] 

\begin{exercise} In itself, the assertion just established says $ \( t + i \)^{-2} G^{ -1 } \in H^2 $, while what we need is $ \( t + i \)^{-1} G^{ -1 } \in H^2 $. Is it possible to proceed with the solution of the inverse problem using just the easier inclusion? Notice also that so far we have not used the isometry of the embedding, just the equivalence of $ \HG $- and $ \HW $-norms on $ \HG $. \end{exercise}

Let $ P $ be the orthogonal projection on $ \HG $ in $ \HW $, $ P^\perp = I - P $, then \[ \begin{array}{ccccc} \underbrace{\frac{ G ( z ) f - f( z) G}{ \cdot - z }} & = & \underbrace{\frac{ G ( z ) Pf - (Pf) ( z) G}{ \cdot - z }} & + & \underbrace{\frac{ G ( z ) P^\perp f - ( P^\perp f) ( z)  G}{ \cdot - z } .} \cr \in \HG & & \in \HG & \Longrightarrow & \in \HG \end{array} \] On taking scalar product with $ P^\perp f $, we find
\[ 0 = \llangle \frac{ G ( z ) P^\perp f - ( P^\perp f) ( z)  G}{ \cdot - z } , P^\perp f \rrangle_{ \HW } , \] that is, for $ z \notin \R $, 
\be\la{fperp} \int_{ \R } \left| \frac{ ( P^\perp f ) (t) }{ W ( t ) } \right|^2 \frac1{ t - z } \d t = \frac{ ( P^\perp f ) (z) }{ G ( z ) } \llangle \frac G{\cdot - z } , P^\perp f \rrangle_{ \HW } . \ee

Consider the limit $ z = i \tau $, $ \tau \to + \infty $. The l. h. s. is $ \( - i \tau \)^{ -1 } \int_{ \R } \left|
\frac{ ( P^\perp f ) (t) }{ W ( t ) } \right|^2 \d t ( 1 + o ( 1 ) ) $. Pick up a $ z_0 $, $ G ( z_0 ) = 0 $, then the scalar product in the r. h. s. is 
\[ \begin{array}{cccc} \llangle \displaystyle{ \frac G{\cdot - z }} , P^\perp f \rrangle_{ \HW } - & \underbrace{ \llangle \frac G{\cdot - z_0 } , P^\perp f \rrangle_{ \HW }} & = ( z - z_0 ) \llangle \displaystyle{ \frac G{( \cdot - z ) ( \cdot - z_0 ) } } , P^\perp f \rrangle_{ \HW } = \cr & \textrm{this is zero!} & \end{array} \] \bequnan \begin{array}{cccc} \displaystyle{ \int_{ \R }} & \!\!\!\!\!\! \underbrace{\frac{ z-z_0 }{ t-z }} &
\!\!\!\!\!\!\!\!\! \underbrace{ \frac{ G ( t ) \overline{ ( P^\perp f ) (t)} }{ ( t - z_0 ) \left| W ( t ) \right|^2 }} & \d t \displaystyle {\mathop{\longrightarrow}_{ \tau \to + \infty }} - \int_{ \R } \frac{ G ( t ) \overline{ ( P^\perp f ) (t)} }{ ( t - z_0 ) \left| W ( t ) \right|^2 } \d t \equiv\cr & | \dots | \le \textrm{const} & \in L^1 & \end{array}  \\ \llangle \frac G{ z_0 - \cdot } , P^\perp f \rrangle_{ \HW }  = 0 .\eequnan 
Comparing this with the asymptotics of the l. h. s. in (\ref{fperp}) we see that the theorem will be proved if we show that 
\be\la{Pperp} \frac{ ( P^\perp f ) ( i \tau ) }{ G ( i \tau ) } = O \( \frac 1\tau \) , \; \tau \to + \infty . \ee 

Notice first that this estimate holds with $ P^\perp f $ replaced by $ f $, that is, 
\[ \frac{ f ( i\tau )}{ G ( i\tau ) } \equiv \frac{ g ( i\tau ) - g ( 0 ) }{ i\tau G ( i \tau ) }= O \( \tau^{ -1 } \) , \] 
for $ g / G \in H^2 $ and $ | G ( i \tau ) | $ is monotone increasing for $ \tau > 0 $ by remark \ref{Polya}. This makes it reasonable to try to estimate $ P f / G $. However, the only thing about $ Pf/G $ we know immediately is that it is an $ H^2 $-function and is therefore $  O \( \tau^{ -1/2 } \)  $ as $ \tau \to + \infty $. To proceed, let us recall that by lemma \ref{deBrchar} $ Pf / G $ is not just an $ H^2 $-function, but satisfies   
\be\la{forthogdeBr} \left| \frac{  ( Pf ) ( i \tau )}{ G ( i\tau ) } \right| \le  C \sqrt{ \frac{ \left| G (i \tau ) \right| - \left| G^* (i \tau ) \right|}{ \tau \left| G (i \tau ) \right| } } . \ee
Here we took into account that $ | G ( i \tau ) | + | G^* ( i \tau ) | \asymp | G ( i \tau) | $ for $ \tau > 0 $. We are going to estimate the r. h. s. via $ ( P^\perp f ) ( i \tau) $. To this end, we shall show $ P^\perp f $ to be a linear combination of $ G $ and $ G^* $.   
Let us first establish that 
\be\la{prodnum} \llangle \frac{ G ( z ) P^\perp f - (P^\perp f) ( z) G}{ \cdot - z } , h \rrangle_{ \HG } = 0 \ee 
for any $ h \in \HG $, $ z \notin \R $ such that $ h \( \overline z \) = 0 $. Since the embedding $ \HG \subset \HW $ is isometric, the scalar product in the l. h. s. can be replaced by the product in $ L^2 \( \R , \left| W \right|^{ -2 } \) $, and then split into a sum of two products according to terms in the numerator. We are going to show that they both vanish, $ \llangle \frac{ P^\perp f }{ \cdot - z } , h \rrangle_{ L^2 \( \R ,  \left| W \right|^{ -2 } \) } = 0 $ and $ (II) \equiv \llangle \frac G{ \cdot - z } , h \rrangle_{ L^2 \( \R ,  \left| W \right|^{ -2 } \) } = 0 $. Indeed, 
\[ \llangle \frac{ P^\perp f }{ \cdot - z } , h \rrangle_{ L^2 \( \R ,  \left| W \right|^{ -2 } \) } = \llangle P^\perp f , \frac h{ \cdot - \overline z } \rrangle_{\HW } = 0 . \]
To handle \textit{(II)} we check that the product in there can be replaced by the $ L^2 \( \R , \left| G \right|^{-2} \) $-product. Pick up an arbitrary $ \xi \in \HG $ such that $ \xi ( z ) \ne 0 $. Then 
\bequnan \begin{array}{rclcl} (II) = \displaystyle{\frac 1{\xi ( z ) }} \Bigl\langle \!\!\!\!\! & \underbrace{ \frac{ \xi( z ) G  - G( z) \xi }{ \cdot - z }} & \!\!\!\!\! , h \Bigr\rangle_{  L^2 \( \R ,  \left| W \right|^{ -2 } \) } + \displaystyle{\frac { G ( z ) }{ \xi ( z ) }} \Bigl\langle \xi , \!\!\!\!\!\!\! & \underbrace{\frac h{ \cdot - \overline z }} & \!\!\!\!\!\!\!\Bigr\rangle_{  L^2 \( \R ,  \left| W \right|^{ -2 } \) } = \cr & \in \HG & & \in \HG & \end{array} \\ \frac 1{\xi ( z ) } \Bigl\langle \frac{ \xi( z ) G  - G( z) \xi }{ \cdot - z } , h \Bigr\rangle_{ \HG } + \frac { G ( z ) }{ \xi ( z ) } \Bigl\langle \xi , \frac h{ \cdot - \overline z }  \Bigr\rangle_{ \HG} = \\ \int \frac{ G( t ) \overline{ h ( t ) }}{ t - z } \frac{ \d t}{ \left| G ( t ) \right|^2 } =  \overline{ \int \frac{ h ( t ) } { G ( t ) } \frac{ \d t}{ t - \overline z }} = 0  \eequnan 
for $ z \in \C_+ $ by $ h / G \in H^2 $ and for $ z \in \C_- $ by $ h ( \overline z ) = 0 $. Thus, (\ref{prodnum}) is established, hence there is a $ c = c ( z ) \in \C $ such that 
\[ \frac{ G ( z ) P^\perp f - (P^\perp f) ( z) G}{ \cdot - z } = c K^G_{ \overline z } . \]
Recalling the expression for a reproducing kernel via the corresponding HB function, we see that the numerator in the l. h. s. is a linear combination of $ G $ and $ G^* $. One concludes that $ P^\perp f = \alpha G + \beta G^* $ for some constants $ \alpha , \beta $. Without loss of generality one can assume that $ f $ is real, hence so is $ P^\perp f $. It follows that $ \beta = \overline \alpha $ and therefore 
\[ \left| G (z ) \right| - \left| G^* (z) \right| \le C \left| ( P^\perp f ) ( z ) \right| . \] 
Plugging this into (\ref{forthogdeBr}) we find that 
\[ \textrm{ R.H.S. of (\ref{forthogdeBr}) } = O \(  \sqrt {\frac{ | ( P^\perp f ) ( i \tau )|}{ | G ( i \tau ) | \tau } } \) . \]
 On the other hand, as has been mentioned, $ f / G $ is $ O \( \tau^{ -1 } \) $. Combining these we find that when $ \tau \to + \infty $  \[ \frac{( P^\perp f) ( i \tau )}{ G ( i \tau ) } = O \( \tau^{ -1 } + \sqrt { \frac 1\tau \left| \frac{ ( P^\perp f ) ( i \tau )}{  G ( i \tau ) } \right| } \) . \]
Solving this inequality we get (\ref{Pperp}). \hfill $ \Box $

\section{Appendix III. Proof of the lattice property (Theorem \ref{lattice})}\la{Latticeproof} 

Suppose by contradiction that there exist $ f_X , f_Y \in L^2 ( \R , \d \mu ) $ such that $ f_X \perp  X $, $ f_X \notin Y^\perp $; $ f_Y \perp Y $, $ f_Y \notin X^\perp $. For nonzero $ h \in Y $ and $ g \in X $ (to be chosen later) define 
\[ \xi_Y ( z ) = \frac 1{h(z)} \llangle \frac{ g ( z ) h - h( z) g}{ z - \cdot } , f_Y \rrangle_{ L^2 ( \R , \d \mu ) }  . \]
Notice that $ \xi_Y $ does not depend on $ h \in Y $. Indeed, the difference of $ \xi $'s corresponding to two different nonzero $ h $, say, $ h_1 $ and $ h_2 $, is  
\[ 
 \begin{array}{ccc}
\displaystyle{\frac {g ( z )}{ h_1 ( z ) h_2 ( z ) }} \bigg\langle \!\!\!\! & \underbrace{ \frac{ h_2 ( z ) h_1 - h_1 ( z) h_2 }{ z - \cdot }} & , f_Y \bigg\rangle_{ L^2 ( \R , \d \mu ) } = \langle \dots , \dots \rangle_Y = 0 \cr & \in Y &  \end{array}  \]
on account of the fact that $ Y $ is a \textit{subspace} in $  L^2 ( \R , \d \mu ) $. It follows that $ \xi_Y $ is an entire function. Our immediate goal is to show that $ \xi_Y $ has finite exponential type for an appropriate choice of $ g $. By its definition, in each of the halfplanes $ \C_\pm $
\be\la{xiYstr} \xi_Y = \frac gh \cdot \( \textrm{Cauchy transform of a finite measure}\) + \( \textrm{Cauchy transform of a finite measure} \) . \ee

Now, let us choose $ g $ and $ h $ so that they have maximal admissible by their de Branges spaces growth at infinity, that is, they are of magnitude $ | E_X | / |z| $ ($ | E_Y | / |z| $ resp.) by taking 
\[ g  = K_\lambda^X , h = K_{ z_0 }^Y , \; z_0 \in \C_+ , \, \lambda  \in \C  . \]
Here $ K^X $ and $ K^Y $ stand for the reproducing kernels of the spaces $ X $ and $ Y $, resp. With this choice of $ g $ and $ h $ by the HB property of $ E_Y $ and $ E_X $ we have,
\[ | h ( z ) | \ge C \left| \frac{ E_Y ( z ) } {z - \overline{z_0} } \right|  , | g ( z ) | \le C \left| \frac{E_X ( z )}z \right| \] 
for $ z \in \C_+ $ with $ |z| $ large enough. It follows that in the upper half plane
\be\la{gonh} 
\frac gh = \frac{ E_X  }{ E_Y } \cdot (\mathrm{a\ bounded\ function} ) . \ee
Now, $ E_X / E_Y $  has bounded type in $ \C_+ $  by the assumption of the theorem, hence so does $ g/h $. Plugging this expression into (\ref{xiYstr}) we find that (i) $ \xi_Y $ is of bounded type in $ \C_+ $, for Cauchy transforms of \textit{finite} complex measures are of bounded type (for instance, because they are linear combinations of four Herglotz functions), and (ii) by  a crude estimate of the Cauchy integrals, there exists a constant $ C $ such that
\be\la{xiY1} | \xi_Y ( z ) | \le \frac C{ \Im z } \(  \left| \frac{ E_X ( z ) }{ E_Y ( z ) } \right|  + 1 \),  \; z \in \C_+ . \ee  

Similarly, choosing $ z_0 \in \C_- $ in the definition of $ h $ we obtain that in the lower half plane $ \C_- $
\[ \frac gh = \frac{ E_X^*  }{ E_Y^* } \cdot (\mathrm{a\ bounded\ function} ) , \]
and therefore $ \xi_Y $ has bounded type in $ \C_- $ as well, and there exists a constant $ C> 0 $ such that 
\be\la{xiY2} | \xi_Y ( z ) | \le \frac C{ \Im z } \(  \left| \frac{ E_X^* ( z ) }{ E_Y^* ( z ) } \right|  + 1 \) , \;  z \in \C_- . \ee

The assumption that $ E_X / E_Y $  is of bounded type in $ \C_+ $ also implies that in this halfplane $ E_X / E_Y  = e^{ ibz } \cdot (\mathrm{an\ outer\ function\  in \ } \C_+ ) $, $ b $ being a real constant. The rest of the proof splits into two cases, $ b > 0$ and $ b = 0 $. The case $ b < 0 $ is reduced to $ b > 0 $ by interchanging the notation for spaces $ X $ and $ Y $.

I. $ b > 0 $.  In this case $ E_X / E_Y $ decays exponentially along any ray $ \mathrm{arg }\, z = \beta $,  $ 0 < \beta < \pi $, and so does $ E^*_X / E^*_Y $  for  $ -\pi  < \beta < 0 $. Thus, $ \xi_Y (z ) = O \( \left| z \right|^{ -1 } \) $ along any ray $ \mathrm{arg}\,  z = \beta $, $ \beta \ne 0, \pi $, which implies that $ \xi_Y \equiv 0 $ by an elementary Lindelof theorem. On writing this for $ z = i \tau $, $ \tau > 0 $
\[ \frac {g(i\tau)}{h(i\tau)} \llangle \frac h{ i\tau - \cdot } , f_Y \rrangle_{ L^2 ( \R , \d \mu ) } =  \llangle \frac g{ i\tau - \cdot } , f_Y \rrangle_{ L^2 ( \R , \d \mu ) } . \] 
Consider the limit $ \tau \to +\infty $. The r.h.s.  is $ -i  \llangle g , f_Y \rrangle_{ L^2 ( \R , \d \mu ) } \tau^{ -1 } + o ( \tau^{ -1 } ) $, 
while on account of (\ref{gonh}) the l. h. s. is exponentially decaying. It follows that $ \llangle g , f_Y \rrangle_{ L^2 ( \R , \d \mu ) } = 0 $. Since $ g $ is an arbitrary reproducing kernel in $ X $, it follows that $ f_Y \in X^\perp $, which is a contradiction.

II. $ b =0 $. Let \[ \xi_X ( z ) = \frac 1{h_1(z)} \llangle \frac{ g_1 ( z ) h_1 - h_1 ( z) g_1 }{ z - \cdot } , f_X \rrangle_{ L^2 ( \R , \d \mu ) }  , \]
\[ g_1 = K^Y_\lambda , \; h_1 = K^X_{ z_0 } , z_0 \notin \R .\]
In the same way as $ \xi_Y $ does not depend on $ h $ and satisfies (\ref{xiY1}) and (\ref{xiY2}), the function $ \xi_X $ does not depend on $ h_1 $ and satisfies 
\be\la{xixest} \xi_X ( z ) \le \frac C{ | \Im z | } \left\{ \begin{array}{cc} \displaystyle{  \left| \frac{ E_Y ( z ) }{ E_X ( z ) } \right|  + 1 ,}  &  z \in \C_+ ; \cr \noalign{\vskip5pt} \displaystyle{ \left| \frac{ E_Y^* ( z ) }{ E_X^* ( z ) } \right|  + 1 ,} &  z \in \C_-  . \end{array}\right. \ee
From this and (\ref{xiY1}), (\ref{xiY2}) we have
\[\min \{ | \xi_X ( z ) | , | \xi_Y ( z ) | \} \le \frac c{ | \Im z | } . \]
The main point of the argument is the following

\begin{proposition}\la{maximumpr}
Let $ \xi_1 $, $ \xi_2 $ be entire functions of exponential type of minimal type such that 
\be\la{minxi} \min \{ | \xi_1 ( z )| , | \xi_2 ( z ) | \} \le \frac c{ | \Im z | } . \ee
Then one of them is zero identically.
\end{proposition}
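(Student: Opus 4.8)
\textit{Proof proposal.} I would argue by contradiction: assume $\xi_1\not\equiv0$ and $\xi_2\not\equiv0$, and derive a contradiction by showing that both functions must in fact be constant, which is incompatible with (\ref{minxi}) since $c/|\Im z|\to0$ along the imaginary axis.

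The starting point is the elementary combinatorics of (\ref{minxi}). The open sets $W_j=\{z\colon|\Im z|\,|\xi_j(z)|>c\}$, $j=1,2$, are disjoint; on $W_1$ one has $|\xi_2(z)|\le c/|\Im z|$ and symmetrically on $W_2$; and off $\R\cup W_1\cup W_2$ \emph{both} satisfy $|\xi_j(z)|\le c/|\Im z|$. Moreover $\overline{W_j}\cap\R=\emptyset$, because as $z\to x_0\in\R$ we have $|\Im z|\to0$ while $|\xi_j(z)|\to|\xi_j(x_0)|<\infty$ locally uniformly, so $|\Im z|\,|\xi_j(z)|\to0$. Hence every connected component $C$ of $W_j$ has $d_C:=\operatorname{dist}(\overline C,\R)>0$, lies in $\C_+$ or in $\C_-$, and on $C$ the subharmonic function $\log|\xi_j|$ equals $\log(c/|\Im z|)\le\log(c/d_C)$ on $\partial C$ and is $o(|z|)$ at infinity (minimal exponential type). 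A Phragm\'en--Lindel\"of argument in a half-plane — applied to the subharmonic function obtained by extending $\bigl(\log|\xi_j|-\log(c/d_C)\bigr)^+\mathbf 1_C$ by $0$, which vanishes near $\R$ since $\overline C$ misses $\R$ — then yields $|\xi_j|\le c/d_C$ on all of $C$.

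Together with the bound off $W_j$, this shows that $\xi_j$ is bounded on a horizontal line $\{\Im z=h\}$ \emph{provided} the numbers $d_C$ for the components $C$ of $W_j$ meeting that line stay bounded below. Establishing this uniform lower bound — equivalently, ruling out thin deep ``tendrils'' of $W_j$ climbing from arbitrarily near $\R$ up to height $h$ — is the crux, and it is here that minimal exponential type must be used decisively: such tendrils would force $\log|\xi_j|$ to be very negative on a set too large to be compatible with the sub-mean-value inequality $\log|\xi_j(0)|\le\frac1{\pi R^2}\int_{|z|<R}\log|\xi_j|\,dA$ together with the merely subexponential growth of $\log^+|\xi_j|$; a Cartan-type minimum-modulus estimate, or a harmonic-measure estimate inside $W_j$, should make this precise. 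Granting it, $\xi_1$ (say) is bounded on some line $\{\Im z=h\}$; a function of exponential type zero that is bounded on a line is bounded on each of the two half-planes that line determines, hence bounded on $\C$, hence constant; then, applying (\ref{minxi}) to this constant, $\xi_2$ is bounded outside a horizontal strip and so is constant by the same token; but then $\min\{|\xi_1|,|\xi_2|\}$ is a constant that is $\le c/|\Im z|\to0$, i.e.\ one of $\xi_1,\xi_2$ is identically $0$ — contradicting the assumption.

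The step I expect to be the real obstacle is precisely this uniform geometric control of the exceptional sets $W_j$ near the real axis: the estimate $|\xi_j(z)|\le c/|\Im z|$ off $W_j$ degenerates as $z\to\R$, so subharmonicity plus Phragm\'en--Lindel\"of by themselves only pin $\xi_j$ down away from $\R$, and quantifying the smallness of $W_j$ through minimal type is what closes the loop. I would first try the variant replacing the component-by-component analysis by the global lower bound for $\frac1{R^2}\int_{|z|<R}\log|\xi_j|\,dA$ sketched above, which is less delicate.
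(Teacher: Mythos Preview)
Your decomposition into the disjoint sets $W_j$ and the component-wise Phragm\'en--Lindel\"of bound $|\xi_j|\le c/d_C$ on each component $C$ are both correct. But the crux you yourself flag --- uniform control of $d_C$ for components meeting a fixed horizontal line --- is a genuine gap, and your proposed mechanism for closing it does not work. You write that tendrils of $W_j$ reaching near $\R$ ``would force $\log|\xi_j|$ to be very negative on a set too large''; this has the sign backwards. On $W_j$ one has $|\xi_j|>c/|\Im z|$, so $\log|\xi_j|$ is \emph{large positive} on the tendrils, not negative. Off $W_j$ near $\R$ the bound $|\xi_j|\le c/|\Im z|$ is vacuous since $c/|\Im z|\to\infty$. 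So the sub-mean-value inequality gives no contradiction: on most of a large disk $\log|\xi_j|$ is bounded above by a constant (from $|\Im z|\ge 1$, say), on a thin strip near $\R$ it is merely $o(|z|)$, and nothing forces the area integral to blow down. A Cartan minimum-modulus estimate goes the wrong way too --- it bounds $|\xi_j|$ from \emph{below} off a small set, whereas you need $|\xi_j|$ to be \emph{small} outside $W_j$.

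The paper's argument avoids the geometry of the components altogether and works on circles. Since $W_1\cap W_2=\emptyset$ and both miss the strip $|\Im z|\le c$, on each circle $|z|=R$ the angular fractions $p_j(R)=(2\pi)^{-1}|\{\theta:\log_+|\xi_j(Re^{i\theta})|>0\}|$ satisfy $p_1(R)+p_2(R)\le 1+O(1/R)$. A Carleman-type growth lemma (proved by a differential inequality for $\nu(\tau)=\int u^2(e^\tau e^{i\theta})\,d\theta$, the Dirichlet eigenvalue bound on each arc where $u>0$ giving $\nu''\ge\nu'/p(e^\tau)$) then shows that each $\int\log_+^2|\xi_j(Re^{i\theta})|\,d\theta$ grows at least like $\exp\!\int^{\log R}\!ds/p_j(e^s)$. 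The arithmetic--harmonic mean inequality turns $p_1+p_2\le 1+o(1)$ into $\tfrac12(1/p_1+1/p_2)\ge 2-o(1)$, forcing the sum of the two $L^2$-integrals to grow at least like $R^2$. That contradicts minimal type, which gives $o(R^2)$ for each. So the right integrated quantity is the angular $L^2$-norm of $\log_+|\xi_j|$ on circles, not an area integral, and the right lemma is Carleman's growth estimate rather than pointwise Phragm\'en--Lindel\"of on components.
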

 
In turn, the proof of this proposition is based on the following

\begin{lemma}\la{Carleman}
Let $ u $ be a nonnegative subharmonic function in $ \C $, $ u ( 0 ) > 0 $. Suppose that $ u $ is smooth in a vicinity of $ 0 $ and that the boundary of the set $ \{ z \colon u (z ) = 0 \} $ has zero planar Lebesgue measure. Then there exists a positive $ C $ such that for all $ \tau $ large enough 
\be\la{carle} \int \left( u \( e^\tau e^{ i \theta } \) \)^2 \d \theta \ge C \int_0^\tau \exp \( \int_0^{ \tau^\prime } \eta ( s ) \d s \) \d \tau^\prime , \ee
\[ \eta ( s )= \left\{ \begin{array}{cc} \frac 1{ p ( e^s) } , & p ( e^s ) < 1 , \cr 0 , &  p ( e^s ) = 1 , \end{array} \right. \]
\[ p ( R ) = \frac 1{2\pi } \left| \{ \theta \colon u ( R e^{ i \theta } ) > 0 \} \right| . \] 
\end{lemma}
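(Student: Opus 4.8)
This is the classical Carleman argument: pass to logarithmic coordinates, extract a second–order differential inequality for a circular mean of $u$, and integrate it. Write $w=s+i\theta$ and $\tilde u(w)=u(e^{w})$; since $w\mapsto e^{w}$ is holomorphic, $\tilde u$ is subharmonic in $\C$ and $2\pi$–periodic in $\theta$, and $p(e^{s})=\frac1{2\pi}\,\big|\{\theta\colon\tilde u(s+i\theta)>0\}\big|$. Because $u\ge0$ is subharmonic with $u(0)>0$, the sub–mean–value inequality over circles gives $\int_{0}^{2\pi}\tilde u(s+i\theta)\,\d\theta\ge 2\pi u(0)$ for every $s$, so by Cauchy--Schwarz the quantity
\[
\Psi(s):=\Big(\int_{0}^{2\pi}\tilde u(s+i\theta)^{2}\,\d\theta\Big)^{1/2}
\]
satisfies $\Psi(s)\ge c_{0}:=\sqrt{2\pi}\,u(0)>0$ for all $s$. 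The assertion (\ref{carle}) reads $\Psi(\tau)^{2}\ge C\int_{0}^{\tau}\exp\!\big(\int_{0}^{\tau'}\eta(s)\,\d s\big)\,\d\tau'$.

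I would first establish the differential inequality for the smooth mollifications $u_{\von}=u*\rho_{\von}$ (nonnegative, subharmonic, decreasing to $u$ as $\von\downarrow0$, with $u_{\von}(0)\ge u(0)>0$) and pass to the limit afterwards. For smooth $v:=\tilde u_{\von}$ one expands $\partial_{s}^{2}(v^{2})=\Delta(v^{2})-\partial_{\theta}^{2}(v^{2})$ and integrates over a period; the $\theta$–derivative term disappears, and using $\Delta(v^{2})=2|\nabla v|^{2}+2v\Delta v\ge 2|\nabla v|^{2}$ one gets
\[
(\Psi_{\von}^{2})''(s)\ \ge\ 2\!\int_{0}^{2\pi}\!\big(\partial_{s}v\big)^{2}\,\d\theta\ +\ 2\!\int_{0}^{2\pi}\!\big(\partial_{\theta}v\big)^{2}\,\d\theta .
\]
On each open arc of $\{\theta\colon v(s+i\theta)>0\}$ the continuous function $v$ vanishes at both endpoints, and each such arc has length $\le 2\pi p_{\von}(e^{s})$; Wirtinger's inequality then yields $\int(\partial_{\theta}v)^{2}\,\d\theta\ge\frac{1}{4p_{\von}(e^{s})^{2}}\int v^{2}\,\d\theta=\frac{1}{4p_{\von}(e^{s})^{2}}\Psi_{\von}^{2}$ when $p_{\von}(e^{s})<1$, and just $\int(\partial_{\theta}v)^{2}\ge0$ when $p_{\von}(e^{s})=1$ (no endpoints). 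Finally Cauchy--Schwarz, $\big|(\Psi_{\von}^{2})'\big|\le2\Psi_{\von}\big(\int(\partial_{s}v)^{2}\big)^{1/2}$, is used to absorb the $\partial_{s}$–term after writing $\Psi_{\von}''=\frac{(\Psi_{\von}^{2})''}{2\Psi_{\von}}-\frac{((\Psi_{\von}^{2})')^{2}}{4\Psi_{\von}^{3}}$. This gives $\Psi_{\von}''(s)\ge\frac14\eta_{\von}(s)^{2}\,\Psi_{\von}(s)$ with $\eta_{\von}$ the analogue of $\eta$ for $u_{\von}$.

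It remains to integrate this inequality and pass to the limit. A standard analysis of a second–order inequality $y''\ge q\,y$ with $y\ge c_{0}>0$ — equivalently of the Riccati equation $h'\ge q-h^{2}$ for $h=y'/y$, whose only attracting value on a stretch where $q\equiv q_{0}$ is $\sqrt{q_{0}}$, together with convexity forcing $y'$ eventually positive — leads, after one more integration, to $\Psi_{\von}(\tau)^{2}\ge C_{0}\int_{0}^{\tau}\exp\!\big(\int_{0}^{\tau'}\eta_{\von}(s)\,\d s\big)\,\d\tau'$ for large $\tau$, with $C_{0}$ depending only on $c_{0}$. Letting $\von\downarrow0$ finishes the proof: $\Psi_{\von}(\tau)^{2}\downarrow\Psi(\tau)^{2}$, while $\{u_{\von}>0\}$ decreases to a set differing from $\{u>0\}$ only inside $\partial\{u=0\}$ — which is Lebesgue–null by hypothesis — so $p_{\von}(e^{s})\downarrow p(e^{s})$, $\eta_{\von}\uparrow\eta$ a.e., and monotone convergence turns the $u_{\von}$–inequality into (\ref{carle}). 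The main obstacle is precisely this last paragraph: the bookkeeping that converts $\Psi_{\von}''\ge\frac14\eta_{\von}^{2}\Psi_{\von}$ into the exact integral lower bound (\ref{carle}) must be done carefully (note that the inequality supplies $\tfrac12\eta$ for the logarithmic derivative of $\Psi_{\von}$, which is why the statement is about $\Psi^{2}=\int u^{2}$ and not $\Psi$), and the passage $\von\downarrow0$ has to be arranged so that the a.e.\ convergence of $p_{\von}$ — where the smoothness near $0$ and the null–boundary hypotheses are spent — suffices; the reduction to the smooth case via mollification is the device that makes all the differentiations and the arc decomposition legitimate for a merely subharmonic $u$.
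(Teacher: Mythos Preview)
Your overall strategy --- mollify, pass to log coordinates, use Wirtinger on the arcs where $u>0$, integrate a differential inequality, then let $\varepsilon\downarrow 0$ using the null--boundary hypothesis --- is exactly the paper's. The mollification and limit steps are handled essentially as the paper does (the paper spells out that $p_\varepsilon\downarrow p$ a.e.\ because interior points of $\{u=0\}$ lie in $\{u_\varepsilon=0\}$ for small $\varepsilon$, and the remainder lies in $\partial\{u=0\}$, which is null).

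Where you diverge is the integration step, and your version has a real gap. You extract the \emph{second}--order inequality $\Psi''\ge\tfrac14\eta^2\Psi$ for $\Psi=\bigl(\int u^2\bigr)^{1/2}$ and then invoke a ``standard Riccati analysis'' of $h'\ge q-h^2$, $q=\tfrac14\eta^2$. For \emph{variable} $q$ that analysis does not give $h\gtrsim\sqrt q=\tfrac12\eta$ without further work (there is no reason $h$ should track the instantaneous square root), the claim that ``convexity forces $y'$ eventually positive'' needs a separate argument, and the assertion that the final constant depends only on $c_0=\sqrt{2\pi}\,u(0)$ is not justified. The paper avoids all of this by working with $\nu=\Psi^2$ directly: from your own inequality $(\Psi^2)''\ge 2\!\int(\partial_s v)^2+2\!\int(\partial_\theta v)^2$ one estimates $\nu'=2\!\int v\,\partial_s v$ by Cauchy--Schwarz with a free parameter $\beta$, optimizes $\beta=1/(2p)$, and obtains the \emph{first}--order inequality $\nu''\ge\eta\,\nu'$. (Your inequality actually implies this too, via AM--GM: $\Psi''\ge\tfrac14\eta^2\Psi$ gives $(\Psi^2)''\ge\tfrac{\eta^2}{2}\Psi^2+\tfrac{((\Psi^2)')^2}{2\Psi^2}\ge\eta\,|(\Psi^2)'|$.) Positivity of $\nu'$ comes not from convexity but from Green's formula on the disc $|z|<R$: $\tfrac12 R\,\nu'(R)\ge\int_{|z|<R}|\nabla u|^2>0$ for nonconstant $u$. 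Then $(\ln\nu')'\ge\eta$ integrates twice to $\nu(\tau)\ge\nu'(\tau_0)\int_{\tau_0}^\tau\exp\bigl(\int_{\tau_0}^{\tau'}\eta\bigr)\,\d\tau'$, and the constant that survives the $\varepsilon\downarrow 0$ limit is $\nu'(\tau_0)$ (with $\tau_0$ chosen so that the smoothness of $u$ near $0$ gives $\nu_\varepsilon'(\tau_0)\to\nu'(\tau_0)$), not something depending only on $u(0)$.
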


Let us first prove the proposition taking the lemma for granted.

\begin{proof} Suppose by contradiction that neither $ \xi_1 \equiv 0 $, no $ \xi_2 \equiv 0 $. WLog, one can assume that $ | \xi_1 ( 0 ) | > 1 $, $ | \xi_2 ( 0 ) | > 1 $. The functions $ u_{ 1,2 }= \ln_+ | \xi_{ 1,2 } | $ are subharmonic and satisfy other assumptions of  lemma \ref{Carleman}, hence (\ref{carle}) holds for them.  In the notation of that  lemma the assumption (\ref{minxi}) means that there exists a $ C > 0 $ such that 
\be\la{b12} p_1 ( R ) + p_2 ( R ) \le 1 + \frac CR . \ee 
 Adding up estimates (\ref{carle}) for $ u_{ 1,2 } $ we get
\be \la{eta12} \begin{array}{ll} \displaystyle{ \int  \( \ln_+^2 \left| \xi_1 \( e^\tau e^{ i \theta } \)\right| +  \ln_+^2 \left| \xi_2 \( e^\tau e^{ i \theta } \) \right| \) \d \theta} \ge & \cr\noalign{\vskip6pt}  \displaystyle{ C \int_0^\tau \left[  \exp \( \int_0^{ \tau^\prime } \eta_1 ( s ) \d s \) +  \exp \( \int_0^{ \tau^\prime } \eta_2 ( s ) \d s \) \right] \d \tau^\prime } & \underbrace\ge \cr & \mathrm{concavity} \cr & \mathrm{of\ exponent} \cr \noalign{\vskip5pt} \displaystyle{ 2C \int^\tau_0 \exp \left[ \frac 12 \int_0^{ \tau^\prime }\( \eta_1 ( s ) +\eta_2 ( s ) \) \d s  \right] \d \tau^\prime  \ge \dots } & \end{array} \ee
If  $ \eta_1 ( s) \ne 0 $ and $ \eta_2 ( s ) \ne 0 $ then
\[ \begin{array}{ccc} \frac 12 \( \eta_1 ( s ) +\eta_2 ( s ) \) \! & \underbrace\ge  & \!\displaystyle{ 2 \( \frac 1{\eta_1 ( s ) } + \frac 1{\eta_2 ( s ) } \)^{ -1 }  = \frac 2{ p_1 ( e^s ) + p_2 ( e^s ) }}  \stackrel{(\ref{b12})}{\ge} \cr & \mathrm{mean\  arithmetic} & \cr &  \bigvee\!\! / & \cr & \mathrm{mean\ harmonic} & \end{array}  \]
\be\la{eta12s} \frac 2{ 1 + C e^{ -s } } . \ee
If, say, $ \eta_1 ( s ) = 0 $, then $ p_1 ( e^s )= 1 $, and $ \eta_2 ( s ) > C e^s $, again, by (\ref{b12}). This means that (\ref{eta12s}) holds with an appropriate $ C $ for all $ s $ large enough. Plugging this we continue the estimate (\ref{eta12}) for $ \tau $ large enough, 
\[ \dots \ge C \int_0^\tau \exp \( 2 \int^{\tau^\prime}_0 \frac {\d s }{ 1 + C_0 e^{ -s } } \) \d \tau^\prime = C \int_0^\tau \exp \( 2 \ln \( e^{\tau^\prime} + C_0 \) \) \d \tau^\prime \ge C e^{ 2 \tau } . \]
Thus, for $ R $  large enough,
\[ \int  \( \ln_+^2 \left| \xi_1 \( R e^{ i \theta } \)\right| +  \ln_+^2 \left| \xi_2 \( R e^{ i \theta } \) \right| \) \d \theta \ge C R^2 \]
which contradicts the assumption that both $ \xi_1 $ and $ \xi_2 $ have minimal type.
\end{proof}
\medskip

\noindent \textit{Proof of lemma \ref{Carleman}.} Notice first that the function $ p $ is separated from zero on the interval $ [ 0 , N ] $ for any $ N $. Indeed, suppose that $ p ( R_n ) \to 0 $ for some bounded sequence $ R_n $. On account of the fact that a subharmonic function is bounded above, we conclude that $ \int u ( R_n e^{ i \theta } ) \d \theta \to 0 $ which is a contradiction because $  2\pi u (0 ) \le \int u ( R_n e^{ i \theta } ) \d \theta $ by the definition of a subharmonic function and $ u ( 0 ) > 0 $ by assumption. Let us first establish (\ref{carle}) assuming $ u $ to be smooth. Define
\[ v ( R ) = \int  u^2 ( R e^{ i \theta } ) \d \theta . \] 
Applying the Green formula to $ \nabla ( u \nabla u ) $ over the ring $ R^\prime < |z| < R $ we find 
\begin{eqnarray}\la{Green} \left. \int u \frac{\partial u}{\partial r} r \, \d \theta \right|_{ r = R } -  \left. \int u \frac{\partial u}{\partial r} r \, \d \theta \right|_{ r = R^\prime } =  \int_{ R^\prime < | z | < R } \( \left| \nabla u \right|^2 + u \Delta u \) \d S \nonumber \\ \ge \int_{ R^\prime < | z | < R } \left| \nabla u \right|^2 \d S \end{eqnarray}
on account of the subharmonicity of $ u $. Dividing this inequality by $ R - R^\prime $ and taking the limit $ R^\prime \to R $, we find
\be\la{diffineq} \frac 12 \frac\d{ \d R } \( R v^\prime ( R ) \) \ge \int R \left| \nabla u ( R e^{ i \theta } ) \right|^2 \d \theta =  \int \left[ R \(  \frac{ \partial u }{ \partial R } \)^2 +  \frac 1R \(  \frac{ \partial u }{ \partial \theta } \)^2 \right]  \d \theta . \ee
All $ R $'s fall into one of two cases, 

(\textit{i})  $ u ( R e^{i \theta } ) >0 $ for a. e. $ \theta $;

(\textit{ii}) $ u ( R e^{ i \theta } ) = 0 $ on a set of positive measure.

 In the case (\textit{ii}) one can estimate $ v^\prime $  via the r.h.s. of the obtained inequality,
\be\la{v} v^\prime ( R ) = 2  \int u \frac{\partial u}{\partial r} \, \d \theta \le  \int \left[ \beta  u^2 +  \frac 1\beta \(  \frac{ \partial u }{ \partial r } \)^2 \right]  \d \theta \le \dots \ee
Since $ u $ is smooth, the set of $ \theta $'s for which $  u ( R e^{ i \theta } ) > 0 $ is an open subset in $ [ - \pi , \pi ] $. This subset is proper as we are in the case (\textit{ii}) and one can assume WLog that $ u ( \pm R ) = 0 $. Then for each interval, $ I $, of this set one has
\be\la{Dirichlet} \int_I u^2 ( R e^{ i \theta } ) \d \theta \le \( \frac { |I|}\pi \)^2 \int_I \( \frac{ \partial u }{ \partial \theta } \)^2 \d \theta , \ee
(in fact, for any $ u $ smooth on $ I $ and vanishing at the ends of this interval) as $ \( \pi / |I| \)^2 $ is the first eigenvalue of the Dirichlet problem for this interval. Substituting the integral in the r.h.s. by the integral over $ [-\pi , \pi ] $ and summing these estimates over all the intervals $ I $, we find
\[ \int u^2 ( R e^{ i \theta } ) \d \theta \le 4 \( p ( R) \)^2 \int \( \frac{ \partial u }{ \partial \theta } \)^2 \d \theta . \]
One can now continue the estimate (\ref{v}),
\[  \dots \le 4 \( p ( R) \)^2 \beta \int \( \frac{ \partial u }{ \partial \theta } \)^2 \d \theta +  \frac 1\beta \int \(  \frac{ \partial u }{ \partial r } \)^2 \d \theta \le \dots \]
Pick $ \beta $ so that the $ R \left| \nabla u \right|^2 $ is obtained in the integrand in the r.h.s., that is, let $ \beta = 1/( 2 R p ( R )) $,
\[ \dots \le   2 p ( R) \int \left[ R \(  \frac{ \partial u }{ \partial R } \)^2 +  \frac 1R \(  \frac{ \partial u }{ \partial \theta } \)^2 \right]  \d \theta \stackrel{(\ref{diffineq})}{\le} p ( R ) \( R v^\prime ( R ) \)^\prime . \]
A change of variables $ R = e^\tau $, $ \nu ( \tau ) \colon = v ( e^\tau ) $, applies to the resulting inequality to give
\[ \frac 1{ p ( e^\tau ) } \frac{\partial \nu }{ \partial \tau } \le \frac{\partial^2 \nu }{ \partial \tau^2 }  . \]
This ineqality is established for $ \tau $ such that $ v ( e^\tau e^{ i \theta } ) $ vanishes on a set of positive measure. In the case (\textit{i}) one can only say from (\ref{diffineq}) that $ \frac\d{ \d R } \( R v^\prime ( R ) \) \ge 0 $. Let us combine these by setting 
\[ \eta ( \tau ) = \left\{ \begin{array}{cc} \displaystyle{\frac 1{ p ( e^\tau ) }} , & p ( e^\tau ) < 1 ; \cr \noalign{\vskip5pt}0, & p ( e^\tau ) = 1. \end{array} \right. \]
Then
\[ \eta ( \tau ) \nu^\prime ( \tau ) \le \nu^{ \prime \prime } ( \tau ) \]
holds for all real $ \tau $. To solve this inequality we notice that $ \nu^\prime $ does not vanish. This follows from (\ref{Green}) if we plug $ R^\prime = 0 $ there. Thus, $ \( \ln \nu^\prime ( \tau ) \)^\prime  \ge \eta ( \tau ) $ and for $ \tau \ge\tau_0 $
\be\la{smoothsub} \nu ( \tau ) \ge \nu^\prime ( \tau_0  ) \int_{ \tau_0 }^\tau e^{ \int_{ \tau_0 }^t \eta ( s ) \d s } \d t . \ee
The lemma is proved for smooth $ u $.

Now let $ u $ be an arbitrary subharmonic function satisfying the conditions of the lemma.   Define for $ \von > 0 $
\[ u_\von ( z ) = \frac 1{ \von^2 } \int u ( z +\zeta ) \omega \( \frac \zeta\von \) \d S_\zeta \]
where $ \omega $ is an arbitrary $ C_0^\infty ( \C ) $-function such that $ \omega ( z ) = 0 $ for $ | z| > 1 $, $ 0 < \omega ( z ) \le 1 $ for $ |z|<1 $, and $ \omega (z ) = 1 $ for $ z $ in a vicinity of zero. The function $ u_\von $  is subharmonic (it is a convex combination of subharmonic functions) and smooth for each $ \von > 0 $, hence satisfies (\ref{smoothsub}). It is obvious ($ u \ge 0 $!) that the sets $ {\mathcal M}_\von \colon =\{z \colon u_\von ( z ) = 0 \} $ increase by inclusion in $ \von $ and each of them is contained in $ {\mathcal M} \colon = \{z \colon u ( z ) = 0 \} $. Notice that $ {\mathcal M} \setminus \cup_\von{\mathcal M}_\von $ has zero Lebsgue measure. Indeed if $ z $ is an interior point of $ {\mathcal M} $ then $ z \in {\mathcal M}_\von $ for $ \von $ small enough, hence the complement of $ \cup_\von{\mathcal M}_\von $ in $ {\mathcal M } $ belongs to the boundary of $ {\mathcal M } $ which has measure zero by assumption. It follows that for a.e. $ R $ the set $ \( {\mathcal M} \setminus \cup_\von{\mathcal M}_\von \) \cap \{ z\colon |z| = R \}  $ has zero angular Lebesgue measure, which means that $ p_\von (R ) \downarrow p ( R ) $ for a.e. $ R $ as $ \von \downarrow 0 $. It follows that $ \eta_\von ( \tau ) \uparrow \eta ( \tau ) $ for a.e. $ R $ as $ \von \downarrow 0 $ (on account of the fact that $ \eta_\von (\tau ) = 0 $ whenever $ \eta ( \tau )= 0 $). The function $ \eta $ is locally bounded  because $ p $ is locally separated from zero, as noted at the beginning of the proof. By the dominated convergence, one can pass to the limit $ \von \downarrow 0 $ in the integral in inequality (\ref{smoothsub}). Picking $ \tau_0 $ negative large enough, one can achieve that $ \nu $ is smooth at $ \tau_0 $ in view of the assumed smoothness of $ u $ at zero. Then it is easy to see that $ \nu_\von^\prime ( \tau_0 ) \to \nu^\prime ( \tau_0 ) $, and one can pass to the limit $ \von \downarrow 0$ in the r.h.s. of (\ref{smoothsub}). In the l.h.s. one can also pass to the limit, for $ \limsup u_\von (z ) \le u ( z ) $ by upper semicontinuity of $ u $, hence $ \limsup \nu_\von ( \tau ) \le \nu ( \tau ) $ (apply the dominated convergence to integrals of squares of  $ {\tilde u}_\von ( z ) \colon = \sup_{ \von^\prime > \von } u_{ \von^\prime } ( z ) $), and (\ref{carle}) is proved in full generality. \hfill $ \Box $

\medskip

\noindent \textit{End of proof of theorem \ref{lattice}}. To apply proposition \ref{maximumpr} to $ \xi_X $, $ \xi_Y $ it remains to check that they are of minimal type. Indeed,  $ E_X / E_Y $ being outer in $ \C_+ $ implies that $ \ln | E_X ( z ) / E_Y ( z ) | = o ( |z| ) $ on any ray $ \mathrm{arg} \, z = \theta $, $ 0 < \theta < \pi $ (this is easy to check from the integral representation for an outer function). It now follows from (\ref{xiY1}), (\ref{xiY2}) that $ \xi_Y (z) = o( |z| ) $ on any ray $ \mathrm{arg} \, z = \theta $, $ \theta \ne 0 ,\pi $. An elementary Fragmen-Lindel\"of theorem implies that $ \xi_Y $ is of minimal type, and the same is true of $ \xi_X $ by an analogous argument. Recall that the functions $ \xi_X $, $ \xi_Y $ depend on a single parameter $ \lambda \in \C $, via the functions  $ g $ and $g_1 $ in their definitions which are the reproducing kernels of the respective de Branges spaces.  By proposition \ref{maximumpr} for each $ \lambda \in \C $ either $ \xi_X $, or $ \xi_Y $ vanishes identically. Suppose that $ \xi_Y \equiv 0 $ for all $ \lambda $ from an uncountable set $ \mathcal N  \subset \C_+ $, that is,
\[ \frac {K_\lambda^X  ( z )}{h(z)} \llangle \frac h{\cdot - z } , f_Y \rrangle_{ L^2 ( \R , \d \mu ) } - \llangle \frac{ K_\lambda^X }{ \cdot - z } , f_Y \rrangle_{ L^2 ( \R , \d \mu ) } = 0 \]
for all $ z \in \C \setminus \R $, $ \lambda \in \mathcal N $, $ h \in Y $.

The argument will proceed as follows. We shall show that for any ray $ \operatorname{arg} z = \theta $, $ 0 < \theta \le \pi/2 $
\be\la{exey} \frac {E_Y (z )}{ E_X ( z ) } \longrightarrow 0 ,\; |z| \to +\infty , \ee
this will imply that $ \xi_X \equiv 0 $  \textit{for all} $ \lambda \in \C $, which in turn would mean that the \textit{inverse} of the ratio in (\ref{exey}) vanishes at infinity along the same rays, which is a contradiction.

Fix a $ \lambda \in \mathcal N $ so that $  \llangle K_\lambda^X , f_Y \rrangle \ne 0 $ which is possible since $ f_Y $ is not orthogonal to $ X $, and the linear span of $ K_\lambda^X $, $ \lambda \in \mathcal N $, is the whole of $ X $ for $ \mathcal N $ being uncountable. Consider $ z = R e^{ i \theta } $, $ 0 < \theta \le \pi/2 $. On multiplyng the identity above by $ R $ and passing to the limit $ R \to \infty $ we find
\[  \begin{array}{rcl} \displaystyle{\frac {K_\lambda^X  ( z )}{h(z)}} \big( & \!\!\!\!\!\! \langle h , f_Y \rangle & \!\!\!\!\!\! \left. e^{ -i\theta } + o ( 1 ) \) = \langle K_\lambda^X , f_Y \rangle e^{ - i \theta } + o ( 1 ) . \cr & \!\!\!\!\!\!\!\!\!\| & \cr & \!\!\!\!\!\!\!\!\! 0 & \!\!\!\!\!\!\!\!\!\!\! \mathrm{as }\,\,  h \in Y , \, f_Y \perp Y \end{array} \]
Thus,
\[ \frac{ h( z )  }{K_\lambda^X ( z ) } \longrightarrow 0 , \; R \to +\infty .\]
On choosing $ h $ to be a reproducing kernel of $ Y $ at a point in $ \C_+ $ and taking into account that $ \lambda \in \C_+ $ we find that (\ref{exey}) holds. 
By (\ref{xixest}) this means that for each $ \lambda \in \C $ the function $ \xi_X \( R e^{ i \theta } \) \to 0 $ for all $ \theta \ne 0 $, $ |\theta| < \pi/2 $, and an elementary Fragmen-Lindel\"of theorem implies that $ \xi_X \equiv 0 $, that is,
\[  \frac { K^Y_\lambda ( z ) }{h_1(z)} \llangle \frac{ h_1}{ \cdot - z }  , f_X \rrangle_{ L^2 ( \R , \d \mu ) } - \llangle \frac{K^Y_\lambda }{ z - \cdot }  , f_X \rrangle_{ L^2 ( \R , \d \mu ) } = 0 \]
for all  $ z \in \C \setminus \R $, $ \lambda \in \C $, $ h_1 \in X $. Arguing as above with $ h_1 $ taken to be a reproducing kernel of $ X $ we find that for $ z = R e^{i \theta } $, $ 0 < \theta \le \pi/2 $,
\[ \frac {E_X (z )}{ E_Y ( z ) } \longrightarrow 0 ,\; R \to +\infty \] which contradicts (\ref{exey}). The contradiction obtained means that either $ \xi_Y $ vanishes for at most countably many $ \lambda \in \C_+ $, or the basic assumption of this section about existence of $ f_X $ and $ f_Y $  fails. In the latter case the theorem is proved, in the former $ \xi_X $ vanishes for uncountably many $ \lambda \in \C_+ $, and one can notice that the argument above is symmetric with respect to interchange of $ X $ and $ Y $. Hence the basic assumption fails in any case, and the proof is complete. \hfill $ \Box $

\section{Necessary facts about de Branges spaces}\la{facts}

\begin{remark} Let $ F $ and $ G $ be real entire functions. Then the following are equivalent,

(i) $ E = F + i G $ is an HB function, 

(ii) $ F/G $ is a non-constant Herglotz function, that is, $ \Im \( F / G \) > 0 $ in the upper half plane. \end{remark}

Another way of writing the condition (ii) is $ \frac 1i \llangle J \Theta ( z ) , \Theta ( z ) \rrangle > 0 $ for $ z \in \C_+ $ where $ \Theta  = \( \begin{array}{c} F \cr G \end{array} \) $. 

\begin{definition}
Let $ E $ be an HB function. The de Branges space, $ \HE $, is the linear set of entire functions $ f $ satisfying $ f/E \in H^2 $, $ f^* / E \in H^2 $ endowed with the metric $ \len f \rin = \len f/E \rin_{ L^2 ( \R ) } $.
\end{definition}

Elements of a de Branges space can be characterized by a "pointwise" condition given by  lemma \ref{deBrchar}. 

\medskip 

\noindent \textit{Proof of lemma \ref{deBrchar}.} Only the implication $ (i) + (ii) \Rightarrow f \in \HW $ is nontrivial. We first show that $ f/W \in H^2 $. Indeed, define 
\[ h_\tau ( z ) = \frac{ i \tau }{ z + i \tau }\frac{ f ( z ) }{ W  ( z ) } . \]
Then $ h_\tau \in L^2 ( \R ) $ for all $ \tau > 0 $, and $ | h_\tau ( z ) | \le C \left| z \right|^{ -1 } \( \Im z \)^{ -1/2 } $ by assumption (ii). By elementary contour integration it follows that $ \frac 1{2 \pi i } \int \frac { h_\tau ( t ) }{ t - z } \mathrm{d} t = h_\tau ( z) $, that is, $ h_\tau \in H^2 $ for any $ \tau > 0 $. Since $ h_\tau \to f/W $ in $ L^2 ( \R ) $ as $ \tau \to + \infty $ we infer that $  f/W \in H^2 $. The inclusion $ f^*/W \in H^2 $ is checked similarly. \hfill $ \Box $

\medskip

\begin{theorem}\la{axiomatic}
Let $ X $ be a Hilbert space of entire functions such that (i) $ f \mapsto f ( w ) $ is a bounded functional on $ X $ for any $ w \in \C $; (ii) $ f^* \in X $ whenever $ f \in X $, and $ \len f^* \rin = \| f \| $; (iii) if $ f \in X $ and $ f ( w ) =0 $ for a $ w \notin \R $, then the function $ f ( z ) / ( z - w ) $ belongs to $ X $, and 
\[ \len \frac{ z - \overline w}{ z - w } f ( z ) \rin_X = \len f \rin_X . \]
Then for any $ a \in \C_-$ the function $ ( z - a ) K^X_{ \overline a } ( z ) $, $ K^X_{ \overline a} $ being the reproducing kernel for the space $ X $ at the point $ \overline a $, is HB, and there exists a number $ c_a $ such that $ X = {\mathcal H}{ (  E_a ) } $ (as Hilbert spaces) for $ E_a ( z ) = c_a ( z - a ) K^X_{ \overline a } ( z ) $. 
\end{theorem}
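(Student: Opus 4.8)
We are given a Hilbert space $X$ of entire functions with the three de Branges axioms, and we must produce, for each $a\in\C_-$, a constant $c_a$ so that $E_a(z)=c_a(z-a)K^X_{\overline a}(z)$ is Hermite--Biehler and $X=\mathcal H(E_a)$ with equal norms.

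\textit{Proof.} We have chosen to assume this theorem rather than prove it, referring to \cite{deBr} and \cite{DymMcKean}; but since the excerpt marks it as the single unproved assertion, we indicate here only the shape of the standard argument, as given in those sources, which the interested reader may consult.

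The plan is the following. First one shows $X$ is a reproducing kernel Hilbert space: axiom (i) guarantees, by the Riesz representation theorem, the existence of $K^X_w\in X$ with $f(w)=\langle f,K^X_w\rangle_X$, and the usual identities $K^X_w(w)=\|K^X_w\|^2\ge 0$, $\overline{K^X_w(z)}=K^X_z(w)$, follow formally. One checks $K^X_w(w)>0$ for $w\notin\R$: if $K^X_w\equiv 0$ then every element of $X$ vanishes at $w$, and applying axiom (iii) one divides out $(z-w)(z-\overline w)$ indefinitely, forcing $X=\{0\}$, a degenerate case one excludes. The central computation, carried out in \cite[Chapter 2]{deBr}, is the \emph{identity for reproducing kernels}: using axiom (iii) for the map $f\mapsto \tfrac{z-\overline w}{z-w}f$ one derives, for $z\ne w$, a formula expressing $K^X_w(z)$ in terms of $K^X_w$, $K^X_{\overline w}$ evaluated at fixed points; iterating and comparing gives that the two-variable kernel has the form
\[
K^X_w(z)=\frac{B(z)\overline{B(w)}-B^*(z)B(\overline w)}{2\pi i(\overline w-z)}
\]
for a single entire function $B$ (this is where axiom (ii) is used, to get the symmetry $B^*$). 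Setting $B=E_a$ up to the normalizing constant $c_a$ — chosen so that a fixed coefficient is real and positive — one then verifies the Hermite--Biehler inequality $|E_a(z)|>|E_a(\overline z)|$ for $z\in\C_+$ directly from positivity of the kernel, $K^X_z(z)>0$, which reads exactly $|E_a(z)|^2-|E_a(\overline z)|^2>0$ after the substitution.

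It remains to identify $X$ with $\mathcal H(E_a)$ isometrically. One shows first that every reproducing kernel $K^X_w$ lies in $\mathcal H(E_a)$ and has the same norm there (an immediate consequence of the explicit formula for $K^X_w$ and the formula for the reproducing kernel of an abstract de Branges space), and that conversely the reproducing kernels of $\mathcal H(E_a)$ coincide with those of $X$. Since reproducing kernels are dense in each space, a polarization/limiting argument upgrades this to a Hilbert space isometry $X=\mathcal H(E_a)$. The delicate point, and the one we are content to cite, is the closedness/completeness bookkeeping: that the span of kernels is dense (standard), and that the entire-function structure is preserved under the identification, i.e.\ that an abstract Cauchy sequence in $X$ converges pointwise to an entire limit — this is what axioms (i) and (iii) are designed to furnish, and the full verification is precisely the content of de Branges' original treatment, which we do not reproduce. $\Box$
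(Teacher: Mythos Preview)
Your proposal aligns with the paper: this theorem is the one result the paper explicitly declines to prove, saying only that ``once the statement is known, the proof (\cite[Theorem 23]{deBr} or \cite[Chapter 6.1]{DymMcKean}) of this theorem is a rather straightforward algebra.'' You likewise give a sketch and cite the same sources, so there is nothing substantive to compare.
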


Once the statement is known, the proof (\cite[Theorem 23]{deBr} or \cite[Chapter 6.1]{DymMcKean}) of this theorem is a rather straightforward algebra.

\begin{remark}
Let $ E $ be an HB polynomial of degree $ n > 1 $, and let $ \alpha \in \R $ be such that $ e = E + e^{ i\alpha } E^* $ is a polynomial of degree less than $ n $ (such an $ \alpha $ obviously exists). Then $ e $ is orthogonal in $ \HE $ to all polynomials of degree $ \le n-2 $.  
\end{remark}

Indeed, for any polynomial $ p $ of degree $ \le n-2 $
\[ \langle p , e \rangle = \int_\R \frac{ p ( k ) }{ E ( k ) } \d k  + e^{ -i\alpha } \int_\R \frac{ p ( k ) }{ E^* ( k )} \d k = 0. \] 

\begin{definition}
An HB function $ E $ and the corresponding de Branges space $ \HE $ are called regular if any of the following equivalent conditions holds

(i) \[ \frac 1{ ( z + i ) E ( z ) } \in H^2 , \]

(ii) for any $ f \in \HE $ and some $ z_0 \in \C $ \[ \frac{ f( z ) - f ( z_0 )}{ z - z_0 } \in \HE . \]  \end{definition}

Notice that if \textit{(ii)} holds for some $ z_0 \in \C $ then it does for any.  

The following theorem is a de Branges version of the spectral theorem for the operator $ D_\alpha $ (theorem \ref{Dalpha}). In fact, \textit{if we know} that the function $ E$ corresponds to a regular canonical system, it follows immediately from theorem \ref{Dalpha} by applying the Fourier transform $ \cU $. We use it in the solution of the inverse problem when reconstructing  the second column of the monodromy matrix from the first one  (theorem \ref{restore}). 

\begin{theorem} \la{orthog}\cite[Theorem 22]{deBr}
Let $ G $ be an HB function without real zeroes, and let $ \alpha \in \R $. Define $ \mathcal N = \{ t \in \R \colon \mbox{arg } G ( t ) = \alpha (\mbox{mod } \pi ) \}$ and let $ K_w $ be the reproducing kernel of the space $ \HG $. Then either 

\textit{(i)} The set $ \{ K_t \}_{ t \in \mathcal N} $ is an orthogonal basis in $ \HG $, 

or 

\textit{(ii)} The function $ e^{ i\alpha } G - e^{ -i\alpha } G^* $ belongs to $ \HG $. The set $ \{ K_t \}_{ t \in \mathcal N} $  has one-dimensional orthogonal complement spanned by this function.
\end{theorem}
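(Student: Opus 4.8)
The plan is to prove Theorem \ref{orthog} by reducing it to a single statement about one explicit entire function. Put $\Psi_\alpha := e^{i\alpha}G - e^{-i\alpha}G^*$. Since $\Psi_\alpha^* = -\Psi_\alpha$, the function $i\Psi_\alpha$ is real entire; because $G$ is HB one has $|G(z)|>|G^*(z)|$ on $\C_+$ (and the reverse on $\C_-$), so $|\Psi_\alpha(z)|\ge \big||G(z)|-|G^*(z)|\big|>0$ off the real axis and all zeros of $\Psi_\alpha$ are real. The strict monotonicity of the phase of $G$ then shows these zeros are simple and form exactly $\mathcal N$, and at each such $t$ the ratio $G(t)/G^*(t)=e^{-2i\alpha}$ is a fixed unimodular constant. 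In particular, if $\Psi_\alpha\in\HG$, then $\Psi_\alpha$ vanishes on $\mathcal N$, hence is orthogonal to every $K_t$, $t\in\mathcal N$. Thus the whole theorem will follow once I establish: (a) $\{K_t\}_{t\in\mathcal N}$ is an orthogonal family of nonzero vectors, and (b) every $f\in\HG$ vanishing on all of $\mathcal N$ is a scalar multiple of $\Psi_\alpha$. Indeed, setting $M:=\overline{\operatorname{span}}\{K_t\}_{t\in\mathcal N}$, one has $M^\perp=\{f\in\HG: f|_{\mathcal N}=0\}$, which by (b) is $\{0\}$ when $\Psi_\alpha\notin\HG$ (so the orthogonal family is complete, i.e.\ an orthogonal basis — alternative (i)) and is the one‑dimensional space $\C\Psi_\alpha$ when $\Psi_\alpha\in\HG$ (alternative (ii), with the spanning vector $e^{i\alpha}G-e^{-i\alpha}G^*$ exactly as stated).

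Part (a) is a short computation with the reproducing kernel formula $K_\lambda(z)=\frac1{2\pi i}\,\frac{G(z)\overline{G(\lambda)}-G^*(z)G(\overline\lambda)}{\overline\lambda-z}$. For $s\ne t$ in $\mathcal N$, using $\overline{G(t)}=G^*(t)$ one sees that the numerator of $\langle K_s,K_t\rangle_{\HG}=K_s(t)$ equals $G^*(s)G^*(t)\big(G(s)/G^*(s)-G(t)/G^*(t)\big)$, which vanishes because $G/G^*$ is the same constant $e^{-2i\alpha}$ at every point of $\mathcal N$; and $\|K_t\|^2=K_t(t)$ is a positive multiple of $|G(t)|^2$ times the (positive) derivative of the phase of $G$ at $t$, so it is nonzero since $G$ has no real zeros.

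Part (b) is the heart of the argument. Given $f\in\HG$, $f\not\equiv0$, $f|_{\mathcal N}=0$, I would form $h:=f/\Psi_\alpha$; since $\Psi_\alpha$ has only simple real zeros, all in $\mathcal N$, $h$ is entire, and it will suffice to show $h$ is constant (then $h\ne0$ and $\Psi_\alpha=f/h\in\HG$). First I would check $h$ has finite exponential type: writing $\Theta:=G^*/G$ (inner in $\C_+$), one has $\Psi_\alpha/G=e^{i\alpha}-e^{-i\alpha}\Theta\in H^\infty(\C_+)$, so $h=(f/G)\big/(\Psi_\alpha/G)$ is a quotient of two functions of bounded type in $\C_+$, hence of bounded type there; applying the $*$‑involution and $\Psi_\alpha^*=-\Psi_\alpha$ gives the same in $\C_-$, so Krein's theorem applies. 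Next I would control the growth: by the de Branges pointwise bound (Lemma \ref{deBrchar}) $|f(z)|^2\le C\,(|G(z)|^2-|G^*(z)|^2)/\Im z$ in $\C_+$, while $|\Psi_\alpha(z)|\ge|G(z)|-|G^*(z)|$, so $|h(z)|^2\le (C/\Im z)\,(1+|\Theta(z)|)/(1-|\Theta(z)|)$. Because $\Theta$ is a non‑constant inner function, $-\log|\Theta(iy)|$ is bounded below by a positive constant times $1/y$ for large $y$ (the contribution of a single zero, singular mass, or exponential factor of its canonical product already has this order), so $h$ is bounded on the imaginary axis; combining this crude estimate on rays with the symmetric one in $\C_-$ and trigonometric convexity of the indicator — a Phragmén–Lindelöf / hall‑of‑mirrors argument in the spirit of Section \ref{finreg} and Appendix \ref{Latticeproof} — forces $h$ to have exponential type $0$ and in fact to be bounded on all of $\C$, hence constant by Liouville. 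This would complete (b), and with it the theorem.

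The step I expect to be the genuine obstacle is the boundedness of $h=f/\Psi_\alpha$ in part (b): the naive estimate $|\Psi_\alpha|\ge|G|-|G^*|$ is weak precisely in the directions where $|G^*/G|\to1$ (for instance when $G$ is a polynomial), so one must pair it with the fact that $f$, as an element of $\HG$, is correspondingly small there, which is exactly what the de Branges pointwise bound together with the lower bound on $1-|G^*/G|$ supplies. An alternative route that reuses the paper's own machinery is to apply Theorem \ref{isometry} with the constant $A\equiv e^{-2i\alpha}$ of unit modulus: the measure in the Herglotz representation of $\Re\frac{1+e^{-2i\alpha}(G^*/G)}{1-e^{-2i\alpha}(G^*/G)}$ has its atoms precisely on $\mathcal N$, the images $K_t/G$ of the kernels are its point masses, and the dichotomy of Theorem \ref{isometry} (isometric onto $L^2$ versus isometric onto a codimension‑one subspace whose complement is a combination of $G$ and $G^*$) translates directly into alternatives (i) and (ii) — the exceptional vector being $e^{i\alpha}G-e^{-i\alpha}G^*$; the remaining bookkeeping is to verify that the atomic part of the measure exhausts $L^2$ in the first case.
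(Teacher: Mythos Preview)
Your overall strategy is sound and the Liouville argument in part~(b) goes through, but there is a sign slip you should fix. For $t\in\mathcal N$ with $\arg G(t)\equiv\alpha\pmod\pi$ one has $G(t)/G^*(t)=e^{2i\alpha}$, not $e^{-2i\alpha}$; consequently $e^{i\alpha}G-e^{-i\alpha}G^*=2i(G_+\sin\alpha+G_-\cos\alpha)$, which does \emph{not} vanish on $\mathcal N$ unless $\sin 2\alpha=0$. The combination whose real zero set is exactly $\mathcal N$ is $e^{-i\alpha}G-e^{i\alpha}G^*=2i(G_-\cos\alpha-G_+\sin\alpha)$. (This is in fact a sign inconsistency in the theorem as stated; it is invisible in the paper's own proof because that proof immediately reduces to $\alpha=0$.) Once $\Psi_\alpha$ is taken to be this corrected function, your argument is intact: the lower bound $|\Psi_\alpha|\ge\big||G|-|G^*|\big|$ is unaffected, $h=f/\Psi_\alpha$ is entire of bounded type in both half-planes (hence of finite exponential type by Krein), the pointwise de~Branges bound combined with $1-|\Theta(re^{i\theta})|\ge c\sin\theta/r$ for large $r$ (a single Blaschke factor or the exponential factor of $\Theta=G^*/G$ already gives this, and one of the two is always present) makes $h$ bounded on every non-real ray, and Phragm\'en--Lindel\"of in four quarter-plane sectors then forces $h$ to be bounded, hence constant.

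This is a genuinely different route from the paper's. The paper never forms the quotient $f/\Psi_\alpha$; instead it computes the orthogonal projection of a reproducing kernel onto $Y=\bigvee_{t\in\mathcal N}K_t$ directly, writing $P_YK_w=\sum_{t\in\mathcal N}\overline{K_t(w)}\,K_t/\|K_t\|^2$ as an explicit series and recognising it term by term inside the Herglotz (partial-fraction) expansion of the meromorphic Herglotz function $G_+/G_-$. The outcome is the identity $K_w-P_YK_w=(b/\pi)\,G_-(\overline w)\,G_-$, where $b\ge0$ is the linear coefficient in that Herglotz representation, so the dichotomy (i)/(ii) is simply $b=0$ versus $b>0$. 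The paper's argument is shorter and uses nothing beyond the explicit kernel formula and the Herglotz representation; yours trades that computation for a growth argument invoking Krein and Phragm\'en--Lindel\"of, with the payoff that the description of $M^\perp$ as ``scalar multiples of $\Psi_\alpha$ that happen to lie in $\HG$'' becomes completely transparent. Your alternative sketch via Theorem~\ref{isometry} with constant unimodular $A$ is, in substance, a repackaging of the paper's Herglotz computation.
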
 

\begin{proof}
It suffices to consider $ \alpha = 0 $, the general case reduces to this one by considering the function $ e^{- i \alpha } G $. Then $ K_t ( z ) = (\mbox{const}) G_- (  z ) / ( z - t ) $ for $ t \in \mathcal N $. It is obvious that $ K_t \perp K_\tau $ for different $ t , \tau \in \mathcal N $.  Let $ Y = \bigvee_{  t \in \mathcal N} K_t $ and $ P_Y $ be the orthogonal projection on $ Y $. Then for an arbitrary $ w \notin \mathcal N $
\begin{eqnarray} ( P_Y K_w ) ( z ) = \sum_{ t \in \mathcal N } \frac{ ( K_w , K_t ) }{ \len K_t \rin^2 } K_t ( z ) =\sum_{ t \in \mathcal N } \frac{ \overline{ K_t (w)} }{ - \frac 1\pi G_+ ( t ) \dot{G}_- ( t ) } K_t ( z ) = \nonumber \\ - \frac 1\pi \sum_{ t \in \mathcal N } \frac{ G_+ (t)  }{ \dot{G}_- ( t ) } \frac 1{ \( \overline w - t \) ( z - t ) } \overline{G_- ( w )} G_- ( z )  . \la{PYrepro} \end{eqnarray}
On the other hand $ G_+ / G_- $ is a meromorphic Herglotz function whose set of poles coincides with $ \mathcal N $, hence for $ z \in \C_+ $
\[ \frac{ G_+ ( z ) }{ G_- ( z ) } = a + bz + \sum_{ t \in \mathcal N} \frac{ G_+ ( t ) }{ \dot{G}_- ( t ) } \( \frac 1{ z - t} + \frac t{ 1 + t^2 } \) . \]
Subtracting  the same decomposition at a point $ \overline w $ for $ w \in \C_- $ we find
\[ \frac { G_+ ( z ) G_- ( \overline w ) - G_- ( z ) G_+ ( \overline w ) }{ G_- ( z ) G_- ( \overline w ) } = b ( z - \overline w ) +  \sum_{t \in \mathcal N} \frac{ G_+ ( t ) }{ \dot{G}_- ( t ) } \( \frac 1{ z - t} - \frac 1{ \overline w - t} \) , \] 
that is,
\[ K_w ( z ) = \frac b\pi  G_- ( z ) G_- ( \overline w ) - \frac 1\pi \sum_{ t \in \mathcal N } \frac{ G_+ (t)  }{ \dot{G}_- ( t ) } \frac 1{ \( \overline w - t \) ( z - t ) } G_- ( \overline w ) G_- ( z ) . \]
Comparing this and (\ref{PYrepro}) we find that 
\[ K_w = \frac b\pi G_- ( \overline w ) G_- + P_Y K_w \]
for arbitrary $ w \in \C_- $. The set of $ K_w $'s  with $  w \in \C_- $ is total in $ \HG $, hence \textit{(i)} holds if $ b = 0 $, \textit{(ii)} in the opposite case.
\end{proof}

\section{Solutions}

{\small\textbf{Exercise \ref{counterex}}. 

\[ \begin{pmatrix} \frac 13 + 3i & -1 - 2i \cr 1 + \frac 43 i & -1-i \end{pmatrix} . \]  An arbitrary fractional linear mapping, $ S $, of $ \C_+ $ to a disc, $ D $, has the form $ S = T \circ \xi $ where $ T = z_0 - \frac 1{ z + i \tau } $, $ z_0 \in \C $, $ \tau > 0 $, and $ \xi $ is an $ SL ( 2 , \R ) $--transform. The disc $ D $ lies in $ \C_+ $ if and only if $ \Im z_0 > 0 $. Let $ \Im z_0 = -1 $, so $ T $ is not $ J $--contractive. The last two conditions in the exercise mean that $ T^{ -1 } ( 0 ) , T^{ -1 } ( \infty ) \in \C_- $. The latter is satisfied automatically because $ \tau > 0 $, and the former is equivalent to $ \Im \frac 1{z_0} - \tau < 0 $. This can be achieved for arbitrarily small $ \tau $ by choosing $ \Re z_0 $ large enough. The remaining two conditions mean that 
\[ \Im S ( 0 ) = -1+ \frac \tau{ \xi^2 ( 0 ) + \tau^2 } > 0 , \; \Im S ( \infty ) = -1 + \frac \tau{ \xi^2 ( \infty ) + \tau^2 } > 0 , \]
and these are achieved by choosing $ \xi ( \infty ), \xi ( 0 ) $ small enough and then optimizing the choice of $ \tau $. }

\section{Comments on literature}

\textbf{Section \ref{examples}}. 2. More on relations of de Branges spaces and canonical systems to the inverse spectral theory for the Schr\"odinger operator can be found in \cite{Remling}. 

3. In fact, a canonical system can be defined for any string. Let us sketch the construction, the details can be found in \cite{KWW}. Let $ m $ be a finite measure on $ ( 0 , l ) $, $ m ( \{ l \} ) = 0 $, then the string equation is 
\[ - \frac{ d^2 y }{ dm dy } = \lambda y \]
with an appropriately understood derivative in the l.h.s. Define the measure $ d\xi ( t ) = dt + d m ( t ) $ and the function $ \xi ( t ) = t +  m ( t ) $, $ m ( t ) = m ( [ 0 , t )) $. Obviously, both $ m $ and the Lebesgue measure are absolutely continuous with respect to $ d \xi $, and the relations $ h_1 ( \xi ( t ) ) = \frac{ dt}{ d\xi ( t ) } $, $  h_2 ( \xi ( t ) ) = \frac{ d m ( t ) }{ d \xi ( t ) } $ define functions $ h_{ 1,2 } $ on $ \xi ( [ 0 , l ) ) $ a.e. with respect to the Lebesgue measure. If we set $ h_1 ( x ) = 0 $, $ h_2 ( x ) = 1 $ for $ x \in [ 0 , l + m ( l ) ] \setminus \xi ( [ 0 , l ) ) $, then $ h_{  1, 2 }$ are defined a.e. on $ [ 0 , l + m ( l ) ] $. Then the canonical system corresponding to the string is $ ( \cH , L ) $ with $ \cH = \operatorname{diag} ( h_1 , h_2 ) $, $ L = l + m ( l ) $. 
When $ m $ is an absolutely continuous measure, this construction reduces to the one given in the main text. 

\textbf{Section \ref{operator}}. The material of this section is not in the book but can be considered standard and is beyond attributing to specific authors. We believe most of it can be found in one form or another in \cite{HassiSnooWinkler} and references therein. For instance, the argument we used to prove the symmetricity of $ D $ in theorem \ref{operatorLinf} appears to be similar to the one in their Lemma 7.8, but it turned out to be easier to work out the necessary assertions on our own rather than extract them from those papers. In part this is because they prefer to deal with non-densely defined operators and use the clumsy terminology of selfadjoint relations, which we feel superfluous, and in part because the required arguments are rather straightforward. 

Formulae (\ref{rho}) and (\ref{q}) in Example 4 expressing the Jacobi parameters in terms of the corresponding Hamiltonian can be found, for instance, in \cite{Katz}.  

\textbf{Section \ref{direct}}. The argument in Exercise \ref{discspect} is very common in spectral theory of ODE's, see e. g. \cite{Naimark}.

\textbf{Section \ref{finreg}}. The proof of the lower estimate for the type in Theorem \ref{exptypeM} is virtually the one in the book. The proof of the upper estimate follows the one in \cite[Theorem X]{deBrII} except for he uses an explicit reduction of the matrix $ J \cH ( x) $ instead of the assertion that constitutes our lemma \ref{2by2} (ii). The proof of the upper estimate in the book is different and considerably more involved. Essentially, de Branges establishes there that, in our notation, $ p ( x ) \le \sqrt{ \det \int_0^x \cH ( s ) \d s } $ which appears weaker than the required estimate but in fact implies it by the multiplicative property of the monodromy matrix. The said inequality follows in turn from another one,
\be\la{deBrproof}  i^{ -1 } M^* ( a , \lambda ) J M (a, \lambda ) \ge e^{ 2 p ( a ) \Im \lambda } i^{ -1} J \ee
holding for all $ \lambda \in \C_+ $,  by developing it at $ \lambda = 0 $ (\cite[Lemma 9]{deBr}). The proof of (\ref{deBrproof}) is based on the fact that $ e^{ \pm i p(a) z } $ is an associated function for $ \HE $ and then uses an extended version  \cite[Theorem 27]{deBr} of the assertion that forms Theorem \ref{restore} in the present text. The analog of (ii) in Theorem \ref{restore} in this extended version is precisely (\ref{deBrproof}). 

\textbf{Section \ref{borgtheorem}}. There is an assertion (Theorem 24) in \cite{deBr} which can be considered  a "Borg theorem for abstract de Branges spaces". It says that 

\begin{theorem} \la{AbstrB}
For any HB functions,  $ G $, $ \tilde G $ having no real zeroes and such that
\[ \{ t \colon \Re  G ( t ) = 0 \} = \{ t \colon \Re  \tilde G ( t ) = 0 \} ,  \{ t \colon \Im  G ( t ) = 0 \} = \{ t \colon \Im  \tilde G ( t ) = 0 \} \]
there exists a real entire function $ S $ such that the mapping $ f \mapsto S f $ is an isomorphism of $ \HG $ onto $ {\mathcal H} (\tilde G) $.
\end{theorem}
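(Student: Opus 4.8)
The plan is to exhibit $S$ explicitly as a constant multiple of $\tilde\Theta_-/\Theta_-$, the quotient of the imaginary parts of $\tilde G$ and $G$, and to realise $f\mapsto Sf$ as the composition of two elementary isometries through an intermediate de Branges space. Write $G=\Theta_++i\Theta_-$, $\tilde G=\tilde\Theta_++i\tilde\Theta_-$ with $\Theta_\pm,\tilde\Theta_\pm$ real entire, so that on $\R$ these are the real and imaginary parts. Since $G,\tilde G$ are Hermite--Biehler with no real zeros, $m:=\Theta_+/\Theta_-$ and $\tilde m:=\tilde\Theta_+/\tilde\Theta_-$ are non-constant meromorphic Herglotz functions (Section~\ref{facts}); their zeros and poles are all real and all \emph{simple} (Herglotz functions have simple zeros and poles, and there is no cancellation, as $G,\tilde G$ have no real zeros), and they sit exactly at the zeros of $\Theta_+$, respectively $\Theta_-$. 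Thus the hypothesis says precisely that $\Theta_+,\tilde\Theta_+$ have the same (simple, real) zeros and $\Theta_-,\tilde\Theta_-$ have the same zeros, so $P:=\tilde\Theta_-/\Theta_-$ and $\tilde\Theta_+/\Theta_+$ are entire and zero-free. The first step is the rigidity statement $\tilde m=c\,m$ for some $c>0$: the quotient $\tilde m/m=(\tilde\Theta_+/\Theta_+)(\Theta_-/\tilde\Theta_-)$ is entire and zero-free, hence equals $e^{g}$ for an entire $g$, and is real on $\R$, so $\Im g$ is constant there; on $\C_+$, $\Im g$ differs by a constant from $\arg\tilde m-\arg m\in(-\pi,\pi)$, hence is bounded, so by reflection across $\R$ and Liouville $g$ is constant. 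The constant is real and must be positive, since otherwise $\tilde m=cm$ would carry $\C_+$ into $\C_-$, contradicting that $\tilde m$ is Herglotz.

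Next I would introduce $G_c:=c\Theta_++i\Theta_-$. Its imaginary part is $\Theta_-$ and its real part is $c\Theta_+$, and $c\Theta_+/\Theta_-=cm$ is non-constant Herglotz, so $G_c$ is an HB function with no real zeros and ${\mathcal H}(G_c)$ is a de Branges space. From $\tilde m=cm$ one reads off $\tilde G=P\,G_c$. The key observation is that the de Branges reproducing-kernel formula (\ref{reprokernel}), in which the two functions appearing are the imaginary and real parts of the relevant HB function, yields
\[
K^{G_c}_\lambda(z)=\frac1\pi\,\frac{\Theta_-(z)\,c\Theta_+(\bar\lambda)-c\Theta_+(z)\,\Theta_-(\bar\lambda)}{\bar\lambda-z}=c\,K^{G}_\lambda(z)
\]
for all $\lambda,z$. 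Proportional reproducing kernels force $\HG$ and ${\mathcal H}(G_c)$ to consist of exactly the same entire functions, with $\len f\rin^2_{{\mathcal H}(G_c)}=c^{-1}\len f\rin^2_{\HG}$; hence multiplication by the constant $\sqrt c$ is an isometric isomorphism $\HG\to{\mathcal H}(G_c)$.

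It then remains to check that $f\mapsto Pf$ is an isometric isomorphism ${\mathcal H}(G_c)\to{\mathcal H}(\tilde G)$. Since $P=P^*$ is entire and zero-free and $\tilde G=PG_c$, for $f\in{\mathcal H}(G_c)$ we have $Pf/\tilde G=f/G_c\in H^2$ and $(Pf)^*/\tilde G=f^*/G_c\in H^2$, so $Pf\in{\mathcal H}(\tilde G)$ with $\len Pf\rin_{{\mathcal H}(\tilde G)}=\len f/G_c\rin_{L^2(\R)}=\len f\rin_{{\mathcal H}(G_c)}$, and multiplication by $P^{-1}=\Theta_-/\tilde\Theta_-$ (again entire) supplies the inverse. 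Composing the two maps, $S:=\sqrt c\,\tilde\Theta_-/\Theta_-$ is a real entire function and $f\mapsto Sf$ is an isometric isomorphism of $\HG$ onto ${\mathcal H}(\tilde G)$.

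The only step with genuine content is the rigidity lemma $\tilde m=cm$ — two meromorphic Herglotz functions with the same zeros and poles differ by a positive constant — and even that is short. The rest is bookkeeping with the definition of a de Branges space and the explicit reproducing-kernel formula. The one point requiring a little care is that no regularity of $G$ or $\tilde G$ is assumed, so the simplicity of the zeros of $\Theta_\pm$ (which is what makes $\tilde\Theta_\pm/\Theta_\pm$ entire) must be drawn purely from the Herglotz property of $m$ and $\tilde m$; but that is automatic.
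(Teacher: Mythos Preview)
The paper does not prove this theorem; it is only cited in the Comments section as Theorem~24 of de~Branges' book, with the remark that de~Branges' argument apparently uses the Krein theorem. There is therefore no proof in the paper to compare your attempt against.

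Your proof is correct. The rigidity step $\tilde m=cm$ is clean: $\Im g$ is bounded on $\C_+$ because both $\arg m$ and $\arg\tilde m$ take values in $(0,\pi)$; since $\tilde m/m$ is real entire one has $g(\bar z)-\overline{g(z)}\in 2\pi i\mathbb Z$, hence constant, so $\Im g$ is bounded on $\C_-$ as well and Liouville applies. The step from $K^{G_c}_\lambda=cK^G_\lambda$ to ``same set, norm scaled by $c^{-1/2}$'' is valid by the Moore--Aronszajn uniqueness of a reproducing kernel Hilbert space from its kernel; it may be worth saying this explicitly, since the weights $|G|^{-2}$ and $|G_c|^{-2}$ on $\R$ are \emph{not} proportional and the equality of the two $L^2$-norms on the de~Branges subspace is a genuine (if automatic) identity rather than an obvious pointwise fact. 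The final multiplication by $P=\tilde\Theta_-/\Theta_-$ is straightforward once $\tilde G=PG_c$ is established.

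Your argument avoids the Krein theorem entirely, so under the no-real-zeros hypothesis it is more elementary than the route the paper alludes to, and it produces the explicit formula $S=\sqrt{c}\,\tilde\Theta_-/\Theta_-$ as a bonus.
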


This theorem could have been used to derive the coincidence of the functions $ E $ and $ \tilde E $ in our argument. We have prefered another path here because the direct argument is simpler (it does not exploit the Krein theorem). Notice that the actual formulation of theorem \ref{AbstrB}  in the book looks a little more general in that it allows for real zeroes of $ G $.  This generalization is irrelevant in the context of the Schr\"odinger operator because the corresponding HB functions do not have real zeroes. 

\textbf{Appendix I}. The argument  in the proof of theorem \ref{restore} showing that $ \Phi_- / \Theta_- $ coincides with the regularized sum of its principal parts could have been shortened by a reference to general properties of functions of the Cartwright class. As mentioned in the Introduction, theorem \ref{restore} is essentially \cite[Theorem 27]{deBr} in the regular case. 

The identity (\ref{sysphi}) can be easily generalized by considering the scalar product of $ \Phi_\lambda $ and $ \Phi_z $  for arbitrary $ z $. The resulting formula will contain two spectral parameters $ \lambda , z $ so that (\ref{sysphi}) corresponds to $ z = 0 $. A scary-looking bilinear identity in \cite[Theorem 27]{deBr} when applied to reproducing kernels,  is precisely this formula.

Instead of using the identity (\ref{Jcontr}) to prove (\textit{ii}) in theorem \ref{restore}, one could refer to a theorem of Sodin \cite{Sodin} which says, in the setup of proof of theorem \ref{restore}, that if any three of the quotients, $ \Theta_+ / \Theta_- $, $ \Phi_+ / \Phi_- $, $ \Phi_\pm / \Theta_\pm $ are Herglotz functions then the matrix-function $ N ( \lambda) $ satisfies (ii). Since $ \Theta_+ / \Theta_- $ is Herglotz because $ E $ is HB, $ \Phi_+ / \Theta_+ $ is Herglotz by (\ref{thplu}), it remains to show that $ \Phi_- / \Theta_- $ is Herglotz. That can be done by observing that the residues of this meromorphic function at zeroes, $ t $, of $ \Theta_- $ are negative, $ \( \Theta_+ ( t ) \dot\Theta_- ( t )  \)^{ -1 } < 0 $, and using an appropriate growth argument to establish that the difference between $ \Phi_- / \Theta_- $ and the regularized sum of its main parts at poles is at most linear, but overall it would probably not be simpler than the argument in the text.   

\textbf{Appendix II}. Theorem \ref{isometry} is Theorems V.A and V.B in \cite{deBr0} and loosely corresponds to Theorem 31 in the form given by Problem 90 (the necessity - if embedding holds then there exists the corresponding contractive function $ A $) and Theorem 32 (sufficiency) in the book. 

\textbf{Appendix III}. The original proof of the lattice property in \cite[Section 35]{deBr} contains an error. The error is in lemma 7 which claims, in our notation, the convexity of the function $ v ( R ) $ for $ u = \log_+ | f | $, $ f $ being entire, with respect to the variable (see (\ref{carle}))
\[ m( R ) =  \int_1^R \exp \( \int_1^r \frac { \d s}{ s p (s) } \) \frac{ \d r }r  . \]
The counterexample is $ f ( z ) = z $ (in fact, any polynomial $f$). The only other exposition of the proof we are aware of - \cite[Section 6.5]{DymMcKean} - reproduces the mistake. The reason for it is that de Branges and Dym-McKean tacitly assume that $ p ( R ) < 1 $ which allows to estimate the $ L^2 $-norm of the function $ u $ from above by that of $ \frac{\partial u}{ \partial \theta } $ (see (\ref{Dirichlet})). The latter is obviously impossible if $ u $ does not vanish on the circle. The case $ p ( R ) = 1 $ is, of course, conceptually simpler, for it corresponds to the situation when one of the functions is not small on the whole of the circle. Nevertheless, the gap appears not to have been noticed and mended so far, although the way to correct the mistake is not difficult and is apparently known to some experts in the field.

\end{document}